\definecolor{CMUrot}{RGB}{128,18,18}
\definecolor{Gold}{RGB}{238,180,34}
\newcommand{\ol}[1]{\overline{#1}}
\numberwithin{equation}{section}
\newcommand{\C}{\ensuremath{\mathbb{C}}}
\newcommand{\R}{\ensuremath{\mathbb{R}}}
\newcommand{\Rn}{\ensuremath{\mathbb{R}^2}}
\newcommand{\Om}{\ensuremath{\Omega}}
\newcommand{\N}{\ensuremath{\mathbb{N}}}
\newcommand{\dist}{\operatorname{dist}}
\newcommand{\sdist}{\operatorname{sdist}}
\newcommand{\sd}{{\rm d}}
\newcommand{\eps}{\ensuremath{\varepsilon}}
\newcommand{\weight}[1]{\langle #1\rangle}
\newcommand{\Div}{\operatorname{div}}
\newcommand{\T}{\ensuremath{\mathbb{T}}}
\newcommand{\Tn}{\Omega}
\newcommand{\no}{\mathbf{n}}
\newcommand{\g}{\mathbf{g}}
\newcommand{\tn}[1]{\mathbf{#1}}
\newcommand{\ve}{\mathbf{v}}
\newcommand{\we}{\mathbf{w}}
\newcommand{\bfvarphi}{\boldsymbol{\varphi}}
\newcommand{\btau}{{\boldsymbol{\tau}}}
\newtheorem{thm}{THEOREM}[section]
\newtheorem{cor}[thm]{Corollary}
\newtheorem{lem}[thm]{Lemma}
\newtheorem{defn}[thm]{Definition}
\newtheorem{theorem}[thm]{Theorem}
\newtheorem{prop}[thm]{Proposition}
\newtheorem{claim*}{Claim}
\newtheorem{rem}[thm]{Remark}
\newenvironment{proof*}[1]{{\bf Proof
#1:}}{\hspace*{\fill}\rule{1.2ex}{1.2ex}\\ }
\newenvironment{proof}{{\bf
Proof:\,}}{\hspace*{\fill}\rule{1.2ex}{1.2ex}\\ }
\newcommand{\First}[1]{{\color{red}#1}}
\newcommand{\Second}[1]{{#1}}
\newcommand{\Third}[1]{{\color{blue}#1}}
\newcommand{\p}{\partial}
\newcommand{\G}{\Gamma}
\def\({\left(}
\def\){\right)}
\renewcommand{\O}{\Omega}
\newcommand{\Lgrad}{L^{\nabla}}
\newcommand{\Ldelta}{L^{\Delta}}
\newcommand{\Lt}{L^{t}}
\newcommand{\divtau}{\Div_\btau}
\newcommand{\tc}{\hat{c}}
\newcommand{\tr}{\hat{r}}
\newcommand{\order}{N}
\newcommand{\zg}{\zeta\circ d_\Gamma}
\begin{document}
\begin{titlepage}
\title{Sharp Interface Limit for a Stokes/Allen-Cahn System}
\author{Helmut Abels  and Yuning Liu}
\end{titlepage}

\maketitle
\abstract{We consider the sharp interface limit of a coupled Stokes/Allen-Cahn system, when a parameter $\eps>0$ that is proportional to the thickness of the diffuse interface tends to zero, in a two dimensional bounded domain. For sufficiently small times we prove convergence of the solutions of the Stokes/Allen-Cahn system to solutions of a sharp interface model, where the interface evolution is given by the mean curvature equation with an additional convection term coupled to a two-phase Stokes system with  an additional contribution to the stress tensor, which describes the capillary stress. To this end we construct a suitable approximation of the solution of the Stokes/Allen-Cahn system, using three levels of the terms in the formally matched asymptotic calculations, and estimate the difference with the aid of a suitable refinement of a spectral estimate of the linearized
Allen-Cahn operator. Moreover, a careful treatment of the coupling terms is needed.}

{\small\noindent
{\bf Mathematics Subject Classification (2000):}
Primary: 76T99; Secondary:
35Q30, 
35Q35, 
35R35,
76D05, 
76D45\\ 
{\bf Key words:} Two-phase flow, diffuse interface model, sharp interface limit, Allen-Cahn equation, Free boundary problems
}

\setcounter{tocdepth}{2}
\tableofcontents

\section{Introduction, Main Result, and Overview}
Two-phase flows of macroscopically immiscible fluids play an important role in real world applications e.g.~in chemistry and engineering sciences. They lead to many interesting fundamental questions concerning modeling, numerical simulations and their mathematical analysis. There are two basic model classes: The so-called sharp and diffuse interface models. In sharp interface models the interface separating two fluids is described as a lower dimensional surface. In numerical simulations and the mathematical analysis the interface is usually  either parametrized explicitly or described with the aid of the level set or characteristic function of one fluid domain. This leads to fundamental problems, when the interface develops singularities e.g.\ due to droplet collision or pinch-off. In diffuse interface models (also called phase field models) a partial mixing of the two fluids on a small length scale proportional to a parameter $\eps>0$ is taken into account. To this end an order parameter, which will be denoted by $c_\eps$, is introduced, which is close to one of two distinct values (e.g.~$\pm 1$) in the bulk phases of the fluids and which varies smoothly between these two values in an interfacial region, which has -- at least heuristically/in sufficiently smooth situations -- a thickness proportional to $\eps$. A fundamental diffuse interface model for a two-phase flow of two macroscopically immiscible, viscous Newtonian fluids with same densities is given by the so-called \emph{model H}, which yields the Navier-Stokes/Cahn-Hilliard system
\begin{alignat}{2}\label{eq:NSCH1}
 \rho \partial_t \ve_\eps+\rho \ve_\eps\cdot \nabla \ve_\eps -\Div (2\nu(c_\eps)D\ve_\eps) +\nabla p_\eps & = -\eps \Div (\nabla c_\eps \otimes \nabla c_\eps)&\quad & \text{in}\ \Omega\times(0,T_1),\\\label{eq:NSCH2}
  \Div \ve_\eps& = 0&\quad & \text{in}\ \Omega\times(0,T_1),\\\label{eq:NSCH3}
 \partial_t c_\eps +\ve_\eps\cdot \nabla c_\eps & =m_\eps\Delta \mu_\eps&\quad & \text{in}\ \Omega\times(0,T_1),\\\label{eq:NSCH4}
 \mu_\eps &= -\eps \Delta c_\eps + \frac1{\eps} f'(c_\eps)&\quad & \text{in}\ \Omega\times(0,T_1)
\end{alignat}
together  with suitable boundary and initial conditions for a suitable  double well potential $f$, e.g. $f(s)=\frac 18(s^2-1)^2$. Here $\ve_\eps, p_\eps$ are the velocity and the pressure of the fluid mixture, $\mu_\eps$ is a chemical potential and $c_\eps$ is related to the concentration difference of the fluids. Moreover,  $D\ve_\eps= \frac12 (\nabla \ve_\eps+\nabla \ve_\eps^T)$, $\rho>0$ is a constant, $\nu\colon \R\to (0,\infty)$ a suitable function describing the viscosity, $\nabla=\nabla_x$, $\Div=\Div_x$, $\Delta=\Delta_x$ are always taken with respect to $x\in\Omega$.

Nowadays there are many results on existence of smooth solutions for sharp interface models of two-phase flows of viscous incompressible fluids for short times and sufficiently smooth initial data and for large times and initial data close to a stable equilibrium, cf.\ e.g.\ \cite{DenisovaTwoPhase,KoehnePruessWilkeTwoPhase,PruessSimonettTwoPhaseFlow,StrongNSMS,PruessShibataShimizuTwoPhaseFlow1}.
Moreover, there are some results on existence of generalized/weak solutions, cf.\ \cite{PlotnikovTwoPhase,MultiFluidFlow,GeneralTwoPhaseFlow,ReviewGeneralTwoPhaseFlow,NSMS,HoffmannStarovoitovStefanConvection}.
But in most cases a satisfactory theory on weak solutions is unknown.
On the other hand diffuse interface models for two-phase flows, such as \eqref{eq:NSCH1}-\eqref{eq:NSCH4} and generalizations of it, were studied intensively during the last decade. There are many results on existence and uniqueness of weak and strong solutions as well as long time behavior, which are comparable to the known results for the classical incompressible Navier-Stokes system in two and three space dimensions and phase field models without fluid mechanics, cf.\ e.g.\  \cite{StarovoitovModelH,BoyerModelH,ModelH,GalGrasselliModelH2D,GalGrasselliTrajAttr}.
We also refer to \cite[Section on ``Weak Solutions and Diffuse Interface Models for Incompressible Two-phase Flows'']{Handbook}

Although there are many analytic results on sharp and diffuse interface models for two-phase flows in fluid mechanics, there are only few rigorous results on convergence of solutions of diffuse interface models to sharp interface models as $\eps\to 0$.  Most results so far are based on the method of formally matched asymptotic expansions, where the validity of certain power series expansions close to the interface is assumed, cf. \cite{StarovoitovSharpInterfaceLimes,LowengrubQuasiIncompressible, AbelsGarckeGruen2}  for the model \eqref{eq:NSCH1}-\eqref{eq:NSCH4}.
First results on convergence for large times to so-called varifold solutions of the sharp interface models were obtained in \cite[Appendix]{NSMS} and \cite{ChenSharpInterfaceLimit} for the model H and a generalization of it for fluids of different densities. A disadvantage of these results is that the notion of varifold solutions, which are comparable to measure-valued solutions, is rather weak and no convergence rates can be shown. We note that results on non-convergence for certain scalings of a mobility coefficient were obtain in \cite{ChenSharpInterfaceLimit} and \cite{ShibataProceedings} or \cite[Chapter 5]{PromotionStefan}. A sharp interface limes  for a Navier-Stokes/phase field system to a Navier-Stokes/Stefan system was proved by Starovoitov and Hoffmann~\cite{PhaseTransitionHoffmannStarovoitov} on the level of weak solutions. But the technique relies rather specifically on structural properties of the systems.

A similar sharp interface limit also arises in the theory of liquid crystals \cite{MR3376794}. In the low temperature regime, the Landau-De Gennes theory predicts the co-existence of an isotropic phase and a nematic phase. Rescaling the corresponding hydrodynamic  system (also referred to as Beris-Edward system) near the isotropic-nematic interface will lead to a limit system that is nematic and is governed by the Ericksen-Leslie system on one side and purely isotropic on the other side. However, a rigorous justification remains open. We believe that insights from the following analysis might also be helpful to solve this problem and other more complicated sharp interface limits in fluid mechanics.

It is the purpose of this contribution to establish a first rigorous convergence result with convergence rates in strong norms for the sharp interface limit $\eps\to 0$ in the case of a two-phase flow in fluid mechanics, which is comparable to results known for single phase field models like the Allen-Cahn, the Cahn-Hilliard,  or the phase field model equation, cf. De~Mottoni and Schatzman~\cite{DeMottoniSchatzman}, Alikakos et al.~\cite{AlikakosLimitCH}, Caginalp and Chen~\cite{CaginalpChen}, respectively.

More precisely,
on a bounded domain $\Omega\subseteq \R^2$
we consider the asymptotic limit $\eps\to 0$ of the following system:
\begin{alignat}{2}\label{eq:NSAC1}
  -\Delta \ve_\eps +\nabla p_\eps & = -\eps \Div (\nabla c_\eps \otimes \nabla c_\eps)&\quad & \text{in}\ \Omega\times(0,T_1),\\\label{eq:NSAC2}
  \Div \ve_\eps& = 0&\quad & \text{in}\ \Omega\times(0,T_1),\\\label{eq:NSAC3}
 \partial_t c_\eps +\ve_\eps\cdot \nabla c_\eps & =\Delta c_\eps - \frac1{\eps^2} f'(c_\eps)&\quad & \text{in}\ \Omega\times(0,T_1),\\
\ve_\eps|_{\partial\Omega} &= 0 \quad c_\eps|_{\partial\Omega} = -1&&\text{on }\partial\Omega\times (0,T_1), \\\label{eq:NSAC4}
 c_\eps |_{t=0}& = c_{0,\eps}&\quad& \text{in }\Omega
\end{alignat}
for a suitable double well potential $f$ and
for suitable ``well-prepared'' initial data $ c_{0,\eps}$ specified below.

Let us note that every sufficiently smooth solution of \eqref{eq:NSAC1}-\eqref{eq:NSAC4} satisfies the \emph{energy identity}
\begin{alignat}{1}\label{eq:Energy}
  E_\eps(c_\eps(t))+ \int_0^t \int_\Omega \left(|\nabla \ve_\eps|^2 +\frac1\eps|\mu_\eps|^2 \right)\, \sd x\, \sd \tau= E_\eps(c_{0,\eps})
\end{alignat}
for all $t\in (0,T_1)$, where $\mu_\eps= -\eps \Delta c_\eps + \frac1{\eps} f'(c_\eps)$ and
\begin{equation*}
  E_\eps(c_\eps(t))=\int_\Omega \eps \frac{|\nabla c_\eps(x,t)|^2}2\, \sd x+ \int_\Omega \frac{f(c_\eps(x,t))}\eps\, \sd x,
\end{equation*}
which provides some limited control as $\eps\to 0$.

The sharp interface limit of \eqref{eq:NSAC1}-\eqref{eq:NSAC4} is the system
\begin{alignat}{2}
\label{eq:Limit1}
  -\Delta \ve +\nabla p &= 0 &\qquad &\text{in }\Omega^\pm (t), t\in [0,T_0],\\\label{eq:Limit2}
    \Div \ve &= 0 &\qquad &\text{in }\Omega^\pm (t), t\in [0,T_0],\\\label{eq:Limit3}
    [2D\ve -p \tn{I}]\no_{\Gamma_t} &= -\sigma H_{\Gamma_t}\no_{\Gamma_t} && \text{on }\Gamma_t, t\in [0,T_0],\\
    [\ve] &=0 && \text{on }\Gamma_t, t\in [0,T_0],\\
    \ve|_{\partial\Omega} &=0&&\text{on }\partial\Omega\times [0,T_0],
\\\label{eq:Limit4}
    V_{\Gamma_t} -\no_{\Gamma_t}\cdot \ve|_{\Gamma_t} &= H_{\Gamma_t} && \text{on }\Gamma_t, t\in [0,T_0].
\end{alignat}
Here $\Omega$ is the disjoint union of $\Omega^+(t), \Omega^-(t)$, and $\Gamma_t$ for every $t\in[0,T_0]$, $\Omega^\pm(t)$ are smooth domains, $\Gamma_t=\partial\Omega^+(t)$,  $\no_{\Gamma_t}$ is the interior normal of $\Gamma_t$ with respect to  $\Omega^+(t)$. Moreover,
\begin{equation*}
  [u](p,t)= \lim_{h\to 0+} \left[u(p+\no_{\Gamma_t}(p)h)- u(p-\no_{\Gamma_t}(p)h)\right]
\end{equation*}
is the jump of a function $u\colon \Omega\times [0,T_0]\to \R^2$ at $\Gamma_t$ in direction of $\no_{\Gamma_t}$, $H_{\Gamma_t}$ and $V_{\Gamma_t}$ are the curvature and the normal velocity of $\Gamma_t$,  both with respect to $\no_{\Gamma_t}$. Furthermore, $D\ve=\frac 12(\nabla\ve+(\nabla\ve)^T)$ and $\sigma= \int_{\R}\theta_0'(\rho)^2\sd \rho$, where $\theta_0$ is the so-called optimal profile that is the unique solution of
\begin{alignat}{1}\label{eq:OptProfile1}
  -\theta_0''(\rho)+f'(\theta_0(\rho))&=0\qquad \text{for all }\rho\in\R,\\\label{eq:OptProfile2}
  \lim_{\rho\to\pm \infty}\theta_0(\rho)&=\pm 1,\qquad \theta_0(0)=0.
\end{alignat}
If the material derivative $\partial_t \ve_\eps+\ve_\eps \cdot \nabla \ve_\eps$ is added to the right-hand side of \eqref{eq:NSAC1} (i.e., the Navier-Stokes equations are considered), the system \eqref{eq:NSAC1}-\eqref{eq:NSAC4} was already suggested by Liu and Shen in \cite{LiuShenModelH} as an alternative approximation of a classical sharp interface model for a two-phase flow of viscous, incompressible, Newtonian fluids, which has advantages for numerical simulations since the Allen-Cahn equation is of second order and not of fourth order as the Cahn-Hilliard equation. On the other hand, for solutions of \eqref{eq:NSAC1}-\eqref{eq:NSAC4} the total mass $\int_\Omega c_\eps(x,t)\sd x$ is in general not preserved in time, in contrast to solutions of \eqref{eq:NSCH1}-\eqref{eq:NSCH4}, which is a disadvantage if the model is used to approximate a two-phase flow without phase transitions. However, \eqref{eq:NSAC1}-\eqref{eq:NSAC4} can be considered as a simplified model for a two-phase flow with phase transitions. Such models can yield systems of Navier-Stokes/Allen-Cahn type, cf. e.g. Blesgen~\cite{BlesgenModel}. Finally, let us mention that in \cite{LiuShenModelH} a rigorous result on the sharp interface limes of \eqref{eq:NSAC1}-\eqref{eq:NSAC4} was announced, which was not published so far to the best of the author's knowledge.

The limit system \eqref{eq:Limit1}-\eqref{eq:Limit4} was also studied by Liu, Sato, and Tonegawa in \cite{LiuSatoTonegawa2} if the Stokes equation on the right-hand side is replaced by a modified Navier-Stokes equation for a shear thickening non-Newtonian fluid of power-law type.
They constructed weak solutions for this system using a Galerkin approximation by a corresponding Navier-Stokes/Allen-Cahn system. In the proof they
pass to the limit  in the Galerkin approximation and pass to the limit $\eps\to 0$ simultaneously. Although the authors do not perform a separate sharp interface limit in the latter non-Newtonian Navier-Stokes/Allen-Cahn system, the result is close to it. The analysis depends heavily on the fact that a shear thickening fluid is used. Moreover, uniqueness of the limit is unknown and no convergence rates are given. Finally, we note that in \cite{TakasaoTonegawa} Takasao and Tonegawa study existence and regularity of global in time solutions of the mean curvature flow with a given convection term. To this end an approximation with a convective Allen-Cahn equation is used as well.

Throughout the paper we assume that
 $(\ve,p,\Gamma)$ is a smooth solution of the limit equation \eqref{eq:Limit1}-\eqref{eq:Limit4} for some $T_0>0$, where $(\Gamma_t)_{t\in[0,T_0]}$ is a family of smoothly evolving compact, non-self-intersecting, closed curves in $\Om$.
  More precisely, we assume that
 \begin{equation*}
   \Gamma:=\bigcup_{t\in [0,T_0]}\Gamma_t\times\{t\}
 \end{equation*}
 is a smooth two-dimensional submanifold of $\Tn\times \R$ (with boundary), and
 $\ve|_{\Omega^\pm}\in C^{\infty}(\ol{\Omega^\pm})^2$, $p|_{\Omega^\pm}\in C^{\infty}(\ol{\Omega^\pm})$, where
 \begin{equation*}
   \Omega^\pm = \bigcup_{t\in [0,T_0]} \Omega^\pm(t)\times \{t\}.
 \end{equation*}
In particular, we assume that $\Gamma_t \subseteq \Omega$ for every $t\in [0,T_0]$, which excludes contact angle problems.
Moreover, for $T_1\geq T_0$ let $(\ve_\eps,p_\eps, c_\eps)$ be the (classical) solution of \eqref{eq:NSAC1}-\eqref{eq:NSAC4} with smooth initial values $c_{0,\eps}\colon\Omega\to \R$, which will be specified in the main result below. Existence of classical solutions can be shown by standard methods. We could work with less regular initial data, but for simplicity of the presentation we assume smoothness.

We note that existence of local strong solutions of \eqref{eq:Limit1}-\eqref{eq:Limit4} can e.g.\ be obtained by adapting the strategy in \cite{StrongNSMS}, where a coupled Navier-Stokes/Mullins-Sekerka system was treated. This was carried out by Moser in \cite{Moser} in the case where the Stokes system \eqref{eq:Limit1}-\eqref{eq:Limit2} is replaced by the instationary Navier-Stokes system. By standard arguments from the regularity theory of parabolic equations and the Stokes system, one can prove that the solution is indeed smooth for smooth initial values.






As in \cite{AlikakosLimitCH} and \cite{ChenHilhorstLogak} we assume in the following that $f\colon \R\to \R$ is smooth and satisfies the assumptions
\begin{equation*}
  f'(\pm 1)=0, \quad f''(\pm 1)>0,\quad f(s)=f(-s)>0 \quad \text{for all }s\in (-1,1).
\end{equation*}
We note that $f'$ in the present paper corresponds to $f$ in \cite{AlikakosLimitCH,ChenSpectrumAC} and to $-f$ in \cite{ChenHilhorstLogak}. Then there is a unique solution $\theta_0 \colon\mathbb{R}\to \R$ of \eqref{eq:OptProfile1}-\eqref{eq:OptProfile2},
which is monotone and will play a central role in the following.
Moreover, for every $m\in\N_0$, there is some $C_m>0$ such that
\begin{equation}\label{yuning:decayopti}
  |\p_\rho^m(\theta_0(\rho)\mp 1)|\leq C_m e^{-\alpha|\rho|}\quad\text{for all } \rho\in\R  \text{ with }\rho \gtrless 0,
\end{equation}
where $\alpha=\min(\sqrt{f''(-1)}, \sqrt{f''(1)})$. In the case $0<\alpha <\min(\sqrt{f''(-1)}, \sqrt{f''(1)})$ a detailed proof  can be found in \cite[Lemma~2.6.1]{PromotionStefan}.
One can choose e.g.\ $f(s)= \frac18(1-s^2)^2$. Then $\theta_0(s)=\tanh(\tfrac{s}2)$ for all $s\in\R$ and $\alpha=1$, cf.\ e.g.\ \cite{StabilityCHSimple}. For simplicity we will assume that $f$ is even. This implies that $\theta_0$ is odd and $\theta'_0$ is even.

For the statement of our main result, we need tubular neighborhoods of $\Gamma_t$.
For $\delta>0$ and $t\in[0,T_0]$ we defined
\begin{equation*}
  \Gamma_t(\delta):=\{y\in \Omega: \dist(y,\Gamma_t)<\delta\},\quad \Gamma(\delta)=\bigcup_{t\in [0,T_0]}\Gamma_t(\delta)\times \{t\}.
\end{equation*}
Moreover, we define the signed distance function
\begin{equation*}
  d_{\G}(x,t):=\sdist (\Gamma_t,x)=
  \begin{cases}
    \dist(\Omega^-(t),x) &\text{if } x\not \in \Omega^-(t)\\
    -\dist(\Omega^+(t),x) &\text{if } x \in \Omega^-(t)
  \end{cases}
\end{equation*}
for all $x\in\Omega, t\in[0,T_0]$.
 Since $\Gamma$ is smooth and compact, there is some $\delta>0$ sufficiently small,  such that $d_\Gamma\colon \Gamma(3\delta)\to \R$ is smooth, cf.\ Section~\ref{subsec:Coordinates} below.

Our main result is:
\begin{thm}\label{thm:main}
  Let $N=2$, $(\ve,\Gamma)$ be a smooth solution of \eqref{eq:Limit1}-\eqref{eq:Limit4} for some $T_0\in (0,\infty)$ and let
\begin{equation*}
  \begin{split}
    c_{A,0}^0(x)&=\zeta(d_{\Gamma_0}(x))\theta_0\left(\frac{d_{\Gamma_0}(x)}\eps\right)+(1-\zeta(d_{\Gamma_0}(x)))\left(
  \chi_{\Omega^+(0)}(x)-\chi_{\Omega^-(0)}(x)\right) \quad \text{for all }x\in \Om,
  \end{split}
\end{equation*}
where $d_{\Gamma_0}=d_\Gamma|_{t=0}$ is the signed distance function to $\Gamma_0$ and $\zeta\in C^\infty(\R)$ such that
\begin{equation}\label{yuning:1.34}
  \zeta(s)=1,~\text{if}~|s|\leq\delta; ~\zeta(s)=0,~\text{if}~|s|\geq 2\delta;~ 0\leq  -s\zeta'(s) \leq 4~\text{if}~ \delta\leq |s|\leq 2\delta.
\end{equation}
Moreover, let $c_{0,\eps}\colon \Omega\to\R$, $0<\eps\leq 1$, be smooth such that
\begin{equation*}
  \|c_{0,\eps}-c^0_{A,0}\|_{L^2(\Omega)}\leq C\eps^{\order+\frac12}\qquad \text{for all }\eps\in (0,1]
\end{equation*}
and some $C>0$, $\sup_{0<\eps\leq 1}\|c_{0,\eps}\|_{L^\infty(\Omega)}<\infty$ and $(\ve_\eps,c_\eps)$ be the corresponding solutions of \eqref{eq:NSAC1}-\eqref{eq:NSAC4}. Then there are some $\eps_0 \in (0,1]$, $R>0$, $T\in (0,T_0]$, and $c_A\colon \Om\times [0,T_0]\to \R$, $\ve_A\colon \Om\times [0,T_0]\to \R^2$ (depending on $\eps$) such that
\begin{subequations}\label{assumptions'}
  \begin{align}
 \sup_{0\leq t\leq T} \|c_\eps(t) -c_A(t)\|_{L^2(\Omega)}+\|\nabla (c_\eps -c_A)\|_{L^2(\Omega\times (0,T)\setminus\Gamma(\delta))}  &\leq R\eps^{\order+\frac12}\\
\|\nabla_\btau(c_\eps -c_A)\|_{L^2(\Omega\times(0,T)\cap \Gamma(2\delta))}+ \eps \|\partial_\no(c_\eps -c_A)\|_{L^2(\Omega\times(0,T)\cap \Gamma(2\delta))} &\leq R\eps^{\order+\frac12}
\end{align}
\end{subequations}
and for any $q\in [1,2)$
\begin{equation}
  \label{eq:convVelocityb}
  \|\ve_\eps -\ve_A\|_{L^2(0,T;L^q(\Om))} \leq C(q,R)\eps^2
\end{equation}
 hold true for all $\eps \in (0,\eps_0]$ and some $C(q,R)>0$. Moreover,
\begin{equation*}
  \lim_{\eps\to 0} c_A= \pm 1 \qquad \text{uniformly on compact subsets of } \Omega^\pm.
\end{equation*}
and
\begin{equation*}
  \ve_A= \ve + O(\eps) \qquad \text{in }L^\infty(\Om\times (0,T))\text{ as }\eps\to 0.
\end{equation*}
\end{thm}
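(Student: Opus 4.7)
The plan is to construct an approximate solution $(c_A,\ve_A,p_A)$ that satisfies \eqref{eq:NSAC1}-\eqref{eq:NSAC4} up to a residual of order $\eps^{\order+1}$ in suitable norms, and then to estimate the difference $(u,\we):=(c_\eps-c_A,\,\ve_\eps-\ve_A)$ by an energy/Gronwall argument whose backbone is a refined spectral lower bound for the linearized Allen-Cahn operator. The target accuracy $\eps^{\order+1/2}$ in \eqref{assumptions'} dictates using three non-trivial orders of the formally matched asymptotic expansion.

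\textbf{Construction of $(c_A,\ve_A,p_A)$.} Near $\Gamma$ I would parametrize in tubular coordinates $(\rho,s,t)$ with $\rho=d_\Gamma/\eps$, and postulate inner expansions $\tilde c_A(\rho,s,t)=\theta_0(\rho)+\sum_{k=1}^{\order+1}\eps^k \tilde c_k(\rho,s,t)$, together with analogous expansions for $\tilde\ve_A$ and $\tilde p_A$; in the bulk I would postulate outer expansions $c_A^\pm=\pm1+\sum_{k\geq 1}\eps^k c_k^\pm$ and analogously for the fluid variables. Substituting into \eqref{eq:NSAC1}-\eqref{eq:NSAC3} and collecting powers of $\eps$ yields, at each order, inhomogeneous linear ODE problems in $\rho$ governed by the operator $\mathcal L_0:=-\p_\rho^2+f''(\theta_0)$; the Fredholm condition $\int_\R(\cdot)\theta_0'\sd\rho=0$ needed for solvability produces precisely the sharp-interface laws \eqref{eq:Limit3}-\eqref{eq:Limit4} at the leading order and their linearizations at higher orders. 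The decay \eqref{yuning:decayopti} guarantees matching with the outer expansions. I would then glue inner and outer pieces via the cutoff $\zg$ with $\zeta$ as in \eqref{yuning:1.34}, obtaining a pair $(c_A,\ve_A,p_A)$ that equals $c^0_{A,0}$ at $t=0$ up to $O(\eps^{\order+1})$ and solves \eqref{eq:NSAC1}-\eqref{eq:NSAC3} modulo remainders $R_{AC}^\eps$, $R_{St}^\eps$ of size $O(\eps^{\order+1})$ in $L^2$.

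\textbf{Error equations and energy/spectral estimate.} Setting $u=c_\eps-c_A$ and $\we=\ve_\eps-\ve_A$ I obtain
\begin{align*}
\p_t u +\ve_\eps\cdot\nabla u-\Delta u+\tfrac1{\eps^2}\bigl(f'(c_\eps)-f'(c_A)\bigr)&=-\we\cdot\nabla c_A+R_{AC}^\eps,\\
-\Delta \we+\nabla(p_\eps-p_A)&=-\eps\Div\bigl(\nabla u\otimes\nabla c_A+\nabla c_A\otimes\nabla u+\nabla u\otimes\nabla u\bigr)+R_{St}^\eps,\\
\Div\we&=0,
\end{align*}
with $\we|_{\p\Om}=0$ and $u|_{\p\Om}=0$. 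Testing the first equation with $u$ gives
\begin{equation*}
\tfrac12\tfrac{\sd}{\sd t}\|u\|_{L^2(\Om)}^2+\int_\Om|\nabla u|^2\sd x+\tfrac1{\eps^2}\int_\Om f''(c_A)u^2\sd x=-\int_\Om \we\cdot\nabla c_A\,u\sd x+\text{l.o.t.}+\int R_{AC}^\eps u.
\end{equation*}
The term $\eps^{-2}\int f''(c_A)u^2$ is sign-indefinite, and I would control it by the spectral estimate (the refinement of Chen's lower bound announced in the abstract), which delivers a constant $c_0>0$ such that
\begin{equation*}
\int_\Om|\nabla u|^2+\tfrac1{\eps^2}f''(c_A)u^2\sd x\geq -C\|u\|_{L^2(\Om)}^2+c_0\bigl(\|\nabla_\btau u\|_{L^2(\G_t(2\delta))}^2+\eps^2\|\p_\no u\|_{L^2(\G_t(2\delta))}^2+\|\nabla u\|_{L^2(\Om\setminus\G_t(\delta))}^2\bigr).
\end{equation*}
These are exactly the good quantities appearing on the left-hand side of \eqref{assumptions'}. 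A standard Stokes estimate applied to the second equation bounds $\|\we\|_{L^q(\Om)}$ for $q\in[1,2)$ by $\eps$ times the norms of $\nabla u\otimes\nabla c_A$ and $\nabla u\otimes\nabla u$ plus $\|R_{St}^\eps\|$, which yields \eqref{eq:convVelocityb} a posteriori. Combining everything produces a differential inequality $\tfrac{\sd}{\sd t}\|u\|_{L^2}^2\leq C\|u\|_{L^2}^2+C\eps^{2\order+1}$, and Gronwall on $[0,T]$ with $T\leq T_0$ possibly reduced gives the desired bound $\sup_t\|u(t)\|_{L^2}\leq R\eps^{\order+1/2}$.

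\textbf{Main obstacle.} The delicate step is the coupling term $\int_\Om\we\cdot\nabla c_A\,u\sd x$: since $\nabla c_A=\eps^{-1}\theta_0'(d_\G/\eps)\no+O(1)$ in the interfacial layer, a crude Cauchy-Schwarz on the normal component loses a factor $\eps^{-1}$ and destroys the Gronwall balance. The remedy is to decompose $\we$ and $\nabla c_A$ into tangential and normal parts in tubular coordinates near $\G_t$, absorb the tangential contribution into the $\|\nabla_\btau u\|$ term of the spectral estimate, and handle the normal contribution by using $\Div\we=0$ together with the Stokes bound $\|\we\|_{L^q}\lesssim \eps\|\nabla u\otimes \nabla c_A\|_{L^{q'}}+O(\eps^{\order+1})$ in order to trade the $\eps^{-1}$ singularity for a power of $\eps$ from the Stokes estimate. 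A secondary technical difficulty is the quadratic Stokes source $\eps\Div(\nabla u\otimes\nabla u)$, whose control forces a bootstrap between \eqref{assumptions'} and \eqref{eq:convVelocityb} using the anisotropic gradient norms that appear in the spectral bound and two-dimensional Sobolev embedding; this, together with the need for sufficient accuracy of $c_A$, is the reason for expanding to the $(\order+1)$-st order and for the smallness restriction on $T$.
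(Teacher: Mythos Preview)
Your overall architecture---construct $(c_A,\ve_A)$ by matched asymptotics, derive error equations, test with $u$, invoke a spectral lower bound, close via Gronwall and a continuation argument---matches the paper's, and you correctly single out $\int_\Omega\we\cdot\nabla c_A\,u\,\sd x$ as the dangerous term. But your proposed cure for it has a genuine gap.

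Your ansatz uses the unmodified stretched variable $\rho=d_\Gamma/\eps$. With this choice $\nabla c_A\approx\eps^{-1}\theta_0'(\rho)\no$ near $\Gamma$, and the normal part of the coupling term is $\int \eps^2\we_{1,\no}\,\eps^{-1}\theta_0'(\rho)\,u$. Your remedy---use $\Div\we=0$ to write $\partial_\no\we_\no=-\Div_\btau\we$---gains a factor $d_\Gamma\sim\eps\rho$ only for the \emph{oscillation} $\we_\no-\we_\no|_\Gamma=\int_0^{d_\Gamma}\partial_\no\we_\no\,\sd r'$, not for the trace $\we_\no|_\Gamma$ itself. With the Stokes bound $\|\we_1\|_{L^2(0,T;H^1)}\leq C(R)$ and $\|u\|_{L^\infty L^2}\leq R\eps^{N+1/2}$, a direct estimate of $\int_0^T\!\int \eps\,\we_{1,\no}|_\Gamma\,\theta_0'\,u$ (via Lemma~\ref{yuning:1.21} or Corollary~\ref{cor:repsEstim2}) gives at best $C(R)\eps^{N+2}$, one full power short of the $\eps^{2N+1}$ needed to close Gronwall.

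The paper fills this gap by a feedback in the construction: the stretched variable is $\rho=d_\Gamma/\eps-h_1(s,t)-\eps h_{2,\eps}(s,t)$, and $h_{2,\eps}$ solves a linear parabolic equation on $\Gamma$ whose right-hand side contains $X_0^\ast(\no\cdot\we_1)$, see \eqref{yuning:h2equ'}. Thus $c_A$ itself depends on the velocity error, and the approximate Allen--Cahn equation for $c_A$ already contains the term $\eps^2\we_1|_\Gamma\cdot\nabla c_A$ (Theorem~\ref{thm:ApproxSolutions}, equation \eqref{eq:cA}). In the error equation only the difference $\mathcal{R}=\eps^2(\we_1-\we_1|_\Gamma)\cdot\nabla c_A$ survives, and \emph{that} term is handled precisely by your $\Div\we=0$ idea (Lemma~\ref{eq:ConvTermEstim}, Lemma~\ref{lem:RuEstim}). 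Without this device the argument does not close; the paper stresses this explicitly after the statement of Theorem~\ref{thm:ApproxSolutions}.

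Two smaller points. First, the paper's spectral estimate (Theorem~\ref{thm:Spectral}) does \emph{not} deliver a positive $\eps^2\|\partial_\no u\|^2$ on the right; that bound is obtained separately at the end of the proof by the crude inequality $\eps^2\int|\nabla u|^2\leq\int(\eps^2|\nabla u|^2+f''(c_A)u^2)-\inf f''\int u^2$. Second, the inner expansion is not the standard $\sum_{k\geq 0}\eps^k\tilde c_k$: there is no $\eps^1$ term, and the third corrector $\hat c_3$ solves a genuinely \emph{parabolic} problem \eqref{yuning:monst4'} on $\R\times\T^1\times[0,T_0]$ with $\eps$-dependent coefficients (because it must absorb $\nabla_\Gamma h_{2,\eps}$), not a $\rho$-ODE; the estimates for $\hat c_3$ (Theorem~\ref{lem:InftyEstimLinPart}, Lemma~\ref{lem:c2c3}) therefore carry $\eps$-dependent constants and require separate care.
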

More precise information on $c_A$ can be found in Section~\ref{eq:Approx}.
In particular, the result implies
\begin{equation*}
  c_\eps\to \pm 1 \qquad \text{in } L^2_{loc}(\Omega^\pm).
\end{equation*}
Here $\order=2$ is the basic convergence order (w.r.t. the $L^\infty(\Omega)$-norm).
Although this order is fixed, we will write $\order+\frac12$, $\order-\frac12$ etc.\ instead of $\frac52$, $\frac32$ etc. since in this way the relations between the different orders become more transparent.
\begin{rem}\label{rem:LInftyboundedness}
   It is easy to show that the solutions $(\ve_\eps,c_\eps)$ of \eqref{eq:NSAC1}-\eqref{eq:NSAC4} satisfy
  \begin{equation}\label{yuning:1.65}
    |c_\eps(x,t)|\leq  \max(\sup_{0<\eps\leq 1}\|c_{0,\eps}\|_{L^\infty(\Omega)},1) \qquad \text{for all }x\in\Om,t\in [0,T_0].
  \end{equation}
To this end one can e.g.~argue by contradiction and apply the same arguments as in the proof of the weak maximum principle.
\end{rem}

For the proof of the main result, we will follow the same basic strategy, which was already successfully used in \cite{DeMottoniSchatzman} for the Allen-Cahn equation, in \cite{AlikakosLimitCH} for the Cahn-Hilliard equation, and in \cite{ChenHilhorstLogak} for the mass-preserving Allen-Cahn equation. In many details we will follow the constructions in \cite{ChenHilhorstLogak}. Following this strategy the proof consists of two parts. In the first part a suitable approximate solution, which will be denoted by $(\ve_A,c_A)$ in the following, for \eqref{eq:NSAC1}-\eqref{eq:NSAC4} upto an error term of a certain  order in $\eps$ is constructed. To this end finitely many terms of an expansion in $\eps>0$, using the method of formally matched asymptotics, are used. In the second step the error of the approximate $(\ve_A, c_A)$ and the exact solutions $(\ve_\eps,c_\eps)$ are estimated with the aid of a suitable estimate for the linearized Allen-Cahn operator $\mathcal{L}_\eps$, defined by
\begin{equation}\label{eq:Leps}
  \mathcal{L}_\eps u = -\Delta u +\frac1{\eps^2} f''(c_A)u,\quad \text{for all }u\in H^2(\Omega).
\end{equation}
  However,  in order to adapt this strategy to the present system, several new difficulties, which are mainly related to the coupling of the Allen-Cahn and the Stokes system, have to be overcome.
More precisely, in order to estimate the difference $u:=c_\eps-c_A$, a suitable estimate of  the convection term $\ve_\eps\cdot \nabla c_\eps$ is required. To this end it will be essential how this term is approximated in the equation of $c_A$. More precisely, we will construct $c_A$ such that we have the following result:
\begin{thm}\label{thm:ApproxSolutions}
  Let the assumption of Theorem~\ref{thm:main} be satisfied and $R\geq 1$. Then for every $\eps\in (0,1)$ there are 
  \begin{equation*}
  \ve_A,\we_1,\we_2\colon \Om\times [0,T_0]\to \R^2,\quad c_A\colon \Om\times [0,T_0]\to \R,\quad r_A\colon \Om\times [0,T_0]\to \R
  \end{equation*}
 (depending on $\eps \in (0,1]$) such that $\ve_\eps = \ve_A+ \eps^2\we_1+\eps^2\we_2$ and
  \begin{alignat}{2}\label{eq:cA}
     \partial_t c_A+ (\ve_A +\eps^2\we_2) \cdot \nabla c_A + \eps^2\we_1|_{\Gamma}\cdot \nabla c_A&= \Delta c_A-\frac{f'(c_A)}{\eps^2} + r_{A} &\quad &\text{in } \Om\times [0,T_0]
  \end{alignat}
\Second{as well as $c_A|_{\partial\Omega}=-1$, $\ve_A|_{\partial\Omega}=0$.}
    Moreover, there are some $\eps_0>0$, $T_1>0$ and $M_R\colon (0,1]\times (0,T_0]\to (0,\infty)$, which is increasing with respect to both variables, such that $M_R(\eps,T)\to_{(\eps,T)\to 0} 0$ and, if
\begin{subequations}\label{assumptions}
  \begin{align}
 \sup_{0\leq t\leq T_\eps} \|c_\eps(t) -c_A(t)\|_{L^2(\Omega)} + \|\nabla (c_\eps -c_A)\|_{L^2(\Omega\times(0,T_\eps)\setminus \Gamma(\delta))} &\leq R\eps^{\order+\frac12},\\
 \|\nabla_\btau(c_\eps -c_A)\|_{L^2(\Omega\times(0,T_\eps)\cap \Gamma(2\delta))} +\eps\|\partial_\no(c_\eps -c_A)\|_{L^2(\Omega\times(0,T_\eps)\cap \Gamma(2\delta))} &\leq R\eps^{\order+\frac12}
\end{align}
\end{subequations}
hold true for some $T_\eps \in (0,T_0]$, $\eps_0\in (0,1]$, and all $\eps \in (0,\eps_0]$, then
  \begin{alignat}{1}\label{eq:rEstim}
\int_0^T \left|\int_{\Om} r_A(x,t)(c_\eps(x,t)-c_A(x,t))\sd x \right| \sd t&\leq M_R(\eps,T)\eps^{2(\order+\frac12)},\quad
  \end{alignat}
  for all $T\in (0,\min(T_\eps,T_1))$, $\eps \in (0,\eps_0]$. 
\end{thm}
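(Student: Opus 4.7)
The plan is to follow the matched-asymptotic scheme of De Mottoni--Schatzman \cite{DeMottoniSchatzman}, Alikakos--Bronsard--Chen \cite{AlikakosLimitCH} and Chen--Hilhorst--Logak \cite{ChenHilhorstLogak}, adapted to the present Stokes-coupled system. First I would construct $c_A,\ve_A$ and the interfacial correction $\we_1$ simultaneously, order by order in $\eps$, up to order $\order+1$. Away from a tubular neighbourhood of $\Gamma$ one uses outer expansions $c_A^\pm=\pm 1+\sum_{k\geq 1}\eps^k c_k^\pm$ in $\Om^\pm$ and $\ve_A=\ve+\sum_{k\geq 1}\eps^k \ve_k$; in $\Gamma(3\delta)$ one introduces the stretched normal variable $\rho=\eps^{-1}(d_\Gamma(x,t)-\sum_{k\geq 1}\eps^k h_k(\btau,t))$, where $\btau$ is a tangential coordinate on $\Gamma_t$, and the inner expansions $c_A^{in}=\sum_{k=0}^{\order+1}\eps^k \theta_k(\rho,\btau,t)$, $\ve_A^{in}=\ve+\sum_{k\geq 1}\eps^k \ve_k^{in}(\rho,\btau,t)$. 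Equating powers of $\eps$ in \eqref{eq:NSAC1}--\eqref{eq:NSAC3} yields the profile equation \eqref{eq:OptProfile1}--\eqref{eq:OptProfile2} at order $\eps^{-1}$, the transported mean-curvature law \eqref{eq:Limit4} together with the capillary jump \eqref{eq:Limit3} at order $\eps^{0}$ (via the solvability condition $\int_\R(\cdot)\theta_0'\,\sd\rho=0$ for the linearised profile operator $-\partial_\rho^2+f''(\theta_0)$ on $L^2(\R)$), and higher-order linear ODEs and Stokes-type problems for $\theta_k,\ve_k^{in}$ whose solvability determines the interface correctors $h_k$. Matching at $\rho\to\pm\infty$ uses the exponential decay \eqref{yuning:decayopti}, and inner/outer pieces are glued via the cut-off $\zeta$ of \eqref{yuning:1.34}.

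I would then define $\we_1(\btau,t)$ as the $\rho$-averaged (i.e.\ constant-in-$\rho$) part of the $\eps^2$-order normal velocity correction produced by the capillary stress, extended off $\Gamma$ by the normal coordinate; the function $\we_2:=\eps^{-2}(\ve_\eps-\ve_A-\eps^2\we_1)$ is then tautologically defined, so that \eqref{eq:cA} becomes the \emph{definition} of $r_A$. By construction $r_A$ splits into three contributions. Part (i) is the formal Allen--Cahn residual from the expansion, which is $O(\eps^{\order+1})$ pointwise, smooth and localised in $\Gamma(2\delta)$ with exponentially decaying $\rho$-profile. Part (ii) is the frozen-coefficient discrepancy $\eps^2(\we_1(x,t)-\we_1|_\Gamma)\cdot\nabla c_A$; since $\we_1(x,t)-\we_1|_\Gamma=O(d_\Gamma)=O(\eps\rho)$, this term is bounded in $L^\infty$ by $C\eps^2\rho|\theta_0'(\rho)|$. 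Part (iii) is the velocity coupling $\eps^2\we_2\cdot\nabla c_A$, and $\we_2$ itself solves, after subtraction of the approximate Stokes system from \eqref{eq:NSAC1}--\eqref{eq:NSAC2}, a Stokes problem whose right-hand side is $-\eps^{-1}\Div(\nabla c_\eps\otimes\nabla c_\eps-\nabla c_A\otimes\nabla c_A)$ plus smoother lower-order terms.

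To prove \eqref{eq:rEstim} I would set $u:=c_\eps-c_A$ and split $\int_0^T\!\!\int_\Om r_A u\,\sd x\,\sd t$ accordingly. For (i) and (ii), Cauchy--Schwarz together with $\|u\|_{L^2}\leq R\eps^{\order+1/2}$ from \eqref{assumptions} and the $\eps^{1/2}$ gain coming from the $O(\eps)$-thick interfacial support yields $O(\eps^{2\order+3/2}T^{1/2})$. For (iii), standard $L^q$-theory ($q\in[1,2)$) for the Stokes operator together with the assumptions \eqref{assumptions} gives $\|\we_2\|_{L^2(0,T;L^q(\Om))}=o(1)$ as $(\eps,T)\to 0$; combined with $\|\nabla c_A\|_{L^p(\Gamma(2\delta))}=O(\eps^{1/p-1})$ (from the inner expansion) and H\"older with $\|u\|_{L^2}\leq R\eps^{\order+1/2}$, this produces a contribution of the form $\eps^{2\order+1}\cdot o(1)$. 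Setting $M_R(\eps,T)$ to the maximum of the resulting $o(1)$ prefactors completes \eqref{eq:rEstim}.

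\textbf{Main obstacle.} The crux is the coupling term (iii): because $\we_2$ depends implicitly on the unknown $u$ through the Stokes equation, and because $\nabla c_A\sim \eps^{-1}\theta_0'(\rho)\no_{\Gamma_t}$ loses one full factor of $\eps$ in the interfacial layer, any $\theta_0'$-resonant contribution in $\we_2$ in $\Gamma(2\delta)$ would be fatal. The definition of $\we_1$ is chosen precisely to absorb the $\rho$-independent, resonant part of the capillary-stress-driven velocity correction at order $\eps^2$, so that $\we_2$ is non-resonant in the interfacial layer in the quantitative sense furnished by the Stokes $L^q$-estimate. Verifying this structural property, together with the required Stokes bound on $\we_2$ uniformly in the hypothesis \eqref{assumptions} on $u$, is the most delicate step and is where the careful choice of the decomposition $\ve_\eps=\ve_A+\eps^2\we_1+\eps^2\we_2$ in \eqref{eq:cA} -- with $\we_1$ evaluated at $\Gamma$ -- pays off.
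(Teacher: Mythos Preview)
Your plan follows the classical matched-asymptotic template, but it misses the central structural point of the construction here, and as a result the estimate for your ``part (iii)'' cannot close.

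\textbf{The definition of $\we_1$ is wrong in kind.} In the paper $\we_1$ is \emph{not} a term of the formal expansion: it is $\we_1=\eps^{-2}(\tilde{\ve}_A-\ve_A+\tilde{\we}_1)$, where $\tilde{\we}_1$ solves a Stokes problem whose right-hand side is $-\eps\Div\bigl((\nabla c_{A,0}-\g)\otimes\nabla u_1+\nabla u_1\otimes(\nabla c_{A,0}-\g)\bigr)$ with $u_1=c_\eps-c_{A,0}$ (Section~\ref{subsec:LeadingErrorVelocity}). Thus $\we_1$ depends on the \emph{actual} solution $c_\eps$. Under the hypothesis \eqref{assumptions} one only gets $\|\we_1\|_{L^2(0,T;H^1)}\le C(R)$, i.e.\ $\we_1$ is $O(1)$, and it is precisely this $O(1)$ piece that must be removed before the remaining velocity error $\we_2=\eps^{-2}\tilde{\we}_2$ becomes small ($\|\we_2\|_{L^r(0,T;L^q)}\le C(R)\eps^{4/r-2}$, Proposition~\ref{yuning:velocityest}). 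If instead you take $\we_1$ to be a $\rho$-averaged term read off from the formal velocity expansion --- a function determined by $(\Gamma,\ve)$ alone --- then your $\we_2=\eps^{-2}(\ve_\eps-\ve_A)-\we_1$ still contains the $c_\eps$-dependent contribution of order $O(1)$, and your claim $\|\we_2\|_{L^2(0,T;L^q)}=o(1)$ is false. Consequently the bound you assert for $\int\!\!\int \eps^2\we_2\cdot\nabla c_A\,u$ does not follow.

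\textbf{The feedback through $h_{2,\eps}$ is missing.} Because $\we_1$ depends on $c_\eps$, one cannot simply ``put $\eps^2\we_1|_\Gamma\cdot\nabla c_A$ on the left of \eqref{eq:cA}'' after the fact: one must build $c_A$ so that this term is genuinely absorbed. The paper does this by letting the interface corrector $h_{2,\eps}$ solve the parabolic equation \eqref{yuning:h2equ'} whose source contains $X_0^\ast(\no\cdot\we_1)$; since $\we_1$ in turn depends on $h_{2,\eps}$ (through $c_{A,0}$), this is a nonlinear fixed-point construction (Lemma~\ref{lem:Existenceh2}), and the bound $\|h_{2,\eps}\|_{X_{T_\eps}}\le M(R)$ is obtained only under the hypothesis \eqref{assumptions}. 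Likewise $\hat c_3$ is not determined by a profile ODE but by the degenerate parabolic problem \eqref{yuning:monst4'}, treated in Theorem~\ref{lem:InftyEstimLinPart}; its right-hand side again involves $h_{2,\eps}$ and hence $c_\eps$. None of this feedback mechanism is present in your outline, and without it the inner residual $\mathfrak{C}$ in \eqref{eq:IdCin} is not $O(\eps^{N+1/2})$ in $L^1(0,T;L^2)$.

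\textbf{A secondary gap in part (ii).} Even with the correct $\we_1$, the assertion $\we_1-\we_1|_\Gamma=O(d_\Gamma)$ pointwise is unjustified: $\we_1$ is only in $L^2(0,T;H^1(\Omega))$. The paper handles $\int\!\!\int \eps^2(\we_1-\we_1|_\Gamma)\cdot\nabla c_A\,u$ by writing $\we_{1,\no}-\we_{1,\no}|_\Gamma=\int_0^r\partial_\no\we_{1,\no}\,\sd r'=-\int_0^r(\Div\ve_A+\Div_\btau\we_1)\,\sd r'$ and integrating the tangential divergence by parts onto $u$ (Lemma~\ref{eq:ConvTermEstim}); this is where the specific form $\we_1|_\Gamma$ (rather than $\we_1$) in \eqref{eq:cA} is exploited. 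Incidentally, this term is \emph{not} part of $r_A$ at all --- it is estimated separately in Section~\ref{sec:DifferenceEstimate} (Lemma~\ref{lem:RuEstim}) --- so your decomposition of $r_A$ into (i)--(iii) does not match the statement being proved.
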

Here
 $\we_1$ will be the leading part of the error $\we=\frac{\ve_\eps-\ve_A}{\eps^2}$ and $\we_1|_{\Gamma}(x,t)= \we_1(P_{\Gamma_t}(x),t)$ for $x\in \Gamma_t(2\delta)$, where $P_{\Gamma_t}$ denotes the orthogonal projection onto $\Gamma_t$, cf.~Section~\ref{subsec:Coordinates} below. The tangential gradient $\nabla_\btau$ and $\partial_\no$ will be defined precisely in the same subsection. 
In \eqref{eq:cA} $\eps^2\we_2 \cdot \nabla c_A$ can also be omitted since it is of the same order as  $r_A$. But the presence of the term $\eps^2\we_1|_{\Gamma}\cdot \nabla c_A$ is essential for the error estimates, cf. Section~\ref{sec:DifferenceEstimate} and Lemma~\ref{eq:ConvTermEstim} below for the details.

We will prove Theorem~\ref{thm:main} with the aid of the latter theorem by considering the equation for $u=c_\eps-c_A$ and using suitable estimates for $\mathcal{L}_\eps$, which refine the results of \cite{ChenSpectrumAC} in tangential directions, as well as careful estimates of all remainder terms. In this proof a continuation argument is used to show that \eqref{assumptions} is valid for $T_\eps\geq T_1>0$ if $T_1$ is sufficiently small.

\smallskip

\noindent {\bf Remark:}  Let us comment on the orders of $\eps$ on the right-hand side of \eqref{assumptions}. Because of \eqref{yuning:decayopti} and a simple change of variables, we have
  \begin{equation*}
    \|\theta_0(\tfrac{\cdot}\eps)-(\chi_{[0,\infty)}-\chi_{(-\infty,0)})\|_{L^2(\R)} = M\eps^{\frac12}.
  \end{equation*}
  Hence the power $\eps^{\frac12}$ appears naturally, when estimating differences in $L^2$ in normal direction. Moreover, applying the one-dimensional interpolation inequality
  \begin{equation*}
    \|u\|_{L^\infty(-2\delta,2\delta)}\leq C_\delta\|u\|_{L^2(-2\delta,2\delta)}^{\frac12}\|u\|_{H^1(-2\delta,2\delta)}^{\frac12}\quad \text{for all }u\in H^1(-2\delta,2\delta),
  \end{equation*}
  to
\eqref{assumptions} yield a control of $u$ of the order $\eps^{\order}$ in the $L^\infty$-norm in normal direction (and $L^2$-norms in the other directions).

As mentioned above the form of the convection terms in \eqref{eq:cA} will be essential for the remainder estimates.
In order to obtain this we will construct $c_A$ such that
\begin{equation*}
  c_A(x,t)= \underbrace{\theta_0 \left(\frac{d_\G(x,t)}\eps - h_\eps(S(x,t),t)\right)}_{=c_{A,0}} + O(\eps^2),\quad \text{where }h_\eps = h_1+\eps h_{2,\eps},
\end{equation*}
in  $\Gamma(\delta)$ for some suitable $h_1,h_{2,\eps}\colon \T^1\times [0,T_0]\to \R$, cf. Section~\ref{eq:Approx} for the details.. Here $S$ is defined in Section~\ref{subsec:Coordinates} below. For every $t\in [0,T_0]$, $S(\cdot,t)|_{\Gamma_t}\colon \Gamma_t\to \T^1$ is the pull-back of a suitable parametrization of $\Gamma_t$.
In order to include the term $\we_1|_{\Gamma}\cdot \nabla c_A$ in \eqref{eq:cA}, we will use a suitable choice of $h_{2,\eps}$, which will depend on $\we_1$, cf.~\eqref{yuning:h2equ'} below. Here $\we_1$ depends on $\eps$ and we have only certain norms of $\we_1$ under control as $\eps\to 0$. This is in sharp contrast to the term $h_1$, which is defined by an equation independent of $\eps$, cf.\ \eqref{yuning:h1equ'} below. The latter equation basically depends only on the (smooth) limit solution $(\Gamma,\ve,p)$ as it is the case in the expansions in \cite{DeMottoniSchatzman,AlikakosLimitCH,ChenHilhorstLogak}. The careful treatment of $h_{2,\eps}$ (as well as a similar higher order term $\hat{c}_{3,\eps}$ in the expansion of $c_A$) is one of the essential novelties in this contribution compared to \cite{DeMottoniSchatzman,AlikakosLimitCH,ChenHilhorstLogak} and a key to the proof of our main result. Furthermore,   treatment of the error $\ve_\eps-\ve_A$ using careful estimates for (very) weak solutions of the Stokes system and the errors in the capillary term on the right-hand side of \eqref{eq:NSAC1} are another essential ingredient. Finally let us note that  another novelty of the present contribution in comparison to \cite{DeMottoniSchatzman,AlikakosLimitCH,ChenHilhorstLogak} is that, in the expansion of $c_A$ we will only use three terms of orders $O(1)$, $O(\eps^2)$ and $O(\eps^3)$, respectively, which reduces the number of levels in the asymptotic expansion significantly.

The structure of this article is as follows. In Section~\ref{sec:Prelim} we will discuss several preliminary results concerning suitable coordinates close to $\Gamma$, evolution equations for $h_1,h_{2,\eps}$, results on ODEs needed in the asymptotic expansions, and some kind of spectral estimate for $\mathcal{L}_\eps$ uniformly in $\eps\in (0,\eps_0]$.
Then $\ve_A$ is constructed in Section~\ref{sec:ApproxVelocity} assuming that the leading part of $c_{A,0}$ of $c_A$ (which depends on $h_1,h_{2,\eps}$) is known. Moreover, $\ve_A\cdot \nabla c_{A,0}$ is expanded and  using the knowledge of the latter expansion, $c_A$ is constructed in Section~\ref{eq:Approx}  and Theorem~\ref{thm:ApproxSolutions} is proved. Finally, the main result is proven in Section~\ref{sec:DifferenceEstimate}. Some lengthy but straight forward calculations related to the matched asymptotic expansions are given in details in the Appendix.

\smallskip

\noindent
{\bf Acknowledgements:} This work was supported by the SPP 1506 "Transport Processes
at Fluidic Interfaces" of the German Science Foundation (DFG) through the grant AB285/4-2.  Y. Liu was supported by Chenguang Plan 15CG66 of Shanghai
Municipal Education Commission \&\ Shanghai Educational Development Foundation. Both supports are  gratefully acknowledged. Moreover, the authors are greatful to Dr.\ Daniel Lengeler for several fruitful discussions, which contributed to this work, and help with the proof of Theorem~\ref{thm:Spectral}.  Finally, we thank Andreas Sch\"ottl and Maximilian Moser for their careful proofreading.




\section{Preliminaries}\label{sec:Prelim}
\subsection{Coordinates}\label{subsec:Coordinates}

We will parametrize $(\Gamma_t)_{t\in[0,T_0]}$ with the aid of a family of smooth diffeomorphisms $X_0\colon \T^1\times [0,T_0]\to \Om$ such that $\partial_s X_0(s,t)\neq 0$ for all $s\in\T^1$, $t\in [0,T_0]$. Moreover, let
 \begin{equation*}
\btau(s,t)= \frac{\partial_s X_0(s,t)}{|\partial_s X_0(s,t)|}\quad \text{and}\quad \no(s,t)=
\begin{pmatrix}
  0 & -1\\
1 & 0
\end{pmatrix}
\btau(s,t)\qquad \text{for all }(s,t)\in \T^1\times [0,T_0]
 \end{equation*}
be the normalized tangent and normal vectors on $\Gamma_t$ at $X_0(s,t)$.

We choose the orientation of $\Gamma_t$ (induced by $X_0(\cdot,t)$) such that $\no(s,t)$ is the exterior normal with respect to $\Omega^-(t)$. Moreover, we denote
\begin{equation}\label{yuning:1.58}
\no_{\Gamma_t}(x):= \no (s,t)~\text{for all}~ x=X_0(s,t)\in \Gamma_t.
\end{equation}
  Furthermore, $V_{\Gamma_t}$ and $H_{\Gamma_t}$ should be the normal velocity and (mean) curvature of $\Gamma_t$ (with respect to $\no_{\Gamma_t}$) and we define
 \begin{equation}\label{yuning:1.57}
   V(s,t)= V_{\Gamma_t}(X_0(s,t)),\quad H(s,t)= H_{\Gamma_t}(X_0(s,t))\quad \text{for all }s\in\T^1, t\in [0,T_0].
 \end{equation}
Hence $H_{\Gamma_t}\leq 0$ if $\Omega^-(t)$ is convex.
Moreover, by definition,
\begin{equation*}
  V_{\Gamma_t}(X_0(s,t))=V(s,t)= \partial_t X_0(s,t)\cdot \no(s,t)\qquad \text{for all }(s,t)\in \T^1\times [0,T_0].
\end{equation*}

In the following we will need a tubular neighborhood of $\Gamma_t$: For $\delta>0$ sufficiently small,  the orthogonal projection $P_{\Gamma_t}(x)$ of all
\begin{equation*}
x\in \Gamma_t(3\delta) =\{y\in \Omega: \dist(y,\Gamma_t)<3\delta\}
\end{equation*}
is well-defined and smooth. Moreover, we choose $\delta$ so small that $\dist(\partial\Omega,\Gamma_t)>3\delta$ for every $t\in [0,T_0]$. Every $x\in\Gamma_t(3\delta)$ has a unique representation
\begin{equation*}
x=P_{\Gamma_t}(x)+r\no_{\Gamma_t}(P_{\Gamma_t}(x))
\end{equation*}
 where $r=\sdist(\Gamma_t,x)$. Here
\begin{equation*}
  d_{\G}(x,t):=\sdist (\Gamma_t,x)=
  \begin{cases}
    \dist(\Omega^-(t),x) &\text{if } x\not \in \Omega^-(t),\\
    -\dist(\Omega^+(t),x) &\text{if } x \in \Omega^-(t).
  \end{cases}
\end{equation*}
For the following we define for $\delta'\in (0,3\delta]$
\begin{equation*}
  \Gamma(\delta') =\bigcup_{t\in [0,T_0]} \Gamma_t(\delta') \times\{t\}.
\end{equation*}
 Throughout this contribution we will often use
\begin{equation*}
  \int_{\Gamma_t(\delta')} f(x)\,\sd x = \int_{-\delta'}^{\delta'}\int_{\Gamma_t} f(p+r\no_{\Gamma_t}(p))J(r,p,t)\sd \sigma(p)\sd r
\end{equation*}
for any $\delta'\in (0,3\delta]$, where $J\colon (-3\delta,3\delta)\times \Gamma \to (0,\infty)$ is a smooth function depending on $\Gamma$.

We introduce new coordinates in  $\Gamma(3\delta)$ which we denote by
\begin{equation*}
  X\colon  (-3\delta, 3\delta)\times \T^1 \times [0,T_0]\mapsto \Gamma(3\delta)~\text{by}~  X(r,s,t):= X_0(s,t)+r\no(s,t),
\end{equation*}
where
\begin{equation}\label{yuning:1.42}
  r=\sdist(\Gamma_t,x), \qquad s= X_{0}^{-1}(P_{\Gamma_t}(x),t)=: S(x,t).
\end{equation}
Differentiating the identity
\begin{equation*}
  d_\Gamma(X_0(s,t)+r\no(s,t),t)=r,
\end{equation*}
one obtains
  \begin{equation}\label{yuning:1.26}
    \nabla d_{\G}(x,t)=\no_{\Gamma_t} (P_{\Gamma_t}(x)),~  \partial_t d_{\G}(x,t)=-V_{\Gamma_t} (P_{\Gamma_t}(x)),~\Delta d_\Gamma(q,t)=-H_{\Gamma_t}(q)
\end{equation}
for all $(x,t)\in \Gamma(3\delta)$, $(q,t)\in\Gamma$, resp., cf. Chen et al.~\cite[Section~4.1]{ChenHilhorstLogak}.
 Moreover, we define
\begin{equation}\label{yuning:1.50}
\partial_{\btau} u(x,t):= \btau(S(x,t),t)u(x,t),\quad   \nabla_\btau u(x,t):= \partial_{\btau} u(x,t)\btau(S(x,t),t)\quad
 \end{equation}
for all $(x,t)\in \Gamma(3\delta)$. 

In the following we associate a function $\phi(x,t)$ to $\tilde{\phi}(r,s,t)$ such that
 \begin{equation*}
   \phi(x,t)=\tilde{\phi}(d_{\G}(x,t),S(x,t),t)\quad\text{or}\quad\phi(X_0(s,t)+r\no(s,t),t)=\tilde{\phi}(r,s,t).
 \end{equation*}
 Then it follows from chain rule together with \eqref{yuning:1.26} that
\begin{equation}\label{Prelim:1.13}
  \begin{split}
    \partial_t \phi(x,t) &= -V_{\Gamma_t} (P_{\Gamma_t}(x)) \partial_r\tilde{\phi}(r,s,t) + \partial_{t}^\Gamma \tilde{\phi}(r,s,t) \\
  \nabla \phi(x,t) &= \no_{\Gamma_t} (P_{\Gamma_t}(x)) \partial_r\tilde{\phi}(r,s,t) + \nabla^ \Gamma  \tilde{\phi}(r,s,t) \\
 \Delta \phi(x,t) &= \partial_r^2\tilde{\phi}(r,s,t) + \Delta d_{\G_t}(x) \partial_r\tilde{\phi}(r,s,t) +  \Delta^{\Gamma} \tilde{\phi}(r,s,t),
  \end{split}
\end{equation}
where $r,s$ are as in \eqref{yuning:1.42} and we used the notation
\begin{equation}\label{Prelim:1.12}
  \begin{split}
    \partial_{t}^\Gamma \tilde{\phi}(r,s,t) &= \partial_t \tilde{\phi}(r,s,t) + \partial_t S(x,t)\partial_s \tilde{\phi}(r,s,t),\\
\nabla^{\Gamma} \tilde{\phi}(r,s,t) &= (\nabla S)(x,t)\partial_s \tilde{\phi}(r,s,t),\\
\Delta^{\Gamma} \tilde{\phi}(r,s,t) &= |(\nabla S)(x,t)|^2\partial_s^2 \tilde{\phi}(r,s,t)+(\Delta S)(x,t)\partial_s \tilde{\phi}(r,s,t),
  \end{split}
\end{equation}
 cf. \cite[Section~4.1]{ChenHilhorstLogak} for more details.
In \eqref{Prelim:1.12} $x$ is understood via $x=\no(s,t)r+X_0(s,t)$.
It can be seen from \eqref{Prelim:1.12} that, if $g\colon \T^1\times [0,T_0]\to \R$ depends only on $(s,t)$, then $\nabla^\G g$ will be a function of $(r,s,t)$ instead of merely depending on the surface coordinate $s$ and $t$:
\begin{equation}\label{yuning:1.46}
  \nabla^\G g(r,s,t)=(\nabla S)(x,t)\partial_s g(s,t), \qquad \text{where }x=X(r,s,t).
\end{equation}
We note that, since $\Gamma$ is given and smooth, we have
$
   |\nabla^\G g|\leq C|\p_s g|.
$
 Moreover, differentiating
  \begin{equation*}
    s= S(X_0(x,t)+ r\no(s,t),t)\qquad \text{for all }s\in \T^1, r\in(-3\delta,3\delta)
  \end{equation*}
  with respect to $r$
  we obtain $\nabla S(x,t)\cdot \no(s,t)=0$ for all $(x,t)\in \Gamma(3\delta)$, $s=S(x,t)$.
Motivated by this, we  define for every $h\colon \T^1\times [0,T_0]\to \R$
\begin{equation}\label{yuning:1.27}
\begin{split}
  (\nabla_\G h)(s,t):=(\nabla^\Gamma h)(0,s,t),\\
  (\Delta_\G h)(s,t):=(\Delta^\G h)(0,s,t),\\
  (D_t h)(s,t):=(\p_t^\G h)(0,s,t),
\end{split}
\end{equation}
which are the restrictions to $r=0$ of the operators above,
and we define the differences
\begin{equation}\label{Prelim:1.11}
\begin{split}
  (\Lgrad h)(r,s,t):=(\nabla^\Gamma h)(r,s,t)-(\nabla_\Gamma h)(s,t),\\
   (\Ldelta h)(r,s,t):=(\Delta^\Gamma h)(r,s,t)-(\Delta_\Gamma h)(s,t),\\
  (\Lt h)(r,s,t):=(\p_t^\G h)(r,s,t)-(D_t h)(s,t).
 \end{split}
\end{equation}
So the coefficients of the latter operators vanish for $r=0$, which corresponds to $x\in\Gamma_t$.

Throughout this contribution we will frequently use that, if $a\colon \Gamma(3\delta)\to \R$ is smooth in normal direction and vanishes on $\Gamma$, then $\tilde{a}\colon \Gamma(3\delta)\to \R$ with
  \begin{equation*}
    \tilde{a}(x,t)=
    \begin{cases}
      \frac{a(x,t)}{d_\Gamma(x,t)} &\text{if }(x,t)\in \Gamma(3\delta)\setminus \Gamma,\\
      \partial_{\no} a(x,t) &\text{if }(x,t)\in \Gamma
    \end{cases}
  \end{equation*}
  is smooth in normal direction as well. Moreover, regularity in tangential directions is preserved. In particular $\tilde{a}$ is smooth if $a$ is smooth. These statements can be easily proved with the aid of a Taylor expansion with respect to $d_\Gamma$.

Finally, we denote
\begin{alignat*}{1}
 (X_0^\ast u)(s,t)&:= u(X_0(s,t),t) \qquad \text{for all }s\in\T^1,t\in[0,T_0],\\
  (X_0^{\ast,-1} v)(p,t) & := v(X_0^{-1}(p,t),t) \qquad \text{for all }(p,t)\in\Gamma
\end{alignat*}
 if $u\colon \Gamma\to \R^N$  and $v\colon \T^{1}\times [0,T_0]\to \R^N$  for some $N\in\N$.

\subsection{Tangential Differential Operators}
In view of \eqref{yuning:1.50}, we can express the tangential derivative in the following way
\begin{equation}\label{yuning:1.49}
  \nabla_\btau=(I-\no(S(\cdot),\cdot)\otimes\no(S(\cdot),\cdot))\nabla.
\end{equation}
 Then we have 
 \begin{alignat}{1}\nonumber
      [\partial_\no, \nabla_\btau] g 
  &:=\p_\no(  (I-\no\otimes \no) \nabla g)  - (I-\no\otimes \no) \nabla(\p_\no g)\\\nonumber
  &=  (I-\no\otimes \no) \p_\no\nabla g   - (I-\no\otimes \no) \nabla(\no\cdot\nabla g)\\\label{yuning:1.22}
   &=\sum_{j=1}^2((I-\no\otimes \no)\nabla \no_j)\partial_{x_j} g = \btau (\partial_{\btau} \no\cdot \nabla g).
  \end{alignat}
 This shows that the commutator  $[\p_\no,\nabla_\btau]$ is a tangential differential operator.
  For the sake of integrating by parts for functions defined near the interface, we need the  formula
  \begin{equation*}
 \begin{split}
    &\Div (I-\no(S(\cdot),\cdot)\otimes\no(S(\cdot),\cdot))\\
    =&- \underbrace{\Div (\no(S(\cdot),\cdot))}_{=:-\kappa}\no(S(\cdot),\cdot)- \no(S(\cdot))\cdot \nabla \no(S(\cdot),\cdot)= \kappa \no(S(\cdot),\cdot).
 \end{split}
\end{equation*}
This together with  Gau\ss' Theorem implies the following lemma:
\begin{lem}
Let $t\in [0,T_0]$.
For any  $u\in H^1_0(\Gamma_t(2\delta))$, $\ve\in H^1_0(\Gamma_t(2\delta))^2$ we have
   \begin{equation}\label{yuning:1.23}
   \int_{\Gamma_t(2\delta)} u \Div_\btau  \ve\sd x= - \int_{\Gamma_t(2\delta)}  \nabla_\btau u\cdot \ve\sd x - \int_{\Gamma_t(2\delta)} \kappa \no\cdot \ve u\sd x
 \end{equation}
 where $\nabla_\btau$ is defined by \eqref{yuning:1.49} and
  $\kappa=-\Div \no(S(\cdot),\cdot)$.
\end{lem}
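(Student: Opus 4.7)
The plan is to read off the identity directly from the vector-calculus computation that precedes the lemma in the excerpt. First I interpret the tangential divergence as $\Div_\btau \ve = \Tr\!\bigl((I-\no\otimes\no)\nabla \ve\bigr)$, i.e.\ the full trace of the matrix product of the tangential projector and the Jacobian. This is the natural partner to $\nabla_\btau u = (I-\no\otimes\no)\nabla u$ and it is what makes the divergence theorem close.

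Next I would apply the Leibniz rule to the product $(I-\no\otimes\no)\ve$, treating $P:=I-\no(S(\cdot),\cdot)\otimes\no(S(\cdot),\cdot)$ as a matrix field:
\begin{equation*}
\Div\bigl(P\ve\bigr) \;=\; (\Div P)\cdot\ve \;+\; \Tr\bigl(P\nabla\ve\bigr)
\;=\; \kappa\,\no\cdot\ve \;+\; \Div_\btau\ve,
\end{equation*}
where I invoke the identity $\Div P = \kappa \no$ that was established immediately before the lemma statement. Rearranging, $\Div_\btau \ve = \Div(P\ve) - \kappa\, \no\cdot \ve$.

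Now multiply by $u\in H^1_0(\Gamma_t(2\delta))$ and integrate over $\Gamma_t(2\delta)$. The $\kappa$-term is already in the form of the second term on the right-hand side of the claimed identity. For the remaining term I apply the classical divergence theorem (or equivalently integration by parts component by component):
\begin{equation*}
\int_{\Gamma_t(2\delta)} u\,\Div(P\ve)\,\sd x \;=\; -\int_{\Gamma_t(2\delta)} \nabla u\cdot(P\ve)\,\sd x,
\end{equation*}
with no boundary contribution because $u$ vanishes on $\partial\Gamma_t(2\delta)$. Using the symmetry $P^T=P$, I move $P$ onto the gradient of $u$:
\begin{equation*}
\nabla u\cdot(P\ve) \;=\; (P\nabla u)\cdot\ve \;=\; \nabla_\btau u\cdot\ve,
\end{equation*}
which is exactly the first term on the right-hand side. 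Combining the two pieces yields \eqref{yuning:1.23}.

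There is no real obstacle here: the only subtlety is the identification $\Div P = \kappa\no$, but that has already been carried out in the preceding display, and the rest is an application of the product rule, the Gauss theorem for $H^1_0$ functions, and the symmetry of the projector onto the tangent line. The standard density argument (smooth compactly supported fields are dense in $H^1_0(\Gamma_t(2\delta))$) extends the identity from the smooth case, for which the manipulations above are justified pointwise, to all admissible $u,\ve$ by continuity of both sides in the $H^1$-topology.
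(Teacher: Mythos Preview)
Your argument is correct and is exactly the approach the paper has in mind: the paper merely says ``This together with Gau\ss' Theorem implies the following lemma,'' and you have spelled out precisely that computation---using $\Div P=\kappa\no$, the Leibniz rule for $\Div(P\ve)$, integration by parts with $u\in H^1_0$, and the symmetry of $P$ to identify $P\nabla u=\nabla_\btau u$.
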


\subsection{The Stretched Variable}
In the sequel,  for given functions $h_1,h_2\colon \T^1\times [0,T_0]\to \R$, we shall define the stretched variable $\rho$ by
\begin{equation}\label{yuningliu:stretched}
  {\rho(x,t)=\tfrac{d_\G(x,t)}{\eps}-h_1(S(x,t),t)-\eps h_2(S(x,t),t)}.
\end{equation}
Moreover, we denote $h_\eps:=h_1+\eps h_2$.
As in \cite[Section~4.2]{ChenHilhorstLogak} we consider the Taylor expansion of $\Delta d_\Gamma$ in the normal direction and obtain
    \begin{equation}\label{yuning:ex1}
      \begin{split}
        \Delta d_{\Gamma}(x,t)&=-H_{\Gamma_t}(s)-\eps(\rho+h_\eps(s,t))\kappa_1(s,t)\\
        &+ \eps^2 \kappa_2(s,t)(\rho+h_\eps(s,t))^2+\eps^3 \kappa_{3,\eps}(\rho,s,t),
      \end{split}
    \end{equation}
where $s$ is understood via \eqref{yuning:1.42} and $$\kappa_1(s,t)=-\nabla d_{\Gamma}(X_0(s,t))\cdot\nabla\Delta d_{\Gamma}(X_0(s,t))=H(s,t)^2,$$ $\kappa_2(s,t)$ is smooth
and $\kappa_{3,\eps}$ is a smooth function satisfying
\begin{equation}\label{yuning:1.44}
  |\kappa_{3,\eps}(\rho,s,t)|\leq C|\rho+h_\eps(s,t)|^3\qquad \text{for all }\rho\in \R, s\in \mathbb{T}^1, t\in [0,T_0], \eps \in  (0,1).
\end{equation}
The following lemma is due to the chain rule and \eqref{Prelim:1.13}, cf. \cite[Section~4.2]{ChenHilhorstLogak}:
\begin{lem}
Let ${\hat{w}}\colon \R\times \T^1\times [0,T_0]\to \R$ be sufficiently smooth and let
\begin{equation*}
w(x,t)={\hat{w}}\(\tfrac{d_\G(x,t)}\eps-h_\eps(S(x,t),t),S(x,t),t\) \quad \text{for all }(x,t)\in \Gamma(2\delta).
\end{equation*}
Then for each $\eps>0$
  \begin{equation}\label{yuningliu:formula1}
  \begin{split}
    \p_t w(x,t)=&-\(\tfrac{V_{\Gamma_t} (P_{\Gamma_t}(x))}\eps +\p_t^\Gamma h_\eps(r,s,t)\)\p_\rho {\hat{w}}(\rho,s,t) +\p_t^\Gamma {\hat{w}}(r,\rho,s,t)\\
    \nabla w(x,t)=& \(\tfrac{\no_{\Gamma_t} (P_{\Gamma_t}(x))}  \eps -\nabla^\Gamma h_\eps(r,s,t)\)\p_\rho{\hat{w}}(\rho,s,t) +\nabla^\Gamma{\hat{w}}(r,\rho,s,t)\\
    \Delta w(x,t)=& (\eps^{-2}+|\nabla^\Gamma h_\eps(r,s,t)|^2) \p^2_\rho{\hat{w}}(\rho,s,t)\\
    &+\(\eps^{-1}\Delta d_\Gamma (x,t) -\Delta^\Gamma h_\eps(r,s,t)\)\p_\rho{\hat{w}}(\rho,s,t)\\
    &-2\nabla^\Gamma h_\eps(r,s,t)\cdot\nabla^\Gamma \p_\rho{\hat{w}}(r,\rho,s,t)+\Delta^\Gamma {\hat{w}}(r,\rho,s,t),
  \end{split}
\end{equation}
where $\rho$ is as in \eqref{yuningliu:stretched} and $(r,s)$ is understood via \eqref{yuning:1.42}.
\end{lem}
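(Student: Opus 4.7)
The formulas are a straight application of the chain rule once we notice that $w = \hat w(\rho, S, t)$ with $\rho(x,t) = d_\Gamma(x,t)/\eps - h_\eps(S(x,t),t)$, and combine it with the identities already recorded in \eqref{yuning:1.26} and the definitions \eqref{Prelim:1.12}. The first step is therefore to compute $\partial_t\rho$, $\nabla\rho$, $\Delta\rho$: using $\partial_t d_\Gamma = -V_{\Gamma_t}$, $\nabla d_\Gamma = \no_{\Gamma_t}$ and rewriting the derivatives of $h_\eps(S(x,t),t)$ by means of the operators $\partial_t^\Gamma$, $\nabla^\Gamma$, $\Delta^\Gamma$ introduced in \eqref{Prelim:1.12}, one obtains
\begin{equation*}
\partial_t\rho = -\tfrac{V_{\Gamma_t}}{\eps}-\partial_t^\Gamma h_\eps,\quad \nabla\rho = \tfrac{\no_{\Gamma_t}}{\eps}-\nabla^\Gamma h_\eps,\quad \Delta\rho = \tfrac{\Delta d_\Gamma}{\eps}-\Delta^\Gamma h_\eps.
\end{equation*}

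For the first two formulas I would then simply apply the chain rule, writing $\partial_t w = \partial_\rho\hat w\,\partial_t\rho + \partial_s\hat w\,\partial_t S + \partial_t\hat w$ and likewise $\nabla w = \partial_\rho\hat w\,\nabla\rho + \partial_s\hat w\,\nabla S$; by \eqref{Prelim:1.12} the $S$-derivative contributions assemble precisely into $\partial_t^\Gamma\hat w$ and $\nabla^\Gamma\hat w$, yielding the stated identities.

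The Laplacian is the only step requiring care. Expanding $\Delta w = \mathrm{div}(\nabla w)$ produces
\begin{equation*}
\Delta w = |\nabla\rho|^2\partial_\rho^2\hat w + 2(\nabla\rho\cdot\nabla S)\,\partial_\rho\partial_s\hat w + |\nabla S|^2\partial_s^2\hat w + \partial_\rho\hat w\,\Delta\rho + \partial_s\hat w\,\Delta S.
\end{equation*}
Here the decisive fact, already highlighted in the excerpt, is the orthogonality $\nabla S\cdot\no = 0$. It gives the two simplifications $\no\cdot\nabla^\Gamma h_\eps = 0$ and consequently $|\nabla\rho|^2 = \eps^{-2}+|\nabla^\Gamma h_\eps|^2$, together with $\nabla\rho\cdot\nabla S = -\nabla^\Gamma h_\eps\cdot\nabla S = -(\partial_s h_\eps)|\nabla S|^2$. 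Since $\nabla^\Gamma(\partial_\rho\hat w) = (\partial_\rho\partial_s\hat w)\nabla S$, the mixed term reorganizes as $2(\nabla\rho\cdot\nabla S)\partial_\rho\partial_s\hat w = -2\nabla^\Gamma h_\eps\cdot\nabla^\Gamma\partial_\rho\hat w$. The last two summands combine into $\Delta^\Gamma\hat w$ by the very definition \eqref{Prelim:1.12}, and substituting the expression for $\Delta\rho$ gives the stated formula.

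The only nontrivial point is the bookkeeping in the Laplacian step: one must remember that $\hat w$ has three arguments so that $\nabla^\Gamma\hat w$ and $\nabla^\Gamma\partial_\rho\hat w$ are both well-defined via \eqref{Prelim:1.12}, and one must exploit $\nabla S\cdot\no = 0$ twice — once to kill the $O(\eps^{-1})$ cross term in $|\nabla\rho|^2$, and once to identify the remaining cross term with $\nabla^\Gamma h_\eps\cdot\nabla^\Gamma\partial_\rho\hat w$. Everything else is purely mechanical differentiation, and no compactness, regularity, or limiting arguments are needed beyond smoothness of $\hat w$, $h_\eps$, and the tubular-neighborhood coordinates.
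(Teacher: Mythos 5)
Your computation is correct and matches the approach the paper points to: it simply unpacks the chain rule for $w = \hat w(\rho, S, t)$ using $\partial_t d_\Gamma = -V_{\Gamma_t}$, $\nabla d_\Gamma = \no_{\Gamma_t}$, the orthogonality $\nabla S\cdot\no = 0$, and the definitions \eqref{Prelim:1.12} of $\partial_t^\Gamma$, $\nabla^\Gamma$, $\Delta^\Gamma$. The paper itself defers the verification to Chen--Hilhorst--Logak and to \eqref{Prelim:1.13}, so your direct chain-rule calculation is exactly the intended argument, merely written out in full.
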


\subsection{A Result from ODE-Theory}

The following result on solvability of the linearized equation for the optimal profile $\theta_0$ will be essential for the asymptotic expansion. The proof can be found in \cite[Lemma 3]{ChenHilhorstLogak}.
\begin{prop}\label{yuning:solveode}
  Assume that $g\colon \R\times\T^1\times [0,T_0]\rightarrow\R$ and $g^\pm\colon \T^1\times [0,T_0]\rightarrow\R $ are smooth 
 and for some $i\in\N_0$, $a>0$ satisfying
  \begin{equation*}
    \sup_{(s,t)\in \T^1\times [0,T_0]}\left|\p_\rho^k\partial_s^l \p_t^m[g(\rho,s,t)-g^\pm(s,t)]\right|\leq C_{k,l,m}(1+|\rho|)^ie^{-a|\rho|}\quad \text{for all } \rho\gtrless 0.
  \end{equation*}
  for all $k,l,m\in \mathbb{N}_0$ 
  and some $C_{k,l,m}>0$. Then for given $(s,t)\in \T^1\times [0,T_0]$ the ODE
  \begin{equation*}
    -\p^2_\rho u+f''(\theta_0)u=g(\cdot ,s,t)~\text{in}~\mathbb{R},\quad u(0,s,t)=0
  \end{equation*}
  has a unique bounded solution $u(\cdot,s,t)$ if and only if
  \begin{equation}\label{lem:CompCondODE}
    ~\int_\mathbb{R}g(\rho,s,t)\theta'_0(\rho) \sd \rho=0. 
  \end{equation}
  If the solution exists for all $(s,t)\in \T^1\times [0,T_0]$, then for all $(k,l,m)\in \mathbb{N}^3$ 
  there is some $C_{k,l,m}$ such that
   \begin{equation}\label{yuning:matching1}
    \sup_{(s,t)\in \T^1\times [0,T_0]}\left|\p_\rho^k\partial_s^l \p_t^m\left(u(\rho,s,t)-\frac{g^\pm(s,t)}{f''(\pm 1)}\right)\right|\leq C_{k,l,m}(1+|\rho|)^ie^{-a|\rho|}\quad\text{for all }\rho\gtrless 0.
  \end{equation}
\end{prop}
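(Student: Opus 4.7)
The plan is to treat $L := -\partial_\rho^2 + f''(\theta_0(\cdot))$ as a one-dimensional Sturm--Liouville operator on $\mathbb{R}$. Differentiating \eqref{eq:OptProfile1} gives $L\theta_0' = 0$, and since $\theta_0'(\rho) > 0$ with exponential decay $e^{-\alpha|\rho|}$ by \eqref{yuning:decayopti}, the bounded nullspace of $L$ is exactly $\mathrm{span}\{\theta_0'\}$. By reduction of order I would construct a second homogeneous solution $\psi(\rho) := \theta_0'(\rho)\int_0^\rho \theta_0'(\sigma)^{-2}\,d\sigma$ with Wronskian $W(\theta_0',\psi)\equiv 1$ and growth $e^{\alpha|\rho|}$ at $\pm\infty$. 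Variation of parameters then yields, for fixed $(s,t)$, the general solution
$$
u(\rho) = -\theta_0'(\rho)\int_0^\rho \psi(\sigma)g(\sigma,s,t)\,d\sigma + \psi(\rho)\int_0^\rho \theta_0'(\sigma)g(\sigma,s,t)\,d\sigma + C\,\theta_0'(\rho).
$$
The coefficient of the growing mode $\psi$ as $\rho\to +\infty$ equals $\int_0^\infty \theta_0'g\,d\sigma$ and as $\rho\to -\infty$ equals $-\int_{-\infty}^0\theta_0'g\,d\sigma$, so boundedness at both ends is equivalent to the Fredholm condition \eqref{lem:CompCondODE}. Under this condition the solution is unique up to addition of $C\theta_0'$, and since $\theta_0'(0)\neq 0$ the normalisation $u(0,s,t)=0$ fixes $C$ uniquely.

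For the asymptotic estimate \eqref{yuning:matching1} I would subtract off the limiting constants: put $v(\rho) := u(\rho,s,t) - g^\pm(s,t)/f''(\pm 1)$ separately for $\rho\gtrless 0$. A direct computation using $L(\mathrm{const}) = f''(\theta_0)\cdot\mathrm{const}$ gives
$$
Lv = \bigl(g(\cdot,s,t) - g^\pm(s,t)\bigr) + \frac{g^\pm(s,t)}{f''(\pm 1)}\bigl(f''(\pm 1) - f''(\theta_0(\rho))\bigr)
$$
on the corresponding half-line. The first summand is bounded by $C(1+|\rho|)^i e^{-a|\rho|}$ by hypothesis, and the second inherits the exponential decay of $\theta_0\mp 1$ from \eqref{yuning:decayopti} through a Taylor expansion of $f''$. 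Since $v$ is bounded and $f''(\theta_0)$ converges to the positive limit $f''(\pm 1)$, the decay of $Lv$ transfers to exponential decay of $v$ at rate $a$, which I would obtain either by re-inserting the decay bound into the variation-of-parameters formula (observing that once the growing-mode integrals have been cancelled by the compatibility condition their tails are controlled by the prescribed exponential weight) or, more robustly, by a sub-/super-solution barrier argument with comparison function $(1+|\rho|)^i e^{-a'|\rho|}$.

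For the tangential and temporal components of \eqref{yuning:matching1}, the crucial point is that $L$ is independent of $(s,t)$, so differentiation yields $L(\partial_s^l \partial_t^m u) = \partial_s^l \partial_t^m g$, whose right-hand side satisfies the same hypothesis with $g^\pm$ replaced by $\partial_s^l \partial_t^m g^\pm$; the compatibility condition is preserved under differentiation under the integral sign, and $u(0,s,t)=0$ differentiates to zero. An induction on $k+l+m$, together with the algebraic identity $\partial_\rho^2 u = f''(\theta_0) u - g$ to handle normal derivatives, then propagates the bounds to all multi-indices. The main obstacle is transferring exponential decay from $Lv$ to $v$ while preserving both the exponent $a$ and the polynomial prefactor $(1+|\rho|)^i$; this is accomplished by the barrier argument, and since the conclusion only requires some fixed $a>0$ one may always shrink $a$ below $\alpha = \min(\sqrt{f''(-1)},\sqrt{f''(1)})$ to stay well inside the positive definiteness regime of the potential.
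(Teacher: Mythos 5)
The paper does not prove this proposition; it cites \cite[Lemma~3]{ChenHilhorstLogak} and uses the statement as a black box, so there is no internal argument to compare yours against. Your route via variation of parameters is the natural classical one and very likely parallels the cited proof.

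There are, however, two concrete gaps. First, your general solution formula carries only the free constant $C\theta_0'$, but the kernel of the ODE is two-dimensional; you need $C\theta_0' + D\psi$. Without the $D\psi$ term, boundedness as $\rho\to+\infty$ forces the coefficient $\int_0^\rho\theta_0'g\,\sd\sigma$ of $\psi$ to tend to $0$, giving $\int_0^\infty\theta_0'g\,\sd\sigma = 0$, and boundedness as $\rho\to-\infty$ separately forces $\int_{-\infty}^0\theta_0'g\,\sd\sigma = 0$. That is two independent constraints, not the single condition \eqref{lem:CompCondODE}. Restoring $D$ lets you absorb one limit, and then the two requirements on $D$ are compatible precisely when $\int_\R\theta_0'g\,\sd\rho = 0$, which is what the Fredholm alternative actually says. (There is also a sign issue against the convention $W(\theta_0',\psi)=1$: with that Wronskian the particular solution should be $\theta_0'\int_0^\rho\psi g\,\sd\sigma - \psi\int_0^\rho\theta_0'g\,\sd\sigma$; but this is cosmetic.)

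Second, the closing remark that one may ``shrink $a$ below $\alpha$'' misreads the statement: the conclusion \eqref{yuning:matching1} asserts decay at the \emph{same} rate $a$ and with the \emph{same} polynomial degree $i$ as the hypothesis, not merely at some $a'>0$. The decay transfer from $Lv$ to $v$ (via the Green's function of $-\partial_\rho^2+f''(\pm1)$ on the half-lines, or via a barrier) returns the rate $a$ only when $a<\sqrt{f''(\pm1)}$ on the corresponding side; at the critical rate one picks up an extra polynomial factor, and above it the rate saturates at $\sqrt{f''(\pm 1)}$. So the proof needs the standing assumption $a<\min(\sqrt{f''(-1)},\sqrt{f''(1)})$ made explicit. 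This is consistent with the paper's usage, where $\alpha$ in \eqref{yuning:decayopti} is taken strictly below $\min(\sqrt{f''(-1)},\sqrt{f''(1)})$ and the proposition is applied only with $a=\alpha$; but the restriction must be stated, not dismissed by weakening the conclusion.
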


\subsection{Remainder Terms}

In the sequel we shall use for fixed $t\in[0,T_0]$ and $1\leq p< \infty$
\begin{alignat*}{1}
  L^{p,\infty}(\Gamma_t(2\delta))&:=\left\{ f\colon \Gamma_t(2\delta)\to \R \text{ measurable }:\|f\|_{L^{p,\infty}(\Gamma_t(2\delta))}<\infty \right\},\quad \text{where }\\
  \|f\|_{L^{p,\infty}(\Gamma_t(2\delta))}&:=\left(\int_{\T^1}\operatorname{ess\, sup}_{|r|\leq 2\delta } |f(X_0(s,t)+r\no(s,t))|^p \sd s\right)^{\frac1p}.
\end{alignat*}
Moreover, the standard $L^p$-Sobolev space of order $m\in\N_0$ on an open set $U\subseteq\R^N$ will be denoted by $W^m_p(U)$ and $L^p(U)$ is the usual Lebesgue space with respect to the Lebesgue measure. Furthermore, $H^s(U)$ denotes the $L^2$-Sobolev space of order $s\in\R$ and $H^s_0(U)$ is the closure of $C_0^\infty(U)$ in $H^s(U)$. The $X$-valued variants are denoted by $W^m_p(U;X)$, $L^p(U;X)$, and $H^s(U;X)$, respectively.
We note that
\begin{equation} \label{yuning:2.01}
  H^1(\Gamma_t(2\delta))\hookrightarrow L^{4,\infty}(\Gamma_t(2\delta)),
\end{equation}
which follows from the interpolation inequality
\begin{equation*}
  \|f\|_{L^\infty(-2\delta,2\delta)}\leq C\|f\|_{L^2(-2\delta,2\delta)}^{\frac12}\|f\|_{H^1(-2\delta,2\delta)}^{\frac12}\quad \text{for }f\in H^1(-2\delta,2\delta).
\end{equation*}

The following lemma provides a first useful general estimate of typical remainder terms.
\begin{lem}\label{yuning:1.21}
  Let  $g\colon\T^1\times [0,T_0]\to \R$ be continuous. There is some $C>0$, independent of $g$, such that
  \begin{equation*}
    \left\|\eta(\tfrac {d_{\G}(\cdot,t)}\eps-g(S(\cdot,t),t))u\psi \right\|_{L^1( \G_t(2\delta))}\leq C \eps^{\frac 12}\|\eta\|_{L^2(\R)} \|u\|_{L^2(\G_t(2\delta))}\|\psi\|_{L^{2,\infty}(\G_t(2\delta))}
  \end{equation*}
  holds for any $u \in L^2(\G_t(2\delta)), \psi\in L^{2,\infty}(\G_t(2\delta))$, continuous $\eta\in L^2(\R)$, and $\eps\in (0,1)$.
\end{lem}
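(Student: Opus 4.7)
The plan is to use the tubular neighborhood coordinates $(r,s)\mapsto X_0(s,t)+r\no(s,t)$ introduced in Section~\ref{subsec:Coordinates} to convert the integral over $\Gamma_t(2\delta)$ into an iterated integral over $(-2\delta,2\delta)\times \T^1$, then apply Cauchy--Schwarz twice together with a rescaling in the normal direction that produces the factor $\eps^{1/2}$. The smoothness of $\Gamma$ guarantees that the Jacobian $J(r,s,t)$ is bounded from above and below by positive constants independent of $t\in[0,T_0]$ and $r\in(-2\delta,2\delta)$, which is what makes equivalence of $\int\cdots J\,\sd r\,\sd s$ with the $L^2$-norm usable.

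First I would write, for fixed $t$,
\begin{equation*}
\left\|\eta(\tfrac{d_\G(\cdot,t)}\eps-g(S(\cdot,t),t))u\psi\right\|_{L^1(\Gamma_t(2\delta))}
=\int_{\T^1}\!\int_{-2\delta}^{2\delta}\!\bigl|\eta(\tfrac{r}\eps-g(s,t))\bigr|\,|\tilde u(r,s)|\,|\tilde\psi(r,s)|\,J(r,s,t)\,\sd r\,\sd s,
\end{equation*}
where $\tilde u(r,s)=u(X(r,s,t))$ and similarly for $\tilde\psi$. Then for each $s\in\T^1$ I would pull out the essential supremum in $r$ of $|\tilde\psi(\cdot,s)|$, which is pointwise bounded by the quantity whose $L^2(\T^1)$-norm equals $\|\psi\|_{L^{2,\infty}(\Gamma_t(2\delta))}$.

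Next, for fixed $s$, I apply the Cauchy--Schwarz inequality in the $r$-variable to the remaining factor:
\begin{equation*}
\int_{-2\delta}^{2\delta}\!\bigl|\eta(\tfrac{r}\eps-g(s,t))\bigr|\,|\tilde u(r,s)|\,J\,\sd r
\leq\Bigl(\int_{-2\delta}^{2\delta}\!\bigl|\eta(\tfrac{r}\eps-g(s,t))\bigr|^2\sd r\Bigr)^{1/2}\Bigl(\int_{-2\delta}^{2\delta}\!|\tilde u(r,s)|^2 J^2\,\sd r\Bigr)^{1/2}.
\end{equation*}
The change of variables $\rho=r/\eps-g(s,t)$, $\sd\rho=\sd r/\eps$, shows that the first factor is bounded by $\eps^{1/2}\|\eta\|_{L^2(\R)}$ uniformly in $s$ and $t$ (and independently of $g$, since the shift by $g(s,t)$ only translates the domain of integration inside $\R$). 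Using that $J$ is bounded above, I replace $J^2$ by $CJ$ in the second factor.

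Finally I apply Cauchy--Schwarz in $s\in\T^1$ to the outer integral to split the $\tilde\psi$-factor from the $u$-factor:
\begin{equation*}
\int_{\T^1}\!\bigl(\operatorname{ess\,sup}_{|r|\leq 2\delta}|\tilde\psi(r,s)|\bigr)\Bigl(\int_{-2\delta}^{2\delta}|\tilde u(r,s)|^2 J\,\sd r\Bigr)^{1/2}\sd s
\leq \|\psi\|_{L^{2,\infty}(\Gamma_t(2\delta))}\,\|u\|_{L^2(\Gamma_t(2\delta))},
\end{equation*}
upon identifying the second factor with $\|u\|_{L^2(\Gamma_t(2\delta))}$ via the coarea-type identity $\int_{\Gamma_t(2\delta)}|u|^2\,\sd x=\int_{\T^1}\!\int_{-2\delta}^{2\delta}|\tilde u|^2 J\,\sd r\,\sd s$. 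Combining all three estimates yields exactly the claimed bound with a constant $C$ depending only on $\sup J$ and $\inf J$, hence only on $\Gamma$ and $\delta$. There is no real obstacle: the only minor care points are keeping the translation $g(s,t)$ inside the $L^2(\R)$-norm of $\eta$ (which it leaves invariant) and absorbing $J$ into the $L^2$-norm on $\Gamma_t(2\delta)$ via its uniform two-sided bounds.
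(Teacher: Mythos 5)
Your proposal is correct and follows essentially the same route as the paper's proof: change to tubular coordinates $(r,s)$, pull out the $L^\infty_r$-supremum of $\psi$, apply Cauchy--Schwarz in $r$ with the change of variables $\rho = r/\eps - g(s,t)$ to produce the $\eps^{1/2}\|\eta\|_{L^2(\R)}$ factor, then Cauchy--Schwarz in $s$. The only cosmetic difference is where you absorb the Jacobian ($J^2 \leq CJ$ versus $|J| \leq C$), which does not change the argument.
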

\begin{proof}
For fixed $t\in [0,T_0]$ we perform the change of variables $(x,t)\mapsto (s,r,t)$ and get
  \begin{equation*}
    \begin{split}
      &\int_{\G_t(2\delta)}\left|\eta(\tfrac {d_{\G}(x,t)}\eps-g(S(x,t),t))u(x)\psi(x) \right| \sd x
      \\
      &=\int_{\T^1}\int_{-2\delta}^{2\delta}\left|\eta(\tfrac r\eps-g(s,t)) u(X(r,s,t))\psi(X(r,s,t))\right|J_t(r,s) \sd r\sd s.
    \end{split}
  \end{equation*}
where $J_t(r,s)$ denotes the square root of the Gram determinant. Since $|J_t(r,s)|\leq C$,
    \begin{equation*}
    \begin{split}
      &  \left\|\eta(\tfrac {d_{\G}(\cdot,t)}\eps-g(S(\cdot,t),t))u\psi \right\|_{L^1( \G_t(2\delta))}\\
      &\leq C\int_{\T^1}\|\eta(\tfrac\cdot\eps-g(s,t))\|_{L^2(\R)}
       \sup_{|r|\leq 2\delta}|\psi(X(r,s,t)) |\sqrt{\int_{|r|\leq 2\delta}|u(X(r,s,t))|^2J_t(r,s)\sd r}\sd s\\
      &\leq C\eps^{\frac 12}\|\eta\|_{L^2(\R)}       \sqrt{\int_{\T^1}
       \sup_{|r|\leq 2\delta}|\psi(X(r,s,t)) |^2\sd s}\sqrt{\int_{\T^1}\int_{|r|\leq2\delta}|u(X(r,s,t))|^2J_t(r,s)\sd r\sd s},
    \end{split}
  \end{equation*}
which implies the statement of the lemma.
\end{proof}

For a systematic treatment of the remainder terms, we introduce:
\begin{defn}\label{yuning:1.15}
  For any $k\in \R$ and $\alpha>0$,  $\mathcal{R}_{k,\alpha}$ denotes the vector space of all families of continuous functions $\tr_\eps\colon \R\times \Gamma(2\delta) \to \R$, $\eps\in (0,1)$, which are continuously differentiable with respect to $\no_{\Gamma_t}$ for all $t\in [0,T_0]$ such that
  \begin{equation}\label{eq:EstimRkalpha}
    |\partial_{\no_{\Gamma_t}}^j \tr_\eps(\rho,x,t)|\leq Ce^{-\alpha |\rho|}\eps^k\qquad \text{for all }\rho\in \R,(x,t)\in\Gamma(2\delta), j=0,1, \eps \in (0,1)
  \end{equation}
for some $C>0$ independent of $\rho\in \R,(x,t)\in\Gamma(2\delta)$, $\eps\in (0,1)$. Moreover,   $\mathcal{R}_{k,\alpha}$ is equipped  with the norm
\begin{equation*}
  \|(\tr_\eps)_{\eps\in (0,1)}\|_{\mathcal{R}_{k,\alpha}}= \sup_{\eps \in (0,1), (x,t)\in\Gamma(2\delta),\rho \in\R,j=0,1} |\partial_{\no_{\Gamma_t}}^j \tr_\eps(\rho,x,t)|e^{\alpha |\rho|}\eps^{-k}.
\end{equation*}
Finally, $\mathcal{R}_{k,\alpha}^0$ is the subspace of all $(\tr_\eps)_{\eps\in (0,1)}\in \mathcal{R}_{k,\alpha}$ such that
\begin{equation}\label{eq:RemainderVanish}
  \tr_\eps(\rho,x,t)= 0 \qquad \text{for all }\rho \in\R, x\in\Gamma_t, t\in [0,T_0].
\end{equation}
\end{defn}
\begin{lem}\label{lem:Remainder}
  Let $k\in\R$, $\alpha>0$, $1\leq p \leq \infty$, $h_\eps \colon \T^1\times [0,T_0] \to \R$ such that
  \begin{equation*}
   M:= \sup_{{0<\eps <\eps_0},(s,t)\in\T^1\times [0,T_\eps]} |h_\eps(s,t)|<\infty
  \end{equation*}
  for some $T_\eps\in (0,T_0]$, {$\eps_0\in (0,1)$},
   $(\tr_\eps)_{\eps\in (0,1)}\in \mathcal{R}_{k,\alpha}$ and
  \begin{equation*}
    r_\eps (x,t):= \tr_\eps\left( \frac{d_{\G}(x,t)}\eps - h_\eps(S(x,t),t), x,t\right)\qquad \text{for all }(x,t)\in\Gamma(2\delta).
  \end{equation*}
 Then there is a constant $C>0$,  independent of ${M}, T_\eps,0<\eps\leq \eps_0,$ and $\eps_0\in (0,1)$, such that
  \begin{equation}\label{eq:RemainderEstim1}
    \| \sup_{(x,t)\in\Gamma(2\delta)}|\tr_\eps(\tfrac{\cdot}\eps,x,t)|\|_{L^p(\R)} \leq C\eps^{k+\frac1{p}}\qquad {\text{for all }0<\eps<\Third{\eps_0}}.
  \end{equation}
  Moreover, if even $(\tr_\eps)_{\eps\in (0,1)}\in \mathcal{R}_{k,\alpha}^0$, then there is a constant $C>0$, independent of ${M}, T_\eps,0<\eps\leq \eps_0,$ and $\eps_0\in (0,1)$, such that
  \begin{equation}\label{eq:RemainderEstim2}
{    \sup_{0\leq t\leq T_\eps, s\in\T^1 }\|r_\eps(X(\cdot,s,t),t)\|_{L^p(-2\delta,2\delta)}} \leq C (M+1) \eps^{k+\frac1p+1}\quad \text{for all }\eps\in (0, \eps_0 ).
  \end{equation}
%
\end{lem}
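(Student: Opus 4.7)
The plan is to derive both estimates by unpacking Definition~\ref{yuning:1.15} and performing a change of variables in the normal direction: in both cases the exponential decay in $\rho$ from \eqref{eq:EstimRkalpha} supplies $L^p$-integrability and the Jacobian factor $\eps^{1/p}$ arises from the scaling $\rho\mapsto\rho/\eps$. The one extra power of $\eps$ in \eqref{eq:RemainderEstim2} will come from the vanishing condition \eqref{eq:RemainderVanish}, converted into a factor of $|d_\G(x,t)|$ by means of the $C^1$-regularity of $\tr_\eps$ in the normal direction built into $\mathcal{R}_{k,\alpha}$. This last step is the sole non-trivial ingredient, but it is not really an obstacle.

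For \eqref{eq:RemainderEstim1}, the immediate pointwise consequence of Definition~\ref{yuning:1.15} is $\sup_{(x,t)\in\Gamma(2\delta)}|\tr_\eps(\rho,x,t)|\leq C\,e^{-\alpha|\rho|}\,\eps^k$ with $C=\|(\tr_\eps)\|_{\mathcal{R}_{k,\alpha}}$. For $1\leq p<\infty$, inserting this bound into the $L^p(\R)$-norm and substituting $\tilde\rho=\rho/\eps$ yields
\begin{equation*}
\Bigl\|\sup_{(x,t)\in\Gamma(2\delta)}|\tr_\eps(\tfrac{\cdot}{\eps},x,t)|\Bigr\|_{L^p(\R)}^{p}\leq C^{p}\eps^{pk}\int_\R e^{-\alpha p|\rho/\eps|}\,\sd\rho=C^{p}\eps^{pk+1}\cdot\tfrac{2}{\alpha p},
\end{equation*}
which gives \eqref{eq:RemainderEstim1} after extracting $p$-th roots; the case $p=\infty$ is immediate from the pointwise bound.

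For \eqref{eq:RemainderEstim2}, under \eqref{eq:RemainderVanish} the function $\tr_\eps(\rho,\cdot,t)$ vanishes at $P_{\Gamma_t}(x)\in\Gamma_t$ and is $C^1$ along the normal direction $\no_{\Gamma_t}$ with $|\partial_{\no_{\Gamma_t}}\tr_\eps|\leq C\,e^{-\alpha|\rho|}\,\eps^k$ by \eqref{eq:EstimRkalpha}. Integrating $\partial_{\no_{\Gamma_t}}\tr_\eps$ along the normal segment connecting $P_{\Gamma_t}(x)$ to $x$ therefore yields the pointwise estimate $|\tr_\eps(\rho,x,t)|\leq C\,|d_\G(x,t)|\,e^{-\alpha|\rho|}\,\eps^k$ on $\Gamma(2\delta)$. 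Fixing $(s,t)$, parametrizing $x=X(r,s,t)$ so that $d_\G(x,t)=r$, and substituting $\rho=r/\eps-h_\eps(s,t)$ as the new integration variable (so that $r=\eps(\rho+h_\eps(s,t))$ and $\sd r=\eps\,\sd\rho$), one computes for $1\leq p<\infty$
\begin{equation*}
\|r_\eps(X(\cdot,s,t),t)\|_{L^p(-2\delta,2\delta)}^{p}\leq C\eps^{p(k+1)+1}\int_\R|\rho+h_\eps(s,t)|^{p}\,e^{-\alpha p|\rho|}\,\sd\rho.
\end{equation*}
With $|h_\eps|\leq M$ and $|\rho+h_\eps|\leq|\rho|+M$, the remaining integral is bounded by a constant times $(M+1)^{p}$ uniformly in $(s,t)$; taking $p$-th roots produces the asserted bound $C(M+1)\,\eps^{k+1/p+1}$, and the case $p=\infty$ is handled by the same pointwise estimate after taking the supremum over $r\in(-2\delta,2\delta)$.
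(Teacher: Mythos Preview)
Your proof is correct and follows essentially the same approach as the paper: both parts use the pointwise decay bound from Definition~\ref{yuning:1.15} together with the rescaling $\rho\mapsto\rho/\eps$, and for \eqref{eq:RemainderEstim2} both integrate $\partial_{\no_{\Gamma_t}}\tr_\eps$ along the normal segment to extract the factor $|r|$, then convert $|r|=\eps|\rho+h_\eps|\leq\eps(|\rho|+M)$ before estimating the remaining integral. The only cosmetic difference is that the paper splits $|r|\leq\eps|r/\eps-h_\eps|+\eps M$ into two terms before taking the $L^p$-norm, whereas you perform the change of variables first and bound $|\rho+h_\eps|^p\leq(|\rho|+M)^p$ afterwards.
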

\begin{proof}
  First let $(\tr_\eps)_{\eps\in (0,1)}\in \mathcal{R}_{k,\alpha}$ and $1\leq p<\infty$. Then
  \begin{alignat*}{1}
     \| \sup_{(x,t)\in\Gamma(2\delta)}|\tr_\eps(\cdot,x,t)|\|_{L^p(\R)}^p & 
\leq C\eps^{kp} \int_{-\infty}^\infty e^{-\alpha p|r|/\eps} \sd r= C\eps^{kp+1} \int_{-\infty}^\infty e^{-\alpha p|z|} \sd z = C' \eps^{kp+1}
  \end{alignat*}
uniformly in $\eps\in (0,1)$. This implies \eqref{eq:RemainderEstim1}. If $p=\infty$, then \eqref{eq:RemainderEstim1} follows immediately from \eqref{eq:EstimRkalpha}.

 Now, if even $(\tr_\eps)_{\eps\in (0,1)}\in \mathcal{R}_{k,\alpha}^0$, then \eqref{eq:RemainderVanish} implies
\begin{equation*}
  {r_\eps (X(r,s,t),t)}= \int_0^r \partial_{\no_{\Gamma_t}} \tr_\eps\left(\tfrac{r}\eps-h_\eps (s,t),X(z,s,t),t\right) \sd z\quad \text{for all }r\in (-2\delta,2\delta).
\end{equation*}
Hence
\begin{alignat*}{1}
  |r_\eps (X(r,s,t),t)| &\leq C\eps^k |r| e^{-\alpha|r/\eps-h_\eps(s,t)|}\\
&\leq C\eps^{k+1} |r/\eps-h_\eps(s,t)| e^{-\alpha|r/\eps- h_\eps(s,t)|}+ CM \eps^{k+1}e^{-\alpha|r/\eps- h_\eps(s,t)|}.
  \end{alignat*}
Using this estimate together with $\int_{-\infty}^\infty |z|^{jp}e^{-\alpha|z|p}\sd z<\infty$ for all $j\in\N_0$, one proves \eqref{eq:RemainderEstim2} in a similar way as before.
\end{proof}
\begin{cor}\label{cor:repsEstim2}
  Let $r_\eps$, $h_\eps$, $M$ and $T_\eps,0<\eps\leq \eps_0$ be as in Lemma~\ref{lem:Remainder},  $(\tr_\eps)_{0<\eps<1}\in \mathcal{R}_{k,\alpha}$ for some $\alpha>0$, $k\in\R$ and let $j=1$ if even $(\tr_\eps)_{0<\eps<1}\in \mathcal{R}_{k,\alpha}^0$ and $j=0$ else.
  Then there is some $C>0$, independent of $T_\eps,0<\eps\leq \eps_0$, $\eps_0\in (0,1)$  such that
  \begin{equation*}
    \left\|{a(P_{\Gamma_t}(\cdot))r_\eps \varphi} \right\|_{L^1(\G_t(2\delta))} \leq C(1+M)^j\eps^{1+k+j}\|\varphi\|_{H^1(\Omega)}\|a\|_{L^2(\Gamma_t)},
  \end{equation*}
  \begin{equation*}
    \left\| {a(P_{\Gamma_t}(\cdot))r_\eps} \right\|_{L^2(\G_t( 2\delta))} \leq C (1+M)^j\eps^{\frac 12+k+j} \|a\|_{L^2(\Gamma_t)}
  \end{equation*}
  uniformly for all $\varphi\in H^1(\Omega)$, $a\in L^2(\Gamma_t)$, $t\in [0,T_\eps]$, and $\eps\in (0,\eps_0]$.
\end{cor}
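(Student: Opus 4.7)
The plan is to reduce both estimates to a single pointwise bound
\begin{equation*}
|r_\eps(X(r,s,t),t)| \leq C(1+M)^j\,\eps^{k+j}(1+|\rho|)^j e^{-\alpha|\rho|},\qquad \rho := \tfrac{r}{\eps} - h_\eps(s,t),
\end{equation*}
and then perform the integration in the tubular coordinates $(r,s)\in(-2\delta,2\delta)\times\T^1$. For $j=0$ the bound is immediate from the definition of $\mathcal{R}_{k,\alpha}$. For $j=1$, since $X(r,s,t)-X_0(s,t)=r\no(s,t)$ is a translation in the direction $\no_{\Gamma_t}(X_0(s,t))$, the fundamental theorem of calculus argument already carried out in the proof of Lemma~\ref{lem:Remainder} yields
\begin{equation*}
r_\eps(X(r,s,t),t) = \int_0^r \partial_{\no_{\Gamma_t}}\tr_\eps\bigl(\tfrac{r}{\eps}-h_\eps(s,t),X(z,s,t),t\bigr)\,\sd z,
\end{equation*}
so that the $\mathcal{R}_{k,\alpha}^0$-estimate combined with $|r|\leq \eps(|\rho|+M)$ gives the advertised pointwise bound.

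For the $L^2$-estimate, note that $a(P_{\Gamma_t}(X(r,s,t)))=a(X_0(s,t))$ is independent of $r$. Changing variables, using boundedness of the Jacobian $J$, and then substituting $\rho=r/\eps-h_\eps(s,t)$ in the inner integral, I obtain
\begin{equation*}
\int_{\Gamma_t(2\delta)}|a\circ P_{\Gamma_t}|^2|r_\eps|^2\,\sd x \leq C\int_{\T^1}|a(X_0(s,t))|^2 \Bigl(\int_{-2\delta}^{2\delta}|r_\eps|^2\,\sd r\Bigr)\sd s,
\end{equation*}
where the inner integral is bounded by $C(1+M)^{2j}\eps^{2k+2j+1}\int_\R(1+|\rho|)^{2j}e^{-2\alpha|\rho|}\sd\rho \leq C(1+M)^{2j}\eps^{2k+2j+1}$. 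Taking square roots and using $\|a(X_0(\cdot,t))\|_{L^2(\T^1)}\leq C\|a\|_{L^2(\Gamma_t)}$ yields the claim.

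For the $L^1$-estimate, set $\Phi(s):=\sup_{|r|\leq 2\delta}|\varphi(X(r,s,t))|$ and apply Fubini together with the same pointwise bound to obtain
\begin{equation*}
\int_{\Gamma_t(2\delta)}|a\circ P_{\Gamma_t}|\,|r_\eps|\,|\varphi|\,\sd x \leq C\int_{\T^1}|a(X_0(s,t))|\,\Phi(s)\,\|r_\eps(X(\cdot,s,t),t)\|_{L^1(-2\delta,2\delta)}\,\sd s.
\end{equation*}
The $r$-integration gives $\|r_\eps(X(\cdot,s,t),t)\|_{L^1}\leq C(1+M)^j\eps^{k+j+1}$; then Cauchy--Schwarz in $s$ plus the embedding \eqref{yuning:2.01} $H^1(\Gamma_t(2\delta))\hookrightarrow L^{4,\infty}(\Gamma_t(2\delta))$ (noting that $\T^1$ has finite measure, so $L^{4,\infty}$ controls $\|\Phi\|_{L^2(\T^1)}$) yield
\begin{equation*}
\int_{\T^1}|a(X_0)|\Phi(s)\,\sd s \leq \|a\|_{L^2(\T^1)}\|\Phi\|_{L^2(\T^1)} \leq C\|a\|_{L^2(\Gamma_t)}\|\varphi\|_{H^1(\Omega)},
\end{equation*}
which completes the proof.

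The only nontrivial conceptual step is the case $j=1$: the vanishing of $\tr_\eps$ on $\Gamma$ must be converted into a full extra factor of $\eps$ (not merely $\eps^{1/2}$), which is precisely what the FTC estimate delivers via $|r|\leq \eps(|\rho|+M)$. Everything else is bookkeeping of powers of $\eps$ under the change of variables $\rho=r/\eps-h_\eps(s,t)$, and the use of the anisotropic space $L^{4,\infty}$ to decouple the surface and normal directions of $\varphi$.
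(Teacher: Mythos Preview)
Your proof is correct and follows essentially the same approach as the paper: change to tubular coordinates, use the pointwise bound on $r_\eps$ coming from the $\mathcal{R}_{k,\alpha}$ (resp.\ $\mathcal{R}_{k,\alpha}^0$) structure, integrate in $r$ to pick up the $\eps^{1/p}$ from the substitution $\rho=r/\eps-h_\eps$, and then apply Cauchy--Schwarz in $s$ together with the embedding $H^1\hookrightarrow L^{p,\infty}$ to control $\sup_r|\varphi|$. The only cosmetic difference is that the paper simply cites Lemma~\ref{lem:Remainder} for the $r$-integral, whereas you unfold that lemma's proof and write the pointwise bound $|r_\eps|\leq C(1+M)^j\eps^{k+j}(1+|\rho|)^j e^{-\alpha|\rho|}$ explicitly.
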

\begin{proof}
  Using a change of variables $(x,t)\mapsto (r,s,t)$ and then Lemma \ref{lem:Remainder}, we obtain
  \begin{align*}
    &  \left\|{a(P_{\Gamma_t}(\cdot))r_\eps\varphi} \right\|_{L^1(\G_t(2\delta))} \\
    &=\int_{-2\delta}^{2\delta}\int_{\T^1} |a(X_0(s,t))| \left|\tr_\eps \left(\tfrac{r}\eps - h_\eps(s,t),X(r,s,t),t \right)\right||\varphi(X(r,s,t))|J_t(r,s)\sd s\sd r \\
    &\leq C\int_{\T^1} |a(X_0(s,t))| \left\|\sup_{(x,t)\in\Gamma(2\delta)}|\tr_\eps (\tfrac{\cdot}\eps, x,t) |\right\|_{L^1(\R)} \sup_{r\in (-2\delta,2\delta)}|\varphi(X(r,s,t))|  \sd s\\
    &\leq C(1+M)^j \eps^{1+j+k} \left(\int_{\T^1}\sup_{|r|\leq 2\delta }|\varphi(X(r,s,t))|^2\sd s\right)^{\frac12}\|a\|_{L^2(\Gamma_t)}\\
    &\leq C(1+M)^j\eps^{1+j+k} \|\varphi\|_{H^1(\Omega)}\|a\|_{L^2(\Gamma_t)}
  \end{align*}
for all $a\in L^2(\Gamma_t)$, $\eps\in (0,1]$, $t\in [0,T_\eps]$, and $\varphi\in H^1(\Omega)$, which proves the first estimate.

Similarly,
we obtain
{  \begin{align*}
     \left\| a(P_{\Gamma_t}(\cdot))r_\eps \right\|_{L^2(\G_t( 2\delta))} &= \left(\int_{-2\delta}^{2\delta}\int_{\T^1} |a(X_0(s,t))|^2 \left|r_\eps (X(r,s,t),t))\right|^2J_t(r,s)\sd s\sd r\right)^{\frac12} \\
    &\leq C\left(\int_{\T^1} \sup_{0\leq t\leq T_\eps, s\in \T^1}\left\|r_\eps (X(\cdot,s,t)) \right\|_{L^2(-2\delta,2\delta)}^2 |a(X_0(s,t))|^2  \sd s\right)^{\frac12}\\
    &\leq C(1+M)^j \eps^{1/2+j+k}  \|a\|_{L^2(\Gamma_t)}
  \end{align*}}
for all $a\in L^2(\Gamma_t)$, $\eps\in (0,1)$, and $t\in [0,T_\eps]$.
\end{proof}
\begin{rem}
  In the following we will apply the results of this subsection to remainder terms $(\tr_\eps)_{\eps\in (0,\eps_0)}$ for some $\eps_0\in (0,1]$. This case can easily be reduced to the former case by considering  $(\check{r}_\eps)_{\eps\in (0,1)}$ with
  \begin{equation*}
    \check{r}_\eps =
    \begin{cases}
      \tr_\eps &\text{if }\eps \in (0,\eps_0),\\
      0 &\text{else}.
    \end{cases}
  \end{equation*}
  In this way we can apply all definitions and statements above with $\eps\in (0,\eps_0)$ instead of $\eps \in (0,1)$.
\end{rem}


\subsection{Parabolic Equations on Evolving Hypersurfaces}\label{subsec:ParabolicEq}

Throughout this subsection $0<T<\infty$ is arbitrary, but fixed. In later applications we will choose  $T=T_0$.
We shall denote the function space
\begin{equation}\label{yuning:1.31}
  X_T:=L^2(0,T;H^{5/2}(\T^1))\cap H^1(0,T;H^{1/2}(\T^1)),
\end{equation}
equipped with the norm
\begin{equation*}
  \|u\|_{X_T} =\|u\|_{L^2(0,T;H^{5/2}(\T^1))}+\|u\|_{H^1(0,T;H^{1/2}(\T^1))}+ \|u|_{t=0}\|_{H^{3/2}(\T^1)}.
\end{equation*}
We note that
\begin{equation}\label{eq:EmbeddingE1}
X_T\hookrightarrow BUC([0,T]; H^{3/2}(\T^1))\cap L^4(0,T; H^2(\T^1))
\end{equation}
{and the operator norm of the embedding is uniformly bounded in $T$.}
 Likewise, we define
 \begin{equation}\label{yuning:1.33}
  Y_T:=L^2(\R\times\T^1\times(0,T)),
\end{equation}

\begin{thm}\label{thm:ParabolicEqOnSurface}
  Let $w\colon \T^1\times [0,T] \to \Rn$ and $a\colon \T^1\times [0,T]\to \R$ be smooth.
  For every $g\in L^2(0,T;H^{\frac 12}(\T^1))$ and $h_0\in H^{ \frac32 }(\T^1)$ there is a unique solution $h\in X_T$ of
  \begin{alignat}{2}\label{eq:h1}
    D_t h+ w\cdot \nabla_\G h -\Delta_\Gamma h + a h &= g&\qquad& \text{on }\T^1\times [0,T],\\\label{eq:h2}
    h|_{t=0} &=h_0 && \text{on } \T^1.
  \end{alignat}
\end{thm}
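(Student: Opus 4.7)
The equation is a linear second-order parabolic equation on the one-dimensional compact manifold $\T^1$ with smooth, time-dependent coefficients, whose principal part $-\Delta_\Gamma$ is, at each fixed $t\in[0,T]$, a positive self-adjoint elliptic operator on $L^2(\T^1)$ with domain $H^2(\T^1)$. My plan is to use maximal $L^2$-regularity of $-\Delta_\Gamma$ in the pair $(H^{5/2}(\T^1), H^{1/2}(\T^1))$ as the backbone, and to treat the remaining terms $D_t-\p_t$ (a first-order transport operator on $\T^1$ with smooth coefficients), $w\cdot\nabla_\Gamma$, and multiplication by $a$ as lower-order perturbations via a contraction-mapping argument.

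\textbf{Uniqueness.} Let $h\in X_T$ satisfy \eqref{eq:h1}--\eqref{eq:h2} with $g=0$ and $h_0=0$. Testing in $L^2(\T^1)$ and integrating by parts on the closed curve $\T^1$ (so there are no boundary terms), the transport term gives $\int_{\T^1} (w\cdot \nabla_\Gamma h)\, h\,\sd s = -\tfrac{1}{2}\int_{\T^1}(\Div_\Gamma w)\, h^2\,\sd s$, and similarly for the transport part hidden in $D_t-\p_t$. Smoothness of $w$ and $a$ yields
\begin{equation*}
\tfrac{1}{2}\tfrac{d}{dt}\|h\|_{L^2(\T^1)}^2+\|\nabla_\Gamma h\|_{L^2(\T^1)}^2\leq C\|h\|_{L^2(\T^1)}^2,
\end{equation*}
and Gr\"onwall's lemma gives $h\equiv 0$.

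\textbf{Existence by maximal regularity and fixed point.} Since $-\Delta_{\Gamma_t}+1$ is a smooth positive self-adjoint elliptic operator on the closed curve $\T^1$, it admits a bounded $H^\infty$-calculus in every $H^s(\T^1)$ and thus enjoys maximal $L^2$-regularity in the scale $H^s(\T^1)$. For frozen $t$, this yields that the map $(\p_t+1-\Delta_{\Gamma_t})$ is an isomorphism from $\{u\in X_T:u|_{t=0}=0\}$ onto $L^2(0,T;H^{1/2}(\T^1))$, and the trace space at $t=0$ is $(H^{5/2},H^{1/2})_{1/2,2}=H^{3/2}(\T^1)$, matching $h_0$. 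The smooth time dependence of $\Delta_\Gamma$ is handled by freezing coefficients and absorbing the difference $\Delta_{\Gamma_t}-\Delta_{\Gamma_0}$ into the perturbation, using its smallness in operator norm on a short interval (a standard Kato--Tanabe style argument). Define the solution operator $\mathcal{T}:\tilde h\mapsto h$ where $h\in X_T$ solves
\begin{equation*}
\p_t h-\Delta_\Gamma h+h=g+\tilde h-a\tilde h-w\cdot\nabla_\Gamma \tilde h-(D_t-\p_t)\tilde h,\qquad h|_{t=0}=h_0,
\end{equation*}
via the above maximal regularity. The right-hand side lies in $L^2(0,T;H^{3/2}(\T^1))\hookrightarrow L^2(0,T;H^{1/2}(\T^1))$ because the perturbation operators involve at most one tangential derivative with smooth coefficients, losing one derivative off the regularity $L^2(H^{5/2})$ of $\tilde h$. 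A direct computation using the embedding $X_T\hookrightarrow C([0,T];H^{3/2}(\T^1))$ together with the gain of a Sobolev index from the perturbations shows that the Lipschitz constant of $\mathcal{T}$ on a short interval $[0,T^\ast]$ is $O(T^{\ast\, 1/4})$, so $\mathcal{T}$ is a contraction for $T^\ast$ small. Fixed-point iteration yields a local solution; since the a priori estimate is linear in $h$, the same argument applied on $[T^\ast, 2T^\ast]$, $[2T^\ast, 3T^\ast]$, etc., with starting data in $H^{3/2}(\T^1)$ (which holds by the embedding $X_{T^\ast}\hookrightarrow BUC([0,T^\ast];H^{3/2}(\T^1))$), extends the solution to all of $[0,T]$.

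\textbf{Main obstacle.} The technical core is verifying that maximal $L^2$-regularity holds in the precise fractional pair $(H^{5/2},H^{1/2})$ for the smoothly time-varying operator $-\Delta_{\Gamma_t}$, and that the perturbation terms are indeed lower order in this scale with the right contraction estimate. Once these are in hand, uniqueness and iteration are routine. All of this is standard machinery for linear parabolic equations with smooth coefficients on closed manifolds, so no genuinely new obstacle appears, but the bookkeeping with the non-integer orders $\tfrac12,\tfrac32,\tfrac52$ must be done carefully to stay within the space $X_T$ defined in \eqref{yuning:1.31}.
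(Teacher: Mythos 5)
Your proof is correct, and it rests on the same backbone as the paper's: maximal $L^2$-regularity for the elliptic operator on $\T^1$, obtained from the bounded $H^\infty$-calculus of Escher--Seiler, in the scale $(H^{5/2},H^{1/2})$ with trace space $H^{3/2}$. The difference is in the packaging. The paper freezes time in the \emph{full} second-order operator (including $w\cdot\nabla_\Gamma$, $a$, and the transport part of $D_t$), verifies $\Lambda(\theta)$-ellipticity, and then invokes the abstract non-autonomous maximal-regularity theorem of Arendt--Chill--Fornaro--Poupaud (\cite[Thm.~2.7]{ArendtMaxReg}) in one shot. You instead establish maximal regularity only for the autonomous operator $\p_t+1-\Delta_{\Gamma_{t_0}}$, handle the time dependence $\Delta_{\Gamma_t}-\Delta_{\Gamma_{t_0}}$ by smallness of the operator norm on a short interval (Kato--Tanabe), and fold all first- and zeroth-order terms into a contraction mapping. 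In effect you are re-proving the non-autonomous theorem by hand in this special case rather than citing it. What this buys you is a self-contained argument that does not rely on the abstract reference and comes with an explicit energy-based uniqueness proof (which the paper leaves implicit in the abstract theorem); what it costs is some redundancy, since the lower-order perturbations could have been absorbed into the elliptic operator from the start, as the paper does. Two minor points worth flagging if you fill in the details: (i) the frozen-coefficient maximal-regularity constant must be taken uniform in $t_0\in[0,T]$, which follows from compactness and smoothness but should be said; (ii) the uniform-in-$T$ embedding $X_{T^\ast}\hookrightarrow BUC([0,T^\ast];H^{3/2})$ that your contraction estimate relies on is valid because the differences in the iteration have vanishing initial trace (and the paper's $X_T$-norm includes the $\|u|_{t=0}\|_{H^{3/2}}$ term precisely to make this uniform in general). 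With those remarks your contraction constant is in fact $O(T^{\ast 1/2})$, slightly better than the $O(T^{\ast 1/4})$ you claim, but either suffices.
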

\begin{proof}
  According to \eqref{yuning:1.27}, equation \eqref{eq:h1}  is equivalent to
  \begin{equation*}
    \partial_t h- \mathcal{L}  h =g\qquad  \text{on }\T^1\times [0,T]
  \end{equation*}
  for some uniformly elliptic operator $ \mathcal{L} $ on $\T^1$ with smooth coefficients depending on $(s,t)\in \T^1\times [0,T]$. More precisely for every fixed  $t \in[0,T]$  the operator is $\Lambda(\theta)$-elliptic in the sense of \cite[Definition~3.3]{EscherSeiler} for every $0<\theta <\pi$.

  Now the result follows from known results for parabolic equations. E.g.~one can argue as follows:   Let $ \mathcal{L} (t_0)$, with $t_0\in [0,T]$, denote the same operator, where $t$ in the coefficients is replaced by a fixed $t_0\in [0,T]$. Moreover, let
\begin{equation*}
  \mathcal{A}(t_0)\colon \mathcal{D}(\mathcal{A}(t_0)):=H^{\frac52}(\T^1)\subseteq H^{\frac12}(\T^1)\to H^{\frac12}(\T^1)
\end{equation*}
be its realization on $H^{\frac12}(\T^1)$. Then there is some $\lambda_0\geq 0$ such that $\lambda_0+\mathcal{A}(t_0)$
possesses a bounded $H^\infty$-calculus due to \cite[Theorem~4.10]{EscherSeiler}. This implies that for every $\mathcal{A}(t_0)$ has maximal $L^p$-regularity on every finite time interval and for every $1<p<\infty$ due to \cite[Theorem~3.2]{DoreVenni}. Now the theorem follows from \cite[Theorem~2.7]{ArendtMaxReg}.
\end{proof}

In order to couple \eqref{eq:h1} to the two-phase Stokes system, we need

\begin{lem}\label{lem:Stokes}
 For every $t\in [0,T]$, $\mathbf{a}\in H^{\frac12}(\Gamma_t)^2$ and $\mathbf{f}\in L^2(\Omega)^2$ there is a unique solution $(\ve,p) \in H^1_0(\Omega)^2\times L^2(\Omega)$ with $\ve|_{\Omega^\pm(t)}\in H^2(\Omega^\pm(t))^2$, $p|_{\Omega^\pm(t)}\in H^1(\Omega^\pm(t))$, with $\int_\Omega p\sd x=0$   of
  \begin{alignat}{2}\label{eq:TwoPhaseStokes1'}
    -\Delta \ve +\nabla p &= \mathbf{f} &\qquad&\text{in }\Omega^\pm (t),\\\label{eq:TwoPhaseStokes2'}
    \Div \ve &= 0 &&\text{in }\Omega^\pm (t),\\\label{eq:TwoPhaseStokes3'}
    [2D \ve-p\mathbf{I}]\no_{\Gamma_t}&= \mathbf{a}  && \text{on }\Gamma_t.
   \end{alignat}
   Moreover, there is a constant $C>0$ independent of $t\in [0,T]$ such that
   \begin{equation}\label{eq:TwoPhaseEstim}
     \|\ve\|_{H^1(\Om)}+ \sum_{\pm}\left( \|\ve\|_{H^{2}(\Omega^\pm (t))} + \|p\|_{H^1(\Omega^\pm(t))} \right)\leq C\left(\|\mathbf{f}\|_{L^2(\Om)}+ \|\mathbf{a}\|_{H^{1/2}(\Gamma_t)}\right).
   \end{equation}
\end{lem}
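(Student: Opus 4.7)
The plan is standard: derive a weak formulation on the space of divergence-free $H^1_0$ functions, solve it by Lax-Milgram, recover the pressure by de~Rham, and then upgrade to the claimed regularity using well-known elliptic theory for two-phase transmission problems.

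\textbf{Step 1 (weak form).} For $\bfvarphi\in V:=\{\bfvarphi\in H^1_0(\Omega)^2:\Div\bfvarphi=0\}$, testing \eqref{eq:TwoPhaseStokes1'} separately on $\Omega^\pm(t)$ and integrating by parts, using $[\bfvarphi]=0$ on $\Gamma_t$ (built into $\bfvarphi\in H^1_0$) and the jump condition \eqref{eq:TwoPhaseStokes3'}, yields
\begin{equation*}
  \int_\Omega 2 D\ve:D\bfvarphi\,\sd x=\int_\Omega \mathbf{f}\cdot\bfvarphi\,\sd x+\int_{\Gamma_t}\mathbf{a}\cdot\bfvarphi\,\sd\sigma
  \qquad\text{for all }\bfvarphi\in V.
\end{equation*}
Because of $\Div\bfvarphi=0$ the pressure drops out completely.

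\textbf{Step 2 (solving for $\ve$).} Since $\mathbf{a}\in H^{1/2}(\Gamma_t)$ and the trace operator $H^1_0(\Omega)\to H^{1/2}(\Gamma_t)$ is bounded uniformly in $t\in[0,T]$ (as $\Gamma_t$ depends smoothly on $t$), the right-hand side above is a bounded linear functional on $V$. Korn's inequality on $H^1_0(\Omega)^2$ makes the bilinear form $(\ve,\bfvarphi)\mapsto\int 2D\ve:D\bfvarphi\,\sd x$ continuous and coercive on $V$, and Lax-Milgram produces a unique solution $\ve\in V$.

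\textbf{Step 3 (recovering $p$).} The functional $F(\bfvarphi):=\int 2D\ve:D\bfvarphi\,\sd x-\int \mathbf{f}\cdot\bfvarphi\,\sd x-\int_{\Gamma_t}\mathbf{a}\cdot\bfvarphi\,\sd\sigma$ on $H^1_0(\Omega)^2$ vanishes on $V$, so de~Rham's theorem (equivalently, the surjectivity of $\Div\colon H^1_0(\Omega)^2\to L^2_{(0)}(\Omega)$ with its closed-range/inf-sup property) provides a unique $p\in L^2_{(0)}(\Omega)$ with $F(\bfvarphi)=-\int p\,\Div\bfvarphi\,\sd x$ for all $\bfvarphi\in H^1_0(\Omega)^2$. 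This means $\ve,p$ satisfy \eqref{eq:TwoPhaseStokes1'}--\eqref{eq:TwoPhaseStokes3'} distributionally on $\Omega^\pm(t)$ and in the sense of traces across $\Gamma_t$.

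\textbf{Step 4 (interior/two-phase regularity).} Away from $\Gamma_t$ and $\partial\Omega$, interior Stokes regularity gives $\ve\in H^2_{\loc},p\in H^1_{\loc}$ on each side. Near $\partial\Omega$ one uses classical Stokes regularity with Dirichlet boundary, noting $\dist(\partial\Omega,\Gamma_t)\geq 3\delta$. Near $\Gamma_t$ we flatten the interface locally via the diffeomorphism $X$ from Section~\ref{subsec:Coordinates} and apply the standard $L^2$-theory for the two-phase Stokes transmission problem (see e.g.\ Solonnikov or the references \cite{DenisovaTwoPhase,PruessSimonettTwoPhaseFlow} cited earlier): the transmission problem with data $(\mathbf{f},\mathbf{a})\in L^2\times H^{1/2}$ admits a solution with $\ve|_{\Omega^\pm(t)}\in H^2$, $p|_{\Omega^\pm(t)}\in H^1$, and the a priori estimate \eqref{eq:TwoPhaseEstim} holds on each local chart. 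Uniqueness from Step~2 identifies this local solution with ours.

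\textbf{Step 5 (uniformity in $t$).} Compactness of $[0,T]$ together with the smooth dependence of $\Gamma_t$ on $t$ (the submanifold $\Gamma$ is smooth up to $t=T$) allows a finite covering by local charts with uniformly bounded transition maps; the constants in Korn's inequality, in the inf-sup condition for $\Div$ on $\Omega^\pm(t)$, and in the local regularity estimates can therefore be chosen independently of $t$. Adding the estimates over charts yields \eqref{eq:TwoPhaseEstim} with a $t$-uniform constant.

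The only non-routine step is Step~4, the two-phase regularity near $\Gamma_t$ with the tangential/normal stress jump, which is however a well-documented piece of the theory for two-phase Stokes systems and can be cited verbatim from the references above.
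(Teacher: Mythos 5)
Your argument is correct in substance, but it is organized differently from the paper. The paper's proof is short: it invokes \cite[Theorem 1.1]{ShibataShimizuTwoPhase} directly for existence, uniqueness, \emph{and} the $H^2\times H^1$ estimate at each fixed $t$, and then spends its effort only on the $t$-uniformity of the constant (a perturbation argument at each $t_0$ plus compactness of $[0,T]$). You instead re-derive the weak solution from scratch by Lax--Milgram on divergence-free $H^1_0$, recover the pressure by de~Rham/inf-sup, and then upgrade regularity by localization and flattening near $\Gamma_t$. This is a legitimate alternative route, but note that it does not actually avoid the hard ingredient: the regularity upgrade in your Step~4 still requires citing the two-phase Stokes transmission theory near a flat interface with a jump in the normal stress, and the most appropriate $L^2$-reference for exactly that is the one the paper uses (\cite{ShibataShimizuTwoPhase}) rather than \cite{DenisovaTwoPhase} (H\"older scale, free-boundary setting) or \cite{PruessSimonettTwoPhaseFlow} (evolutionary maximal regularity). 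So the extra work in Steps~1--3 buys transparency but not real independence from the cited theorem. Your handling of $t$-uniformity (Step~5) is in the same spirit as the paper's, relying on compactness of $[0,T]$ and smoothness of $\Gamma$. Two small remarks: with the paper's convention that $\no_{\Gamma_t}$ is the interior normal of $\Omega^+(t)$ and $[u]=u^+-u^-$, the boundary integral in your weak formulation should appear with the opposite sign, $\int_\Omega 2D\ve:D\bfvarphi\,\sd x=\int_\Omega\mathbf{f}\cdot\bfvarphi\,\sd x-\int_{\Gamma_t}\mathbf{a}\cdot\bfvarphi\,\sd\sigma$, though this does not affect the structure of the argument; and the closing sentence of Step~4 (``uniqueness identifies this local solution with ours'') is superfluous if one proves a priori regularity of the weak solution directly rather than building a second, regular solution to compare with.
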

\begin{proof}
  First of all, \cite[Theorem 1.1]{ShibataShimizuTwoPhase} implies the existence of a unique solution $(\ve,p)$ and \eqref{eq:TwoPhaseEstim} for some $C>0$, which might depend on $t\in [0,T]$.
Hence it only remains to prove that $C>0$ can be chosen independently of $t\in [0,T]$. Standard perturbation arguments imply that for every $t_0\in [0,T]$ there is some $\eps>0$ such that \eqref{eq:TwoPhaseEstim} holds true for some $C>0$ and any $t\in [0,T]\cap (t_0-\eps,t_0+\eps)$. (Alternatively, one can verify that the constants and cut-off functions in the proof of \cite[Theorem 1.1]{ShibataShimizuTwoPhase} can be chosen independently of $t\in [0,T]\cap (t_0-\eps,t_0+\eps)$ if $\eps>0$ is sufficiently small.) Since $[0,T]$ is compact, there is some $C>0$ such that \eqref{eq:TwoPhaseEstim} holds true for any $t\in[0,T]$.
\end{proof}
\begin{cor}\label{cor:ParabolicEqOnSurfaceStokes}
  Let $w\colon \T^1\times [0,T] \to \Rn$, $a\colon \T^1\times [0,T]\to \R$ and $b\colon \Gamma\to \R^2$ be smooth.
  For every $g\in L^2(0,T;H^{\frac 12}(\T^1))$ and $h_0\in H^{ \frac32 }(\T^1)$ there is a unique solution $h\in X_T$ of
  \begin{alignat}{2}\label{eq:h1'}
    D_t h+ w\cdot \nabla_\G h -\Delta_\Gamma h + a h &= X_0^\ast (\ve_{\no})+g&\qquad& \text{on }\T^1\times [0,T],\\\label{eq:h2'}
    h|_{t=0} &=h_0 && \text{on } \T^1,
  \end{alignat}
  where  for every $t\in [0,T]$, $\ve=\ve(\cdot,t) \in H^1_0(\Omega)^2$ with $\ve|_{\Omega^\pm(t)}\in H^2(\Omega^\pm(t))^2$ and $p=p(\cdot,t)\in H^1(\Om\setminus\Gamma_t)\cap  L^2(\Omega)$ with $\int_\Omega p\sd x=0$  are determined by
  \begin{alignat}{2}\label{eq:TwoPhaseStokes1}
    -\Delta \ve +\nabla p &= 0 &&\text{    in }\Omega^\pm (t), t\in [0,T],\\\label{eq:TwoPhaseStokes2}
    \Div \ve &= 0 &&\text{    in }\Omega^\pm (t), t\in [0,T],\\\label{eq:TwoPhaseStokes3}
    [2D \ve-p\mathbf{I}]\no_{\Gamma_t}&= b X_0^{\ast,-1}(h)-\sigma X_0^{\ast,-1}(\Delta_\Gamma h)\no_{\Gamma_t}  && \text{    on }\Gamma_t, t\in [0,T].
  \end{alignat}
 Here $\sigma=\int_{\R} \theta'_0(\rho)^2\, \sd \rho$ as before and $\ve_\no=\no\cdot \ve$. 
Moreover, if $g$ and $h_0$ are smooth, then $h$ is smooth and $\ve|_{\Omega^\pm}, p|_{\Omega^\pm}$ are smooth in $\overline{\Omega^\pm}$.
\end{cor}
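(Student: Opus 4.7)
The plan is to reformulate \eqref{eq:h1'}--\eqref{eq:TwoPhaseStokes3} as a fixed-point problem on $X_{T'}$ for a short interval $[0,T']$, coupling Theorem~\ref{thm:ParabolicEqOnSurface} and Lemma~\ref{lem:Stokes}. Given $\tilde h\in X_{T'}$, I let $(\ve[\tilde h],p[\tilde h])(\cdot,t)$ be the solution, supplied pointwise in $t$ by Lemma~\ref{lem:Stokes}, of \eqref{eq:TwoPhaseStokes1'}--\eqref{eq:TwoPhaseStokes3'} with $\mathbf{f}\equiv 0$ and
\[
\mathbf{a}(\cdot,t)=b(\cdot,t)X_0^{\ast,-1}(\tilde h(\cdot,t))-\sigma X_0^{\ast,-1}(\Delta_\Gamma\tilde h(\cdot,t))\no_{\Gamma_t}.
\]
Then I apply Theorem~\ref{thm:ParabolicEqOnSurface} with right-hand side $X_0^{\ast}(\ve[\tilde h]_\no)+g$ and initial value $h_0$, and call the unique solution in $X_{T'}$ by $K(\tilde h)$. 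A fixed point of $K$ solves the coupled system, and the map $K$ is affine.

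The key estimate, provided by Lemma~\ref{lem:Stokes} combined with the trace $H^2(\Omega^\pm)\hookrightarrow H^{3/2}(\Gamma_t)$, is uniform in $t\in[0,T]$:
\[
\|X_0^\ast(\ve[\tilde h_1]_\no-\ve[\tilde h_2]_\no)(\cdot,t)\|_{H^{3/2}(\T^1)}\le C\bigl(\|(\tilde h_1-\tilde h_2)(\cdot,t)\|_{H^{5/2}(\T^1)}+\|(\tilde h_1-\tilde h_2)(\cdot,t)\|_{H^{1/2}(\T^1)}\bigr).
\]
Applying Theorem~\ref{thm:ParabolicEqOnSurface} to the difference $K(\tilde h_1)-K(\tilde h_2)$, whose initial value vanishes, and combining with the above estimate (embedded into $H^{1/2}$) yields
\[
\|K(\tilde h_1)-K(\tilde h_2)\|_{X_{T'}}\le C\|X_0^\ast(\ve[\tilde h_1]_\no-\ve[\tilde h_2]_\no)\|_{L^2(0,T';H^{1/2}(\T^1))}.
\]
To extract smallness from small $T'$ I exploit that Stokes interior regularity provides two derivatives more than the boundary data in appropriate norms, and use the continuous embedding $X_T\hookrightarrow BUC([0,T];H^{3/2}(\T^1))$ (with constant uniform in $T$, by \eqref{eq:EmbeddingE1}) together with the interpolation $\|\varphi\|_{H^{3/2}}\le\|\varphi\|_{H^{1/2}}^{1/2}\|\varphi\|_{H^{5/2}}^{1/2}$ applied to $\tilde h_1-\tilde h_2$, which vanishes at $t=0$. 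This yields $\|\tilde h_1-\tilde h_2\|_{L^2(0,T';H^{3/2}(\T^1))}\le C(T')^{1/2}\|\tilde h_1-\tilde h_2\|_{X_{T'}}$, and a parallel bound holding after trading one derivative on the Stokes side gives a contraction factor of order $(T')^{1/2}$. Banach's fixed point theorem then provides the unique solution on $[0,T']$.

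Because the system is linear and the energy identity (obtained by testing \eqref{eq:h1'} with $h$ in $L^2(\T^1)$ and employing the Stokes estimate \eqref{eq:TwoPhaseEstim}) precludes blow-up, the local solution extends to $[0,T]$ by concatenating finitely many short-time steps of uniform length $T'$. Smoothness of $h$ and of $\ve|_{\Omega^\pm}$, $p|_{\Omega^\pm}$ when $g,h_0$ and the geometry are smooth follows by bootstrap: differentiating \eqref{eq:h1'} in $t$ (which preserves the structure by linearity and smoothness of $w,a,b$) and invoking elliptic regularity for the two-phase Stokes system in each phase repeatedly raises the regularity by one derivative at a time.

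The main obstacle, and the step demanding the most care, is turning the abstract coupling into a strict contraction on a short time interval: the Stokes-to-trace map $\Phi(t)\colon\tilde h\mapsto X_0^\ast(\ve[\tilde h]_\no)$ formally shares the second-order character of $\Delta_\Gamma$ occurring on the parabolic side, so the decisive smallness has to be harvested through the one-derivative gain given by interior Stokes regularity, combined with sharp interpolation and the zero-initial-value identity for the difference. An alternative route, avoiding the delicate interpolation, is to view $\Phi(t)$ as a relatively compact (hence lower-order) perturbation of the elliptic part in Theorem~\ref{thm:ParabolicEqOnSurface} and to invoke non-autonomous maximal $L^p$-regularity perturbation results, using that $\Phi(t)$ depends smoothly on $t$ through the geometry of $\Gamma_t$.
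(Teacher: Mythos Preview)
Your main fixed-point argument has a genuine gap at precisely the step you flag as ``the main obstacle.'' From Lemma~\ref{lem:Stokes} and the $H^2\to H^{3/2}$ trace you only obtain $\|\Phi(t)\varphi\|_{H^{3/2}(\T^1)}\le C\|\varphi\|_{H^{5/2}(\T^1)}$, i.e.\ $\Phi(t)$ acts as a second-order operator---the same order as $\Delta_\Gamma$. Your proposed contraction needs instead the lower-order bound $\|\Phi(t)\varphi\|_{H^{1/2}(\T^1)}\le C\|\varphi\|_{H^{3/2}(\T^1)}$: only then does the smallness $\|\tilde h_1-\tilde h_2\|_{L^2(0,T';H^{3/2})}\le C(T')^{1/2}\|\tilde h_1-\tilde h_2\|_{X_{T'}}$ close the loop. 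The phrase ``trading one derivative on the Stokes side'' asserts exactly this missing estimate without proving it, and it does \emph{not} follow from Lemma~\ref{lem:Stokes} or from interpolating your strong estimate against anything you have written down.

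The remedy is short but essential, and it is what the paper does: test \eqref{eq:TwoPhaseStokes1}--\eqref{eq:TwoPhaseStokes3} with $\ve$ itself and integrate by parts. The boundary term is controlled by the $H^{1/2}\times H^{-1/2}$ duality on $\Gamma_t$, so
\[
\|\ve\|_{H^1(\Omega)}\le C\|(h,\Delta_\Gamma h)\|_{H^{-1/2}(\T^1)}\le C'\|h\|_{H^{3/2}(\T^1)},
\]
and the $H^1\to H^{1/2}$ trace gives precisely $\Phi(t)\colon H^{3/2}(\T^1)\to H^{1/2}(\T^1)$ bounded. With this in hand your Banach fixed-point scheme goes through. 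The paper instead skips the fixed-point argument entirely: once $\Phi(t)$ is known to be bounded $H^{3/2}\to H^{1/2}$, it is a lower-order perturbation of $-\Delta_\Gamma$ in the sense of Arendt et al.\ (since $H^{5/2}\hookrightarrow H^{3/2}$ is compact), and their Theorem~2.11 combined with Theorem~\ref{thm:ParabolicEqOnSurface} yields the solution on $[0,T]$ directly, with no short-time contraction or continuation step required. Your ``alternative route'' is this argument, but it too needs the weak Stokes estimate above, which you did not supply.
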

\begin{proof}
For given $h\in H^{\frac52}(\T^1)$ and fixed $t\in [0,T]$, let $\ve\in H^2(\Om\setminus \Gamma_t)^2\cap H^1_0(\Om)^2$, $p\in H^1(\Om\setminus \Gamma_t)\cap L^2(\Om)$ with $\int_\Omega p\, dx=0$ be determined as solution of \eqref{eq:TwoPhaseStokes1}-\eqref{eq:TwoPhaseStokes3}.
 Then testing \eqref{eq:TwoPhaseStokes1} by $\ve$, integration by parts, and using \eqref{eq:TwoPhaseStokes3} yields
 \begin{equation*}
   \|\ve\|_{H^1(\Om)}\leq C\|(h,\Delta_\Gamma h)\|_{H^{-\frac12}(\T^1)}\leq C' \|h\|_{H^{\frac32}(\T^1)}
 \end{equation*}
for some $C,C'$ independent of $h$ and $t\in [0,T]$.

 Now we define $\mathcal{B}(t)\colon H^{\frac52}(\T^1)\to H^{\frac12}(\T^1)$  by $\mathcal{B}(t) h:=  X_0^\ast (\ve_{\no})$.
Then the previous estimate implies
\begin{equation*}
  \|\mathcal{B}(t) h\|_{H^{\frac12}(\T^1)}\leq C\|h\|_{H^{\frac32}(\T^1)}
\end{equation*}
for some $C$ independent of $h$ and $t\in [0,T]$. Hence we can extend $\mathcal{B}(t)$ to  $\mathcal{B}(t)\colon H^{\frac32}(\T^1)\to H^{\frac12}(\T^1)$ by continuity. Moreover, $H^{\frac32}(\T^1)$ is relatively closer to $H^{\frac12}(\T^1)$ compared with $H^{\frac52}(\T^1)$ in the sense of \cite{ArendtMaxReg} because of the compactness of the embedding $H^{\frac52}(\T^1)\hookrightarrow H^{\frac32}(\T^1)$, cf. \cite[Example 2.9. (d)]{ArendtMaxReg}.
Hence the existence of a unique solution $h\in X_T$ as in the statement of the corollary follows from \cite[Theorem 2.11]{ArendtMaxReg} and the previous theorem.

Finally, smoothness of $h$ and $\ve|_{\Omega^\pm}$ in  $\overline{\Omega^\pm}$ can be shown by standard localization techniques. To this end one transforms $\Omega^\pm$ and $\Gamma$ locally (in space and time) to the situation of a flat interface and applies e.g.\ difference quotients in tangential directions or the time direction using the a~priori estimates in $L^2$-Sobolev spaces obtained before. This yields one order of higher regularity of $h$, $\ve|_{\Omega^\pm}$ and $p|_{\Omega^\pm}$ in tangential and time directions. Then one order higher regularity in normal directions follows from \eqref{eq:TwoPhaseStokes1} and \eqref{eq:TwoPhaseStokes2}. Repeating this arguments yields the claimed smoothness.
\end{proof}

\begin{thm}\label{lem:InftyEstimLinPart}
  Let  $g\colon \R\times\T^1\times (0,T)  \to \R$ be a smooth function with $g(\cdot,s,t)\in L^2(\R)$ and
    \begin{equation}\label{eq:gtheta}
    \int_\R g(\rho,s,t)\theta_0'(\rho)\sd \rho =0 \qquad \text{for all}\ (s,t)\in \T^1\times [0,T].
  \end{equation}
  Then there is a unique smooth solution $c=c(\rho,s,t )$ solving
  \begin{equation}\label{eq:c1}
    \begin{split}
      \eps^2(D_t c  -\Delta_\Gamma c)-\partial_{\rho}^2 c+f''(\theta_0)c &= g\quad \text{on } \R\times\T^1\times [0,T] ,\\
    c|_{t=0} &=0 \quad\text{on } \R\times\T^1.
    \end{split}
  \end{equation}
  Moreover,
  \begin{equation}\label{eq:OrthogonalitySol}
        \int_\R c(\rho,s,t)\theta_0'(\rho)\sd \rho =0 \qquad \text{for all}\ (s,t)\in \T^1\times [0,T]
  \end{equation}
  and there is a constant $C$ independent of $g,c$, $T'\in (0,T]$ and $\eps\in (0,1]$ such that
  \begin{alignat}{1}\nonumber
      \eps\|  c\|_{L^\infty(0,T';L^2(\T^1\times \R))}+ \| (  c,\p_\rho c,\eps \partial_s c) \|_{L^2(\R\times\T^1\times(0,T'))}  &\leq C\|g\|_{L^2(\R\times\T^1\times(0,T'))}, \\\nonumber
  \eps\| \p_\rho c\|_{L^\infty(0,T';L^2(\T^1\times \R))}+     \| \p_\rho^2 c \|_{L^2(\R\times\T^1\times(0,T'))}  &\leq C\|(g,\p_\rho g)\|_{L^2(\R\times\T^1\times(0,T'))}, \\\label{yuning:1.12}
       \eps\| \p_s c\|_{L^\infty(0,T';L^2(\T^1\times \R))}+ \| (  \partial_s c, \partial_s \p_\rho c,  \eps \partial_s^2 c ) \|_{L^2(\R\times\T^1\times(0,T'))}    &\leq C\|\p_s g \|_{L^2(\R\times\T^1\times(0,T'))}.
  \end{alignat}
Furthermore, if additionally $\|(1+|\rho|)^kg\|_{L^2(\R\times\T^1\times(0,T'))}$ is finite for some $k\in\N$, then there is some $C_k$ independent of $c,g,\eps, T'$ such that
  \begin{alignat}{1}\nonumber
      &\eps\|  \rho^k c\|_{L^\infty(0,T';L^2(\T^1\times \R))}\\\label{yuning:1.12Weigthed}
      &+ \| ( \rho^k c,\p_\rho (\rho^kc),\eps \rho^k \partial_s c) \|_{L^2(\R\times\T^1\times(0,T'))}  \leq C_k\|(1+|\rho|)^k g\|_{L^2(\R\times\T^1\times(0,T'))}.
  \end{alignat}
 Finally, for any  $1<p\leq \infty$ there is a constant $C_p$ independent of $g,c$, $T'\in (0,T]$ and $\eps\in (0,1]$ such that

  \begin{equation}\label{yuning:1.11}
    \begin{split}
      & \eps^{\frac1p}\sup_{(s,t)\in \T^1\times [0,T']} \|c(\cdot, s,t)\|_{H^1(\R)}\leq C_p \|\sup_{s\in \T^1}\|g(\cdot, s,t)\|_{L^2(\R)}\|_{L^{2p}(0,T')}.
    \end{split}
  \end{equation}
\end{thm}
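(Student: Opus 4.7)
The plan is to construct the solution by standard parabolic theory applied to the singularly perturbed operator $\eps^2(D_t-\Delta_\Gamma)+L$ on $L^2(\R\times\T^1)$, where $L:=-\partial_\rho^2+f''(\theta_0)$ is self-adjoint on $L^2(\R)$ with kernel spanned by $\theta_0'$ (differentiating \eqref{eq:OptProfile1} yields $L\theta_0'=0$). Existence and uniqueness of a smooth $c$ follow by a Galerkin argument on the eigenfunctions of $L$ or by the $H^\infty$-calculus method used in the proof of Theorem~\ref{thm:ParabolicEqOnSurface}. The orthogonality \eqref{eq:OrthogonalitySol} is obtained by taking the $L^2_\rho$-inner product of \eqref{eq:c1} with $\theta_0'$: using self-adjointness of $L$ and $L\theta_0'=0$, together with \eqref{eq:gtheta} and the zero initial datum, one finds that $\phi(s,t):=\int_\R c(\rho,s,t)\theta_0'(\rho)\,\sd\rho$ satisfies $\eps^2(D_t-\Delta_\Gamma)\phi=0$ with $\phi|_{t=0}=0$, hence $\phi\equiv 0$. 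The first line of \eqref{yuning:1.12} then follows by testing \eqref{eq:c1} against $c$ and integrating over $\R\times\T^1\times(0,T')$: the orthogonality permits the spectral coercivity $\langle Lc,c\rangle_{L^2_\rho}\geq c_0\|c\|_{H^1_\rho}^2$, and Young's inequality absorbs the right-hand side.

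The remaining lines of \eqref{yuning:1.12} are obtained by differentiating \eqref{eq:c1} in $\rho$ and in $s$ respectively and running the same energy argument: the extra term $-f'''(\theta_0)\theta_0' c$ from the $\rho$-differentiation, and the commutators $[\partial_s,D_t]$, $[\partial_s,\Delta_\Gamma]$ from the $s$-differentiation, are lower-order and inductively bounded by the preceding lines. The weighted estimate \eqref{yuning:1.12Weigthed} follows by testing with $\rho^{2k} c$; the commutator $[L,\rho^{2k}]$ produces weights of order $2k-2$ that are absorbed by induction on $k$, while the positivity of $f''(\theta_0)$ at $\pm\infty$ yields a weighted spectral gap modulo the already-controlled unweighted norm.

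The main obstacle is the $L^\infty$-type estimate \eqref{yuning:1.11}. The key idea is to work with the pointwise quadratic functional
\[
F(s,t):=\langle L c(\cdot,s,t),c(\cdot,s,t)\rangle_{L^2(\R)},
\]
which by the spectral coercivity satisfies $F\simeq\|c(\cdot,s,t)\|_{H^1(\R)}^2$. Using self-adjointness to compute $\partial_t F=2\langle Lc,\partial_t c\rangle$ and substituting $\eps^2\partial_t c=\eps^2\Delta_\Gamma c-Lc+g-\eps^2\partial_t S\,\partial_s c$ from \eqref{eq:c1}, together with the identity $2\langle Lc,\Delta_\Gamma c\rangle_{L^2_\rho}=\Delta_\Gamma F-2|\nabla S|^2\langle L\partial_s c,\partial_s c\rangle_{L^2_\rho}$ (which comes from differentiating $F$ twice in $s$ and invoking self-adjointness of $L$), yields the pointwise identity
\[
\eps^2(D_t-\Delta_\Gamma)F+2\|Lc\|_{L^2_\rho}^2+2\eps^2|\nabla S|^2\langle L\partial_s c,\partial_s c\rangle_{L^2_\rho}=2\langle g,Lc\rangle_{L^2_\rho}.
\]
The crucial observation is that $\partial_s c$ also belongs to $\{\theta_0'\}^\perp$ (since $\theta_0'$ is $s$-independent), so $\langle L\partial_s c,\partial_s c\rangle_{L^2_\rho}\geq 0$ has the favorable sign, and the spectral bound $\|Lc\|_{L^2_\rho}^2\geq\lambda F$ applies. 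Combined with Cauchy--Schwarz on the right-hand side, this reduces to the scalar parabolic inequality
\[
\eps^2(D_t-\Delta_\Gamma)F+\lambda F\leq\|g(\cdot,s,t)\|_{L^2_\rho}^2.
\]
Setting $N(t):=\sup_{s\in\T^1}F(s,t)$, at any maximizer $s^*(t)$ one has $\partial_s F=0$ and $\Delta_\Gamma F\leq 0$, so Danskin's envelope theorem gives the ODE inequality $\eps^2 N'+\lambda N\leq\sup_s\|g(\cdot,s,t)\|_{L^2_\rho}^2$ with $N(0)=0$. Duhamel's formula and Young's convolution inequality applied to the kernel $\eps^{-2}e^{-\lambda t/\eps^2}$, whose $L^r_t$-norm scales as $\eps^{2/r-2}$, combined with the H\"older duality $1/r+1/p=1$, then yield exactly \eqref{yuning:1.11}. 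The hardest aspects are verifying that the sign-definite identity above survives the lower-order coefficient terms arising from the explicit forms of $D_t=\partial_t+\partial_t S\,\partial_s$ and $\Delta_\Gamma$, and rigorously justifying the Danskin reduction by a weak-derivative argument on $N$, which is standard since $\T^1$ is compact and $F$ is continuous in $s$.
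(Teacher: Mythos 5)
Your proposal is correct and follows essentially the same route as the paper: existence via $H^\infty$-calculus/maximal regularity, orthogonality by pairing with $\theta_0'$, the energy hierarchy by testing with $c$, $\partial_\rho c$, $\partial_s c$, $\rho^{2k}c$, and the $L^\infty$-in-$(s,t)$ estimate via the quadratic functional $F(s,t)=\langle\mathcal{L}c,c\rangle_{L^2_\rho}$ reduced to the scalar parabolic inequality $\eps^2(D_t-\Delta_\Gamma)F+\lambda F\leq\|g\|_{L^2_\rho}^2$, followed by Duhamel and Young's convolution inequality. The one small difference is the final reduction step: you pass to $N(t)=\sup_s F(s,t)$ by a Danskin/envelope argument, whereas the paper constructs the explicit $s$-independent supersolution $w(t)=\eps^{-2}\int_0^t e^{-\kappa(t-\tau)/\eps^2}M(\tau)\,\sd\tau$ and invokes the parabolic comparison principle on $\T^1$ — two equivalent ways of extracting the same convolution bound; your derivation is also slightly more explicit than the paper's about the commutator term $2\eps^2|\nabla S|^2\langle\mathcal{L}\partial_s c,\partial_s c\rangle$ and the factor of $2$ arising from $\Delta_\Gamma F$, both of which have the favourable sign.
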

\begin{proof}
The proof  of existence is similar to the proof of Theorem~\ref{thm:ParabolicEqOnSurface}.
If we denote  $$\mathcal{L} \phi:= -\partial_\rho^2 \phi +f''(\theta_0(\rho))\phi\quad \text{for all }\phi=\phi(\rho)\in H^2(\R),$$  then
equation \eqref{eq:c1}  is equivalent to
  \begin{equation}\label{yuning:1.19}
    \eps^2(\partial_t - \mathcal{A} )c +\mathcal{L}c= g\qquad  \text{on }\T^1\times [0,T]
  \end{equation}
  for some uniformly elliptic operator $\mathcal{A}$ on $\T^1$ with smooth coefficients depending on $(s,t)\in \T^1\times [0,T]$.
    Then $\mathcal{L}$ is a non-negative self-adjoint operator on $L^2(\R)$ with $\ker \mathcal{L} = \operatorname{span} \{\theta_0'\}$,
which is invertible on the orthogonal complement of $\operatorname{span} \{\theta_0'\}$, cf.\ e.g.\ \cite[(1.8)]{DeMottoniSchatzman}. By \cite[Theorem~3.11 and Theorem~4.10]{EscherSeiler} for every $\theta\in (0,\pi)$ there are some $\lambda_0,\lambda_1\geq 0$ such that $\lambda_0+ \mathcal{A}(t_0)$ and $\lambda_1 + \mathcal{L}$ possess a bounded $H^\infty$-calculus on $L^2(\T^1)$, $L^2(\R)$, respectively, and a sector $\C\setminus \Lambda(\theta)$, where $\Lambda(\theta)= \{z\in\C: \arg (z)\in [\theta,2\pi-\theta]\}$. The same is true if $L^2(\T^1)$ and $L^2(\R)$ are replaced $L^2(\T^1\times \R)$ since $\mathcal{A}(t_0)$ does not act on $\rho \in \R$ and $\mathcal{L}$ does not act on $s\in\T^1$. Hence \cite[Theorem~6.3]{KaltonWeis} implies that $\lambda_0+\lambda_1+ \mathcal{A}(t_0)+ \mathcal{L}$ is $\mathcal{R}$-sectorial on $\Lambda(2\theta)$ for every $t_0\in [0,T]$. By choosing $\theta <\frac{\pi}4$, we obtain that $\mathcal{A}(t_0)+ \mathcal{L}$ has maximal $L^p$-regularity on every finite time interval and for every $1<p<\infty$ due to \cite[Theorem~3.2]{DoreVenni}. Now the existence of a unique solution
\begin{alignat*}{1}
c\in H^1(0,T;L^2(\T^1\times\R))\cap L^2(0,T;H^2(\T^1\times\R))  
\end{alignat*}
 follows again from \cite[Theorem~2.7]{ArendtMaxReg}. Furthermore, standard results on parabolic equations imply smoothness.

In order to prove \eqref{eq:OrthogonalitySol}, we multiply  the equation \eqref{yuning:1.19} with $\theta'_0(\rho)$, use \eqref{eq:gtheta}, and the fact that $\mathcal{L}$ is self-adjoint. This yields
\begin{alignat*}{2}
  \eps^2 (\partial_t -\mathcal{A}) \int_{\R}c(\rho,s,t)\theta_0'(\rho)\sd \rho &=0&\quad& \text{on } \T^1\times [0,T],\\
  \left.\int_{\R}c(\rho,s,t)\theta_0'(\rho)\sd \rho\right|_{t=0} &=0 &\quad& \text{on } \T^1.
\end{alignat*}
Hence, \eqref{eq:OrthogonalitySol} follows from the unique solvability of the latter system. Consequently, we also obtain
 \begin{equation}\label{yuning:1.13}
    \int_{\R}c(\rho,s,t) (\mathcal{L}c)(\rho,s,t)\sd \rho\geq C \|c(\cdot,s,t)\|_{H^1(\R)}^2
 \end{equation}
since $\mathcal{L}$ is positive on the orthogonal complement of $\operatorname{span} \{\theta_0'\}$.
Now the proof of \eqref{yuning:1.12} follows in a straight forward manner by testing the equation with $c$, differentiating with respect to $\rho$, $s$ and testing with $\partial_\rho c$,  $\p_s c$, respectively, and  integration by parts. The details are omitted here. {Moreover, in order to prove \eqref{yuning:1.12Weigthed} one uses that
\begin{equation*}
  \eps^2 (D_t -\Delta_\Gamma)(\rho^kc)- \partial_\rho^2 (\rho^k c)+ f''(\theta_0)\rho^k c= -2k \rho^{k-1}\partial_\rho c - k(k-1) \rho^{k-2}c + \rho^kg.
\end{equation*}
Now testing with $\rho^k c$ and using the interpolation inequalities
\begin{equation*}
  \|\rho^{k-1} \partial_\rho c\|_{L^2}\leq \|\rho^k\partial_\rho c\|^{\frac{k-1}k}_{L^2}\|\partial_\rho c\|_{L^2}^{\frac1k},\qquad
  \|\rho^{k-2} c\|_{L^2}\leq \|\rho^kc\|^{\frac{k-2}k}_{L^2}\|c\|_{L^2}^{\frac2k},
\end{equation*}
Young's inequality, and \eqref{yuning:1.12} one derives \eqref{yuning:1.12Weigthed} in the same way as before.}

 Finally, we prove \eqref{yuning:1.11}. We multiply the differential equation for $c$ in \eqref{eq:c1} with $\mathcal{L} c$, integrate  with respect to $\rho$, and obtain
\begin{alignat*}{1}
  \eps^2 (\partial_t {-\mathcal{A}}) \int_{\R}c(\rho,s,t) (\mathcal{L}c)(\rho,s,t)\sd \rho + \int_\R (\mathcal{L}c)^2(\rho,s,t)\sd \rho  = \int_\R g(\rho,s,t) (\mathcal{L}c)(\rho,s,t)\sd \rho\\
  \leq \|g(\cdot,s,t)\|_{L^2(\R)}\|\mathcal{L}c(\cdot,s,t)\|_{L^2(\R)}.
\end{alignat*}
Using the fact that \[\|(\mathcal{L}c)(\cdot,s,t)\|_{L^2(\R)}^2\geq 2\kappa \|(\mathcal{L}^{\frac12}c)(\cdot,s,t)\|_{L^2(\R)}^2= 2\kappa \int_{\R} c(\cdot,s,t) (\mathcal{L}c)(\cdot,s,t) \sd \rho\] for some $\kappa >0$ (since $c(\cdot,s,t)$ is orthogonal to $\theta_0'$)  and Young's inequality we deduce that
\begin{alignat*}{1}
  \eps^2 (\partial_t{-\mathcal{A}}+\frac{\kappa}{\eps^2}) \int_{\R}c(\rho,s,t) (\mathcal{L}c)(\rho,s,t)\sd \rho    &\leq \sup_{ s\in \T^1}\|g(\cdot,s,t)\|_{L^2(\R)}^2=:M(t) .
\end{alignat*}
Since the function $w(t):= \frac{1}{\eps^2}\int_0^t e^{-\tfrac{\kappa}{\eps^2} (t-s)} M(s)\sd s$ satisfies $\eps^2(w'+\tfrac{\kappa}{\eps^2}w)=M$, it solves the equation
\begin{alignat*}{2}
  \eps^2 (\partial_t {-\mathcal{A}}+\tfrac{\kappa}{\eps^2}) w &= M&\quad& \text{on } \T^1\times [0,T_0],\\
  w|_{t=0} &=0&\quad& \text{on } \T^1.
\end{alignat*}
Thus, by the comparison principle and H\"{o}lder's inequality, we have
\begin{equation*}
  \int_{\R}c(\rho,s,t) (\mathcal{L}c)(\rho,s,t)\sd \rho \leq w(t)\leq  C_p \frac{\|M\|_{L^p(0,T')}}{\eps^{2/p}}\quad  \text{for all }t\in [0,T'] ,
\end{equation*}
which combined with \eqref{yuning:1.13} implies \eqref{yuning:1.11}.
\end{proof}

\subsection{Spectral Estimate}

In this subsection we assume that $\Omega\subseteq \R^d$ is a bounded domain and $\Gamma_t\subseteq \Om$, $t\in [0,T_0]$, $T_0>0$, are given smoothly evolving closed and compact $C^\infty$-hypersurfaces, dividing $\Om$ in disjoint domains $\Omega^+(t)$ and $\Omega^-(t)$ as before, and
\begin{alignat*}{2}
    c_A(x)&= c_{A,0}(x)+ \eps^2c_{A,2+}(x), &\quad& \text{for all } x\in \Om,\\
    c_{A,0}(x)&=\zeta\circ d_\Gamma \theta_0(\rho)+(1-\zeta\circ d_\Gamma)\left(
  \chi_{\Omega^+(t)}-\chi_{\Omega^-(t)} \right)&\quad& \text{for all } x\in \Om
\end{alignat*}
where $\zeta$ satisfies \eqref{yuning:1.34}.
Moreover, we assume that $\dist(\Gamma_t,\partial\Omega)> 2\delta$ for all $t\in [0,T_0]$.
In this subsection, for given  continuous functions $(\tilde{h}_\eps)_{0<\eps< 1} \colon \G\to \R$ with $\Gamma:=\bigcup_{t\in[0,T_0]} \Gamma_t \times {\{t\}}$, we define the stretched variable $\rho$ by
\begin{equation*}
  \rho = \frac{d_\Gamma(x,t)}\eps - \tilde{h}_\eps (P_{\Gamma_t}(x),t).
\end{equation*}
 Finally, we assume that
\begin{equation}\label{eq:BoundhcA}
\sup_{\eps \in (0,1)}\left(\sup_{(p,t)\in \Gamma} |\tilde{h}_\eps (p,t)|+ \sup_{x\in\Om, t\in[0,T_0]} |c_{A,2+} (x,t)|\right)\leq M
\end{equation}
for some $M>0$.  We remark that later we will apply the results of this subsection to $\tilde{h}_\eps (p,t)= h_\eps(X_0^{-1}(p,t),t)$ for some $h_\eps\colon \T^1\times [0,T_0]\to \R$. But for the following proof the present formulation is more convenient. Moreover, the result holds true in any dimension $d\geq 2$.

The following spectral estimate will be a key ingredient for the proof of convergence.
\begin{thm}\label{thm:Spectral}
Let $c_A$ be as above and \eqref{eq:BoundhcA} be satisfied for some $M>0$. Then there are some $C,\eps_0>0$, independent of $\tilde{h}_\eps, c_A$, such that for every $\psi\in H^1(\Om)$, $t\in[0,T_0]$, and $\eps\in (0,\eps_0]$ we have
  \begin{equation*}
    \int_{\Om}\left(|\nabla\psi(x)|^2+ \eps^{-2}f''(c_A(x,t))\psi^2(x)\right)\sd x\geq -C\int_\Om \psi^2 \sd x + \int_{\Om\setminus \Gamma_t(\delta)} |\nabla \psi|^2\sd x +  \int_{\Gamma_t({\delta})} |\nabla_\btau \psi|^2\sd x.
  \end{equation*}
\end{thm}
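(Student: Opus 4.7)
The plan is to reduce the claim to a one-dimensional spectral estimate along normal fibers of $\Gamma_t$, where the ``bad direction'' of $\mathcal{L}_\eps$ is concentrated, and then invoke the spectral estimate of Chen~\cite{ChenSpectrumAC}. Using the orthogonal decomposition $|\nabla\psi|^2=|\nabla_\btau\psi|^2+|\partial_\no\psi|^2$ on $\Gamma_t(\delta)$, subtracting the desired positive terms $\int_{\Om\setminus\Gamma_t(\delta)}|\nabla\psi|^2$ and $\int_{\Gamma_t(\delta)}|\nabla_\btau\psi|^2$ from both sides, the claim is equivalent to
\begin{equation*}
\int_{\Gamma_t(\delta)}|\partial_\no\psi|^2\,\sd x+\eps^{-2}\int_\Om f''(c_A)\psi^2\,\sd x\geq -C\|\psi\|_{L^2(\Om)}^2.
\end{equation*}
Outside $\Gamma_t(\delta)$ we have $|d_\Gamma(\cdot,t)|\geq\delta$, so the exponential decay \eqref{yuning:decayopti}, the cut-off structure of $c_{A,0}$, and the bound \eqref{eq:BoundhcA} yield $f''(c_A)\geq c_0>0$ uniformly for $\eps\leq\eps_0$ small; the corresponding bulk contribution to the potential integral is therefore non-negative and can be dropped. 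It suffices to prove this inequality with both integrals restricted to $\Gamma_t(\delta)$.

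In the tubular neighborhood I would pass to the coordinates $x=p+r\no_{\Gamma_t}(p)$ with $(r,p)\in(-\delta,\delta)\times\Gamma_t$ and Jacobian $J(r,p,t)$ bounded above and away from zero. Setting $\tilde\psi(r,p)=\psi(p+r\no_{\Gamma_t}(p))$ and applying Fubini, the reduced estimate follows from the uniform-in-$(p,t)$ one-dimensional inequality
\begin{equation*}
\int_{-\delta}^{\delta}\bigl(|\partial_r\tilde\psi|^2+\eps^{-2}f''(c_A)\tilde\psi^2\bigr)J\,\sd r\geq -C\int_{-\delta}^{\delta}\tilde\psi^2 J\,\sd r.
\end{equation*}
Rescaling $\rho=r/\eps-\tilde{h}_\eps(p,t)$, $v(\rho)=\tilde\psi(\eps(\rho+\tilde{h}_\eps),p)$, and using $c_A=\theta_0(\rho)+\eps^2 c_{A,2+}$ near the interface (modulo exponentially small contributions coming from $\zeta\circ d_\Gamma$ on $|d_\Gamma|\geq\delta$), the above is equivalent, after multiplication by $\eps$, to
\begin{equation*}
\int_{I_{\eps,p}}\bigl(|v'|^2+f''(\theta_0)v^2\bigr)\hat J\,\sd\rho\geq -C\eps^2\int_{I_{\eps,p}}v^2\hat J\,\sd\rho,
\end{equation*}
where $\hat J(\rho,p,t)=J(\eps(\rho+\tilde{h}_\eps),p,t)$ and $I_{\eps,p}\uparrow\R$; the $O(\eps^2)$ potential perturbation from $c_{A,2+}$ is absorbed on the right by \eqref{eq:BoundhcA}.

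This rescaled inequality is the content of the ground-state estimate for $\mathcal{L}_0:=-\partial_\rho^2+f''(\theta_0)$: writing $v=\alpha\theta_0'+v^\perp$ with $v^\perp$ $L^2(\R)$-orthogonal to $\theta_0'$, the identities $\mathcal{L}_0\theta_0'=0$ and the spectral gap $\mathcal{L}_0\geq\nu_*>0$ on $(\theta_0')^\perp$ yield $\int(|v'|^2+f''(\theta_0)v^2)\,\sd\rho\geq 0$. Since $\hat J$ is smooth with $\hat J(0,p,t)=J(0,p,t)>0$, which is exactly where the ground state $\theta_0'$ is concentrated (by its exponential decay), the slow variation of $\hat J$ over the effective support of the ground state contributes only a $-C\eps^2\|v\|_{L^2(\hat J\sd\rho)}^2$ error, of the required order. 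This is essentially the argument of~\cite{ChenSpectrumAC}, adapted to the present setting by noting that the translation $\tilde{h}_\eps$ does not affect the spectral gap and that the perturbation $c_{A,2+}$ enters only through its $L^\infty$-norm.

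\textbf{Main obstacle.} The delicate step is making the perturbation analysis on $(\theta_0')^\perp$ robust against the family $(\tilde{h}_\eps)_{\eps\in(0,1)}$ and against $c_{A,2+}$, so that both $C$ and $\eps_0$ can be chosen uniformly under \eqref{eq:BoundhcA}: the shift by $\tilde{h}_\eps$ translates the effective ground state without changing its $L^2$-norm or the gap, and the $O(\eps^2)$ and exponentially small deviations of $f''(c_A)$ from $f''(\theta_0)$ must be estimated in $L^\infty$ only. This is the essential content of \cite{ChenSpectrumAC}; the novelty specific to this theorem is the clean extraction of $\int_{\Om\setminus\Gamma_t(\delta)}|\nabla\psi|^2$ and $\int_{\Gamma_t(\delta)}|\nabla_\btau\psi|^2$, which comes almost for free once one reduces to the one-dimensional normal problem as above.
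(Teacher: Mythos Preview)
Your approach is essentially the same as the paper's: split off the tangential gradient and the outer gradient, use $f''(c_A)\geq 0$ away from the interface, pass to tubular coordinates, rescale in the normal variable, and reduce to the one-dimensional weighted spectral estimate that is then handled by the argument of \cite{ChenSpectrumAC}.

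The one place where the paper is more explicit is the treatment of the Jacobian weight $\hat J$. Rather than arguing heuristically that ``slow variation of $\hat J$'' costs only $O(\eps^2)$, the paper performs the Liouville-type substitution $\hat\Psi=\tilde J_t^{1/2}\Psi$, which converts the weighted quadratic form $\int(|\partial_z\Psi|^2+f''(\theta_0)\Psi^2)\tilde J_t\,\sd z$ into the flat-measure form $\int(|\partial_z\hat\Psi|^2+f''(\theta_0)\hat\Psi^2)\,\sd z$ plus a bounded potential $\tilde q$ (built from $\partial_r J$, $\partial_r^2 J$, and the $\eps^2 c_{A,2+}$ perturbation) and an explicit boundary term at $z=\pm 3/(4\eps)$. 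This makes transparent why the Jacobian costs only $O(1)$ in the original variables (equivalently $O(\eps^2)$ after your rescaling): the first-order term $\partial_z\tilde J_t\sim\eps$ is removed by an integration by parts and what remains involves $\partial_z^2\tilde J_t\sim\eps^2$. Your decomposition $v=\alpha\theta_0'+v^\perp$ would reach the same conclusion, but you would need exactly this two-integrations-by-parts computation on the $\alpha^2$ term and careful control of the cross terms; the substitution does this automatically. After that, both the paper and you defer the final lower bound to \cite[Proof of Theorem~2.3]{ChenSpectrumAC}.
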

\begin{proof}
In the sequel, we shall fix $t\in [0,T_0]$.
For sufficiently small $\eps_0\in (0,1]$ and $\eps\in (0,\eps_0]$, consider the set
\begin{equation*}
\Gamma_\eps:=\{x\in\Gamma_t(2\delta)\,|\,d_\Gamma(x,t)\in I_\eps^p, p=P_{\Gamma_t}(x) \}
\end{equation*}
 where $I_\eps^p$ is defined via
\begin{equation*}
I_{\eps}^p:=(-\tfrac{3\delta}4+\eps \tilde{h}_\eps(p,t),\tfrac{3\delta}4+\eps \tilde{h}_\eps(p,t)).
\end{equation*}
As in \cite{ChenSpectrumAC} we assume for notational simplicity that $\delta=1$. Then
it is evident that $x\in \Gamma_\eps$ is equivalent to $\rho\in I_\eps:=( -\tfrac 3{4\eps},\frac 3{4\eps})$.  If we denote $\tilde{\psi}(r,p)=\psi(x),\tilde{c}_A(r,p,t)=c_A(x,t)$, then
\begin{equation}\label{problem1}
\begin{split}
    &\int_{\Om}\left(|\nabla\psi(x)|^2 + \eps^{-2}f''(c_A(x,t))\psi^2(x)\right)\sd x\\
 &\ge \int_{\Gamma_t}\int_{I_\eps^p}\left(|\p_r \tilde{\psi}(r,p)|^2 + \eps^{-2}f''(\tilde{c}_A(r,p,t))\tilde{\psi}^2(r,p)\right)J_t(r,p)\sd r\sd p\\
&\quad + \int_{\Om\setminus {\Gamma_\eps}} |\nabla \psi|^2\sd x +  \int_{{\Gamma_\eps}} |\nabla_\btau \psi|^2\sd x,
\end{split}
\end{equation}
  since $f''(c_A(x,t))\ge 0$ for all $x\in {\Om\setminus\Gamma_t(\frac12)\supseteq \Om \setminus \Gamma_\eps}$, $t\in [0,T_0]$, and $0<\eps\leq\eps_0$ for sufficiently small $\eps_0$. {Moreover, since $|\nabla\psi(x)|^2\geq |\nabla_\btau \psi(x)|^2$ for all $x\in \Gamma_t(1)\setminus \Gamma_\eps$, and $\Gamma_t(1)\supseteq \Gamma_\eps$ for sufficiently small $\eps$, we have
    \begin{equation*}
      \int_{\Om\setminus {\Gamma_\eps}} |\nabla \psi|^2\sd x +  \int_{{\Gamma_\eps}} |\nabla_\btau \psi|^2\sd x\geq
\int_{\Om\setminus \Gamma_t(1)} |\nabla \psi|^2\sd x +  \int_{\Gamma_t(1)} |\nabla_\btau \psi|^2\sd x.
    \end{equation*}}
 Let us fix $(p,t)\in\Gamma$ and perform a change of variables in $r$:
 \begin{align*}
 F_\eps^p: I_\eps\rightarrow I_\eps^p,\ z\mapsto r=\eps (z + \tilde h_\eps(p,t)).
 \end{align*}
 This gives
 \begin{align*}
&\int_{I_\eps^p}\left(|\p_r\tilde\psi(r,p)|^2 + \eps^{-2}f''(\tilde{c}_A(r,p,t))\tilde\psi^2(r,p)\right) J_t(r,p)\sd r\\
& = \int_{I_\eps}\left(\eps|\p_r\tilde\psi(F_\eps^p(z),p)|^2+ \eps^{-1}f''(\tilde{c}_A(F_\eps^p(z),p,t))\,\tilde\psi^2(F_\eps^p(z),p)\right) J_t(F_\eps^p(z),p)\sd z\\
& =\eps^{-2} \int_{I_\eps}\left(|\p_z \Psi(z,p)|^2 + f''(\tilde{c}_A(F_\eps^p(z),p,t))\,\Psi^2(z,p)\right) \tilde J_t(z,p)\sd z,
  \end{align*}
where $\Psi(z,p):=\sqrt\eps\tilde\psi(F_\eps^p(z),p)$ and $\tilde J_t(z,p):=J_t(F_\eps^p(z),p)$. Furthermore, with $\hat\Psi:=\tilde J_t^{\frac12} \Psi$, we compute
   \begin{align*}
&\int_{I_\eps^p}\left(|\p_r\tilde\psi(r,p)|^2 + \eps^{-2}f''(\tilde{c}_A(r,p,t))\tilde\psi^2(r,p)\right) J_t(r,p)\sd r\\
& =\eps^{-2}\int_{I_\eps}(|\p_z\hat\Psi(z,p)|^2 + f''(\theta_0(z)+\eps^2\hat{c}_{A,2+}(z,p,t))\,\hat\Psi^2(z,p))\sd z\\
&\quad -\eps^{-2}\int_{I_\eps} 2\partial_z\Psi(z,p)\tilde J_t^{\frac12}(z,p)\Psi(z,p)\partial_z\tilde J_t^{\frac12}(z,p) + \Psi^2(z,p)(\partial_z\tilde J_t^{\frac12}(z,p))^2\sd z\\
& =\eps^{-2}\int_{I_\eps}(|\p_z\hat\Psi(z,p)|^2 + f''(\theta_0(z)+\eps^2\hat{c}_{A,2+}(z,p,t))\,\hat\Psi^2(z,p))\sd z\\
&\quad -\eps^{-2}\int_{I_\eps}\frac12 \partial_z\Psi^2(z,p)\partial_z\tilde J_t(z,p) + \frac14\Psi^2(z,p)(\partial_z\tilde J_t(z,p))^2/\tilde J_t(z,p)\sd z\\
& =\eps^{-2}\int_{I_\eps}(|\p_z\hat\Psi(z,p)|^2 + f''(\theta_0(z)+\eps^2\hat{c}_{A,2+}(z,p,t))\,\hat\Psi^2(z,p))\sd z\\
&\quad +\eps^{-2}\int_{I_\eps}\frac14 \hat\Psi^2(z,p)\left[2\partial_z^2\tilde J_t(z,p)/\tilde J_t(z,p) -  (\partial_z\tilde J_t(z,p))^2/\tilde J_t^2(z,p)\right]\sd z\\
&\quad -\eps^{-2}\frac12 \hat\Psi^2(z,p)\partial_z\tilde J_t(z,p)/\tilde J_t(z,p)\Big|_{z=-{3/4\eps}}^{z={3/4\eps}},
\end{align*}
where we used integration by parts in the last step. We conclude that
 \begin{align*}
&\int_{I^p_\eps}\left(|\p_r\tilde\psi(r,p)|^2 + \eps^{-2}f''(\tilde{c}_{A}(r,p,t))\tilde{\psi}^2(r,p)\right) J_t(r,p)\sd r\\
& =\eps^{-2}\int_{I_\eps}(|\p_z\hat\Psi(z,p)|^2 + f''(\theta_0(z))\,\hat\Psi^2(z,p))\sd z\\
&\quad + \int_{I_\eps}\tilde q(z,p,t)\hat\Psi^2(z,p)\sd z - \eps^{-1}\frac12 \hat\Psi^2(z,p)(\partial_r J_t)(F_\eps^p(z),p)/J_t(F_\eps^p(z),p)\big|_{z=-{3/4\eps}}^{z={3/4\eps}},
  \end{align*}
where
 \begin{align*}
\tilde q(z,p,t)&:=\eps^{-2}\big(f''(\theta_0(z)+\eps^2\hat{c}_{A,2+}(z,p,t))-f''(\theta_0(z))\big)\\
&\quad+\frac14(2(\partial_r^2J_t)(F_\eps^p(z),p)/J_t(F_\eps^p(z),p) - ((\partial_r J_t)(F_\eps^p(z),p))^2/J_t^2(F_\eps^p(z),p)).
 \end{align*}
Since $\Gamma$ is smooth and compact and due to \eqref{eq:BoundhcA}, we have
 \begin{align*}
 |\tilde q(z,p,t)|\le \sup_{|\xi|\leq 2M+2}|f'''(\xi)|\sup_{(x,t)\in \Omega}|c_{A,2+}(x,t)| + C\le C'
\end{align*}
for all $z\in\R$, $(p,t)\in \Gamma$, where $M$ is the uniform bound in \eqref{eq:BoundhcA}.
Now, we can proceed as in \cite[Proof of Theorem 2.3]{ChenSpectrumAC} to show that
\begin{align*}
&\int_{\Gamma_t}\int_{I_\eps^p}\left(|\p_r\tilde\psi(r,p)|^2 + \eps^{-2}f''(\tilde{c}_A(r,p,t))\tilde\psi^2(r,p)\right) J_t(r,p)\sd r\sd p\\
& \ge -\frac{1}{C}\int_{\Gamma_t}\int_{I_\eps}\Psi^2(z,p)\, \tilde J_t(z,p)\sd z\sd p\geq -\frac1C\int_{\Gamma_t(2\delta)}\tilde\psi^2(r,p)\sd r\sd p
\geq - \frac1C \int_\Omega \psi^2(x)\sd x
  \end{align*}
for some $C>0$ independent of $\psi, t\in[0,T_0]$, and $\eps\in (0,\eps_0]$. Altogether we obtain the claimed estimate.
\end{proof}


\section{Approximation of the Stokes System}\label{sec:ApproxVelocity}

\subsection{The Leading Part of the Velocity}

The aim of this section is to construct an approximation of  the following system
\begin{equation} \label{train:stokes}
  \left\{
 \begin{array}{rl}
   -\Delta \tilde{\ve}_A +\nabla \tilde{p}_A &= -\eps \Div (\nabla c_{A,0}\otimes \nabla c_{A,0}),\\
 \Div \tilde{\ve}_A &=0,\\
 \Second{\tilde{\ve}_A|_{\partial\Omega}} &\Second{=0}
 \end{array}
  \right.
\end{equation}
with the aid of the method of formally matched asymptotics,
where
\begin{equation}\label{yuning:ca1}
  c_{A,0}(x,t)=\zg c_0^{in}+(1-\zg )\(c_{+}^{out}
  \chi_+ +c_{-}^{out}\chi_- \)~\text{with}~c_0^{in}(\rho):=\theta_0(\rho)
\end{equation}
and $c_{\pm}^{out}=\pm 1$, $\chi_\pm=\chi_{\O^\pm(t)}(x)$,
\begin{equation}\label{yuning:1.40}
  \begin{split}
    \rho(x,t)  &:= \frac{d_{\G}(x,t)}\eps -h_\eps (S(x,t),t),\\
  h_\eps(s,t)&:=h_1(s,t)+\eps h_2(s,t),
  \end{split}
\end{equation}
  and  $\zeta$ is the cutoff function defined by \eqref{yuning:1.34}. Let  $X_T$ be the function space defined by  \eqref{yuning:1.31}. Then we assume that $h_1,h_2=h_{2,\eps}$ satisfy
    \begin{equation}\label{eq:h2epsBound}
    h_1\in C^\infty(\T^1\times [0,T_0]),\qquad  \sup_{0<\eps\leq \eps_0}\|h_{2,\eps}\|_{X_{T_\eps}}\leq M,
    \end{equation}
         for some $\eps_0\in (0,1)$, $M\geq 1$, $T_\eps\in (0,T_0]$ for all $\eps \in (0,\eps_0]$.
Note that $h_1$ is independent of $\eps$ but $h_2=h_{2,\eps}$ does.

In the following we construct approximate solutions $(\ve_A, p_A)$ to   $(\tilde{\ve}_A,\tilde{p}_A)$  as
\begin{equation}\label{yuning:chap2glue1}
  \begin{split}
     \ve_A(x,t)&:=\zg \ve_A^{in}(\rho,x,t)+(1-\zg )\left(\ve_A^{+}(x,t)
  \chi_+ +\ve_A^-(x,t)\chi_-\right),\\
  p_A(x,t)&:=\zg p_A^{in}(\rho,x,t)+(1-\zg )
  \left(p_A^{+}(x,t)
  \chi_+ +p_A^{-}(x,t)\chi_- \right),\\
  \end{split}
\end{equation}
where $\rho$ is understood via \eqref{yuning:1.40}
 and $\ve_A^\pm, p_A^\pm, \ve_A^{in}, p_A^{in}$  are defined by
 \begin{equation}\label{yuning:outer1}
  \begin{split}
     \ve_A^\pm(x,t)&:=\ve^\pm_0( x,t)+\eps \ve_1^\pm(x,t)+\eps^2\ve^\pm_2(x,t),\\
p_A^\pm(x,t)&:=p^\pm_0(x,t)+\eps p^\pm_1(x,t),
  \end{split}
\end{equation}
and
\begin{equation}\label{yuning:innerd}
  \begin{split}
     \ve_A^{in}(\rho,x,t)&:=\ve_0(\rho,x,t)+\eps \ve_1(\rho,x,t)+\eps^2\ve_2(\rho,x,t),\\
p_A^{in}(\rho,x,t)&:=\eps^{-1}p_{-1}(\rho,x,t)+p_0(\rho,x,t)+\eps p_1(\rho,x,t).
  \end{split}
\end{equation}
After determining $\ve_j^\pm \colon \Omega^\pm(t)\to\R^2$ and $p_j^\pm\colon \Omega^\pm(t) \to \R$ in the sequel, these functions will be extended to smooth functions (denoted again by $\ve_j^\pm$ and $p_j^\pm$) on $\Om\times [0,T_0]$ such that $\Div \ve_j^\pm=0$ in $\Om\times [0,T_0]$ \Second{and $\ve_j^\pm|_{\partial\Omega}=0$}. These extensions can e.g. be obtained by using the extension operator in \cite[Chapter VI, \S 3]{Stein:SingInt} and correcting the divergence of the extension to obtain $\Div \ve_j^\pm=0$ with the aid of the Bogovskii operator, cf.~e.g.~\cite[Chapter III, Theorem~3.2]{Galdi}.

Here $\ve^\pm_j, p^\pm_j, \ve_j$, and $p_j$ are defined as follows: $\ve_0^\pm:= \ve|_{\Omega^\pm(t)}, p_0^\pm := p|_{\Omega^\pm(t)}$, where $(\ve,p)$ is the smooth solution of \eqref{eq:Limit1}-\eqref{eq:Limit4}. Hence $(\ve_0^\pm,p^\pm_0)$ solve
  \begin{subequations}\label{yuning:firstlimit}
  \begin{align}
    -\Delta \ve_0^\pm +\nabla p_0^\pm &= 0 &\qquad &\text{in }\Omega^\pm (t), t\in (0,T_0),\\
    \Div \ve_0^\pm &= 0 &\qquad &\text{in }\Omega^\pm (t), t\in (0,T_0),\\
    [2D  \ve_0^{{\pm}}-p_0^{{\pm}}\mathbf{I}]\no_{\Gamma_t} &= -\sigma H\no_{\Gamma_t} && \text{on }\Gamma_t, t\in (0,T_0),\\
     [\ve_0^{{\pm}}]&=0 && \text{on }\Gamma_t, t\in (0,T_0),\\
\Second{     \ve_0^{-}|_{\partial\Omega}} &\Second{=0} &&\Second{\text{on }\partial\Omega\times (0,T_0),}
  \end{align}
\end{subequations}
where we employed the notation
  \begin{equation}\label{yuning:1.59}
    [f^\pm]= f^+|_{\Gamma_t}-f^-|_{\Gamma_t} \qquad \text{on }\Gamma_t.
  \end{equation}

In order to determine $\ve_j, p_j$ satisfying suitable matching conditions for all $(x,t)\in \Gamma(3\delta)$, we use the ansatz
\begin{equation}\label{yuning:1.41}
  \begin{split}
    \ve_j(\rho,x,t)&= \tilde{\ve}_j(\rho,x,t) + \eta(\rho)d_{\G }(x,t) \hat{\ve}_j(x,t), \qquad j=0,1,2,\\
  p_j(\rho,x,t)&= \tilde{p}_j(\rho,x,t) + \eta(\rho)d_{\G }(x,t) \hat{p}_j(x,t), \qquad j=-1,0,1,
  \end{split}
\end{equation}
where $\eta(\rho):= -1+\frac{2}\sigma\int_{-\infty}^\rho \theta_0'(s)^2 \sd s$ for all $\rho\in\R$. Then $\eta\colon \R\to \R$ is a smooth, non-decreasing, and odd function such that
\begin{equation}  \label{eq:eta}
  |\eta(\rho)\mp 1|\leq Ce^{-\alpha|\rho|} \quad \text{if }\rho \gtrless 0
\end{equation}
for some $\alpha>0$.
Now we define for all $(\rho,x,t)\in \R\times \Gamma(3\delta)$
\begin{subequations}\label{yuning:summery1}
  \begin{align}
     \tilde{p}_{-1}(\rho,x,t)&=  p_{-1}(\rho,x,t)=-(\theta_0'(\rho))^2,~\hat{p}_{-1}\equiv 0 ,~ p^\pm_{-1}\equiv 0,\\
    \tilde{\ve}_0(\rho,x,t) &= \tfrac12 (\ve_0^+(x,t) +\ve_0^-(x,t)),\quad   \hat{\ve}_0(x,t) = \tfrac1{2d_{\G}} (\ve_0^+(x,t) -\ve_0^-(x,t)),\\
  \tilde{p}_0(\rho,x,t) &= \tfrac 12(p_0^+(x,t)+p_0^-(x,t)) -\tfrac \sigma 2\Delta d_\G(x,t) \eta(\rho),\\
    \hat{p}_0(x,t) &= \tfrac1{2d_\G}\(p_0^+(x,t)-p_0^-(x,t)  +\sigma \Delta d_{\G}(x,t)\),
  \end{align}
\end{subequations}
where $(\ve_1^\pm,p_1^\pm)$ is the solution of the linear two-phase Stokes system
\begin{subequations}\label{yuning:secondlimit}
  \begin{align}
    -\Delta \ve_1^\pm +\nabla p_1^\pm &= 0 &&\text{in }\Omega^\pm (t), t\in (0,T_0),\\
    \Div \ve_1^\pm &= 0 &&\text{in }\Omega^\pm (t), t\in (0,T_0),\\\nonumber
      [2D \ve_1^{\pm}-p_1^{\pm}\mathbf{I}]\no_{\Gamma_t}&=2 X_0^{\ast,-1}(h_1)(\no_{\Gamma_t}\hat{p}_0-2\p_\no\hat{\ve}_0)\\
&\quad -\sigma X_0^{\ast,-1}(\Delta_\Gamma h_1)\no_{\Gamma_t}  && \text{on }\Gamma_t, t\in (0,T_0),\\
    [\ve_1^{\pm}]&=0 && \text{on }\Gamma_t, t\in (0,T_0),\\
  \Second{   \ve_1^-}&\Second{=0} &&\Second{\text{on }\p\Omega}.
  \end{align}
\end{subequations}

Here and in the following,  for a function $f$ that vanishes on $\Gamma$, $\frac1{d_\Gamma}f(x,t)$ is understood as $$\lim_{d_\Gamma\to 0} \frac1{d_\Gamma}f(x,t)=\partial_{\no}f(x,t),\quad \forall (x,t)\in \G.$$
Moreover, we define
\begin{equation}\label{eq:DefnV1}
  \tilde{\ve}_1(\rho,x,t)=\tfrac 12(\ve^+_1(x,t)+\ve^-_1(x,t)),\quad
  \hat{\ve}_1(x,t)=\tfrac 1{2d_{\G}}(\ve^+_1(x,t)-\ve^-_1(x,t)).
\end{equation}
Furthermore $(\ve_2,p_1)$ and $\ve^\pm_2$ are determined by
\begin{subequations}\label{yuning:summery3}
  \begin{align}
    \ve_{2,\no}^-&\equiv0~\text{on}~\Om\times [0,T_0],\\
 \ve_{2,\no}^+(x,t)&=-\p_\no\hat{\ve}_{0,\no}(x,t)
  \int_{\R}(z+h_1(S(x,t),t))^2\eta'(z)\sd z,\\
   \tilde{\ve}_{2,\no}(\rho,x,t)&=\ve^-_{2,\no}(x,t)
   +d_{\G}\hat{\ve}_{2,\no}(x,t)
  -\tfrac{\hat{\ve}_{0,\no}(x,t)}{d_{\G}}\int_{-\infty}^\rho(z+h_1(S(x,t),t))^2\eta'(z)\sd z,\\
  \hat{\ve}_{2,\no}(x,t)&=\tfrac 1{2d_{\G}}\(\tfrac{\hat{\ve}_{0,\no}(x,t)}{d_{\G}}\int_{\mathbb{R}}
  (z+h_1(S(x,t),t))^2\eta'(z)\sd z+\ve_{2,\no}^+(x,t)-\ve_{2,\no}
  ^-(x,t)\)
  \end{align}
\end{subequations}
for all $(x,t)\in \Gamma(3\delta)$, $\rho \in\R$,
and 
\begin{subequations}\label{yuning:summery2}
  \begin{align}\label{yuning:summery2a}
   &\tilde{\ve}_{2,\btau}(\rho,x,t)=- \p_\no\hat{\ve}_{0,\btau}(P_{\G_t}(x),t)
   \int_{-\infty}^\rho\int_{-\infty}^y\((z^2-h_1^2 )\eta''(z)+4z\eta'(z)\)\sd z\sd y,\\
   &\hat{\ve}_{2,\btau}(x,t)=0,\quad \ve_{2,\btau}(\rho,x,t)=\tilde{\ve}_{2,\btau}(\rho,x,t),\quad \ve_{2,\btau}^\pm(x,t)=\lim_{\rho \to \pm\infty} \tilde{\ve}_{2,\btau}(\rho,x,t),
  \end{align}
\end{subequations}
where $h_1$ is understood as  a function of $(x,t)$ via  $h_1=h_1(S(x,t),t)$.  Moreover, $\ve_2^\pm$ is extended smoothly to $\Om\times [0,T_0]$ with $\ve_{2,\btau}^-|_{\partial\Omega}=0$.
Finally we define the pressure terms by
\begin{subequations}\label{yuning:1.61}
\begin{align}
&\tilde{p}_1 (\rho,x,t) =\tfrac12\left( p_1^+(x,t)+p_1^-(x,t)-\int_{\mathbb{R}}  a_1( z,x,t)\sd  z\right) + \int_{-\infty}^\rho a_1( z,x,t)\sd  z,\\
   &\hat{p}_1(x,t) = \tfrac1{2d_{\G}}\left( p_1^+(x,t)-p_1^-(x,t) - \int_{\mathbb{R}}  a_1( z,x,t)\sd  z\right),\\
   &p_1(\rho,x,t) = \tfrac{1+\eta(\rho)}{2}\left( p_1^+(x,t)-p_1^-(x,t) - \int_{\mathbb{R}}  a_1( z,x,t)\sd  z\right)+ \int_{-\infty}^\rho a_1( z,x,t)\sd  z,
\end{align}
\end{subequations}
where   $a_1$ is defined for all $(\rho,x,t)\in \R\times\Gamma(3\delta)$ by
\begin{equation} \label{yuning:defa1'}
  \begin{split}
     &a_1(\rho,x,t)\\
     =&
\p_\rho^2 \tilde{\ve}_{2,\no}(\rho,x,t)-\eta''(\rho)(h_2\hat{\ve}_{0,\no}
(x,t)+(\rho+h_1)
\hat{\ve}_{1,\no}(x,t))-\eta'(\rho)\hat{p}_0(\rho+h_1) \\&-2\eta'(\rho)(\nabla^\Gamma h_1\cdot\nabla_x(d_{\G} \hat{\ve}_0(x,t)))\cdot \no_{\Gamma_t} \\
     &+2\eta'(\rho)\no_{\Gamma_t}\cdot\nabla_x(d_{\G}\hat{\ve}_{1,\no}(x,t)) +(h_1 +\rho)\hat{\ve}_{0,\no}(x,t)\eta'(\rho)\Delta_x d_{\G} \\
     &{+\tfrac{\hat{\ve}_{0,\no}(x,t)}{d_{\G}}\p_\rho
     \((\rho+h_1)^2\eta'(\rho)\)
     -\hat{\ve}_{0,\no}(x,t)h_2\eta''(\rho)+2(\rho+h_1)
     \p_\no\hat{\ve}_{0,\no}(x,t)\eta'(\rho)}\\
     &-\p_\rho(\theta_0'(\rho))^2|\nabla^\Gamma h_1 |^2+(\theta_0'(\rho))^2\Delta^\Gamma h_1.
  \end{split}
\end{equation}
In the above formula, $h_2$ should be interpreted as a function of $(x,t)$ via $h_2=h_2(S(x,t),t)$. The motivation for defining $a_1$ can be seen  at \eqref{yuning:defa1} in the appendix.

\begin{rem}
  From the construction above one observes that $\ve_0$ and $p_0$ only depend on the solutions $(\ve_0^\pm, p_0^\pm)$ of the limit sharp interface problem \eqref{yuning:firstlimit} or equivalently \eqref{eq:Limit1}-\eqref{eq:Limit4}. Moreover, $\ve_1$ and $\ve_2$   depend only on the choice of $h_1$ and $p_1$  depends on   $ h_1,h_{2,\eps} $.
\end{rem}
\begin{lem}\label{yuning:lip4}
Under the assumption \eqref{eq:h2epsBound}, the functions $(\ve_0^\pm,p^\pm_0)$, $(\ve_1^\pm,p^\pm_1)$, $(\ve_{2,\btau}^\pm,\ve_{2,\no}^\pm)$ and
\begin{equation*}
\{\tilde{p}_{-1},\hat{p}_{-1},\tilde{p}_{0},\hat{p}_{0}, \tilde{\ve}_0,\hat{\ve}_0,\tilde{\ve}_1,\hat{\ve}_1, \tilde{\ve}_2,\hat{\ve}_2\}
\end{equation*}
can be defined through   \eqref{yuning:firstlimit}, \eqref{yuning:summery1}, \eqref{yuning:secondlimit}, and \eqref{yuning:summery2} and they are all smooth.  The functions $\tilde{p}_1, \hat{p}_1$ are  continuous with respect to $(\rho,x,t)\in \R\times \Gamma(3\delta)$ and  continuously differentiable with respect to $\rho\in \R, x\in \Gamma_t(3\delta)$ for almost every $t\in (0,T_\eps)$.
 \end{lem}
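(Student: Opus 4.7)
The plan is to verify the claimed regularity object by object in the order they are defined, relying on two elementary tools: (i) the fact, recorded after \eqref{Prelim:1.11}, that if $a\in C^\infty(\Gamma(3\delta))$ vanishes on $\Gamma$ then $a/d_\Gamma$ extends smoothly to $\Gamma$ with boundary value $\partial_{\no}a$; and (ii) the exponential decay \eqref{yuning:decayopti}, \eqref{eq:eta} of $\theta_0'$ and $\eta\mp 1$ at $\pm\infty$, so that every improper $z$-integral in the construction converges absolutely and may be differentiated under the integral sign in $(x,t)$.

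The pair $(\ve_0^\pm,p_0^\pm)$ is smooth by hypothesis on the limit solution. From $[\ve_0^\pm]=0$ on $\Gamma$ together with $\Div\ve_0^\pm=0$ one obtains $[\no\cdot\p_\no\ve_0^\pm]=-[\btau\cdot\p_\btau\ve_0^\pm]=0$ and hence $[2\no\cdot D\ve_0^\pm\no]=0$; contracting \eqref{eq:Limit3} with $\no_{\Gamma_t}$ forces $[p_0^\pm]=\sigma H=-\sigma\Delta d_\Gamma|_\Gamma$. Thus the numerators defining $\hat\ve_0$ and $\hat p_0$ in \eqref{yuning:summery1} vanish on $\Gamma$, so tool (i) gives their smoothness, and therefore that of $\tilde\ve_0,\tilde p_0$. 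For $(\ve_1^\pm,p_1^\pm)$, the data on $\Gamma_t$ in \eqref{yuning:secondlimit} depends smoothly on $t$, since $h_1$ and $\hat\ve_0,\hat p_0$ are smooth; Lemma~\ref{lem:Stokes} applied pointwise in $t$ yields existence, and differentiating the equation in $t$ and reapplying the same $t$-independent a priori estimate gives joint smoothness in $(x,t)$. Finally $[\ve_1^\pm]=0$ and tool (i) make $\hat\ve_1,\tilde\ve_1$ smooth.

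For the second-order profiles the key additional identity is $\hat\ve_{0,\no}|_\Gamma=0$, which was derived above; consequently $\hat\ve_{0,\no}/d_\Gamma$ is itself smooth on $\Gamma(3\delta)$, and together with the smoothness of $h_1$ and the convergence of all $z$-integrals (tool (ii)) the formulas \eqref{yuning:summery3} and \eqref{yuning:summery2} immediately produce smooth $\ve_{2,\no}^\pm$, $\tilde\ve_{2,\no}$ and $\tilde\ve_{2,\btau}$. For $\hat\ve_{2,\no}$ one must check smoothness of a further quotient: substituting $\ve_{2,\no}^-=0$ and the formula for $\ve_{2,\no}^+$, the bracketed expression in \eqref{yuning:summery3} becomes $I(x,t)(\hat\ve_{0,\no}/d_\Gamma-\p_\no\hat\ve_{0,\no})$ with $I(x,t):=\int_\R(z+h_1(S(x,t),t))^2\eta'(z)\,\sd z$, and this vanishes on $\Gamma$ because $\hat\ve_{0,\no}/d_\Gamma|_\Gamma=\p_\no\hat\ve_{0,\no}|_\Gamma$; tool (i) then yields $\hat\ve_{2,\no}\in C^\infty$. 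The limits $\ve_{2,\btau}^\pm=\lim_{\rho\to\pm\infty}\tilde\ve_{2,\btau}$ exist because $\eta'=\tfrac{2}{\sigma}(\theta_0')^2$ is even, forcing $\int_\R z\eta'\,\sd z=0$, which gives vanishing total integral and exponential decay of the inner primitive, hence absolute convergence of the outer integral; they are smooth, and the standard divergence-preserving extension to $\Om\times[0,T_0]$ preserves smoothness.

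It remains to analyze $\tilde p_1,\hat p_1$ via \eqref{yuning:1.61} and the definition \eqref{yuning:defa1'} of $a_1$. Every summand in $a_1$ apart from those involving $h_2=h_{2,\eps}(S(\cdot),\cdot)$ is smooth in $(\rho,x,t)$ and decays exponentially in $\rho$, so the integrals $\int_\R a_1\,\sd z$ and $\int_{-\infty}^\rho a_1\,\sd z$ are well-defined and smooth. The $h_2$-dependence enters only through smooth, exponentially decaying multiplicative factors. By \eqref{eq:EmbeddingE1} and the one-dimensional Sobolev embeddings $H^{3/2}(\T^1)\hookrightarrow C(\T^1)$, $H^2(\T^1)\hookrightarrow C^1(\T^1)$, the bound \eqref{eq:h2epsBound} gives $h_{2,\eps}\in C([0,T_\eps];C(\T^1))\cap L^4(0,T_\eps;C^1(\T^1))$, so $(x,t)\mapsto h_{2,\eps}(S(x,t),t)$ is continuous on $\Gamma(3\delta)$ and $C^1$ in $x\in\Gamma_t(3\delta)$ for a.e.\ $t\in(0,T_\eps)$; this regularity transfers through \eqref{yuning:1.61} to the claimed continuity of $\tilde p_1,\hat p_1$ in $(\rho,x,t)$ and continuous differentiability in $\rho$ and in $x$ for a.e.\ $t$. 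The main bookkeeping obstacle is the smoothness of $\hat\ve_{2,\no}$, which rests on the non-trivial cancellation encoded in the definition of $\ve_{2,\no}^+$ together with the identity $\hat\ve_{0,\no}|_\Gamma=0$; once these are in place, the rest of the verification is essentially mechanical.
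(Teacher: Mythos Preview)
Your approach matches the paper's: verify well-definedness in the order of construction, and for every quotient by $d_\Gamma$ check that the numerator vanishes on $\Gamma$ so that the smooth-extension principle applies. Your treatment of $\hat\ve_0,\hat p_0,\hat\ve_1,\hat\ve_{2,\no}$ and of the convergence of the double integral defining $\tilde\ve_{2,\btau}$ is correct and in fact slightly more detailed than the paper's own proof.

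There is, however, one genuine gap. In your last paragraph you argue that the regularity of $a_1$ (smoothness in the $h_2$-free terms, continuity and a.e.\ $C^1$-in-$x$ via $h_{2,\eps}\in X_{T_\eps}$) ``transfers through \eqref{yuning:1.61} to the claimed continuity of $\tilde p_1,\hat p_1$''. For $\tilde p_1$ this is fine, but for
\[
\hat p_1(x,t)=\frac{1}{2d_\Gamma(x,t)}\Bigl(p_1^+(x,t)-p_1^-(x,t)-\int_{\R}a_1(z,x,t)\,\sd z\Bigr)
\]
regularity of $a_1$ is not enough: you must show that the numerator vanishes on $\Gamma$, otherwise $\hat p_1$ blows up there and is not even defined on $\Gamma(3\delta)$. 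This is a \emph{compatibility condition} between $[p_1^\pm]$ and $\int_\R a_1$, not a consequence of regularity. The paper secures it via \eqref{yuning:1.60}, which in turn is obtained by restricting the defining relation \eqref{yuning:defa1} to $\Gamma$, integrating in $\rho$, and identifying the result with the normal component of the jump condition in \eqref{yuning:secondlimit} (equivalently, the normal component of \eqref{yuning:1.39}). Concretely, on $\Gamma$ one uses $\hat\ve_{0,\no}|_\Gamma=\hat\ve_{1,\no}|_\Gamma=0$ to simplify $a_1$, and then $\int_\R a_1\,\sd z|_\Gamma$ matches $[p_1^\pm]$ precisely because the boundary data in \eqref{yuning:secondlimit} were designed so that this holds. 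Without this step your argument for $\hat p_1$ is incomplete; once you insert it, the rest of your proof goes through.
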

 \begin{proof}
 We first show that these definitions do not leads to circular reasoning: First of all $\Gamma$ and $(\ve_0^\pm,p_0^\pm)$ are given. Hence all the functions in \eqref{yuning:summery1} as well as $\tilde{\ve}_{2,\btau}$ are well defined. Moreover, $(\ve_1^\pm, p_1^\pm)$ are determined by solving \eqref{yuning:secondlimit} for given $h_1$, which can be done because of Lemma~\ref{lem:Stokes} and Corollary \ref{cor:ParabolicEqOnSurfaceStokes}. Hence $\tilde{\ve}_1,\hat{\ve}_1$ and thus $\ve_1$ are well defined as well.
 With all the previous information, the first three formulae in \eqref{yuning:summery2} make sense and thus also  \eqref{yuning:defa1'}. Finally $\tilde{p}_1,\hat{p}_1$ and $p_1$ can be defined via \eqref{yuning:summery2}. 

In order to show that these functions are continuously differentiable/smooth, we need the following facts: if $f(x)\in C^\ell(\overline{\Omega^\pm})$ and vanishes on $\Gamma_t$, then we can redefine $\frac{f(x)}{d_\Gamma}$ on $\Gamma_t$ by $\lim_{d_\Gamma\to 0}\frac{f(x)-f(P_{\G_t}(x) )}{d_\Gamma}$ and $\frac{f(x)}{d_\Gamma}\in C^{\ell-1}(\overline{\Omega^\pm})$. 
       One can verify that all the functions defined in \eqref{yuning:summery1} are smooth, using \eqref{yuning:firstlimit}.          $\tfrac{\hat{\ve}_{0,\no}}{d_\Gamma}$ is smooth because $\hat{\ve}_{0,\no}$ vanishes on the interface due to the divergence-free condition of $\ve_0^\pm$ and $[\ve_0^\pm]:=\ve_0^+\mid_{\G_t}-\ve_0^-\mid_{\G_t}=0$.

 Now we consider  $\hat{\ve}_{2}$ and $\tilde{\ve}_{2}$. We substitute the formula for $\ve^+_{2,\no}$ in \eqref{yuning:summery3}  into that of $\hat{\ve}_{2,\no}$ and deduce that,  for all $x\in \Om\backslash\Gamma_t, t\in [0,T_0]$
\begin{equation*}
  \hat{\ve}_{2,\no}(x,t)=\tfrac 1{2d_\Gamma}\left[\(\tfrac{\hat{\ve}_{0,\no}(x,t)}{d_{\G}}
  -\p_\no\hat{\ve}_{0,\no}(P_{\G_t}(x),t)\)
  \int_{\mathbb{R}}(z+h_1)^2\eta'(z)d z+\ve_{2,\no}^+(x)
   -\ve_{2,\no}^+(P_{\G_t}(x),t) \right]
\end{equation*}
and the expressions in the rectangle bracket vanishes on $\Gamma_t$,
which implies  that $\hat{\ve}_{2,\no}$ as well as $\tilde{\ve}_{2,\no}$ are smooth. On the other hand, it can be verified that the double integral defining \eqref{yuning:summery2a} is well-defined as the integral $\int_{-\infty}^yr\eta'(r)dr$ vanishes when $y\leq -1$  because of \eqref{eq:eta}.
 So $\tilde{\ve}_{2,\btau}$ is smooth in the tangent direction and is constant on the normal direction.

 The conclusion for   $\hat{p}_1$ follows from \eqref{yuning:1.60} together with  the following formula
  \begin{equation*}
    \begin{split}
      \hat{p}_1(x,t) &= \tfrac1{2d_\Gamma}\Big(p_1^+(x,t)-p_1^-(x,t)-\int_{\R}a_1(\rho,x,t)\sd \rho \\
      & \underbrace{-p_1^+(P_{\G_t}(x),t))+p_1^-(P_{\G_t}(x),t)
      +\int_{\R}a_1(\rho,P_{\G_t}(x),t)\sd \rho}_{=0}\Big),
    \end{split}
  \end{equation*}
   where $a_1$ is smooth in normal direction and  continuous in all directions since it   depends on smooth quantities and the push-forward of $h_{2,\eps}\in X_{T_\eps}\hookrightarrow BUC([0,T_\eps]; H^{\frac32}(\T^1))\cap L^4(0,T_\eps;C^1(\T^1))$, $H^{\frac32}(\T^1)\hookrightarrow C^0(\T^1)$.
 \end{proof}

 Now we construct the outer expansion in \eqref{yuning:outer1}.
The following statement can be readily verified:
\begin{lem}\label{lem:ExpDecay}
There exists some $\alpha>0$ and $C>0$ such that for all $\rho\in\R$ and $m,\ell=0,1,2$, $n=0,1$,
  \begin{equation}\label{yuning:matching4}
    \begin{split}
    \sup_{(x,t)\in \Gamma(3\delta)}|  D_x^m\partial_t^n\partial_\rho^\ell(\ve_i(\pm\rho,x,t)-\ve^\pm_i(x,t))|&\leq Ce^{-\alpha |\rho|}\text{ for all }0\leq i\leq 2,\\
     \sup_{(x,t)\in  \Gamma(3\delta) }| \partial_\rho^\ell(p_i(\pm\rho,x,t)-p^\pm_i(x,t))|&\leq Ce^{-\alpha |\rho| } \text{ for all }-1\leq i\leq 1.
    \end{split}
  \end{equation}
\end{lem}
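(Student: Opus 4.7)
My plan is to reduce each of the differences $\ve_i(\pm\rho,x,t)-\ve_i^\pm(x,t)$ and $p_i(\pm\rho,x,t)-p_i^\pm(x,t)$ to explicit expressions built from the profile $\eta(\rho)$, the derivative $\theta_0'(\rho)$, and tail integrals of such objects---each of which enjoys exponential decay in $|\rho|$. First I would record three basic facts. (i) $|\partial_\rho^\ell(\eta(\rho)\mp 1)|\le C_\ell e^{-\alpha|\rho|}$ for $\pm\rho>0$ and $\ell=0,1,2$: the case $\ell=0$ is \eqref{eq:eta}, and for $\ell\ge 1$ one differentiates $\eta'(\rho)=\tfrac{2}{\sigma}\theta_0'(\rho)^2$ and invokes \eqref{yuning:decayopti}. (ii) If $g=g(\rho,x,t)$ is smooth with $|\partial_\rho^k g|\le C(1+|\rho|)^j e^{-\alpha|\rho|}$ uniformly in $(x,t)\in\Gamma(3\delta)$, then the tails $\int_{\rho}^{+\infty}g\,\sd z$ and $\int_{-\infty}^{\rho}g\,\sd z-\int_\R g\,\sd z$ both decay like $(1+|\rho|)^{j+1}e^{-\alpha'|\rho|}$ for any $\alpha'<\alpha$, and each $\partial_\rho$ acting on them either differentiates $g$ or returns $\pm g(\rho,x,t)$ itself. (iii) By Lemma~\ref{yuning:lip4}, the functions $\ve_i^\pm$, $\tilde\ve_i$, $\hat\ve_i$ (and, for the pressure estimate only, $p_i^\pm$, $\tilde p_i$, $\hat p_i$, $a_1$), together with $h_1$ and all their $x,t$-derivatives used here, are smooth and uniformly bounded on $\Gamma(3\delta)\times[0,T_0]$. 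Note that $\ve_0,\ve_1,\ve_2$ do not involve $h_{2,\eps}$ at all, and for the pressure estimate we only differentiate in $\rho$, so the limited regularity of $h_{2,\eps}$ entering $a_1$ causes no trouble.

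The straightforward cases then follow directly from the definitions \eqref{yuning:summery1}: one checks
\[
\ve_0(\rho,x,t)-\ve_0^\pm(x,t)=\tfrac{1}{2}(\eta(\rho)\mp 1)\bigl(\ve_0^+(x,t)-\ve_0^-(x,t)\bigr),
\]
and after cancelling the two occurrences of $\tfrac{\sigma}{2}\Delta d_\G\,\eta(\rho)$ coming from $\tilde p_0$ and $\eta\,d_\G\hat p_0$ one obtains the analogous identity for $p_0$. The case $\ve_1$ has identical structure, and $p_{-1}(\rho,x,t)-p_{-1}^\pm(x,t)=-(\theta_0'(\rho))^2$ is immediate. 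Applying $D_x^m\partial_t^n\partial_\rho^\ell$ to these identities yields sums whose $\rho$-factors are covered by (i) (or by \eqref{yuning:decayopti} for $\theta_0'$) and whose $x,t$-factors are bounded by (iii), giving the desired estimate with a common $\alpha>0$ chosen as the minimum of finitely many exponents.

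The genuinely more involved cases are $\ve_2$ and $p_1$; this is where the main bookkeeping effort lies. For the normal component I would insert \eqref{yuning:summery3} and rewrite $\ve_{2,\no}(\rho,x,t)-\ve_{2,\no}^\pm(x,t)$ as a linear combination of $(\eta(\rho)\mp 1)d_\G\hat\ve_{2,\no}$ and the tail $\int_{\rho}^{+\infty}(z+h_1)^2\eta'(z)\,\sd z$ (respectively $\int_{-\infty}^{\rho}(z+h_1)^2\eta'(z)\,\sd z$) with smooth bounded coefficients; both pieces decay exponentially by (i) and (ii), since $(z+h_1)^2\eta'(z)$ is of polynomial-times-exponential type in $z$. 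For the tangential component \eqref{yuning:summery2a} the key algebraic identity---which I expect to be the main obstacle to get right---is
\[
\int_\R\bigl((z^2-h_1^2)\eta''(z)+4z\eta'(z)\bigr)\sd z=0.
\]
I would verify this via $\int_\R z^2\eta''\,\sd z=-2\int_\R z\eta'\,\sd z$ (integration by parts), together with $\int_\R \eta''\,\sd z=0$ (since $\eta'$ vanishes at $\pm\infty$) and $\int_\R z\eta'\,\sd z=0$ (since $\eta'=\tfrac{2}{\sigma}(\theta_0')^2$ is even, as $\theta_0$ is odd). Consequently the inner integral $G(y,x,t):=\int_{-\infty}^{y}\bigl((z^2-h_1^2)\eta''(z)+4z\eta'(z)\bigr)\sd z$ decays exponentially at both $\pm\infty$, so a second application of (ii) to $\int_{-\infty}^{\rho}G(y,\cdot)\,\sd y$ yields the claimed bound on $\tilde\ve_{2,\btau}-\ve_{2,\btau}^\pm$. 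Finally $p_1$ is handled in the same spirit: \eqref{yuning:1.61} expresses it as $\tfrac{1\pm\eta(\rho)}{2}$-combinations of $p_1^\pm$ plus the tail integral of $a_1$, and inspection of \eqref{yuning:defa1'} shows that $a_1$ and each of its $\rho$-derivatives is of polynomial-times-exponential type---every summand contains a factor of $\eta'$, $\eta''$, $\partial_\rho(\theta_0')^2$, or a $\rho$-derivative of $\tilde\ve_{2,\no}$, each of which decays exponentially---so (i), (ii), (iii) close the estimate once more.
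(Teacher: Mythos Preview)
Your proposal is correct and carries out precisely the kind of direct verification the paper has in mind; the paper itself gives no argument beyond stating that the lemma ``can be readily verified,'' and your case-by-case reduction to the decay of $\eta(\rho)\mp 1$, $\theta_0'$, and their tail integrals is exactly the natural way to do this. Your observation that $\ve_0,\ve_1,\ve_2$ do not involve $h_{2,\eps}$ (so full $x,t$-differentiation is harmless) and that only $\partial_\rho^\ell$ is required for the pressure (so the limited regularity of $a_1$ via $h_{2,\eps}$ is irrelevant) is the one subtlety worth making explicit, and you handle it correctly.
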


The following lemma is crucial in the inner expansion and its proof is given  in Section~\ref{subsec:ProofOfLemma} below.
\begin{lem}\label{yuning:app1}
If we define $(\ve_A^{in}, p_A^{in})$ through \eqref{yuning:innerd},
then
\begin{align}\nonumber
  -\Delta \ve_A^{in} +\nabla p_A^{in}= &-\eps \Div (\nabla c^{in}_0\otimes \nabla c^{in}_0)+ r_\eps\left(\tfrac{d_\Gamma}\eps - h_\eps,x,t \right) + \tilde{r}_\eps\left(\tfrac{d_\Gamma}\eps - h_\eps,x,t \right)\nonumber\\\label{eq:ApproxStokesEq1}
&+\sum_{ i' \leq 2;0\leq i,j,j'\leq 1} \eps^2   R^{ i' j' i j }_\eps (x,t)  (\partial_s^{j'} h_{2})^j (\partial_s^{i'} h_2 )^i\nonumber\\
    &+\sum_{0\leq i,i',j,j',k,k'\leq 1}  \eps^2  \tilde{R}^{ i' j' k' i j k }_\eps (x,t)  (\partial_s^{j'} h_{2})^j (\partial_s^{i'} h_{2} )^i(\partial_s^{k'} h_{2} )^k\\
\label{eq:ApproxStokesEq2}
\Div \ve_A^{in} =&\,  \sum_{0\leq i,i'\leq 1}g^{{i'}i }_\eps\left(\tfrac{d_\Gamma}\eps - h_\eps,x,t \right)(\partial_s^{i'} h_2)^i +\eps^2 \tilde{g}_\eps (x,t)
\end{align}
in $\Gamma_t(3\delta)$
where $  h_\eps$ is defined through \eqref{eq:h2epsBound} and
 \begin{equation}\label{yuning:class1}
      (r_\eps)_{0<\eps <1}\in \mathcal{R}_{0,\alpha}^0,\quad (\tilde{r}_\eps)_{0<\eps <1}\in \mathcal{R}_{1,\alpha},\quad
  (g^{ i' i}_\eps)_{0<\eps <1}\in \mathcal{R}^0_{1,\alpha}
 \end{equation}
  for some $\alpha>0$  and $
   R^{i' j' i j }_\eps, \tilde{R}^{i' j' k' i j k }_\eps, \tilde{g}_\eps
$  are uniformly bounded with respect to $\eps\in (0,1], (x,t)\in \Gamma(3\delta)$.
\end{lem}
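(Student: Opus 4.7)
The plan is to substitute the ansatz \eqref{yuning:innerd} into the left-hand sides of \eqref{eq:ApproxStokesEq1} and \eqref{eq:ApproxStokesEq2}, expand via the chain rule formulas \eqref{yuningliu:formula1} and the Taylor expansion \eqref{yuning:ex1} of $\Delta d_\Gamma$, and sort the resulting expression into (i) terms that vanish identically because of the definitions in \eqref{yuning:summery1}--\eqref{yuning:1.61}, and (ii) remainder terms which fall into the claimed classes $\mathcal{R}_{k,\alpha}^{(0)}$. Writing $\rho = d_\Gamma/\eps - h_\eps \circ S$ and using \eqref{yuningliu:formula1}, each $-\Delta \ve_j$ splits into $-\eps^{-2}\partial_\rho^2 \ve_j$ plus terms of order $\eps^{-1}$ and $\eps^0$ involving $\nabla d_\Gamma$, $\nabla^\Gamma h_\eps$, $\Delta d_\Gamma$ and tangential derivatives; similarly $\nabla p_j$ contributes $\eps^{-1} \no_{\Gamma_t}\partial_\rho p_j$ plus tangential corrections, and analogously for $\Div \ve_j$. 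For the nonlinearity, $\nabla c_0^{in} = \eps^{-1}\no_{\Gamma_t}\theta_0'(\rho) + O(1)$ gives $-\eps \Div(\nabla c_0^{in}\otimes \nabla c_0^{in}) = -\eps^{-1}\partial_\rho((\theta_0')^2)\no_{\Gamma_t} + \text{lower order}$, which combined with $p_{-1}=-(\theta_0')^2$ exactly cancels the $\eps^{-2}$ and (part of the) $\eps^{-1}$ contributions.

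I would then collect terms order by order in $\eps$. The cancellation at order $\eps^{-2}$ uses $p_{-1}=-(\theta_0')^2$ and the definition of $\tilde \ve_0 = \tfrac12(\ve_0^++\ve_0^-)$ (which makes $\partial_\rho^2\tilde\ve_0=0$). At order $\eps^{-1}$, the definitions $\hat\ve_0 = \tfrac{1}{2d_\Gamma}(\ve_0^+-\ve_0^-)$, the choice of $\tilde p_0$, $\hat p_0$ (which encode the jump condition $[2D\ve_0-p_0\tn I]\no_\Gamma = -\sigma H\no_\Gamma$ via $\eta$) and the two-phase Stokes equation \eqref{yuning:firstlimit} cause the remaining tangential/normal contributions to cancel. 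At order $\eps^0$, the equations \eqref{yuning:secondlimit} for $(\ve_1^\pm,p_1^\pm)$ together with the definitions of $\tilde\ve_1,\hat\ve_1$ remove the tangential part, and the specific form of $a_1$ in \eqref{yuning:defa1'} is precisely chosen so that the normal component of the residual vanishes after choosing $\tilde p_1,\hat p_1,p_1$ as in \eqref{yuning:1.61}; analogously, the tangential components of $\tilde\ve_2,\hat\ve_2$ from \eqref{yuning:summery2} and the normal ones from \eqref{yuning:summery3} eliminate the remaining $\eps^0$ contributions, the motivation being recorded in the Appendix formula \eqref{yuning:defa1}.

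The surviving terms form the right-hand side of \eqref{eq:ApproxStokesEq1}--\eqref{eq:ApproxStokesEq2}. Terms that depend on $(\rho,x,t)$ through an exponentially decaying profile (derived from $\theta_0'$, $\eta'$ or $\eta\mp 1$) and carry an explicit factor $\eps^{k}$ from the expansion belong to $\mathcal{R}_{k,\alpha}$; those that in addition vanish on $\Gamma$ (typically because they contain a factor like $d_\Gamma\hat\ve_j$ or are proportional to $\partial_\rho(\text{profile})\cdot d_\Gamma$) belong to $\mathcal{R}^0_{k,\alpha}$, giving rise to the $r_\eps$, $\tilde r_\eps$ and $g^{i'i}_\eps$ in the statement. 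The polynomial dependence on $h_2$ and $\partial_s h_2$ comes from two sources: Taylor expanding any smooth function $\phi(\rho,x,t)$ around the $h_1$-shifted variable $\rho_1 = d_\Gamma/\eps - h_1$ gives $\phi(\rho_1-\eps h_2) = \phi(\rho_1) - \eps h_2 \partial_\rho\phi(\rho_1) + \tfrac12(\eps h_2)^2\partial_\rho^2\phi(\rho_1) + \cdots$, and differentiating $h_\eps=h_1+\eps h_2$ under $\nabla^\Gamma$, $\Delta^\Gamma$ in \eqref{yuningliu:formula1} produces factors of $\partial_s h_2$ and $\partial_s^2 h_2$. These two mechanisms account for the bilinear and trilinear polynomial structures $(\partial_s^{j'}h_2)^j(\partial_s^{i'}h_2)^i$ and $(\partial_s^{k'}h_2)^k(\partial_s^{j'}h_2)^j(\partial_s^{i'}h_2)^i$ with coefficients $R^{i'j'ij}_\eps, \tilde R^{i'j'k'ij k}_\eps$ that depend only on $\Gamma,\ve_0^\pm,p_0^\pm,\ve_1^\pm,p_1^\pm,\ve_2^\pm$ and $h_1$, hence are uniformly bounded in view of Lemma~\ref{yuning:lip4}; the pure remainder $\eps^2\tilde g_\eps$ in \eqref{eq:ApproxStokesEq2} and $\tilde r_\eps$ in \eqref{eq:ApproxStokesEq1} collect $\eps^2$-terms independent of $h_2$.

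The main obstacle will be the careful bookkeeping: one has to check that truncating the Taylor expansions at the appropriate polynomial degree in $h_2$ produces only terms of total order $\eps^2$ or better, that every coefficient inherits the exponential decay in $\rho$ from factors of $\theta_0'$, $\eta'$, or $\eta\mp 1$ and from Lemma~\ref{lem:ExpDecay}, and that the zero-trace condition on $\Gamma$ is preserved exactly for the subclass $\mathcal{R}^0_{k,\alpha}$ (those terms that do not carry an $\eps^1$-factor but are still needed to be small after integration against a function tested near $\Gamma_t$). The delicate points are the consistency of the normal-component ansatz in \eqref{yuning:summery3} with the factor $\tfrac{\hat\ve_{0,\no}}{d_\Gamma}$, which requires $\hat\ve_{0,\no}$ to vanish on $\Gamma_t$ (true because $\Div\ve_0^\pm=0$ and $[\ve_0^\pm]=0$), and the pressure identity coming from integrating $a_1$ in $\rho$, whose compatibility with $p_1^\pm$ follows from \eqref{yuning:secondlimit}. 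Once these are verified, the classification into $\mathcal{R}_{k,\alpha}$ and $\mathcal{R}_{k,\alpha}^0$ is straightforward by inspection.
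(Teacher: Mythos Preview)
Your overall approach---expand via \eqref{yuningliu:formula1}, sort by powers of $\eps$, use the definitions \eqref{yuning:summery1}--\eqref{yuning:1.61} to cancel the orders $\eps^{-2}$, $\eps^{-1}$, $\eps^0$, and classify what is left---is exactly the paper's route. But there is one genuine gap in your remainder classification.

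You assert that ``every coefficient inherits the exponential decay in $\rho$ from factors of $\theta_0'$, $\eta'$, or $\eta\mp 1$''. This fails for the $O(1)$ contribution
\[
(-\Delta_x \ve_0^+ +\nabla_x p_0^+)\tfrac{1+\eta(\rho)}2 + (-\Delta_x \ve_0^- +\nabla_x p_0^-)\tfrac{1-\eta(\rho)}2,
\]
which arises from $-\Delta_x\tilde\ve_0 - \eta(\rho)\Delta_x(d_\Gamma\hat\ve_0) + \nabla_x\tilde p_0 + \eta(\rho)\nabla_x(d_\Gamma\hat p_0)$ once you insert \eqref{yuning:summery1}. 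This term vanishes on $\Gamma$ by the outer Stokes equations \eqref{yuning:firstlimit}, but $(1\pm\eta(\rho))/2 \to 1$ as $\rho\to\pm\infty$, so it is \emph{not} in $\mathcal{R}_{0,\alpha}^0$ as written, and your ``straightforward by inspection'' step breaks down here. The paper fixes this with a specific trick: replace $(1\pm\eta(\rho))/2$ by auxiliary smooth functions $\zeta_N^\pm(\rho)$ that agree with $(1\pm\eta(\rho))/2$ for $\rho \lessgtr \pm N$ and satisfy $|\zeta_N^\pm(\rho)|\le Ce^{-\alpha|\rho|}$. Since $-\Delta_x\ve_j^\pm+\nabla_x p_j^\pm=0$ in $\Omega^\pm(t)$ and $d_\Gamma/\eps - h_\eps \lessgtr \pm N$ in $\Omega^\mp(t)$ for $N$ large enough, the substitution is an exact pointwise identity once $\rho$ is evaluated at $d_\Gamma/\eps - h_\eps$, and the modified expression \emph{does} lie in $\mathcal{R}_{0,\alpha}^0$. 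The same device is needed at order $O(\eps)$ for the analogous term built from $(\ve_1^\pm,p_1^\pm)$.

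A minor bookkeeping correction: $\tilde r_\eps\in\mathcal{R}_{1,\alpha}$ collects the $O(\eps)$ residuals (the terms in \eqref{train:scal-d} together with the compensation terms \eqref{yuning:penlty} that were pushed down from order $O(\eps^{-1})$), not ``$\eps^2$-terms independent of $h_2$'' as you wrote. The $\eps^2$ bucket with polynomial $h_2$-dependence is \eqref{train:scal-e}, and the trilinear structure there comes from expanding $-\eps\Div(\nabla c_0^{in}\otimes\nabla c_0^{in})$, which produces $|\nabla^\Gamma h_\eps|^2\nabla^\Gamma h_\eps$-type terms, not from a Taylor expansion in $\eps h_2$ of the profile.
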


The main result of this section is:
\begin{theorem}\label{thm:ApproxVA}
Under the assumptions  \eqref{eq:h2epsBound}, there is some $C(M)>0$, independent of $(T_\eps)_{0<\eps\leq \eps_0}$ and $\eps_0\in (0,1]$, such that
  \begin{equation*}
\|\ve_A-\tilde{\ve}_A\|_{L^2(0,T;H^1(\Omega))}\leq C(M)(T^{\frac14}+ \eps^{\frac12})\eps^2
  \end{equation*}
for all $0<T\leq T_\eps$ and $\eps \in (0,\eps_0]$.
\end{theorem}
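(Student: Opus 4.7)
The plan is to set up an error Stokes system. Let $\we:=\ve_A-\tilde{\ve}_A$ and $q:=p_A-\tilde{p}_A$. Since $\dist(\Gamma_t,\partial\Omega)>3\delta$, the cutoff $\zg$ vanishes near $\partial\Omega$, and by the extension convention $\ve_j^\pm|_{\partial\Omega}=0$; together with $\tilde{\ve}_A|_{\partial\Omega}=0$, this gives $\we|_{\partial\Omega}=0$. The pair $(\we,q)$ then satisfies
\begin{equation*}
-\Delta\we+\nabla q = F_\eps,\qquad \Div\we = G_\eps,\qquad \we|_{\partial\Omega}=0,
\end{equation*}
where $F_\eps:=-\Delta\ve_A+\nabla p_A+\eps\Div(\nabla c_{A,0}\otimes\nabla c_{A,0})$ and $G_\eps:=\Div\ve_A$. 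The divergence theorem gives $\int_\Omega G_\eps\,\sd x=0$, so standard Stokes regularity in a bounded smooth domain yields $\|\we(t)\|_{H^1(\Omega)}\leq C(\|F_\eps(t)\|_{H^{-1}(\Omega)}+\|G_\eps(t)\|_{L^2(\Omega)})$. The task reduces to bounding these two norms and integrating over $t\in(0,T)$.

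Next I would decompose $F_\eps,G_\eps$ according to the three regions where $\zg$ is identically $1$, varies, or is identically $0$. On $\Gamma(\delta)$, where $\zg\equiv 1$, $\ve_A=\ve_A^{in}$, $p_A=p_A^{in}$ and $c_{A,0}=c_0^{in}$, Lemma~\ref{yuning:app1} provides the explicit form of $F_\eps,G_\eps$ as the sum of the remainders $r_\eps\in\mathcal{R}^0_{0,\alpha}$ and $\tilde r_\eps\in\mathcal{R}_{1,\alpha}$, the $g^{i'i}_\eps\in\mathcal{R}^0_{1,\alpha}$ factors, and polynomial combinations of $\partial_s^{j'}h_{2,\eps}$ with uniformly bounded coefficients and $\eps^2$-prefactors. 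On the outer region $\Omega\setminus\Gamma(2\delta)$, where $\zg\equiv 0$ and $\nabla c_{A,0}=0$, we have $\ve_A=\ve_A^\pm$ and $p_A=p_A^\pm$ on the appropriate side; by \eqref{yuning:firstlimit}--\eqref{yuning:secondlimit} the pairs $(\ve_0^\pm,p_0^\pm),(\ve_1^\pm,p_1^\pm)$ solve Stokes in $\Omega^\pm(t)$ and $\ve_2^\pm$ is divergence-free, so $G_\eps\equiv 0$ and $F_\eps=-\eps^2\Delta\ve_2^\pm$, a smooth $O(\eps^2)$ term. On the transition region $\Gamma(2\delta)\setminus\Gamma(\delta)$, commuting $\zg$ through $\Delta$ and $\Div$ produces terms containing $\nabla\zg,\Delta\zg$ multiplied by differences $\ve_A^{in}-\ve_A^\pm$, $p_A^{in}-p_A^\pm$, and analogous differences for $\nabla c_{A,0}$; by Lemma~\ref{lem:ExpDecay} these are pointwise bounded by $Ce^{-\alpha\delta/\eps}$ and hence negligible.

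Finally I would estimate each piece. Corollary~\ref{cor:repsEstim2} applied with $a\equiv 1$ gives $\|r_\eps(t)\|_{H^{-1}(\Omega)}+\|\tilde r_\eps(t)\|_{H^{-1}(\Omega)}\leq C(1+M)\eps^2$ and $\|g^{i'i}_\eps(t)\|_{L^2(\Omega)}\leq C(1+M)\eps^{5/2}$ uniformly in $t\in[0,T_\eps]$, contributing at most $C(M)T^{1/2}\eps^2$ after $L^2_t$-integration. For the polynomial terms in $h_{2,\eps}$, the embedding \eqref{eq:EmbeddingE1} (uniform in $T_\eps$) together with $H^{3/2}(\T^1)\hookrightarrow L^\infty(\T^1)$ and $H^2(\T^1)\hookrightarrow W^1_\infty(\T^1)$ yields $\|\partial_s^{j'}h_{2,\eps}\|_{L^4(0,T_\eps;L^\infty(\T^1))}\leq CM$ for $j'\leq 1$ and $\|\partial_s^2 h_{2,\eps}\|_{L^4(0,T_\eps;L^2(\T^1))}\leq CM$; applying $\|\cdot\|_{L^2(0,T)}\leq T^{1/4}\|\cdot\|_{L^4(0,T)}$ then produces a $C(M)T^{1/4}\eps^2$ bound on both $F_\eps$ in $L^2(0,T;H^{-1})$ and $G_\eps$ in $L^2(0,T;L^2)$. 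The outer residual is $O(\eps^2)$ in $L^\infty(\Omega\times[0,T_0])$ and contributes $CT^{1/2}\eps^2$, while the transition residual is exponentially small in $\eps$ and absorbed into the $C(M)\eps^{1/2}\eps^2$ term via $e^{-\alpha\delta/\eps}\leq C\eps^{1/2}$. Summing over the three regions and using $T^{1/2}\leq C(T_0)T^{1/4}$ for $T\leq T_0$ yields the claimed bound. The main obstacle is the careful handling of the products involving $\partial_s^2 h_{2,\eps}$, which is only in $L^4_t L^2_s$: the remaining factors must be placed in $L^4_t L^\infty_s$ via the embeddings above, and all constants must be shown to depend only on $M$, not on $T_\eps$ or $\eps_0$.
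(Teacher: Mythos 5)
Your proposal is correct and takes essentially the same route as the paper: decompose $-\Delta\ve_A+\nabla p_A+\eps\Div(\nabla c_{A,0}\otimes\nabla c_{A,0})$ and $\Div\ve_A$ via the cutoff into inner (Lemma~\ref{yuning:app1} plus Corollary~\ref{cor:repsEstim2}), transition (exponentially small by Lemma~\ref{lem:ExpDecay}), and outer ($O(\eps^2)$) pieces, then invoke the $H^1$ estimate for the Stokes problem with inhomogeneous divergence. The paper tests against $C^\infty_{0,\sigma}$ functions and cites "standard results on the weak solutions of the Stokes equation with general divergence" rather than writing out the error system; these are the same mechanism, and your tracking of the $T^{1/4}$ (via $h_{2,\eps}\in L^4(0,T;H^2)$) and $\eps^{1/2}$ factors matches the paper's \eqref{yuning:class2}--\eqref{yuning:class3}.
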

\begin{proof}
We will use that
\begin{equation}\label{eq:dGammaEstim}
  \left|\frac{d_\Gamma(x,t)}\eps - h_\eps (S(x,t),t) \right|\geq \frac{\delta}{2\eps}\quad \text{for all }(x,t)\in \Gamma(3\delta)\setminus \Gamma(\delta),  t\leq T_\eps
\end{equation}
if $0<\eps\leq \min(\eps_1,\eps_0)$ for sufficiently small $\eps_1\in (0,1)$ in dependence of $M$. Moreover, we will choose $\eps_1$ such that $\eps_1M\leq 1$.
  First of all, because of \eqref{yuning:matching4} and \eqref{eq:EmbeddingE1}, we have for $1\leq i' \leq 2, 0\leq  j' \leq 1$,
  \begin{equation}\label{yuning:chap2glue2}
  \begin{split}
    &\|\p_x^{i'} (\zg) \p_x^{j'}( \ve_A^{in}-\ve_A^{+}
  \chi_+-\ve_A^{-}\chi_-)\|_{L^\infty(0,T_\eps; L^2(\Om))}\leq C e^{-\frac{\alpha\delta}{2\eps}},\\
  &\|\p_x^{i'} (\zg) (  p_A^{in}-p_A^{+}
  \chi_+-p_A^{-}\chi_-)\|_{L^\infty(\Om\times (0,T_\eps))}\leq C e^{-\frac{\alpha\delta}{2\eps}}.
  \end{split}
\end{equation}
So it follows from \eqref{yuning:chap2glue1} that
  \begin{align*}
    \Delta {\ve}_A =& (\zg )\Delta \ve_A^{in}+(1-\zg )\left(\Delta \ve_A^{+}
  \chi_++\Delta\ve_A^{-}\chi_-\right)\\
 &+ 2\nabla (\zg) \cdot \nabla \left(\ve_A^{in}-\ve_A^{+}
  \chi_+-\ve_A^{-}\chi_-\right)
+ \Delta (\zg)  \left(\ve_A^{in}-\ve_A^{+}
  \chi_+-\ve_A^{-}\chi_-\right),\\
    \nabla {p}_A =&( \zg )\nabla p_A^{in}+(1-\zg )\left(\nabla p_A^{+}
  \chi_++\nabla p_A^{-}\chi_-\right)+ \nabla (\zg) \left(p_A^{in}-p_A^{+}
  \chi_+-p_A^{-}\chi_-\right).
  \end{align*}
 On the other hand, it follows from \eqref{yuning:outer1} and the first equation of \eqref{yuning:firstlimit} and \eqref{yuning:secondlimit} that
 \begin{equation*}
   -\Delta \ve_A^{+}\chi_++\nabla p_A^{+}\chi_+-\Delta \ve_A^{-}\chi_-+\nabla p_A^{-}\chi_-=O(\eps^2)\quad \text{in }L^\infty(\Om\times(0,T_\eps)).
 \end{equation*}
  The above two formulas together with \eqref{train:stokes}  imply
  \begin{alignat}{1}\nonumber
          -&\Delta(\ve_A-\tilde{\ve}_A)+\nabla (p_A-\tilde{p}_A)\\\nonumber
      =&\zg (-\Delta \ve_A^{in}+\nabla p_A^{in})+\eps\Div (\nabla c_{A,0}\otimes\nabla c_{A,0})\\\nonumber
      &-(1-\zg )(-\Delta \ve_A^{+}\chi_++\nabla p_A^{+}\chi_+-\Delta \ve_A^{-}\chi_-+\nabla p_A^{-}\chi_-)\\\nonumber
&+\eps(1-\zg )\Div (\nabla c_{A,0}\otimes\nabla c_{A,0})+O(e^{-\frac{\alpha\delta}{2\eps}})\\\label{eq:ApproxStokesEq1new}
      =&\zg (-\Delta \ve_A^{in}+\nabla p_A^{in})+\eps\Div (\nabla c_{A,0}\otimes\nabla c_{A,0})  +O(\eps^2)~\text{in}~L^\infty(0,T_\eps;L^2(\Om)).
      \end{alignat}

  To expand the concentration term, we need the following formula which can be easily derived from \eqref{yuning:ca1} together with $\nabla c^{out}_{0,\pm}=0$:
  \begin{equation}\label{yuning:1.51}
\nabla c_{A,0}=\nabla (\zg) (c^{in}_0-c_{0,+}^{out}
  \chi_+-c_{0,-}^{out}\chi_-)+\zg \nabla c^{in}_0.
  \end{equation}
  Applying the above formula to the second term on the right hand side of \eqref{eq:ApproxStokesEq1new} and extract the leading terms, we get
    \begin{alignat}{1}\nonumber
      &-\Delta(\ve_A-\tilde{\ve}_A)+\nabla (p_A-\tilde{p}_A)\\ \label{yuning:app2}
      =&\zg \(-\Delta \ve_A^{in}+\nabla p_A^{in}+\eps\Div (\nabla c^{in}_0\otimes\nabla c^{in}_0)\)+O(\eps^2)~\text{in}~L^\infty(0,T_\eps;L^2(\Om)).
  \end{alignat}
  Next we multiply \eqref{eq:ApproxStokesEq1} by an arbitrary $\varphi\in C_{0,\sigma}^\infty(\Omega)$ and we employ Lemma \ref{yuning:app1} to estimate the first part on the right hand side of \eqref{yuning:app2}:
  \begin{align*}
    &\left|\int_{\Omega}\zg (-\Delta \ve_A^{in}+\nabla p_A^{in}+\eps\Div (\nabla c^{in}_0\otimes\nabla c^{in}_0))\cdot\varphi \sd x\right| \\
    &\leq \int_{\Gamma_t(2\delta)}\left| r_\eps\left(\tfrac{d_\Gamma}\eps - h_\eps,x,t \right)\cdot \varphi\right|\sd x + \int_{\Gamma_t(2\delta)}  \left|\tilde{r}_\eps\left(\tfrac{d_\Gamma}\eps - h_\eps,x,t \right)\cdot \varphi\right|\sd x\\
&\ \ +\int_{\Gamma_t(2\delta)}\left|\sum_{i'\leq 2;  0\leq i,j,j'\leq 1} \eps^2  R_\eps^{ i' j' i j } (x,t ) (\partial_s^{j'} h_2)^j (\partial_s^{i'} h_2 )^i \varphi\right|\sd x\\
&\ \ +\int_{\Gamma_t(2\delta)}\left|\sum_{ 0\leq i,j,k,i',j',k'\leq 1}\eps^2  \tilde{R}^{ i' j' k' i j k }_\eps (x,t) (\partial_s^{j'} h_2)^j (\partial_s^{i'} h_2 )^i(\partial_s^{k'} h_2 )^k\varphi\right|\sd x\\
&\ \ + C\eps^2\|\varphi\|_{L^2(\Omega)}\qquad  \text{for all }t\in (0,T_\eps).
  \end{align*}
  Moreover, we have the embeddings $H^{3/2}({\T^1})\hookrightarrow W^1_p({\T^1})$ for every $1\leq p<\infty$  and the estimate
  $$\(\int_{|r|\leq 2\delta}\operatorname{ess\, sup}_{s\in\T^1}|\varphi(X_0(s,t)+r\no(s,t))|^4\sd r\)^{\frac 14}\leq C\|\varphi\|_{H^1(\Gamma_t(2\delta))}$$
 for some $C>0$ independent of $\varphi$. This estimate is proved in the same way as  \eqref{yuning:2.01} with the role of $r\in (-2\delta,2\delta)$ and $s\in \T^1$ interchanged.
These together with \eqref{eq:EmbeddingE1}, \eqref{eq:h2epsBound}, \eqref{yuning:class1} and the first inequality in Corollary~\ref{cor:repsEstim2}  yields
  \begin{equation}\label{yuning:app3}
  \begin{split}
    &\left|\int_{\Omega}\zg (-\Delta \ve_A^{in}+\nabla p_A^{in}+\eps\Div (\nabla c^{in}_0\otimes\nabla c^{in}_0)) \cdot\varphi\sd x\right| \\
    &\leq C(M)\eps^2\(1+ \|h_2(t)\|_{H^2({\T^1})} \|h_2(t)\|_{ H^1({\T^1}) }+ \|h_2(t)\|_{W^1_3({\T^1})}^3\)\|\varphi\|_{H^1(\Omega)}\\
    &
    \leq C(M)\eps^2\(1+\|h_2(t)\|_{H^2({\T^1})}\)\|\varphi\|_{H^1(\Omega)}.
  \end{split}
\end{equation}
Combining \eqref{yuning:app2} and \eqref{yuning:app3} yields
\begin{alignat}{1}\label{yuning:class2}
  \left|\int_{\Omega}\nabla(\ve_A-\tilde{\ve}_A):\nabla\varphi \sd x\right|
 &=\left| \int_{\Omega}\(-\Delta(\ve_A-\tilde{\ve}_A)+\nabla (p_A-\tilde{p}_A)\)\cdot\varphi \sd x\right|\\\nonumber
 &\leq  C(M)\eps^2\(1+\|h_2(t)\|_{H^{ 2}({\T^1})}\)\|\varphi\|_{H^1(\Omega)}\quad \text{for all }t\in (0,T_\eps),
\end{alignat}
where $h_2\in X_{T_\eps}\hookrightarrow L^4(0,T_\eps;H^2(\T^1))$ is bounded.
 On the other hand, using  \eqref{yuning:outer1} together with $\Div \ve_j^\pm=0$ for $j=0,1$, we obtain
 \begin{equation*}
   \begin{split}
      \Div \ve_A& =\Div\(\zg \ve_A^{in}+(1-\zg )\left(\ve_A^{+}
  \chi_++\ve_A^{-}\chi_-\right)\)\\
  &=\nabla (\zg) \cdot\(\ve_A^{in}- \ve_A^{+}
  \chi_+-\ve_A^{-}\chi_-\)+(\zg) \Div \ve_A^{in}+O(\eps^2)~ \text{in}~L^\infty(\Omega\times [0,T_\eps]).\\
   \end{split}
 \end{equation*}
So we can apply \eqref{eq:ApproxStokesEq2} to the above estimate and then use the second inequality in Corollary~\ref{cor:repsEstim2} as well as \eqref{yuning:chap2glue2} :
\begin{equation}\label{yuning:class3}
  \|\Div (\tilde{\ve}_A -\ve_A)\|_{{L^2(\Omega\times (0,T_\eps))}}\leq C\eps^2(T^{\frac12}+\eps^{\frac12})(\|h_2\|_{L^2(0,{T_\eps};H^{5/2}({\T^1}))}+1).
\end{equation}
Combining \eqref{yuning:class2} and \eqref{yuning:class3} together with  standard results on the weak solutions of the Stokes equation with general divergence leads to 
\begin{equation*}
  \|\tilde{\ve}_A -\ve_A\|_{L^2(0,T_\eps;H^1)}\leq C(M)\eps^2(T^{ \frac14 }+\eps^{\frac12}).
\end{equation*}
This implies the desired result
due to \eqref{eq:h2epsBound}.
\end{proof}

\subsection{The Leading Error in the Velocity}\label{subsec:LeadingErrorVelocity}

Within this subsection we denote $u_1=c_\eps-c_{A,0}$, where $c_{A,0}$ is defined by \eqref{yuning:ca1} for given $h_1,h_{2,\eps}=h_2$ satisfying
\eqref{eq:h2epsBound} for some $\eps_0\in (0,1)$ and  $M\geq 1$. Moreover, we will often write $\no$ instead of $\no_\Gamma$ for simplicity.

In the following
$\tilde{\we}_1, \tilde{\we}_2 \colon \Om\times [0,T_0]\to \R^2$, $q_1, q_2\colon \Om\times [0,T_0]\to \R$ are the solutions of
\begin{alignat}{2}\label{eq:w1a'}
      -\Delta \tilde{\we}_1 +\nabla q_1 &= -\eps \Div ((\nabla c_{A,0}-\g)\otimes \nabla u_1)-\eps \Div (\nabla {u}_1\otimes (\nabla c_{A,0}-\g)) &\quad &\text{in }\Om,\\\label{eq:w1b'}
 \Div \tilde{\we}_1 &=0&\quad &\text{in }\Om,\\
      -\Delta \tilde{\we}_2 +\nabla q_2 &= -\eps \Div (\g\otimes \nabla u_1)-\eps \Div (\nabla {u}_1\otimes \g)-\eps \Div (\nabla u_1\otimes \nabla u_1)&\quad &\text{in }\Om,\\\label{eq:w1a''}
 \Div \tilde{\we}_2 &=0&\quad &\text{in }\Om,\\\label{eq:wjBCs}
\Second{ \tilde{\we}_1|_{\partial\Om}}&\Second{= \tilde{\we}_2|_{\partial\Om}=0} &&\Second{\text{on }\partial\Om}
\end{alignat}
for every $t\in[0,T_0]$, where
\begin{equation*}
  \g =- \zg\, \theta_0'(\tfrac{d_\Gamma}\eps-h_\eps) \eps\nabla_\btau h_{2,\eps}.
\end{equation*}
Then $\ve_\eps= \tilde{\ve}_A+ \tilde{\we}_1+\tilde{\we}_2$ because of the unique solvability of the Stokes systems.
According to \eqref{yuning:1.51},
\begin{equation}\label{eq:cAg}
  \nabla c_{A,0}-\g = \zg \ \theta_0'(\tfrac{d_\Gamma}\eps-h_\eps)(\tfrac{\nabla d_\Gamma}\eps - \nabla_\btau h_1) + \nabla d_\Gamma \zeta'\circ d_\Gamma\ \(\theta_0(\tfrac{d_\Gamma}\eps-h_\eps)-\chi_++\chi_-\) .
\end{equation}
It will turn out that $\tilde{\we}_1$ is the leading part in the error estimates for $\ve_\eps-\tilde{\ve}_A$ and it plays an important role in the construction of $c_A$.
Before proceeding the estimates, we give a few words about the  Stokes equations above. These systems will be understood in the weak sense. E.g., $\ve=\tilde{\we}_1\in  \Second{H^1_0(\Om)^2\cap L^2_{\sigma}(\Om)}$ solves
\begin{equation}\label{eq:WeakStokes}
  \int_\Om \nabla \ve: \nabla \varphi \sd x = \weight{F,\varphi}\qquad \text{for all }\varphi \in H^1_{0}(\Om)^2\cap L^2_\sigma(\Om),
\end{equation}
where 
 $F\in \Second{(H^1_0(\Om)^2\cap L^2_{\sigma}(\Om))'}$ is defined by
\begin{equation*}
  \weight{F,\varphi}:= \eps \int_\Omega ((\nabla c_{A,0}-\g)\otimes \nabla u_1+ \nabla u_1\otimes (\nabla c_{A,0}-\g) ) : \nabla \varphi \sd x
\end{equation*}
for all $\varphi \in \Second{H^1(\Om)^2\cap L^2_{\sigma}(\Om)}$. Here $L^2_\sigma(\Omega)$ is the closure of divergence free $C^\infty_0(\Omega)$-vector fields in $L^2(\Omega)$.
Existence of weak solutions for every $\eps>0$ is a consequence of the Lax-Milgram Theorem. Moreover, if $\ve\in  \Second{H^1_0(\Om)^2\cap L^2_{\sigma}(\Om)}$ solves \eqref{eq:WeakStokes} for some $F\in \Second{(H^1_0(\Om)^2\cap L^2_{\sigma}(\Om))'}$ and $1<p<\infty$, then there is some $C_p>0$ independent of $\ve$ and $F$ such that
\begin{equation}\label{eq:VeryWeakEstim}
  \|\ve\|_{L^p(\Om)}\leq 
C\sup_{0\neq \varphi\in \Second{W^2_{p'}(\Om)\cap W^1_{p',0}(\Om)\cap L^2_{\sigma}(\Om)}} \frac{|\weight{F,\varphi}|}{\|\varphi\|_{W^2_{p'}(\Om)}}
\end{equation}
where $\frac 1p+\frac 1{p'}=1$.
The latter estimate follows by duality from the well-known fact that the Stokes operator $A_{p'}\colon \Second{W^2_{p'}(\Om)^2\cap W^1_{p',0}(\Omega)^2\cap L^2_{\sigma}(\Om)\to L^{p'}_{\sigma}(\Om)}$ is bijective.
\begin{prop}\label{yuning:velocityest}
  Let $N=2$, $1<q<2$ and $1\leq r \leq 2$.  We assume  that  there are some $R\geq 1$, $\eps_0\in (0,1)$, $T_\eps \in (0,T_0]$  such that
  \begin{subequations}\label{yuning:diffcaca1New}
    \begin{align}
      \|c_\eps -c_{A,0}\|_{L^4(0,T_\eps;L^2(\Om))}+\|\nabla (c_\eps -c_{A,0})\|_{L^2(\Om\times (0,T_\eps)\setminus \Gamma(\delta))} &\leq 2R\eps^{\order+\frac12},\\\label{yuning:diffcaca1New'}
      \|\nabla_\btau(c_\eps -c_{A,0})\|_{L^2(\Om\times (0,T_\eps)\cap \Gamma(2\delta))} + \eps\|\partial_\no(c_\eps -c_{A,0})\|_{L^2(\Om\times (0,T_\eps)\cap \Gamma(2\delta))} &\leq 2R\eps^{\order+\frac12}
    \end{align}
  \end{subequations}
  for all $\eps\in (0,\eps_0]$ and \eqref{eq:h2epsBound} holds true for some $M\geq 1$.
  Then there are some $C(R)>0$, $\eps_1\in (0,1)$, independent of $T_\eps,\eps_0$ such that
  \begin{subequations}
    \begin{align}\label{eq:estofw}
      \|\tilde{\we}_1\|_{L^2(0,T,H^1(\Om))}&\leq C(R)\eps^N,\\\label{eq:estofw'}
      \|\tilde{\we}_1\|_{L^2(0,T,L^2(\Om))}&\leq C(R)\left(T^{\frac14}+\eps^{\frac12}\right)\eps^N,\\  \label{eq:estofwc}
 \|\tilde{\we}_2\|_{L^r(0,T,L^q(\Om))}&\leq C(R)\eps^{\frac{4}r}
    \end{align}
  \end{subequations}
  for all $0<T\leq T_\eps$, $\eps\in (0,\min(\eps_0,\eps_1)]$. Moreover, $C(R)$ is independent of $M$.
\end{prop}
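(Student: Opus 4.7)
The plan is to apply the weak and very weak Stokes estimates \eqref{eq:WeakStokes}--\eqref{eq:VeryWeakEstim} together with the remainder tools of Lemma~\ref{yuning:1.21} and Corollary~\ref{cor:repsEstim2}. The starting observation is that, by \eqref{eq:cAg},
\begin{equation*}
\nabla c_{A,0}-\g = \zg\,\theta_0'(\rho)\Bigl(\tfrac{\no_{\Gamma_t}}{\eps}-\nabla_\btau h_1\Bigr) + (\zeta'\circ d_\Gamma)\,\no_{\Gamma_t}\bigl(\theta_0(\rho)-\chi_++\chi_-\bigr),
\end{equation*}
so that the sole $O(\eps^{-1})$ singularity is carried by the normal component $\zg\,\theta_0'(\rho)\no_{\Gamma_t}/\eps$, while the $\zeta'$-supported transition terms are $O(e^{-c/\eps})$ by \eqref{yuning:decayopti} and can be safely ignored.

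For \eqref{eq:estofw} and \eqref{eq:estofw'} I would test the Stokes system \eqref{eq:w1a'}--\eqref{eq:w1b'} against a divergence-free $\varphi$ of the appropriate regularity ($H^1_0(\Om)\cap L^2_\sigma(\Om)$ for the $H^1$-bound, $W^{2,2}(\Om)\cap H^1_0(\Om)\cap L^2_\sigma(\Om)$ for the $L^2$-bound) and integrate by parts to shift derivatives of $u_1=c_\eps-c_{A,0}$ onto $\varphi$. The second-order contraction $\sum_{ij}(\partial_j c_{A,0}-\g_j)\partial_i\partial_j\varphi_i=(\nabla c_{A,0}-\g)\cdot\nabla\Div\varphi$ vanishes identically by divergence-freeness, and the remaining would-be dominant normal--normal contribution $(\no_{\Gamma_t}\otimes\no_{\Gamma_t}){:}\nabla\varphi=\no_{\Gamma_t}\cdot\partial_\no\varphi$ is converted, again from $\Div\varphi=0$, into the tangential derivative $-\btau\cdot\partial_\btau\varphi$. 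Lemma~\ref{yuning:1.21} applied with $\eta\in\{\theta_0',\theta_0''\}$ then delivers an $\eps^{1/2}$-gain from the thin-interface integration; combined with the $\eps^{-1/2}$-scaling $\|\partial_\no u_1\|_{L^2((0,T)\times\Gamma(2\delta))}\le C(R)\eps^{N-\frac12}$ implied by \eqref{yuning:diffcaca1New'} and the embedding \eqref{yuning:2.01} controlling the $L^{2,\infty}$-norm of $\varphi$, this yields $\|\tilde{\we}_1\|_{L^2(0,T;H^1)}\le C(R)\eps^N$. For \eqref{eq:estofw'} the improved $L^2$-in-space bound exploits in addition the sharper assumption $\|u_1\|_{L^4(0,T;L^2)}\le 2R\eps^{N+\frac12}$ from \eqref{yuning:diffcaca1New}: H\"older-interpolation in time between $L^4(0,T)$ and $L^\infty(0,T)$ supplies the factor $T^{1/4}$, while the extra $\eps^{1/2}$ comes from the second application of Lemma~\ref{yuning:1.21} made possible by the $H^2$-regularity of $\varphi$.

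For \eqref{eq:estofwc} I use the very weak $L^q$-Stokes estimate \eqref{eq:VeryWeakEstim} with test $\varphi\in W^{2,q'}(\Om)\cap W^{1,q'}_0(\Om)\cap L^{q'}_\sigma(\Om)$, $\tfrac{1}{q}+\tfrac{1}{q'}=1$. The linear terms in the right-hand side of the Stokes system for $\tilde{\we}_2$ carry an explicit extra $\eps$ through $\g=-\zg\,\theta_0'(\rho)\eps\nabla_\btau h_{2,\eps}$, with the tangential factor under control thanks to \eqref{eq:h2epsBound} and the embedding $X_{T_\eps}\hookrightarrow L^4(0,T;H^2(\T^1))$. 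The quadratic term $-\eps\Div(\nabla u_1\otimes\nabla u_1)$ is handled by moving the divergence onto $\varphi$, using $\|\nabla\varphi\|_{L^\infty}\le C\|\varphi\|_{W^{2,q'}}$ in two dimensions for $q'>2$, and exploiting the quadratic smallness $\|\nabla u_1\|_{L^2((0,T)\times\Gamma(2\delta))}^2\le C(R)^2\eps^{2N-1}$ from \eqref{yuning:diffcaca1New'} together with the outer-region bound from \eqref{yuning:diffcaca1New}. An $L^r$-integration in time via H\"older then produces $C(R)\eps^{4/r}$, uniformly in $M$.

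The main obstacle is the $H^1$-estimate \eqref{eq:estofw}: with only $H^1$-regular divergence-free test functions at hand, the natural $L^{2,\infty}$-estimate from Lemma~\ref{yuning:1.21} would ordinarily require an $H^2$-control of a \emph{derivative} of the test function, which is not available. The resolution is the careful integration-by-parts/divergence-free reduction sketched above, which arranges that the surviving $L^{2,\infty}$-norm always falls on $\varphi$ itself, where it is supplied by the embedding \eqref{yuning:2.01}.
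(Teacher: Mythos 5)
Your approach is essentially the paper's: test the Stokes system against divergence-free $\psi$, exploit $\no\otimes\no:\nabla\psi=-\Div_\btau\psi$, and integrate by parts so that only $\psi$ itself (not a derivative) needs to be placed in $L^{2,\infty}$. However, your sketch of \eqref{eq:estofw} glosses over the actual resolution of the obstacle you identify. After the tangential integration by parts (via Lemma~2.7) one meets $\theta_0'\nabla_\btau\partial_\no u_1\cdot\psi$; one must then integrate by parts once more in the normal direction to trade the two-derivative term for $\tfrac1\eps\theta_0''\nabla_\btau u_1\cdot\psi$. The dominant contribution is then $\eps^{-1/2}\|\nabla_\btau u_1\|_{L^2(\Gamma_t(2\delta))}\|\psi\|_{H^1}$, using the \emph{tangential} bound $\|\nabla_\btau u_1\|\lesssim\eps^{N+\frac12}$ from \eqref{yuning:diffcaca1New'}, not the normal-derivative bound $\eps^{N-\frac12}$ you invoke. (The numbers happen to agree, but attributing the leading term to $\partial_\no u_1$ obscures why the extra IBPs were needed at all.) The remark about the ``second-order contraction vanishing by divergence-freeness'' is a red herring; no such term arises in the paper's computation.

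For \eqref{eq:estofwc} there is a genuine gap. Your hypotheses control only the $L^2$-in-time norm of $\nabla u_1$ on $(0,T_\eps)\times\Gamma(2\delta)$; this yields $\|\nabla u_1\otimes\nabla u_1\|_{L^1(0,T;L^1)}\lesssim\eps^{2N-1}$ and hence the $r=1$ case, but it does \emph{not} give the $L^r$-in-time bound for $r>1$ that the statement requires. The paper's crucial extra input is the a priori energy estimate \eqref{eq:Energy}, which gives $\eps^{1/2}\|\nabla u_1\|_{L^\infty(0,T;L^2)}\leq C$; interpolating $\|\nabla u_1\|_{L^{2r}(0,T;L^2)}$ between the $L^2$- and $L^\infty$-in-time norms then produces the exponent $\eps^{4/r-1}$, and after the extra $\eps$ the desired $\eps^{4/r}$. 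Without invoking the energy identity, your ``H\"older in time'' step has nothing to interpolate against and does not close for $r>1$.
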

\begin{proof}
Within this proof we will often write $h_\eps$ instead of $h_\eps (S(x,t),t)$ for brevity. We also recall that $u_1=c_\eps-c_{A,0}$. Because of \eqref{eq:dGammaEstim}, we have
\begin{equation*}
\left|\theta_0\left(\tfrac{d_\Gamma(x,t)}\eps - h_\eps (S(x,t),t)\right) - \chi_+(x,t) + \chi_-(x,t)\right| +
  \left|\theta'_0\left(\tfrac{d_\Gamma(x,t)}\eps - h_\eps (S(x,t),t)\right)\right| \leq C e^{- \frac{\alpha\delta}{2\eps}}
\end{equation*}
for all $(x,t)\in \Gamma(3\delta)\setminus \Gamma(\delta)$ and $0 <\eps \leq \min(\eps_0,\eps_1)$ because of \eqref{yuning:decayopti} provided $\eps_1$ is chosen sufficiently small (in dependence on $M$). In particular, we choose $\eps_1\leq \frac1M$ such that $\eps\|h_{2,\eps}\|_{X_{T_\eps}}\leq 1$ independent of the choice of $M$. Here $C$ can be chosen independent of $M$. According to the definition of $\zeta$ at \eqref{yuning:1.34}, all terms involving derivatives of $\zeta\circ d_\Gamma$ will provide terms of order $O(e^{- \frac{\alpha\delta}{2\eps}})$ in the following and will be negligible.

\smallskip

\noindent
\emph{Proof of \eqref{eq:estofw}:} Using identity \eqref{eq:cAg} and  $ \nabla u_1=\nabla_\btau u_1+\no \p_\no u_1$
in $\Gamma(3\delta)$,
we have for all $\psi\in H^1(\Om)^2 \cap L^2_\sigma(\Om)$ with $\|\psi\|_{H^1(\Om)}\leq 1$ that
\begin{alignat}{1}\nonumber
    &\int_{\Om}\eps(\nabla c_{A,0}-\g)\otimes\nabla u_1:\nabla\psi\sd x =\int_{\Gamma_t(2\delta)}\zg   \eps(\nabla c_{A,0}-\g)\otimes\nabla u_1:\nabla\psi\sd x+ O(e^{- \frac{\alpha\delta}{2\eps}})\\\nonumber
  &=\int_{\Gamma_t(2\delta)} \zg \, \theta'_0\left(\tfrac{d_\Gamma}\eps-h_\eps(S(x,t),t)\right)\left(\no-\eps\nabla_\btau h_1(S(x,t),t)\right)\otimes \nabla u_1: \nabla\psi\sd x +O(e^{- \frac{\alpha\delta}{2\eps}})\\\nonumber
  &= {\int_{\Gamma_t(2\delta)} \zg \,  \theta'_0\left(\tfrac{d_\Gamma}\eps-h_\eps\right)\no\otimes \nabla_\btau u_1: \nabla\psi\sd x}  + {\int_{\Gamma_t(2\delta)}\zg \,  \theta'_0\left(\tfrac{d_\Gamma}\eps-h_\eps\right)\no\otimes \no \partial_\no u_1: \nabla\psi\sd x}\\\nonumber
  &\quad {-\int_{\Gamma_t(2\delta)}\zg \,  \theta'_0\left(\tfrac{d_\Gamma}\eps-h_\eps\right)\left(\eps\nabla_\btau h_1 \right)\otimes \nabla u_1: \nabla\psi\sd x}+O(e^{- \frac{\alpha\delta}{2\eps}})\\\label{yuning:1.20}
&\equiv I+II+III + O(e^{- \frac{\alpha\delta}{2\eps}})\qquad \text{in }L^2(0,T_\eps).
\end{alignat}
 Therefore the following estimates are due to Cauchy-Schwarz inequality:
\begin{equation*}
      |I |\leq C\|\nabla_\btau u_1(t)\|_{L^2(\Gamma_t(2\delta))}\|\psi\|_{H^1(\Om)},\quad
      |III |\leq C\eps\|\nabla  u_1(t)\|_{L^2(\Gamma_t(2\delta))} \|\psi\|_{H^1(\Om)}.
\end{equation*}
To estimate $II$, we employ the formula $\no\otimes \no : \nabla\psi  = \partial_\no \psi_\no =-\Div_\btau \psi$, a consequence of the divergence-free condition $\Div \psi=0$. This together with \eqref{yuning:1.23} implies
\begin{alignat*}{1}
  II =& -\int_{\Gamma_t(2\delta)}\zg \,  \theta'_0\left(\tfrac{d_\Gamma}\eps-h_\eps\right) \partial_\no u_1 \Div_\btau \psi \sd x\\
  =& \int_{\Gamma_t(2\delta)}\zg \,  \theta'_0\left(\tfrac{d_\Gamma}\eps-h_\eps\right) (\nabla_\btau \partial_\no  u_1 + {\kappa}\no\partial_\no  u_1) \cdot \psi\sd x\\
& - \int_{\Gamma_t(2\delta)}\zg \,  \theta''_0\left(\tfrac{d_\Gamma}\eps-h_\eps\right) \nabla_\btau h_\eps \partial_\no  u_1 \cdot \psi\sd x\\
  =& \int_{\Gamma_t(2\delta)}\zg \,  \theta'_0\left(\tfrac{d_\Gamma}\eps-h_\eps\right) \partial_\no \nabla_\btau  u_1 \cdot \psi \sd x + \int_{\Gamma_t(2\delta)}\zg \,  \theta'_0\left(\tfrac{d_\Gamma}\eps-h_\eps\right) [\partial_\no, \nabla_\btau]  u_1 \cdot \psi \sd x \\
& + \int_{\Gamma_t(2\delta)}\zg \,  \left(-\theta''_0\left(\tfrac{d_\Gamma}\eps-h_\eps\right)\nabla_\btau h_\eps+ \theta'_0\left(\tfrac{d_\Gamma}\eps-h_\eps\right){\kappa} \no\right) \partial_\no  u_1 \cdot \psi\sd x+O(e^{- \frac{\alpha\delta}{2\eps}})\\
=& -\int_{\Gamma_t(2\delta)}\zg \,  \tfrac1\eps \theta''_0\left(\tfrac{d_\Gamma}\eps-h_\eps\right) \nabla_\btau  u_1 \cdot \psi \sd x -{\int_{\Gamma_t(2\delta)}\zg \,  \theta'_0\left(\tfrac{d_\Gamma}\eps-h_\eps\right) \nabla_\btau  u_1 \cdot \partial_\no \psi \sd x}\\
& + {\int_{\Gamma_t(2\delta)}\zg \,  \theta'_0\left(\tfrac{d_\Gamma}\eps-h_\eps\right) [\partial_\no, \nabla_\btau]  u_1 \cdot \psi \sd x}  \\
& +{\int_{\Gamma_t(2\delta)}\zg \,  \left(-\theta''_0\left(\tfrac{d_\Gamma}\eps-h_\eps\right)\nabla_\btau h_\eps+ \theta'_0\left(\tfrac{d_\Gamma}\eps-h_\eps\right){\kappa} \no\right) \partial_\no  u_1 \cdot \psi\sd x}+O(e^{- \frac{\alpha\delta}{2\eps}})\\
\equiv&\, II_1+II_2+II_3+II_4 +  O(e^{- \frac{\alpha\delta}{2\eps}})\qquad \text{in }L^2(0,T_\eps).
\end{alignat*}
In the above calculation,  $[\cdot,\cdot]$ denotes the commutator of two differential operators. So
it follows from assumption \eqref{eq:h2epsBound} and Lemma \ref{yuning:1.21}   that
\begin{alignat*}{1}
  |II_1|&\leq \tfrac1{\sqrt\eps} \| \theta_0''\|_{L^2(\R)}\|\nabla_\btau u_1\|_{L^2(\Gamma_t(2\delta))}\|\psi\|_{L^{2,\infty}(\Gamma_t(2\delta))}\leq C\tfrac1{\sqrt\eps}\|\nabla_\btau u_1\|_{L^2(\Gamma_t(2\delta))}\|\psi\|_{H^1(\Om)},\\
|II_2| &\leq  C\|\nabla_\btau u_1\|_{L^2(\Gamma_t(2\delta))}\|\psi\|_{H^1(\Om)},\\
|II_3| &\leq C\sqrt\eps\|\theta_0'\|_{L^2(\R)}\|\nabla_\btau u_1\|_{L^2(\Gamma_t(2\delta))}\|\psi\|_{H^1(\Om)}\leq C'\sqrt{\eps}\|\nabla_\btau u_1\|_{L^2(\Gamma_t(2\delta))}\|\psi\|_{H^1(\Om)},\\
  |II_4|&\leq  C\sqrt\eps\left(\| \theta_0''\|_{L^2(\R)}+ \| \theta_0'\|_{L^2(\R)} \right)\|\partial_\no u_1\|_{L^2(\Gamma_t(2\delta))}\|\psi\|_{L^{4,\infty}(\Gamma_t(2\delta))}\left(1+\eps\|\partial_s h_{2,\eps}\|_{L^4(\T^1)}\right)\\
&\leq C\sqrt{\eps}\|\partial_\no u_1\|_{L^2(\Gamma_t(2\delta))}\|\psi\|_{H^1(\Om)},
\end{alignat*}
where we have used \eqref{yuning:1.22}, \eqref{eq:RemainderEstim2} and the following imbedding theorem
$$H^1(\Gamma_t(2\delta))\hookrightarrow L^{4,\infty}(\Gamma_t(2\delta)),\quad H^{\frac32}(\T^1)\hookrightarrow W^1_4(\T^1).$$ Similarly, we obtain
\begin{alignat*}{1}
  &\int_{\Om}\eps\nabla u_1\otimes (\nabla c_{A,0}-\g):\nabla\psi\sd x \\
  &= {\int_{\Gamma_t(2\delta)}\zg \,  \theta'_0\left(\tfrac{d_\Gamma}\eps-h_\eps\right) \nabla_\btau u_1\otimes \no: \nabla\psi\sd x}  + {\int_{\Gamma_t(2\delta)}\zg \,  \theta'_0\left(\tfrac{d_\Gamma}\eps-h_\eps\right)\partial_\no u_1 \no\otimes \no : \nabla\psi\sd x}\\
  &\ { -\int_{\Gamma_t(2\delta)}\zg \,  \theta'_0\left(\tfrac{d_\Gamma}\eps-h_\eps\right)\left(\eps\nabla u_1 \otimes \nabla_\btau h_1 \right): \nabla\psi\sd x}+O(e^{- \frac{\alpha\delta}{2\eps}})\quad \text{in }L^2(0,T_\eps),
\end{alignat*}
where each term can be estimated as before.
Hence we obtain
\begin{equation*}
  \|\tilde{\we}_1(t)\|_{H^1(\Om)}\leq C \left(\eps^{-\frac12}\|\nabla_\btau u_1\|_{L^2(\Gamma_t(2\delta))}+ \sqrt{\eps} \|\partial_\no u_1\|_{L^2(\Gamma_t(2\delta))}+e^{- \frac{\alpha\delta}{2\eps}}\right)\quad  \text{in }L^2(0,T_\eps)
\end{equation*}
 and this together with \eqref{yuning:diffcaca1New} leads to
\eqref{eq:estofw}.

\smallskip

\noindent
\emph{ Proof of \eqref{eq:estofw'}:}
 We still need to estimate the same terms as in
\eqref{yuning:1.20}. But it is sufficient to estimate $II$ since all other terms can be estimates in terms of $C\eps^{\order+\frac12}\|\psi\|_{H^1(\Om)}$. To this end let $\psi\in H^2(\Om)^2\Second{\cap H^1_0(\Om)^2}\cap L^2_\sigma(\Om)$. Then
\begin{alignat*}{1}
  II &= \int_{\Gamma_t(2\delta)} \zg \,\theta'_0\left(\tfrac{d_\Gamma}\eps-h_\eps\right)  u_1 \partial_\no \Div_\btau \psi \sd x \\
  &+
\int_{\Gamma_t(2\delta)}\zg \, \tfrac1\eps \theta''_0\left(\tfrac{d_\Gamma}\eps-h_\eps\right)  u_1  \Div_\btau \psi \sd x+O(e^{- \frac{\alpha\delta}{2\eps}})~\text{in}~L^2(0,T_\eps)
\end{alignat*}
and therefore Lemma \ref{yuning:1.21} implies
\begin{alignat*}{1}
  |II| &\leq C\|u_1(t)\|_{L^2(\Om)}\left(\|\psi\|_{H^2(\Om)}+ \tfrac1\eps\sqrt{\eps}\|\theta '' _0\|_{L^2(\R)}
  \|\Div_\btau\psi\|_{L^{2,\infty}(\Gamma_t(2\delta))}\right) +O(e^{- \frac{\alpha\delta}{2\eps}})\\
&\leq C'\eps^{-\frac12} \|u_1(t)\|_{L^2(\Om)}\|\psi\|_{H^2(\Om)}+O(e^{- \frac{\alpha\delta}{2\eps}})\qquad \text{in }L^2(0,T_\eps).
\end{alignat*}
Since we have $\|u_1\|_{L^2(0,T;L^2)}\leq C(R)T^{\frac14}\eps^{\order+\frac12}$ for all $0<T\leq T_\eps$ due to \eqref{yuning:diffcaca1New},  we obtain \eqref{eq:estofw'}.

\noindent
\emph{ Proof of \eqref{eq:estofwc}:} Because of $W^2_{q}(\Om)\hookrightarrow C^0(\ol\Om)$, we have the imbedding $L^1(\Om)\hookrightarrow (W^2_{q}(\Om))'$ and it follows from \eqref{eq:VeryWeakEstim}    that
\begin{equation*}
  \begin{split}
    &\|\tilde{\we}_2(t,\cdot)\|_{L^r(0,T;L^q)}
    \leq C\eps\left( \|\nabla u_1\otimes \nabla u_1\|_{L^r(0,T;L^1)}+ \|\g\otimes \nabla u_1\|_{L^r(0,T;L^1)} \right).
  \end{split}
\end{equation*}
On the other hand
\begin{equation*}
  \eps^{\frac12} \|\nabla u_1\|_{L^\infty(0,T;L^2)}\leq \eps^{\frac12} \|\nabla c_\eps\|_{L^\infty(0,T;L^2)}+ \eps^{\frac12} \|\nabla c_{A,0}\|_{L^\infty(0,T;L^2)}\leq C
\end{equation*}
uniformly in $T\in (0,T_\eps]$, $\eps\in (0,\min(\eps_0,\eps_1)]$ because of {the energy estimate \eqref{eq:Energy} for \eqref{eq:NSAC1}-\eqref{eq:NSAC4}} and the explicit form of $c_{A,0}$ at \eqref{yuning:ca1}. Altogether we conclude
\begin{alignat*}{1}
  &\|\tilde{\we}_2\|_{L^r(0,T;L^q)} \leq C\eps\left( \|\nabla u_1\otimes \nabla u_1\|_{L^r(0,T;L^1)}+ \|\g\otimes \nabla u_1\|_{L^r(0,T;L^1)} \right)\\
&
  \leq C\eps\left( \|\nabla u_1\|_{L^2(0,T;L^2)}^{\frac2r} \|\nabla u_1\|_{L^\infty(0,T;L^2)}^{2-\frac2r} +\eps \|\partial_s h_{2,\eps}\|_{L^\infty(0,T;L^2(\T^1))}\|\theta_0'(\tfrac{\cdot}\eps)\|_{L^2(\R)} \|\nabla u_1\|_{L^2(0,T;L^2)} \right)\\
&\leq C\left(\eps^{1+ \frac2r\frac32-1+ \frac1r}+ M\eps^{N+2}\right)\leq  C'\eps^{\frac4r}\qquad \text{for all }0<t \leq T_\eps, \eps\in (0,\eps_0].
\end{alignat*}
Therefore we have completed the proof.\end{proof}

\section{Approximate Solution for the Allen-Cahn Part}\label{eq:Approx}

Let us start with an overview of this central section.
The goal of this section is to construct the full expansion of \eqref{eq:NSAC3} through Ansatz using inner and outer expansions, cf. \eqref{yuning:ca} below, and  to prove Theorem \ref{thm:ApproxSolutions}. The crucial step is to  prove Theorem \ref{thm:InnerExpansion} in Section~\ref{subsec:inner} for the inner expansions defined by \eqref{yuning:innerexpan'} in the sequel, which will be glued to the outer expansion in  \eqref{yuning:ca}. To this end, we shall first explicitely define $h_1, h_2, \hat{c}_2, \hat{c}_3$ in the inner expansions, while the technical calculations, which lead to this definitions, are done in the Appendix \ref{subsec:1.1}. Then,  in Lemma \ref{lem:c2c3}, Lemma \ref{lem:Existenceh2} and Lemma \ref{lem:c3Estim}, we prove the validity of these definitions and various estimates are given. In Lemma \ref{lemma:va}, a part of the error in convection term is treated carefully.  In Lemma \ref{lem:MatchEstim}, we deal with estimates away from the interface, which is less subtle. For a first reading, the  proofs of these technical lemmas can be skipped.

\subsection{Inner Expansion}\label{subsec:inner}

 In this section we will construct the inner expansion of the approximate solution $c_A$ with the aid of the following ansatz:
\begin{equation}\label{yuning:innerexpan'}
\begin{split}
  c^{in} (x,t)=\tc^{in}(\rho,s,t)=&\, \theta_0(\rho) + \eps^2 \tc_2(\rho,S(x,t),t) + \eps^3 \tc_3\(\rho,S(x,t),t\)\\
  =:&\, c_0^{in}(x,t)+\eps^2c_2^{in}(x,t)+\eps^3c_3^{in}(x,t)
\end{split}
\end{equation}
 where  $s=S(x,t)$ and $\rho$ is  related to $(x,t)$ by
 \begin{equation}\label{eq:rho}
   \rho = \frac{d_\Gamma(x,t)}\eps - h_1(S(x,t),t)-\eps h_{2,\eps}(S(x,t),t).
 \end{equation}
 We note that in the functions which will be defined in the following, $\rho$ is often an independent variable. But in the final definition of the approximate solutions $\rho$ is related to $(x,t)$ by \eqref{eq:rho}.
Here $\hat{c}_2, \hat{c}_3, h_1, h_{2,\eps}$ are chosen as follows: We define $h_1=h_1(s,t)$ as the solution of the following  linear parabolic equation, coupled with  \eqref{yuning:secondlimit}, which determines $\ve_1^\pm$:
  \begin{equation}\label{yuning:h1equ'}
      D_t  h_1 -X_0^\ast (\ve) \cdot  \nabla_\Gamma h_1  - \Delta_\Gamma h_1  -\kappa_1 h_1 + X_0^\ast(\divtau \ve) h_1=X_0^\ast(\ve_{1,\no}^\pm)\quad \text{on }\T^1\times [0,T_0]
   \end{equation}
together  with the initial data  $h_1|_{t=0}=0$, cf. Corollary~\ref{cor:ParabolicEqOnSurfaceStokes}.  Note that  here we use the notation \eqref{yuning:1.27} and $(\ve,\G)$ is the solution of  \eqref{eq:Limit1}-\eqref{eq:Limit4}. $\kappa_1$ is a curvature term defined in \eqref{yuning:ex1}.
  Using $h_1$, we shall define $\tc_2=\tc_2(\rho,s,t)$ as the bounded  solution of the following ordinary differential equation:
  \begin{equation}\label{yuning:c2equ'}
   \begin{split}
     &-\p_\rho^2 \tc_2(\rho,s,t)+f''(\theta_0(\rho)) \tc_2(\rho,s,t)\\
     &=|\nabla_\Gamma h_1(s,t)|^2\theta_0''(\rho)-\rho\theta_0'(\rho)    (\kappa_1(s,t)- (\divtau \ve)(X_0(s,t),t))
   \end{split}
\end{equation}
 together with $\tc_2(0,s,t)=0$
for all $(\rho,s,t)\in\R\times \T^1\times[0,T_0]$.
For the following we denote
\begin{alignat}{1}\nonumber
&b(\rho,s,t):=
 \frac12\left[(\partial_\no^2 \ve^+_{0,\no}+\partial_\no^2 \ve^-_{0,\no })(X_0(s,t ),t) +(\partial_\no^2 \ve^+_{0,\no }-\partial_\no^2 \ve^-_{0,\no })(X_0(s,t ),t)\eta(\rho)\right]  (\rho+h_1(s,t))^2 \\ \nonumber
&+\frac12\left[(\partial_\no\ve^+_{0 }+ \partial_\no\ve^-_{0 })(X_0(s,t),t)+ (\partial_\no\ve^+_{0 }- \partial_\no\ve^-_{0})(X_0(s,t),t)\eta(\rho)\right]\cdot \nabla_\Gamma h_1(s,t)(\rho+h_1(s,t))\\ \label{eq:DefnB1}
&+  \ve_1^\pm(X_0(s,t),t)\cdot \nabla_\Gamma h_1(s,t)- (\divtau \ve_1^\pm)(X_0(s,t),t) h_1(s,t) + \tilde{\ve}_{2,\no}(\rho,X_0(s,t),t),
\end{alignat}
where  $\ve^\pm_0=\ve|_{\Omega^\pm}$ and extended smoothly to $\Om\times [0,T_0]$ as before  and $\ve_1^\pm$ are determined by \eqref{yuning:secondlimit}.
Then $b$ is a smooth function in $(\rho,s,t)$, which  is independent of $h_{2,\eps}$ and satisfies
 \begin{equation}\label{yuning:1.36}
   |(b,\p_s b)|\leq C(1+|\rho|^2)~\text{for all}~(\rho,s,t)\in\R\times \T^1\times [0,T].
 \end{equation}
Furthermore, we define
\begin{alignat}{1}\nonumber
  \mathfrak{D}_1(r,\rho,s,t)&= -2 \nabla_\Gamma h_1(s,t)\cdot \Lgrad h_1(r,s,t) \theta_0''(\rho)- |\Lgrad h_1(r,s,t)|^2 \theta_0''(\rho)\\\nonumber
  &\quad+\theta_0'(\rho)\left[(\Ldelta h_1-\Lt h_1)(r,s,t) + \ve(X_0(s,t),t)\cdot \Lgrad h_1(r,s,t)\right],
  \\\label{yuning:1.37}
  \mathfrak{D}(\rho,s,t)&= \partial_r \mathfrak{D}_1(0,\rho,s,t),
\end{alignat}
where $r\in (-3\delta,3\delta)$, $s\in\T^1$, $t\in [0,T_0]$, and $\rho\in \R$.
Moreover,
\begin{equation}\label{yuning:deterb'}
 \mathfrak{B}(s,t)
  :=\frac 1{\int_\R(\theta_0')^2d\rho}\int_\R\left[  (b(\rho,s,t)-\kappa_2(s,t)\rho^2)\theta_0'(\rho) + (\rho+h_1(s,t))\mathfrak{D}(\rho,s,t) \right]\theta_0'(\rho)\sd \rho.
\end{equation}
Now we define $h_2=h_{2,\eps}\colon \T^1\times [0,T_0]\to \R$ as solution of the following parabolic equation
\begin{alignat}{1}\label{yuning:h2equ'}
   & D_t  h_2    -   \Delta_\Gamma h_2 - \kappa_1h_2 -   X_0^\ast(\ve)\cdot \nabla_\Gamma h_2 + X_0^\ast (\divtau \ve) h_2
 =\mathfrak{B}- \kappa_2h_1^2+X_0^\ast(\no\cdot\we_1)
\end{alignat}
on  $\T^1\times [0,T_0]$ together with initial  condition $h_2|_{t=0}=0$, where
\begin{equation}\label{yuning:2.02}
 \we_1=\frac{\tilde{\ve}_A-\ve_A+\tilde{\we}_1}{\eps^2}\quad \text{with}~\tilde{\we}_1\in L^\infty(0,T_0;H^1_0(\Om)^2\cap L^2_\sigma(\Om))
\end{equation}
is determined by \eqref{eq:w1a'}-\eqref{eq:w1b'}  and $\ve_A, \tilde{\ve}_A$ are as in the beginning of  previous section.
Finally, we shall define $\hat{c}_3=\hat{c}_3(\rho,s,t)$ as the solution of
\begin{equation}\label{yuning:monst4'}
\begin{split}
&\eps^2 (D_t \tc_3 -  \Delta_\G \tc_3) -\p_\rho^2\tc_3+f''(\theta_0)\tc_3\\
&=2\nabla_\Gamma h_1\cdot\nabla_\Gamma h_2\theta_0''-\(b- \mathfrak{B}-\kappa_2(\rho^2+2\rho h_1)\) \theta_0'- (\rho+h_1)\mathfrak{D} \quad \text{on }\R\times\T^1\times [0,T_0]
\end{split}
\end{equation}
together with initial  condition  $\tc_3|_{t=0}=0$.

 The next lemma provides important estimates to treat remainder terms in the following.
\begin{lem}\label{lem:c2c3}
  The functions  $h_1,\tc_2, h_2, \tc_3$ are well defined through the above formulae. For $\tc_2$ we have
  \begin{equation}\label{yuning:1.16}
   \p_\rho^i\p_s^j\p_t^k\tc_2(\rho,S(x,t),t) \in\mathcal{R}_{0,\alpha},\quad \forall i,j,k\geq 0.
  \end{equation}
  Moreover, for $\tc_3$ we have  for every $k\in\N_0$,  $\theta>0$,   $\eps\in (0,1)$ and $T_\eps \in (0,T_0]$ that
 \begin{alignat}{1}\nonumber
    &\eps\| (\rho^k \tc_3, \p_s \tc_3)\|_{L^\infty(0,T_\eps;L^2(\T^1\times \R))}\\\label{yuning:1.38}
&+ \eps^\theta \sup_{(s,t)\in \T^1\times (0,T_\eps)}\|\hat{c}_3(\cdot,s,t)\|_{H^1(\R)}+\eps^\theta \|\hat{c}_3\|_{L^\infty((0,T_\eps)\times \T^1\times \R)}\\\nonumber
&  +\| ( \rho^k \tc_3, \rho^k\p_\rho \tc_3, \p_s \tc_3,\p_\rho^2 \tc_3,\partial_s\p_\rho \tc_3, \eps\partial_s^2 \tc_3,\eps\rho^k\p_s\hat{c}_3) \|_{L^2((0,T_\eps)\times \R\times\T^1)}  \leq C_{k,\theta }(1+\|h_\eps\|_{X_{T_\eps}}),
  \end{alignat}
where $C_{k, \theta }$ is independent of $\eps, T_\eps$ and $h_\eps$.
\end{lem}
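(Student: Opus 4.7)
The plan is to treat the four objects $h_1$, $\hat{c}_2$, $h_{2,\eps}$, $\hat{c}_3$ successively, in the order they are defined, and then package the estimates from Section~\ref{sec:Prelim}. For $h_1$, equation \eqref{yuning:h1equ'} is a linear parabolic equation on $\T^1 \times [0,T_0]$ coupled with the two-phase Stokes system \eqref{yuning:secondlimit} through $X_0^\ast(\ve_{1,\no}^\pm)$; existence, uniqueness and smoothness follow directly from Corollary~\ref{cor:ParabolicEqOnSurfaceStokes}, since the initial datum and all coefficients ($\ve$, $\divtau \ve$, $\kappa_1$) are smooth.

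Next, for $\hat{c}_2$ the equation \eqref{yuning:c2equ'} is, at each fixed $(s,t)$, an ODE of the form $-\partial_\rho^2 u + f''(\theta_0)u = g$ with $u(0,s,t)=0$, so Proposition~\ref{yuning:solveode} applies once the solvability condition $\int_\R g \theta_0'\,\sd\rho = 0$ is verified. Here
\[
g = |\nabla_\Gamma h_1|^2\theta_0''(\rho) - \rho\,\theta_0'(\rho)\bigl(\kappa_1 - X_0^\ast(\divtau\ve)\bigr),
\]
and both $\int_\R \theta_0''\theta_0'\,\sd\rho = \tfrac12[(\theta_0')^2]_{-\infty}^{\infty} = 0$ and $\int_\R \rho(\theta_0')^2\,\sd\rho = 0$ (since $\theta_0'$ is even). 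Proposition~\ref{yuning:solveode} with $g^\pm \equiv 0$ then yields a unique bounded solution whose $\partial_\rho^i\partial_s^j\partial_t^k$--derivatives all decay exponentially in $\rho$ uniformly in $(s,t)$, which is exactly \eqref{yuning:1.16}.

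For $h_{2,\eps}$, equation \eqref{yuning:h2equ'} has the form treated in Corollary~\ref{cor:ParabolicEqOnSurfaceStokes}, except that the source contains $X_0^\ast(\no\cdot\we_1)$ with $\we_1 = \eps^{-2}(\tilde\ve_A-\ve_A+\tilde\we_1)$ depending on $h_{2,\eps}$ itself through $c_{A,0}$. I would decouple this by combining Theorem~\ref{thm:ApproxVA} and Proposition~\ref{yuning:velocityest} to bound $\we_1$ in $L^2(0,T_\eps;L^2(\Om))$ Lipschitz-continuously in $h_{2,\eps}$ on balls of $X_{T_\eps}$, and then run a contraction in $X_{T_\eps}$ for short $T_\eps$ (extended iteratively), obtaining a unique solution $h_{2,\eps}\in X_{T_\eps}$. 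This fixed-point step is the main technical obstacle; the rest of the lemma is relatively routine.

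Finally, for $\hat{c}_3$ I would invoke Theorem~\ref{lem:InftyEstimLinPart} applied to \eqref{yuning:monst4'}. The compatibility condition \eqref{eq:gtheta} is where the definition of $\mathfrak{B}$ pays off: multiplying the right-hand side of \eqref{yuning:monst4'} by $\theta_0'$ and integrating, the $2\nabla_\Gamma h_1\!\cdot\!\nabla_\Gamma h_2\,\theta_0''$ term vanishes as before, and the odd-in-$\rho$ piece $2\rho h_1\kappa_2(\theta_0')^2$ drops out, leaving exactly
\[
\mathfrak{B}\int_\R(\theta_0')^2\,\sd\rho \;-\; \int_\R\bigl[(b-\kappa_2\rho^2)\theta_0' + (\rho+h_1)\mathfrak{D}\bigr]\theta_0'\,\sd\rho = 0
\]
by \eqref{yuning:deterb'}. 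The estimates \eqref{yuning:1.38} then follow from \eqref{yuning:1.12}, \eqref{yuning:1.12Weigthed} and \eqref{yuning:1.11}, provided one controls $\|(1+|\rho|)^k g\|_{L^2}$ and $\|\partial_s g\|_{L^2}$ of the source; this is where \eqref{yuning:1.36} and the polynomial$\times$exponential decay of $\theta_0',\theta_0''$ enter, together with the fact that $\Lgrad h_1(0,\cdot)=0$ so that $\mathfrak{D}$ inherits exponential decay in $\rho$. The factor $1+\|h_\eps\|_{X_{T_\eps}}$ on the right-hand side of \eqref{yuning:1.38} comes from the embedding $X_{T_\eps}\hookrightarrow L^\infty(0,T_\eps;H^{3/2}(\T^1))\cap L^4(0,T_\eps;H^2(\T^1))$ used to estimate the terms in $g$ that depend on $h_2$ and $\nabla_\Gamma h_2$.
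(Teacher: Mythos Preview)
Your outline matches the paper's proof closely: Corollary~\ref{cor:ParabolicEqOnSurfaceStokes} for $h_1$, Proposition~\ref{yuning:solveode} with the same parity verification of \eqref{lem:CompCondODE} for $\hat c_2$ (giving \eqref{yuning:1.16} via \eqref{yuning:matching1} with $g^\pm=0$), a fixed-point argument for $h_{2,\eps}$, and Theorem~\ref{lem:InftyEstimLinPart} for $\hat c_3$ with the compatibility condition \eqref{eq:gtheta} enforced by the definition \eqref{yuning:deterb'} of $\mathfrak B$. The paper likewise defers the $h_{2,\eps}$ existence to the separate Lemma~\ref{lem:Existenceh2}.

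One point to correct in your $h_{2,\eps}$ sketch: Theorem~\ref{thm:ApproxVA} and Proposition~\ref{yuning:velocityest} are a~priori estimates that already \emph{assume} \eqref{eq:h2epsBound} and \eqref{yuning:diffcaca1New}; they do not yield Lipschitz dependence of $\we_1$ on $h_{2,\eps}$, and \eqref{yuning:diffcaca1New} itself involves the unknown $h_{2,\eps}$ through $c_{A,0}$. The paper's contraction argument (Lemma~\ref{lem:Existenceh2}) instead fixes $\eps$, treats $c_\eps$ as a known smooth function, and estimates the weak form of $\tilde\we_1$ directly from \eqref{eq:w1a'}--\eqref{eq:w1b'} and \eqref{eq:cAg} to obtain $\|X_0^\ast(\no\cdot\we_1)\|_{L^2(0,T;H^{1/2})}\le C(\eps)T^{1/2}\bigl(\sup_t\|h_{2,\eps}(t)\|_{H^1(\T^1)}+1\bigr)$ with an $\eps$-dependent constant and matching Lipschitz bound; this gives the contraction on short intervals and extension to $[0,T_0]$ via the linear growth. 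The uniform-in-$\eps$ bound \eqref{eq:h2epsBound} is established only afterwards, under the additional hypothesis \eqref{yuning:diffcaca1New}, and that is where Theorem~\ref{thm:ApproxVA} and Proposition~\ref{yuning:velocityest} enter.
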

\begin{proof}
First of all,  we will show that all terms are well-defined.
Because of Corollary \ref{cor:ParabolicEqOnSurfaceStokes},  $h_1$ can be uniquely determined by solving the coupled system involving \eqref{yuning:secondlimit} and \eqref{yuning:h1equ'} together with $h_1|_{t=0}=0$. Moreover,  we can obtain $\tc_2=\tc_2(\rho,s,t)$ by solving \eqref{yuning:c2equ'} for every $s\in\T^1$, $t\in [0,T_0]$ using Proposition~\ref{yuning:solveode}. Note that the compatibility condition \eqref{lem:CompCondODE} is fulfilled as
\begin{alignat*}{1}
  &\int_{\R} \left(|\nabla_\Gamma h_1(s,t)|^2\theta_0''(\rho)-\theta_0'(\rho)   \rho \left(\kappa_1(s,t) -(\Div_{\btau} \ve)(X_0(s,t)) \right)\right)\theta_0'(\rho) \sd \rho \\
  &= |\nabla_\Gamma h_1(s,t)|^2\int_\R \theta_0''(\rho)\theta_0 '(\rho)\sd \rho - \left(\kappa_1(s,t)  -(\Div_{\btau} \ve)(X_0(s,t)) \right)\int_{\R}\rho (\theta_0'(\rho))^2\sd \rho =0,
\end{alignat*}
following from the fact that  $\theta'_0$ is an even function and $\theta_0'(\rho)\to_{|\rho|\to \infty} 0$.

This leads to a smooth function $\mathfrak{B}$ defined by \eqref{yuning:deterb'}. Moreover, according to \eqref{yuning:matching1} and Definition \ref{yuning:1.15}, we can use Proposition \ref{yuning:solveode} to show \eqref{yuning:1.16}, where $g^\pm=0$.
  The existence of a unique solution
  $ h_2\in X_{T_0} $ 
  of \eqref{yuning:h2equ'} is shown in the next lemma. Note that, as $\we_1$ depends on $h_2$, the equation for $h_2$ is non-linear.
  %

Finally,   we need to establish the estimate for  $\tc_3$ satisfying \eqref{yuning:monst4'}. In order to apply
 Theorem \ref{lem:InftyEstimLinPart}, we need to estimate
 \begin{equation*}
   \tilde{g}(\rho,s,t):=2\nabla_\Gamma h_1\cdot\nabla_\Gamma h_2\theta_0''(\rho)-\(b- \mathfrak{B}-\kappa_2(\rho^2+2\rho h_1)\) \theta_0'(\rho)- (\rho+h_1)\mathfrak{D}(\rho,s,t).
 \end{equation*}
 We note that the compatibility condition \eqref{eq:OrthogonalitySol} is fulfilled due to  the definition of $\mathfrak{B}$.
 It follows from  \eqref{yuning:1.36}, \eqref{yuning:1.37}, \eqref{yuning:deterb'} and the decay estimate \eqref{yuning:decayopti}  that
 \begin{alignat*}{1}
    &\eps\| (\tc_3,\p_s \tc_3)\|_{L^\infty(0,T_\eps;L^2(\T^1\times \R))}  +\| (  \tc_3,\p_\rho \tc_3, \p_s \tc_3,\p_\rho^2 \tc_3, \partial_s \p_\rho \tc_3, {\eps \partial_s^2 \tc_3}) \|_{L^2(\R\times\T^1\times(0,T_\eps))} \\
& \leq C\|(\tilde{g},\p_s \tilde{g},\partial_\rho \tilde{g})\|_{L^2(\R\times\T^1\times(0,T_\eps))}\leq C'(1+\|h_\eps\|_{X_{T_\eps}}).
  \end{alignat*}
  Moreover,  using the exponential decay of $\theta_0'(\rho)$, $\theta_0''(\rho)$ and $\tilde{g}(\rho,s,t)$ as $\rho \to \infty$ and \eqref{yuning:1.12Weigthed}, it is easy to observe that for any $k\in\N$ there is a constant $C_k$ independent of $\tc_3,\eps, h_\eps$ such that
  \begin{alignat*}{1}
&\| \eps\rho^k \tc_3\|_{L^\infty(0,T_\eps;L^2(\T^1\times \R))}  +    \| (  \rho^k\tc_3,\rho^k\p_\rho \tc_3) \|_{L^2(\R\times\T^1\times(0,T_\eps))}\\
&\leq C\|(1+|\rho|)^k\tilde{g}\|_{L^2(\R\times\T^1\times(0,T_\eps))}\leq  C_k(1+\|h_\eps\|_{X_{T_\eps}}).
  \end{alignat*}
  The  second and third  estimate in \eqref{yuning:1.38} follows from \eqref{yuning:1.11} and the boundness of
$$
h_{2,\eps}\in X_{T_\eps}\hookrightarrow BUC([0,T_\eps];H^{\frac32}(\T^1))\cap L^2(0,T_\eps;H^{\frac52}(\T^1))\hookrightarrow L^{2p}(0,T_\eps; C^1(\T^1))
$$
for any $1\leq p<\infty$.
\end{proof}

In the last proof we used the following lemma which is concerned with the solvability of \eqref{yuning:h2equ'}.
\begin{lem}\label{lem:Existenceh2}
Let $\eps\in (0,1)$. Then there is a unique solution $h_{2,\eps}\in X_{T_0}$ of \eqref{yuning:h2equ'}, where $\we_1$ is determined by \eqref{yuning:2.02} with $\tilde{\we}_1$ being  determined by \eqref{eq:w1a'}-\eqref{eq:w1b'}.  Moreover, there are some $\eps_1\in (0,1)$,  $T_1\leq T_0$  and
$M =M(R)>0$, independent of $\eps$, such that the solution $h_\eps$ satisfies \eqref{eq:h2epsBound} (for some $T_\eps\in (0,T_0]$) if \eqref{yuning:diffcaca1New} is valid  and $\eps\leq \eps_1$, $T_\eps\leq T_1$.
\end{lem}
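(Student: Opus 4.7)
\textbf{Proof plan for Lemma~\ref{lem:Existenceh2}.}
The equation \eqref{yuning:h2equ'} is a linear parabolic PDE on the curve for $h_{2,\eps}$, \emph{except} that the right-hand side depends on $h_{2,\eps}$ through $\we_1=(\tilde\ve_A-\ve_A+\tilde\we_1)/\eps^2$. The plan is a Banach fixed point scheme. Given $h\in X_{T_\eps}$, let $c_{A,0}(h)$, $\ve_A(h)$, $\tilde\ve_A(h)$ be constructed via \eqref{yuning:ca1}, \eqref{yuning:chap2glue1}, \eqref{train:stokes} with $h_{2,\eps}$ replaced by $h$; let $\tilde\we_1(h)$ be determined by the weak Stokes problem \eqref{eq:w1a'}-\eqref{eq:w1b'} with $u_1=c_\eps-c_{A,0}(h)$; and let $\we_1(h)$ be the resulting right-hand factor. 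Define $\Phi(h)$ to be the unique solution $\tilde h\in X_{T_\eps}$ of the linear problem
\begin{equation*}
D_t \tilde h - \Delta_\Gamma \tilde h - \kappa_1 \tilde h - X_0^\ast(\ve)\cdot\nabla_\Gamma\tilde h + X_0^\ast(\divtau\ve)\tilde h
= \mathfrak{B} - \kappa_2 h_1^2 + X_0^\ast(\no\cdot\we_1(h)),\quad \tilde h|_{t=0}=0,
\end{equation*}
which exists and is unique by Theorem~\ref{thm:ParabolicEqOnSurface}, with a maximal-regularity estimate whose constant is independent of $T_\eps\in(0,T_0]$ (by treating the coefficient operator as a perturbation on a fixed compact time interval, c.f.~the proof of that theorem).

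For the a~priori bound leading to \eqref{eq:h2epsBound}, I first note that $\|\mathfrak{B}-\kappa_2 h_1^2\|_{L^2(0,T_\eps;H^{1/2}(\T^1))}\leq C_0$ for a constant $C_0$ depending only on the smooth limit data. For the $\we_1$-term I use the trace inequality, Theorem~\ref{thm:ApproxVA} and Proposition~\ref{yuning:velocityest}: since \eqref{yuning:diffcaca1New} is assumed, and since the constant in Proposition~\ref{yuning:velocityest} is \emph{independent of} $M$, I obtain
\begin{equation*}
\eps^2\|\we_1(h)\|_{L^2(0,T_\eps;H^1(\Om))}
\leq \|\tilde\ve_A-\ve_A\|_{L^2(0,T_\eps;H^1)} + \|\tilde\we_1\|_{L^2(0,T_\eps;H^1)}
\leq C(M)(T_\eps^{1/4}+\eps^{1/2})\eps^2 + C(R)\eps^2,
\end{equation*}
whenever $\|h\|_{X_{T_\eps}}\leq M$. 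Trace gives $\|X_0^\ast(\no\cdot\we_1(h))\|_{L^2(0,T_\eps;H^{1/2}(\T^1))}\leq C\|\we_1(h)\|_{L^2(0,T_\eps;H^1)}$, so
\begin{equation*}
\|\Phi(h)\|_{X_{T_\eps}} \leq C_0 + C_1(R) + C_2(M)(T_\eps^{1/4}+\eps^{1/2}).
\end{equation*}
Choosing $M:=2(C_0+C_1(R))$ and then $T_1,\eps_1$ so small that $C_2(M)(T_1^{1/4}+\eps_1^{1/2})\leq\tfrac12 M$ gives $\Phi(B_M)\subseteq B_M$ for $T_\eps\leq T_1$ and $\eps\leq\eps_1$, where $B_M=\{h\in X_{T_\eps}:\|h\|_{X_{T_\eps}}\leq M,\ h|_{t=0}=0\}$.

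Contractivity follows by running the same estimates for the difference $\we_1(h)-\we_1(h')$: the only $h$-dependence in $\tilde\we_1$ and in $\tilde\ve_A-\ve_A$ enters through $c_{A,0}$ and through the functions $\rho\mapsto\theta_0(\rho-\eps(h-h')\cdots)$ etc., and each of the relevant factors carries a prefactor $\eps$ coming from $h_\eps=h_1+\eps h_2$. This yields a Lipschitz estimate
\begin{equation*}
\|\Phi(h)-\Phi(h')\|_{X_{T_\eps}} \leq C_3(M)(T_\eps^{1/4}+\eps^{1/2})\|h-h'\|_{X_{T_\eps}},
\end{equation*}
and after possibly decreasing $T_1,\eps_1$ we get a strict contraction on $B_M$. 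Banach's theorem delivers the unique fixed point $h_{2,\eps}\in B_M\subset X_{T_\eps}$, which satisfies \eqref{eq:h2epsBound} with $M=M(R)$ independent of $\eps$. Global existence on $[0,T_0]$ for each fixed $\eps\in(0,1)$ then follows by a standard continuation argument: for fixed $\eps>0$ the map $h\mapsto\we_1(h)$ is globally Lipschitz from $X_T$ into $L^2(0,T;H^1(\Om))$ (with a constant depending on $\eps,R$ but finite), so the local fixed point can be iterated across $[0,T_0]$ in finitely many steps; uniqueness on each subinterval guarantees compatibility at the gluing times.

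The hardest step is the contraction estimate: it requires redoing the delicate integration-by-parts arguments used in the proof of Proposition~\ref{yuning:velocityest} (the terms $I, II, III$ there, in particular the commutator manipulations producing the $\eps^{-1/2}$ factor) now for the differences of the various building blocks, keeping careful track of the $\eps$-powers so that the final Lipschitz constant is of the form $C(M)(T_\eps^{1/4}+\eps^{1/2})$ and can be made $<1$. No genuinely new ideas are needed beyond those already assembled in Section~\ref{sec:ApproxVelocity}, but the bookkeeping is substantial.
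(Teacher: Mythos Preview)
Your fixed-point scheme is correct, and so is your route to the uniform bound \eqref{eq:h2epsBound}. The paper, however, organizes the argument differently and more economically. It \emph{decouples} existence from the uniform bound: for existence on $[0,T_0]$ (for each fixed $\eps$, with no assumption like \eqref{yuning:diffcaca1New}) it uses only that $c_\eps$ is a known smooth function, which yields the crude estimate
\[
\big\|X_0^\ast(\no\cdot\we_1(h))\big\|_{L^2(0,T;H^{1/2}(\T^1))}\le C(\eps)\,T^{1/2}\Big(\sup_{0\le t\le T}\|h(\cdot,t)\|_{H^1(\T^1)}+1\Big),
\]
so the Lipschitz constant of $\Phi$ is $C(\eps)T^{1/2}$ and one obtains local existence plus an $\eps$-dependent a~priori bound (from the linear growth) allowing continuation to $T_0$. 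Only \emph{afterwards}, assuming \eqref{yuning:diffcaca1New}, does the paper invoke Theorem~\ref{thm:ApproxVA} and Proposition~\ref{yuning:velocityest} on the already-constructed $h_{2,\eps}$ to get $\|h_{2,\eps}\|_{X_{T_\eps}}\le C(R)+C(M)(T^{1/4}+\eps^{1/2})$, and closes the bootstrap by choosing $M=C(R)+1$. The practical advantage is that the paper never needs a \emph{difference} version of the delicate integration-by-parts estimates in Proposition~\ref{yuning:velocityest} (your ``hardest step''): the contraction uses only the crude $C(\eps)T^{1/2}$ bound, and the refined estimates are applied once, directly to the solution. Your approach buys you the existence and the $M(R)$-bound in a single contraction, at the cost of proving Lipschitz versions of Theorem~\ref{thm:ApproxVA} and Proposition~\ref{yuning:velocityest}; this is doable but is genuinely more bookkeeping than the paper's route requires.
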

\begin{proof}
  First of all, we note that $\tilde{\we}_1$ and therefore $\we_1$ depends on $h_{2,\eps}$ since the definition of $c_{A,0}$  in \eqref{yuning:ca1} contains the term
  \begin{equation*}
    \theta_0\left(\tfrac{d_\G(x,t)}\eps -h_1(s,t)-\eps h_{2,\eps}(s,t)\right).
  \end{equation*}
  Using Theorem~\ref{thm:ParabolicEqOnSurface}, one can reduce \eqref{yuning:h2equ'} to a fixed point equation
  \begin{equation*}
    h_{2,\eps}= S_T(h_{2,\eps})\qquad \text{in } X_T.
  \end{equation*}
  To solve it, one can first apply the contraction mapping principle to obtain a solution $h_{2,\eps}\in X_{T}$ for some $T=T_\eps \in (0,T_0]$. To this end one uses that for every $h_{2,\eps}\in X_T$
  \begin{alignat*}{1}
   &\sup_{0\leq t\leq T}\left|\eps\int_{\Om} \nabla (c_\eps-c_{A,0})\otimes \left(\nabla c_{A,0}-\g\right):D\bfvarphi(x) \sd x\right|\\
&\quad \leq C(\eps) \left(\sup_{0\leq t\leq T}\|h_{2,\eps}(.,t)\|_{H^1(\T^1)} + 1\right)\|\bfvarphi\|_{H^1(\Om)} 
  \end{alignat*}
  for all $\bfvarphi\in H^1(\Om)^2$ because of \eqref{eq:EmbeddingE1}, \eqref{eq:cAg},
and since $c_\eps$ is a known smooth function.
  Therefore the $L^2(0,T;H^{1/2}(\T^1))$-norm of $X_0^\ast (\no\cdot\we_1)$ on the right-hand side of \eqref{yuning:h2equ'} can be estimated by
  \begin{equation}\label{eq:Bddh2eps}
    C'(\eps)T^{\frac12}\left(\sup_{0\leq t\leq T}\|h_{2,\eps}(.,t)\|_{H^1(\T^1)} + 1\right).
  \end{equation}
  Similarly one shows that the right-hand side is Lipschitz-continuous with respect to $h_{2,\eps}$ with Lipschitz constant $C(\eps)T^{\frac12}$. Hence choosing $T=T_\eps'\in (0,T_0]$ sufficiently small one obtains a contraction. Moreover, because of the linear growth of the bound in \eqref{eq:Bddh2eps}, there is an a~priori bound of $\sup_{0\leq t\leq T_\eps'}\|h_{2,\eps}(\cdot,t)\|_{H^1(\T^1)}$, which depends only on $\eps$ and $T_0$. Therefore the solution can be extended on $[0,T_0]$ to a unique solution $h_{2,\eps}\in X_{T_0}$.  The details are left to the reader.

Now we assume that \eqref{yuning:diffcaca1New} is valid for some $T_\eps\in (0,T_0]$.
In order to show the validity of \eqref{eq:h2epsBound},
we use that
\begin{equation*}
  \|h_{2,\eps}\|_{X_{T_\eps}}\leq C(1+ \|X_0^\ast (\no\cdot\we_1)\|_{L^2(0,T_\eps;H^{ 1/2}(\T^1))})
\end{equation*}
because of Theorem~\ref{thm:ParabolicEqOnSurface}. To estimate the right hand side of the above inequality, we use \eqref{yuning:2.02}, Theorem \ref{thm:ApproxVA} and Proposition~\ref{yuning:velocityest} and obtain
\begin{equation*}
\begin{split}
&  C(1+\|X_0^\ast (\no\cdot\we_1)\|_{L^2(0,T_\eps;H^{1/2}(\T^1))})\\
\leq &C'(1+ \eps^{-2}  \|\ve_A-\tilde{\ve}_A\|_{L^2(0,T_\eps;H^1(\O))}+ \eps^{-2} \|\tilde{\we}_1\|_{L^2(0,T_\eps;H^1(\Om))}) \leq C(R)+C(M)(T^{1/4}+\eps^{1/2})
\end{split}
\end{equation*}
 for some ${C,C',C(R)}$ independent of $\eps$, $T_\eps$, $M$, and $h_{2,\eps}$ due to \eqref{eq:estofw} as long as $\eps\leq \min(\eps_0,\eps_1)$, where $\eps_1$ is as in Proposition~\ref{yuning:velocityest} and depends on $M$. Note that $C(R)$ is independent of $M$.
Now we choose  $M=C(R)+1$ and $T_1\in (0,T_0],\eps_1\in (0,1]$ such that $C(M)(T^{\frac14}+\eps_1^{1/2})\leq 1$ and Proposition~\ref{yuning:velocityest} can be applied. This determines $\eps_1$ and $T_1$   and finishes the proof.
\end{proof}

The next lemma is concerned with the estimate of the inner expansion defined by \eqref{yuning:innerexpan'}.
\begin{lem}\label{lem:c3Estim}
 Assume that  \eqref{eq:h2epsBound} holds true for some $M>0$, $T_\eps \in [0,T_0],\eps\in (0,\eps_0]$, and $\eps_0\in (0,1]$.
  Then there is some  $C(M)>0$ independent of $T_\eps$, $\eps\in (0,\eps_0]$, and $\eps_0\in (0,1]$ such that
  \begin{alignat*}{2}
    \eps^2\|(c^{in}_2,\nabla_\btau c^{in}_2) \|_{L^\infty(0,T_\eps;L^2(\Gamma_t(2\delta)))}&\leq C(M) \eps^{\order+\frac12},&\
    \eps^2\|\partial_\no c^{in}_2 \|_{L^\infty(0,T_\eps;L^2(\Gamma_t(2\delta)))}&\leq C(M)\eps^{\order-\frac12}.
\\
    \eps^3\|c^{in}_3 \|_{L^\infty(0,T_\eps;L^2(\Gamma_t(2\delta)))}&\leq C(M,\theta)\eps^{N+\frac32 -\theta },&\
    \eps^3\|\nabla c^{in}_3 \|_{L^\infty(0,T_\eps;L^2(\Gamma_t(2\delta)))}&\leq C(M,\theta)\eps^{\order+{\frac12}-\theta},\\
   {\eps^3\|\nabla c^{in}_3 \|_{L^2(0,T_\eps;L^2(\Gamma_t(2\delta)))}}&{\leq C(M)\eps^{\order+{\frac12}}}.
  \end{alignat*}
  for any $\theta\in (0,1)$.
\end{lem}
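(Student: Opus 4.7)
All six bounds reduce to the a-priori estimates for $\hat c_2$ and $\hat c_3$ supplied by Lemma~\ref{lem:c2c3}, combined with the change of coordinates on $\Gamma_t(2\delta)$ from $x$ to $(r,s)\in(-2\delta,2\delta)\times\T^1$ via $x=X(r,s,t)$, followed by $r=\eps(\rho+h_\eps(s,t))$. The latter substitution has Jacobian of order $\eps$, so for any function $F=F(\rho,s,t)$
\begin{equation*}
\bigl\|F\bigl(\tfrac{d_\Gamma(\cdot,t)}{\eps}-h_\eps(S(\cdot,t),t),S(\cdot,t),t\bigr)\bigr\|_{L^2(\Gamma_t(2\delta))}\;\le\;C\,\eps^{1/2}\,\|F(\cdot,\cdot,t)\|_{L^2(\R\times\T^1)}.
\end{equation*}
A direct chain-rule calculation, using $\no\cdot\nabla S=0$ on $\Gamma(3\delta)$, gives for $j\in\{2,3\}$
\begin{equation*}
\partial_{\no}c_j^{in}=\tfrac{1}{\eps}\partial_\rho\hat c_j,\qquad \nabla_{\btau}c_j^{in}=\nabla^\Gamma\hat c_j-\nabla^\Gamma h_\eps\,\partial_\rho\hat c_j,
\end{equation*}
so the only factor $\eps^{-1}$ in the derivatives comes from the normal direction.

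For $c_2^{in}$ I would invoke \eqref{yuning:1.16}: every derivative $\partial_\rho^i\partial_s^j\partial_t^k\hat c_2$ lies in $\mathcal R_{0,\alpha}$ and is therefore bounded pointwise by $Ce^{-\alpha|\rho|}$. The change of variables above then yields $\|\hat c_2(\rho(\cdot,t),\cdot,t)\|_{L^2(\Gamma_t(2\delta))}\le C\eps^{1/2}$. Together with $\|\partial_s h_\eps(\cdot,t)\|_{L^2(\T^1)}\le C(M)$ (from $h_{2,\eps}\in X_{T_\eps}\hookrightarrow L^\infty(0,T_\eps;H^{3/2}(\T^1))$) the same order transfers to $\nabla_\btau c_2^{in}$. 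Multiplying by $\eps^2$ gives the claimed $\eps^{N+1/2}$-estimates for $c_2^{in}$ and $\nabla_\btau c_2^{in}$; for $\partial_\no c_2^{in}=\eps^{-1}\partial_\rho\hat c_2$ the extra $\eps^{-1}$ produces the weaker $\eps^{N-1/2}$-bound.

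For $c_3^{in}$ I would split according to the time norm on the left-hand side. For the $L^\infty(0,T_\eps;L^2_x)$-bounds I use the estimate $\eps^\theta\sup_{(s,t)\in\T^1\times[0,T_\eps]}\|\hat c_3(\cdot,s,t)\|_{H^1(\R)}\le C_\theta(M)$ from \eqref{yuning:1.38}; the embedding $H^1(\R)\hookrightarrow L^2(\R)$ uniformly in $s$, together with the change of variables and Fubini, yield
\begin{equation*}
\|c_3^{in}(\cdot,t)\|_{L^2(\Gamma_t(2\delta))}\le C\eps^{1/2}\sup_{s\in\T^1}\|\hat c_3(\cdot,s,t)\|_{L^2(\R)}\le C_\theta(M)\eps^{1/2-\theta},
\end{equation*}
and the same argument applied to $\partial_\rho\hat c_3$, together with the extra factor $\eps^{-1}$ from $\partial_\no c_3^{in}$, produces $\|\nabla c_3^{in}\|_{L^\infty_tL^2_x}\le C_\theta(M)\eps^{-1/2-\theta}$; multiplying by $\eps^3$ gives the first two claimed $c_3^{in}$-bounds. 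For the $L^2(0,T_\eps;L^2_x)$-gradient estimate I would instead use the $L^2$-bound $\|\partial_\rho\hat c_3\|_{L^2((0,T_\eps)\times\R\times\T^1)}\le C(M)$ from \eqref{yuning:1.38}, which has no $\eps^\theta$-loss; the same change of variables produces the sharp $C(M)\eps^{-1/2}$-bound on $\nabla c_3^{in}$ in $L^2_tL^2_x$, hence $\eps^{N+1/2}$ after multiplication by $\eps^3$.

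The only delicate point is book-keeping: one must isolate the $\eps^{-1}$ produced by the normal derivative and select the right norm from the menu in \eqref{yuning:1.38} for each case. The unavoidable $\eps^{-\theta}$-loss in the $L^\infty_t$-bounds stems from the Gronwall-type argument \eqref{yuning:1.11} in Theorem~\ref{lem:InftyEstimLinPart}, reflecting the singular $\eps^2$-scaling of the time derivative in the equation \eqref{yuning:monst4'} satisfied by $\hat c_3$; no such loss appears in the $L^2_t$-bound because the direct energy estimate \eqref{yuning:1.12} on $\partial_\rho\hat c_3$ is uniform in $\eps$.
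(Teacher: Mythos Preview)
Your approach is correct and matches the paper's proof: both rely on the change of variables $x\mapsto(\rho,s)$ with Jacobian of order $\eps$, the chain-rule formulas \eqref{yuningliu:formula1}, and the a-priori bounds \eqref{yuning:1.16} for $\hat c_2$ and \eqref{yuning:1.38} for $\hat c_3$ from Lemma~\ref{lem:c2c3}. The one detail you should make explicit in the final $L^2_t$-gradient estimate is the cross-term $\nabla^\Gamma h_\eps\,\partial_\rho\hat c_3$: since $\partial_s h_{2,\eps}$ is only in $L^\infty(0,T_\eps;L^4(\T^1))$ (via $X_{T_\eps}\hookrightarrow BUC([0,T_\eps];W^1_4(\T^1))$) and not in $L^\infty_{t,s}$, the bare bound $\|\partial_\rho\hat c_3\|_{L^2_{t,\rho,s}}$ does not close the H\"older pairing; the paper pairs $\|\partial_s h_\eps\|_{L^\infty_tL^4_s}$ with $\|\partial_\rho\hat c_3\|_{L^2((0,T_\eps)\times\R;L^4(\T^1))}$, the latter controlled through the $\partial_s\partial_\rho\hat c_3$-term in \eqref{yuning:1.38}.
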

\begin{proof}
The estimates of $c^{in}_2$ follow from \eqref{yuning:1.16}, \eqref{yuningliu:formula1},  and Lemma~\ref{lem:Remainder} in a straight-forward manner.
Using Lemma \ref{lem:c2c3} and the same change of variables as in the proof of Lemma~\ref{lem:Remainder}, we conclude
  \begin{alignat*}{1}
    \eps^3\|c^{in}_3 \|_{L^\infty(0,T_\eps;L^2(\Gamma_t(2\delta)))}&\leq C(M)\eps^{3+\frac12}\sup_{t\in [0,T_\eps]}\|\tc_3(\cdot,\cdot,t)\|_{L^2(\T^1\times \R)}\leq C'(M,\theta)\eps^{3+\frac12 -\theta }
\end{alignat*}
as well as
  \begin{alignat*}{1}
    &\eps^3\|\nabla c^{in}_{3} \|_{L^\infty(0,T_\eps;L^2(\Gamma_t(2\delta)))}\\
&\leq C(M)\eps^{3+\frac12}\left( \frac1\eps \sup_{t\in [0,T_\eps], s\in\T^1}\|\partial_\rho \tc_{3}(\cdot,s,t)\|_{L^2(\R)} +\|\partial_s \tc_{3}\|_{L^\infty(0,T_\eps; L^2(\T^1\times \R))}\right)\\
    &\leq C(M,\theta)\eps^{\order+{\frac12}-\theta}.
  \end{alignat*}
Here we first applied  \eqref{yuningliu:formula1}   to $\tc_3$   and then employed \eqref{yuning:1.38}. 

  Finally,
the last inequality follows from the boundedness of $h_2\in X_T\hookrightarrow BUC([0,T_\eps]; W^1_4(\T^1))$, \eqref{yuningliu:formula1} and
  \begin{alignat*}{1}
    &\eps^3\|\nabla c^{in}_{3} \|_{L^2(0,T_\eps;L^2(\Gamma_t(2\delta)))}\\
&\leq C(M)\eps^{3+\frac12}\left( \|(\tfrac 1\eps\partial_\rho \tc_{3},\partial_s  \tc_{3})\|_{L^2((0,T_\eps)\times \T^1\times \R)} +\|\partial_\rho \tc_{3}\|_{L^2((0,T_\eps)\times \R;L^4(\T^1))}\right)
    \leq C(M)\eps^{\order+{\frac12}}.
  \end{alignat*}

\end{proof}

The following result gives an important expansion formula for the convection term. Recall that, $\Lgrad$ is defined by \eqref{Prelim:1.11}.
\begin{lem}\label{lemma:va}
  Let $\ve_A^{in}$ be defined via \eqref{yuning:innerd}, \eqref{yuning:summery1}-\eqref{yuning:summery2}  and assume that  \eqref{eq:h2epsBound} and \eqref{yuning:diffcaca1New} hold true.  Then
  \begin{alignat}{1}\nonumber
    &\ve^{in}_A(\rho,x,t)\cdot \nabla c^{in}(x,t) = (\tfrac1\eps \ve_\no|_{\Gamma}+ \ve_{1,\no}^\pm|_{\Gamma})(x,t)\cdot \theta_0'(\rho)\\\nonumber
&  + \left(\ve|_{\Gamma}\cdot \nabla_\Gamma h_1+ \ve|_{\Gamma}\cdot \Lgrad h_1 - (\divtau \ve)|_{\Gamma} (\rho+h_1)\right)(x,t)\theta_0'(\rho)+ \eps b(\rho,s,t)\theta_0'(\rho)\\
&
+ \eps \left(\ve|_{\Gamma}\cdot \nabla_\Gamma h_{2,\eps} - \divtau \ve|_{\Gamma} h_{2,\eps}\right)(x,t)\theta_0'(\rho)\label{eq:ExpansionConvTerm}
 +  \eps\ve_\no|_{\Gamma}(x,t)\partial_\rho \hat{c}_2(\rho,s,t)+ R_\eps (x,t)
  \end{alignat}
in $\Gamma(3\delta)$, where $s=S(x,t)$,  $\rho$ is as in \eqref{eq:rho}, and   $b=b(\rho,s,t)$ is defined in \eqref{eq:DefnB1}. Moreover,
\begin{equation*}
\|R_\eps\|_{ L^2(0,T_\eps;L^2(\Gamma_t(2\delta))) } \leq C(R,M)\eps^{\order+1/2}\qquad \text{for all }\eps\in (0,\eps_0],
\end{equation*}
where $C(R,M)$ is independent of $T_\eps, \eps$, and $\eps_0$.
\end{lem}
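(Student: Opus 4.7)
\smallskip
\noindent\textbf{Proof sketch.}
The plan is a direct (but careful) expansion using the chain-rule formula \eqref{yuningliu:formula1} together with Taylor expansion of the smooth outer velocities $\ve_j^\pm$ in the normal direction. Writing $\hat c(\rho,s,t)=\theta_0(\rho)+\eps^2\hat c_2(\rho,s,t)+\eps^3\hat c_3(\rho,s,t)$, so that $c^{in}(x,t)=\hat c(\rho,S(x,t),t)$ with $\rho$ as in \eqref{eq:rho}, we obtain
\[
\nabla c^{in}=\Bigl(\tfrac{\no_{\Gamma_t}\!\circ P_{\Gamma_t}}{\eps}-\nabla^\Gamma h_\eps\Bigr)\partial_\rho\hat c+\nabla^\Gamma\hat c.
\]
Since $\nabla^\Gamma\theta_0=0$, the tangential piece starts at order $\eps^2$. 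Dotting with $\ve_A^{in}=\ve_0+\eps\ve_1+\eps^2\ve_2$ gives four groups of terms according to whether we hit the normal or the tangential part and whether $\hat c$ is replaced by $\theta_0$, $\eps^2\hat c_2$, or $\eps^3\hat c_3$.

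The main algebraic step is to Taylor expand $\ve_0^\pm(x,t)$, $\ve_1^\pm(x,t)$ around $P_{\Gamma_t}(x)$, using $d_\Gamma(x,t)=\eps(\rho+h_1(s,t))+\eps^2 h_{2,\eps}(s,t)$, and then to combine this with the explicit form of $\tilde\ve_j,\hat\ve_j$ from \eqref{yuning:summery1}, \eqref{eq:DefnV1}, \eqref{yuning:summery3}--\eqref{yuning:summery2a}. The jump condition $[\ve_0^\pm]|_\Gamma=0$ gives $(\ve_0^++\ve_0^-)/2|_\Gamma=\ve|_\Gamma$ and removes the $\eta(\rho)$ contribution at leading order, producing the $\eps^{-1}\ve_\no|_\Gamma\,\theta_0'(\rho)$ term. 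The $O(1)$-coefficient of $\theta_0'(\rho)$ is assembled from
\begin{itemize}
\item the $d_\Gamma$-term in the Taylor expansion of $\ve_0\cdot\no$, using $\partial_\no\ve_\no|_\Gamma=-\divtau\ve|_\Gamma$ (divergence-free condition at $\Gamma$), giving $-(\divtau\ve)|_\Gamma(\rho+h_1)\theta_0'(\rho)$;
\item the $O(\eps)$ tangential correction $-\ve_0\cdot\nabla^\Gamma h_\eps\,\theta_0'(\rho)$, split into $\ve|_\Gamma\cdot\nabla_\Gamma h_1+\ve|_\Gamma\cdot\Lgrad h_1$ via \eqref{Prelim:1.11};
\item the $O(\eps)$ term $\ve_1\cdot(\no/\eps)\theta_0'(\rho)$, whose leading value is $\ve_{1,\no}^\pm|_\Gamma\,\theta_0'(\rho)$.
\end{itemize}
At order $\eps$ one collects: the second-order Taylor term of $\ve_0\cdot\no$ (the $(\rho+h_1)^2$ contribution), the $\eps\ve_1$ cross term with $\nabla^\Gamma h_1$, the $\ve_2$ contribution $\tilde\ve_{2,\no}(\rho,X_0(s,t),t)$, and the $h_{2,\eps}$-dependent terms $\ve|_\Gamma\cdot\nabla_\Gamma h_{2,\eps}$ and $\divtau\ve|_\Gamma\,h_{2,\eps}$. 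A comparison with \eqref{eq:DefnB1} identifies the first three with $\eps b(\rho,s,t)\theta_0'(\rho)$. Finally, the product of the leading $\no/\eps\cdot\partial_\rho\hat c$ with $\ve_0\cdot\no\simeq\ve_\no|_\Gamma+O(\eps)$ produces the announced $\eps\ve_\no|_\Gamma\,\partial_\rho\hat c_2$ term (after multiplying by $\eps^2$), all the remaining pieces going into $R_\eps$.

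The main obstacle is the bound $\|R_\eps\|_{L^2(0,T_\eps;L^2(\Gamma_t(2\delta)))}\leq C(R,M)\eps^{N+1/2}$ for the remainder. Terms not involving $\hat c_3$ are quadratic in exponentially decaying profiles times $\eps^{\geq 2}$ and are handled by Corollary \ref{cor:repsEstim2} together with the $X_{T_\eps}$-bound on $h_{2,\eps}$. The delicate pieces are those involving $\hat c_3$ or $\nabla\hat c_3$: here the $L^\infty$-bound from Lemma \ref{lem:c3Estim} is only $O(\eps^{-\theta})$, so we cannot afford $L^\infty$--$L^2$ pairing. Instead, one multiplies $\ve_A^{in}$, which is uniformly bounded in $L^\infty$, by $\eps^3\nabla c_3^{in}$, and uses the sharp $L^2(0,T_\eps;L^2(\Gamma_t(2\delta)))$ estimate $\eps^3\|\nabla c_3^{in}\|_{L^2L^2}\leq C(M)\eps^{N+1/2}$ from Lemma \ref{lem:c3Estim}. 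The analogous argument handles $\eps^3\,c_3^{in}\,\nabla^\Gamma h_\eps$. The computation closely parallels the derivation of $a_1$ in the appendix (cf.\ \eqref{yuning:defa1'} and \eqref{yuning:defa1}) and the inner expansion performed there.
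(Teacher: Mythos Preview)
Your sketch is correct and follows essentially the same route as the paper: chain rule for $\nabla c^{in}$, Taylor expansion of $\ve_j^\pm$ in the normal variable combined with the explicit formulae \eqref{yuning:summery1}--\eqref{yuning:summery2}, identification of the $O(\eps)$ collection with $b$ from \eqref{eq:DefnB1}, and then Lemma~\ref{lem:Remainder} for the exponentially decaying remainders together with the sharp $L^2$-in-time bound $\eps^3\|\nabla c_3^{in}\|_{L^2(0,T_\eps;L^2)}\le C(M)\eps^{N+1/2}$ from Lemma~\ref{lem:c3Estim} for the $\hat c_3$-terms. The paper organizes the computation by treating $\ve_0\cdot\nabla c_0^{in}$, $\eps\ve_1\cdot\nabla c_0^{in}$, $\eps^2\ve_2\cdot\nabla c_0^{in}$, $\ve_A\cdot\eps^2\nabla c_2^{in}$, and $\ve_A\cdot\eps^3\nabla c_3^{in}$ separately, which matches your grouping.
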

\begin{proof}
In the proof, all identities should be interpreted in terms of the variable $x$ instead of the surface coordinate $s\in \T^1$. For example, according to our definition, the function $\nabla_\G h_2=(\nabla_\G h_2)(s,t)$ with $s\in\T^1$,  is a function defined on the chart of the interface. However, in this proof, it is understood as $\nabla_\G h_2=(\nabla_\G h_2)(S(x,t),t)$. Similarly, $\ve|_\G$ is the restriction of $\ve$ on the interface $\G$. However, it should be understood via $\ve|_\G=\ve(P_{\G_t}(x),t)$, as a function of $x$. In the sequel the dependence on variables $(x,t)$ is emphasised occasionally for clarity.

  In the following $R_\eps$ will denote any term such that $ \|R_\eps\|_{L^2(\Gamma(2\delta)\cap (\Omega\times (0,T_\eps)))}  \leq  C\eps^{\order+1/2}$ for all $\eps \in (0,\eps_0]$ and for some $C>0$, which depends only on the quantity $M$ in \eqref{eq:h2epsBound}, $R$ in \eqref{yuning:diffcaca1New}, $\ve,\ve_1, $ and $\Gamma$.
  We use that
  \begin{alignat*}{1}
    &\ve^{in}_A(\rho,x,t)\cdot\nabla c^{in}_0(x,t)=\\
&  \ve_A^{in}\left(\tfrac{d_\G }\eps - h_\eps(S(x,t),t),x,t\right)\cdot \left(\tfrac{\no_{\Gamma_t}(x)}\eps - (\nabla^\Gamma h_\eps)(d_\G ,S(x,t),t)\right) \theta_0'\left(\tfrac{d_\G }\eps - h_\eps(S(x,t),t)\right), 
  \end{alignat*}
  where $d_\Gamma=d_\Gamma(x,t)$.
Furthermore, we have $\partial_{\no} \ve_{0,\no}^{ \pm} |_{\Gamma} =-\Div_\btau \ve_0^{ \pm}|_{\Gamma}$, $\ve^\pm_0|_{\Gamma}= \ve|_{\Gamma}$ and therefore $\divtau \ve^\pm_0|_{\Gamma}=\divtau \ve|_{\Gamma}$. Thus a Taylor expansion of $\ve_{0,\no}^\pm$  in terms of distance function $r=d_\G(x,t)$ gives
\begin{equation*}
    \ve_{0,\no}^\pm(x,t)= \ve_\no|_{\Gamma} (x,t) -\Div_\btau \ve|_{\Gamma}(x,t)d_\G (x,t) + \partial_\no^2 \ve^\pm_{0,\no}|_\Gamma(x,t)\tfrac{d_\G (x,t)^2}2
+  r^\pm  (x,t)d_\G (x,t)^3,
\end{equation*}
 where the remainder $r^\pm$ satisfies
 \begin{equation}\label{eq:repsEstim}
   \sup_{(x,t)\in \Gamma(3\delta)}|r^\pm (x,t)|\leq C.
 \end{equation}
 On the other hand, it follows from \eqref{yuning:1.41} and \eqref{yuning:summery1} that,
\begin{equation}\label{yuning:1.53}
  \ve_0(\rho,x,t)=\frac 12(\ve_0^+(x,t)+\ve_0^-(x,t))+\frac {\eta(\rho)}2(\ve_0^+(x,t)-\ve_0^-(x,t)).
\end{equation}
Combining the above two identities leads to
  \begin{alignat*}{1}
   & \frac1\eps\ve_0\left(\rho,x,t\right)\cdot \no(s,t)\\ 
   &= \frac1\eps \ve_\no|_{\Gamma}(x,t) -\divtau \ve|_{\Gamma}(x,t)(\rho +h_1(s,t)+\eps h_{2,\eps}(s,t))\\
&\quad + \frac{\eps}2(\partial_\no^2 \ve_{0,\no}^+|_{\Gamma}+\partial_\no^2 \ve_{0,\no}^-|_{\Gamma})(x,t)(\rho+h_1(s,t)+\eps h_{2,\eps}(s,t))^2\\
 &\quad + \frac{\eps}2(\partial_\no^2 \ve_{0,\no}^+|_{\Gamma}-\partial_\no^2 \ve_{0,\no}^-|_{\Gamma})(x,t)\eta(\rho)(\rho+h_1(s,t)+\eps h_{2,\eps}(s,t))^2 \\
&\quad + \eps^2 \left(\frac{r^+ (x,t)+r^- (x,t)}2+ \frac{r^+ (x,t)-r^- (x,t)}2\eta(\rho) \right)(\rho+h_1(s,t)+\eps h_{2,\eps}(s,t))^3
  \end{alignat*}
where $s=S(x,t)$ and $\rho$ is defined via \eqref{eq:rho}. Lemma \ref{lem:Remainder} implies that the terms in the last line of the above identity give a contribution to $R_\eps$ after multiplication with $\theta_0'(\tfrac{d_\G (x,t)}\eps-h_\eps(s,t))$. Moreover, one can replace $(\rho+h_1(s,t)+\eps h_{2,\eps}(s,t))$ by $(\rho+h_1(s,t))$ in all terms except the second since the remainders give another contribution to $R_\eps$. As a result
  \begin{alignat*}{1}
   & \frac1\eps\ve_0\left(\rho,x,t\right)\cdot \no(s,t)\theta_0'(\tfrac{d_\G (x,t)}\eps-h_\eps(s,t))\\ 
   &= \left(\frac1\eps \ve_{\no}|_{\Gamma}(x,t) -\divtau \ve|_{\Gamma}(x,t)(\rho +h_\eps)\right)\theta_0'(\rho)
 + \frac{\eps}2(\partial_\no^2 \ve_{0,\no}^+|_{\Gamma}+\partial_\no^2 \ve_{0,\no}^-|_{\Gamma})(x,t)(\rho+h_1)^2\theta_0'(\rho)\\
&\quad + \frac{\eps}2(\partial_\no^2 \ve_{0,\no}^+|_{\Gamma}-\partial_\no^2 \ve_{0,\no}^-|_{\Gamma})(x,t)\eta(\rho)(\rho+h_1)^2\theta_0'(\rho) +R_\eps
  \end{alignat*}
 Moreover, a Taylor expansion of $\ve^\pm_{0}$ near the interface is given by
 \begin{equation*}
        \ve^\pm_{0}(x,t)= \ve|_{\Gamma} (x,t) + \partial_\no\ve^\pm_{0}|_{\Gamma}(x,t)d_\G (x,t) + \tilde{r}^\pm  (x,t)d_\G (x,t)^2,
  \end{equation*}
  where  $\sup_{(x,t)\in \Gamma(3\delta)}|\tilde{r}^\pm (x,t)|\leq C.$
This together with \eqref{yuning:1.53} leads to
  \begin{alignat*}{1}
   & \ve_0\left(\rho,x,t\right)\cdot (\nabla^\Gamma h_\eps)(d_\Gamma(x,t),S(x,t),t)\\ 
   &=  \ve|_{\Gamma}\cdot \nabla_\Gamma h_1 + \ve|_{\Gamma}\cdot (\Lgrad h_1)(d_\G,s,t)+ \eps \ve|_{\Gamma}\cdot \nabla_\Gamma h_{2,\eps}+\eps \ve|_{\Gamma} \cdot (\Lgrad h_{2,\eps})(d_\G,s,t) \\
&\quad +\frac{\eps}2\left[\left(\partial_\no\ve^+_{0}|_{\Gamma}+ \partial_\no\ve^-_{0}|_{\Gamma}\right) + (\partial_\no\ve^+_{0}|_{\Gamma}- \partial_\no\ve^-_{0}|_{\Gamma})\eta(\rho)\right]\cdot  \nabla_\Gamma h_1(s,t) (\rho+h_1+\eps h_{2,\eps})\\
&\quad +\frac{\eps}2\left[\left(\partial_\no\ve^+_{0}|_{\Gamma}+ \partial_\no\ve^-_{0}|_{\Gamma}\right) + (\partial_\no\ve^+_{0}|_{\Gamma}- \partial_\no\ve^-_{0}|_{\Gamma})\eta(\rho)\right]\cdot  \Lgrad h_1(d_\Gamma,s,t) (\rho+h_1+\eps h_{2,\eps})\\
&\quad +\frac{\eps^2}2\left[\left(\partial_\no\ve^+_{0}|_{\Gamma}+ \partial_\no\ve^-_{0}|_{\Gamma}\right) + (\partial_\no\ve^+_{0}|_{\Gamma}- \partial_\no\ve^-_{0}|_{\Gamma})\eta(\rho)\right]\cdot (\nabla^\Gamma h_{2,\eps})(d_\G ,s,t)(\rho+h_1+\eps h_{2,\eps})\\
&\quad + \eps^2 \left(\frac{\tilde{r}^+ (x,t)+\tilde{r}^- (x,t)}2+ \frac{\tilde{r}^+ (x,t)-\tilde{r}^- (x,t)}2\eta(\rho) \right)\cdot (\nabla^\Gamma h_\eps)(d_\G ,s,t)(\rho+h_1+\eps h_{2,\eps})^2.
  \end{alignat*}
  Using Lemma~\ref{lem:Remainder} and $\Lgrad h_1|_{\Gamma}\equiv 0$, it is easy to observe that the last three terms give rise to terms $R_\eps$ after multiplication with $\theta_0'(\tfrac{d_\G (x,t)}\eps-h_\eps(s,t))$.
Hence we obtain
  \begin{alignat*}{1}
   & \ve_0\left(\rho,x,t\right)\cdot \nabla c^{in}_0(x,t)\\ 
   &= \frac1\eps \ve_\no|_{\Gamma}\theta'_0(\rho) + \(\ve|_{\Gamma}\cdot \nabla_\Gamma h_1 +\ve|_{\Gamma}\cdot \Lgrad h_1(d_\G,s,t)-\divtau \ve|_{\Gamma}  (\rho+h_1)\)\theta_0'(\rho)\\
& \quad +  \eps (\ve|_{\Gamma}\cdot \nabla_\Gamma h_{2,\eps} -\divtau \ve|_{\Gamma}h_{2,\eps})\theta_0'(\rho)\\
&\quad + \frac{\eps}2(\partial_\no^2 \ve_{0,\no}^+|_{\Gamma}+\partial_\no^2 \ve_{0,\no}^-|_{\Gamma})(\rho+h_1)^2\theta'_0(\rho) + \frac{\eps}2(\partial_\no^2 \ve_{0,\no}^+|_{\Gamma}-\partial_\no^2 \ve_{0,\no}^-|_{\Gamma})\eta(\rho)(\rho+h_1)^2\theta'_0(\rho)\\
&\quad +\frac{\eps}2\big(\left(\partial_\no\ve^+_{0}|_{\Gamma}+ \partial_\no\ve^-_{0}|_{\Gamma}\right) + (\partial_\no\ve^+_{0}|_{\Gamma}- \partial_\no\ve^-_{0}|_{\Gamma})\eta(\rho)\big)  \cdot \nabla_\Gamma h_1(\rho+h_1)\theta'_0(\rho) + R_\eps.
  \end{alignat*}
Similarly we derive
  \begin{alignat*}{1}
   & \eps\ve_1\left(\rho,x,t\right)\cdot \nabla c^{in}_0(x,t)\\ 
   &= \ve_{1,\no}^\pm|_{\Gamma}\theta'_0(\rho) + \eps \left(\ve_1^\pm|_{\Gamma}\cdot \nabla_\Gamma h_1 +\ve_1^\pm|_{\Gamma}\cdot \Lgrad h_1(d_\Gamma,s,t) -\divtau \ve_1^\pm|_{\Gamma} h_1\right)\theta_0'(\rho)\\
& \quad +  \eps^2 (\ve_1^\pm|_{\Gamma}\cdot \nabla^\Gamma h_{2,\eps}(d_\Gamma,s,t)-\divtau \ve_1^\pm|_{\Gamma}h_{2,\eps})\theta_0'(\rho)\\
&\quad + \eps^2 \left(\frac{r^+_{1,\no} +r^-_{1,\no} }2+ \frac{r^+_{1,\no} -r^-_{1,\no} }2\eta(\rho) \right)(\rho + h_\eps)^2\theta'_0(\rho)\\
&\quad + \eps^2 \left(\frac{\tilde{r}^+_{1} +\tilde{r}^-_{1} }2+ \frac{\tilde{r}^+_{1} -\tilde{r}^-_{1} }2\eta(\rho) \right)\cdot \nabla^\G h_\eps (d_\Gamma,s,t)(\rho+h_\eps)\theta_0'(\rho)\\
&= \ve_{1,\no}^\pm|_{\Gamma}\theta'_0(\rho) + \eps \left(\ve_1^\pm|_{\Gamma}\cdot \nabla_\Gamma h_1 -\divtau \ve_1^\pm|_{\Gamma} h_1\right)\theta_0'(\rho) + R_\eps
  \end{alignat*}
as well as
  \begin{alignat*}{1}
   & \eps^2\ve_2\left(\rho,x,t\right)\cdot \nabla c^{in}_0(x,t)= \eps \tilde{\ve}_{2,\no}(\rho, P_{\Gamma_t}(x),t)\theta'_0(\rho) - \eps^2\ve_2(\rho,x,t)\cdot (\nabla^\Gamma h_\eps)(d_\G,s,t)\theta'_0(\rho)\\
&\quad +\eps^2\left(\frac{\tilde{\ve}_{2,\no}(\rho, x,t)-\tilde{\ve}_{2,\no}(\rho, P_{\Gamma_t}(x),t)}{d_\Gamma}+  \eta(\rho)\hat{\ve}_{2,\no}(x,t)\right)(\rho + h_\eps)\theta'_0(\rho)\\
&= \eps \tilde{\ve}_{2,\no}(\rho, X_0(s,t),t)\theta'_0(\rho) + R_\eps
  \end{alignat*}
because of \eqref{yuning:summery3}, \eqref{yuning:summery2}, $\Div \ve_1^{ \pm} =0$ and
\begin{alignat*}{1}
    \ve_{1,\no}^\pm(x,t)&= \ve_{1,\no} (s,t) -\Div_\btau \ve_1(s,t)d_\G (x,t)
+  r^\pm_{1,\no}  (x,t)d_\G (x,t)^2, \\
  \ve^\pm_1(x,t)&= \ve_1 (s,t) +  \tilde{r}_{1}^\pm (x,t)d_\G (x,t),
\end{alignat*}
where $r^\pm_{1,\no}$ and $\tilde{r}_{1}^\pm$  satisfy the same estimate as $r^\pm$ in \eqref{eq:repsEstim}.
Finally, with the aid of Lemma~\ref{lem:Remainder} it is easy to show that
\begin{alignat*}{1}
\ve_A(x,t)\cdot \eps^2 \nabla c^{in}_2(x,t) &=\eps \ve_{\no}|_{\Gamma}(x,t) \partial_\rho \tc_2(\rho,S(x,t),t)  +R_\eps(x,t),\quad
\end{alignat*}
and Lemma~\ref{lem:c3Estim} yields
$
\ve_A(x,t)\cdot\eps^3 \nabla c^{in}_3(\rho,S(x,t),t) =R_\eps(x,t).
$
  Hence the statement of the lemma follows if $b$ is defined as in \eqref{eq:DefnB1}.
\end{proof}

\begin{thm}\label{thm:InnerExpansion}
  Let $\ve_A^{in}$ be defined via \eqref{yuning:innerd} and assume that 
 \eqref{eq:h2epsBound} and \eqref{yuning:diffcaca1New} hold true. Then we have
  \begin{equation}\label{eq:IdCin}
\begin{split}
&\p_t c^{in}(x,t)+\ve^{in}_A(\rho,x,t)\cdot\nabla c^{in}(x,t)+ \eps^2\we_1|_{\Gamma}(x,t)\cdot\nabla c_0^{in}(x,t)-\Delta c^{in}(x,t) +\tfrac1{\eps^2} f'(c^{in})\\
    &\qquad =\mathfrak{C}(x,t) 
\qquad \text{for all }(x,t)\in \Gamma(2\delta),
\end{split}
\end{equation}
where
\begin{equation}\label{yuning:1.14}
  \int_0^{T}\left\|\mathfrak{C}(.,t)\right\|_{L^2(\G_t(2\delta))}\sd t\leq C(M,R,T,\eps)\eps^{\order+\frac12} \qquad \text{for all }T\in (0,T_\eps], \eps\in (0,\eps_0)
\end{equation}
for some $C(M,R,T,\eps)$ independent of $(T_\eps)_{\eps\in (0,\eps_0)}$, $\eps_0\in (0,1]$ such that $C(M,R,T,\eps)\to_{(T,\eps)\to 0} 0$.
\end{thm}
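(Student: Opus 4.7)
\smallskip

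\noindent\textbf{Proof plan.} The strategy is to substitute the ansatz \eqref{yuning:innerexpan'} into the left-hand side of \eqref{eq:IdCin} and perform an asymptotic expansion in powers of $\eps$, systematically using the defining equations \eqref{yuning:h1equ'}, \eqref{yuning:c2equ'}, \eqref{yuning:h2equ'} and \eqref{yuning:monst4'} to cancel all contributions of order $\eps^{-2}$ through $\eps^{1}$. First I would apply the chain-rule formulas \eqref{yuningliu:formula1} to rewrite $\partial_t c^{in}$, $\nabla c^{in}$ and $\Delta c^{in}$ in terms of $(\rho,s,t)$-derivatives of $\theta_0$, $\hat{c}_2$ and $\hat{c}_3$, inserting the Taylor expansion \eqref{yuning:ex1} of $\Delta d_\Gamma$ so that the coefficient of each $\eps^j$ becomes explicit modulo $\mathcal{R}_{k,\alpha}$-remainders controlled by Lemma~\ref{lem:Remainder}. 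For the convection term I would quote Lemma~\ref{lemma:va}, which already expresses $\ve^{in}_A\cdot\nabla c^{in}$ as an explicit sum plus a remainder $R_\eps$ of the right size. Finally I would Taylor-expand $f'(c^{in})=f'(\theta_0)+f''(\theta_0)(\eps^2\hat{c}_2+\eps^3\hat{c}_3)+\tfrac12 f'''(\theta_0)(\eps^2\hat{c}_2+\eps^3\hat{c}_3)^2+\dots$, keeping all contributions through $O(\eps^3)$ explicit.

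\smallskip

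\noindent Next I would collect terms by order of $\eps$. At order $\eps^{-1}$, after using $-\theta_0''+f'(\theta_0)=0$ from \eqref{eq:OptProfile1}, the surviving coefficient is $(\ve_{\no}|_\Gamma-V_{\Gamma_t}+H_{\Gamma_t})\theta_0'(\rho)$, which vanishes by the kinematic condition \eqref{eq:Limit4}. At order $\eps^0$, the coefficient of $\theta_0'(\rho)$ reproduces exactly the evolution operator in \eqref{yuning:h1equ'} applied to $h_1$ minus $X_0^\ast(\ve_{1,\no}^\pm)$, while the remaining part, after isolating $-\partial_\rho^2\hat{c}_2+f''(\theta_0)\hat{c}_2$, agrees with the right-hand side of \eqref{yuning:c2equ'}; here the identities $\divtau\ve|_\Gamma=-\partial_{\no}\ve_{\no}|_\Gamma$ and $\Delta d_\Gamma|_\Gamma=-H_{\Gamma_t}$ are needed. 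At order $\eps^1$, the crucial point is that the extra convection $\eps^2\we_1|_\Gamma\cdot\nabla c_0^{in}$, via identity \eqref{yuning:1.51}, produces a leading contribution $\eps\,X_0^\ast(\no\cdot\we_1)\,\theta_0'(\rho)$, which is exactly the inhomogeneity on the right-hand side of \eqref{yuning:h2equ'}. The coefficient of $\theta_0'$ therefore vanishes by the evolution equation for $h_{2,\eps}$, and the remaining part of the $\eps^1$-coefficient is by construction the right-hand side of \eqref{yuning:monst4'}, which defines $\hat{c}_3$; the compatibility condition \eqref{eq:gtheta} needed to invoke Theorem~\ref{lem:InftyEstimLinPart} holds precisely because of the definition \eqref{yuning:deterb'} of $\mathfrak{B}$.

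\smallskip

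\noindent What remains after these cancellations is $\mathfrak{C}$, a sum of three families of remainders: (i) the residual $R_\eps$ from Lemma~\ref{lemma:va}; (ii) higher-order pieces of the Taylor expansions of $\Delta d_\Gamma$ and of $f'(c^{in})$ multiplied by $\hat{c}_2$, $\hat{c}_3$ and their derivatives; and (iii) cross terms such as $\eps^2|\nabla^\Gamma h_{2,\eps}|^2\theta_0''$, $\eps^2\Delta^\Gamma h_{2,\eps}\,\theta_0'$ and mixed tangential terms produced by the parabolic operator $\eps^2(D_t-\Delta_\Gamma)$ applied to $\hat{c}_2,\hat{c}_3$. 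Each of these can be bounded in $L^1(0,T;L^2(\Gamma_t(2\delta)))$ by $C(M,R,T,\eps)\eps^{N+\tfrac12}$ using Lemma~\ref{lem:c3Estim} for $\hat{c}_3$ and its derivatives, Corollary~\ref{cor:repsEstim2} together with \eqref{yuning:1.38} for terms weighted by $\theta_0'$ or $\theta_0''$, and the embedding $X_{T_\eps}\hookrightarrow L^4(0,T_\eps;H^2(\T^1))$ for products involving $h_{2,\eps}$. The factor $C(M,R,T,\eps)\to 0$ as $(T,\eps)\to 0$ arises from the additional powers of $T^{1/4}$ or $\eps^{1/2}$ produced either by H\"older's inequality in time or by the change of variables $r=\eps(\rho+h_\eps)$ combined with the exponential decay of $\theta_0'$.

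\smallskip

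\noindent The main obstacle is the algebraic bookkeeping at order $\eps^{1}$: one has to verify that the many terms stemming from $\partial_t\theta_0(\rho)$, from the quadratic part of \eqref{yuning:ex1}, from $\ve_A^{in}\cdot\nabla c^{in}$ as given by Lemma~\ref{lemma:va}, from the Taylor remainder of $f'$ acting on $\eps^2\hat{c}_2$, and from the parabolic operator applied to $\eps^2\hat{c}_2$ match exactly $\theta_0'(\rho)$ times \eqref{yuning:h2equ'} plus the inhomogeneity defining $\hat{c}_3$. This is precisely why $\mathfrak{B}$, $b$, $\mathfrak{D}_1$ and $\mathfrak{D}$ are defined as they are: they encode the two orthogonal projections of the order-$\eps^1$ residual onto $\operatorname{span}\{\theta_0'\}$ and its $L^2(\R)$-complement, and the solvability condition \eqref{lem:CompCondODE} then forces the evolution of $h_{2,\eps}$ to coincide with \eqref{yuning:h2equ'}. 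This bookkeeping is carried out in full in the appendix referred to at the end of the introduction.
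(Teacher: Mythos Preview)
Your proposal is correct and follows essentially the same approach as the paper: the detailed $\eps$-expansion you outline is precisely the computation carried out in the appendix (culminating in the decomposition \eqref{eq:ExpandAC} of $\mathfrak{C}$ into $R_\eps+\mathfrak{R}_1+\mathfrak{R}+\sum_{k=2}^5\eps^k\mathfrak{R}_k$), and the term-by-term estimates you sketch using Lemma~\ref{lem:c3Estim}, Corollary~\ref{cor:repsEstim2}, and the $X_{T_\eps}$-embeddings match how the paper handles each $\mathfrak{R}_k$ in the main proof.
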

\begin{proof}
In the Appendix \ref{subsec:1.1} it is shown by careful, but lengthy calculations (see equation \eqref{yuning:1.43}) that
\begin{alignat}{1}\nonumber
   \mathfrak{C}(x,t)&=R_\eps (x,t) +\sum_{k=2}^5\eps^k\mathfrak{R}_k(\tfrac{d_\G}\eps-h_\eps(S(x,t),t),S(x,t),t)
\\\label{eq:ExpandAC}&+\mathfrak{R}_1 (\tfrac{d_\G}\eps-h_\eps(S(x,t),t),S(x,t),t) +\mathfrak{R}(\tfrac{d_\G}\eps-h_\eps(S(x,t),t),S(x,t),t)
\end{alignat}
where $R_\eps(x,t)$ is defined as  in Lemma~\ref{lemma:va} 
and $b=b(\rho,s,t)$ is defined   in \eqref{eq:DefnB1}. Moreover, 
\begin{equation*}
  \begin{split}
  \mathfrak{R}:=&-\eps^2 \kappa_{3,\eps}\theta'_0-\eps^2 \kappa_2(2(\rho+h_1) h_2+ \eps h^2_2)\theta'_0+\eps^2\widehat{\mathfrak{D}}_\eps+\eps(\Ldelta -\Lt )h_2\theta'_0\\
&-2\eps\theta''_0( \nabla_\G h_1\cdot\Lgrad h_2+\Ldelta h_1\cdot \nabla_\G h_2+\Lgrad h_1\cdot\Lgrad h_2)\\
&+\eps^2\((\rho+h_\eps)\kappa_1(s,t)- \eps  \kappa_2 (\rho+h_\eps )^2-\eps^2 \kappa_{3,\eps}(\rho,s,t)\)\p_\rho\tc_2,\\
\mathfrak{R}_1: =& \frac{\eps^2}2f'''\big( \theta_0(\rho)+ \xi(\rho,s,t)(\eps^2\tc_2+\eps^3\tc_3)(\rho,s,t)\big)\left(\tc_2(\rho,s,t)+\eps\tc_3(\rho,s,t)\right)^2,\\
    \mathfrak{R}_2:=& \p_t^\G \tc_2-\Delta^{\Gamma } \tc_2+2\nabla^\Gamma h_1\cdot\nabla^\Gamma \p_\rho \tc_2-(\p_t^\G h_1-\Delta^{\G} h_1)\p_\rho \tc_2-\nabla^\G h_1\cdot \we_1|_{\Gamma}\theta'_0\\
    &-|\nabla^\G h_1|^2 \p_\rho^2 \tc_2-|\nabla^\G h_2|^2 \theta''_0-\p_\rho \tc_3(V+\Delta d_\G),\\
      \mathfrak{R}_3 :=&2\nabla^\G h_2\cdot\nabla^\G \p_\rho \tc_2+2\nabla^\G h_1\cdot\nabla^\G \p_\rho \tc_3-(\p_t^\G h_2 -\Delta^\G h_2)\p_\rho \tc_2 {+ {(\Lt-\Ldelta)\tc_3}}\\
    &-2\nabla^\G h_1\cdot\nabla^\G h_2 \p_\rho^2 \tc_2-|\nabla^\G h_1|^2\p_\rho^2 \tc_3-(\p_t^\G h_1-\Delta^\G h_1)\p_\rho \tc_3-\nabla^\G h_2\cdot\we_1|_{\Gamma} \theta'_0,\\
    \mathfrak{R}_4:=&2\nabla^\G h_2\cdot\nabla^\G \p_\rho
     \tc_3-(\p_t^\G h_2-\Delta^\G h_2)\p_\rho \tc_3-|\nabla^\G h_2|^2\p_\rho^2
      \tc_2-2\nabla^\G h_1\cdot\nabla^\G h_2 \p_\rho^2 \tc_3,\\
    \mathfrak{R}_5:=&-|\nabla^\G h_2|^2 \p_\rho^2 \tc_3,
  \end{split}
\end{equation*}
where $\xi$ is some function with $|\xi(\rho,s,t)| \leq 1$, $\widehat{\mathfrak{D}}_\eps$ is as in \eqref{yuning:1.28} below, and   $\kappa_1, \kappa_2, \kappa_{3,\eps}$ are defined by \eqref{yuning:ex1}.
We recall that $\kappa_1(s,t)$ and $\kappa_2(s,t)$ are smooth  and $\eps$-independent functions while the estimate of $\kappa_{3,\eps}$ is given by \eqref{yuning:1.44}:
\begin{equation}\label{yuning:1.45}
  |\kappa_{3,\eps}(\rho,s,t)|\leq C|\rho+h_\eps(s,t)|^3\qquad \text{for all }\rho\in \R, s\in \mathbb{T}^1, t\in [0,T_0].
\end{equation}
In order to  prove \eqref{yuning:1.14} we estimate $\mathfrak{R}$ and $\mathfrak{R}_k, k=1,\ldots, 5$  individually.

\smallskip

\noindent
{\bf To $\mathfrak{R}$:}  By the Definition \ref{yuning:1.15} and \eqref{Prelim:1.11}, we have
 \begin{alignat*}{1}
 &\eps(\Ldelta h_2-\Lt h_2)(d_\G (x,t),S(x,t),t)\theta'_0(\rho)\\
&-2\eps\theta''_0(\rho)\( \nabla_\G h_1\cdot\Lgrad h_2+\Ldelta h_1\cdot \nabla_\G h_2+\Lgrad h_1\cdot\Lgrad h_2\)(d_\G (x,t),S(x,t),t) \\
&=
 a_1(\rho,x,t)\partial_s h_2+ a_2(\rho,x,t)\partial_s^2 h_2,
\end{alignat*}
where $a_1,a_2\in \mathcal{R}^0_{1,\alpha}$.
  So we can apply Corollary~\ref{cor:repsEstim2}   to deduce
       \begin{equation*}
         \int_0^T\|\eps(\Ldelta h_2-\Lt h_2)\theta'_0(\tfrac{d_\G}\eps-h_\eps)\|_{L^2(\G_t(2\delta))}\sd t \leq C(M)T^{\frac12}\eps^{\order+\frac 12}
       \end{equation*}
       and
       \begin{equation*}
         \int_0^T\left\| 2\eps\theta_0''(\tfrac{d_\G}\eps-h_\eps)( \nabla_\G h_1\cdot\Lgrad h_2+\Ldelta h_1\cdot \nabla_\G h_2+\Lgrad h_1\cdot\Lgrad h_2)\right\|_{L^2(\G_t(2\delta))}\, \sd t\leq C(M)T^{\frac12}\eps^{\order+\frac 12}
       \end{equation*}
       for all $T\in (0,T_\eps]$, $\eps\in (0,\eps_0]$.
       All the rest terms in $\mathfrak{R}$ are multiplied by $\eps^2$, depend only on $h_2(s,t)$ (and not on its derivatives), have exponential decay as $|\rho|\to \infty$ uniformly in $(s,t),\eps$, 
and can be estimated with the help of Lemma~\ref{lem:Remainder} because of $X_T\hookrightarrow C^0(\Om\times [0,T])$ and \eqref{yuning:1.32}. So
    all these estimates together imply
    \begin{equation*}
     \int_0^T \|\mathfrak{R}(\tfrac{d_\G}\eps-h_\eps(S(\cdot),t),S(\cdot),t)\|_{L^2(\G_t(2\delta)))}\sd t\leq C(M,\eps,T)\eps^{\order+\frac 12},
    \end{equation*}
    where $C(M,\eps,T)\to_{(\eps,T)\to 0} 0$.

\noindent
{\bf To $\mathfrak{R}_1$:} First of all, because of \eqref{yuning:1.38}, 
$\eps\|\hat{c}_3\|_{L^\infty((0,T_\eps)\times \T^1\times \R)}$ is bounded.
Therefore there is some $C>0$ such that
\begin{equation*}
  |\mathfrak{R}_1  (\rho,s,t)|\leq C\eps^2|\tc_2(\rho,s,t)+\eps \tc_3(\rho,s,t)|^2\qquad \text{for all }\rho \in\R, s\in \T^1,t\in [0,T_0].
\end{equation*}
Hence, using a change of variable, \eqref{yuning:1.16}, and \eqref{yuning:1.38} again,  we obtain
\begin{alignat*}{1}
  &\int_0^T\|\mathfrak{R}_1 (\tfrac{d_{\G}}\eps-h_\eps(\cdot,t),\cdot,t)\|_{L^2(\Gamma_t(2\delta))}\sd t\leq
   C\eps^{2+\frac12}\|(\tc_2+ \eps \tc_3)^2\|_{L^1(0,T;L^2(\R\times \T^1))}\\
&\leq C\eps^{\order+\frac12}\left(T+\|\eps \tc_3\|_{L^1(0,T;L^2(\R\times \T^1))}+ \|\eps \tc_3\|_{L^2(0,T;L^4(\R\times \T^1))}^2\right) \leq C(M)(T+\eps) \eps^{\order+\frac12}
\end{alignat*}
for all $T\in (0,T_\eps]$, $\eps\in (0,\eps_0]$.

\smallskip

\noindent
{\bf To $\mathfrak{R}_2$:}
 Using \eqref{yuning:1.16} and the smoothness of $h_1$, we can show that all the terms in $\mathfrak{R}_2$ that are related to $\tc_2$ belong to $\mathcal{R}_{0,\alpha}$, i.e.,
      $$
     {  \mathcal{R}_{0,\alpha}\ni \mathfrak{R}_2(\rho,s,t)+|\nabla^\G h_2|^2 \theta_0''(\rho) +\nabla^\G h_1\theta'_0\cdot \we_1|_{\Gamma}+(V+\Delta d_\G)\p_\rho \tc_3(\rho,s,t).
      }$$
      It remains to estimate the last three terms on the right hand side.  A change of variable together with \eqref{yuning:1.38} implies that
      \begin{equation*}
       \begin{split}
         &\int_0^T\|(V+\Delta d_\G)\p_\rho \tc_3(\tfrac{d_\G}\eps-h_\eps(S(\cdot),t),S(\cdot),t)\|_{L^2(\G_t(2\delta))}\sd t\\
         &\leq  CT^{\frac12}\sqrt{\eps}\|\p_\rho \tc_3\|_{L^2(\R\times\T^1\times(0,T'))}\leq C' T^{\frac12}\sqrt{\eps}
       \end{split}
      \end{equation*}
      for all $T\in (0,T_\eps]$, $\eps\in (0,\eps_0]$.
    The estimate for $|\nabla^\G h_2|^2 \theta_0''(\rho)$ follows from \eqref{eq:h2epsBound} and \eqref{yuning:1.46} together with Corollary \ref{cor:repsEstim2}:
       \begin{equation*}
         \begin{split}
         &\int_0^T\left\||\nabla^\G h_2|^2 \theta_0''(\tfrac{d_\G}\eps-h_\eps(S(\cdot,t),t)) \right\|_{L^2(\G_t(2\delta))}\sd t\\
         &\leq  C\int_0^T\left\||\p_s h_2(S(\cdot,t),t)|^2 \theta_0''(\tfrac{d_\G}\eps-h_\eps(S(\cdot,t),t) ) \right\|_{L^2(\Gamma_t(2\delta)))}\sd t\\
         &\leq C \sqrt{\eps}\int_0^T\|\p_s h_2\|^2_{L^4(\T^1)}d t\leq CT\eps^{\frac12}\|h_2\|^2_{X_T}
       \end{split}
       \end{equation*}
       for all $T\in (0,T_\eps]$, $\eps\in (0,\eps_0]$.
       Similarly, it follows from the smoothness of $h_1$,  Corollary \ref{cor:repsEstim2} and trace estimate that
       \begin{equation*}
         \begin{split}
         &\int_0^T\left\|\nabla^\G h_1\theta'_0\cdot \we_1|_{\Gamma}\right\|_{L^2(\G_t(2\delta))}\sd t\leq C\int_0^T \left\| \theta'_0\cdot \we_1|_{\Gamma}\right\|_{L^2(\G_t(2\delta))}\sd t
        \\ & \leq C\sqrt{\eps} \int_0^T\left\|  \we_1|_{\Gamma}\right\|_{L^2(\G_t)}\sd t\leq C  \sqrt{T\eps} \left\|  \we_1\right\|_{L^2(0,T;H^1(\Om))}
       \end{split}
       \end{equation*}
       Since we assume \eqref{eq:h2epsBound} and \eqref{yuning:diffcaca1New}, Proposition \ref{yuning:velocityest} and Theorem \ref{thm:ApproxVA} are applicable. These together with \eqref{yuning:2.02} imply the   estimate of $\we_1$
       \begin{equation}\label{yuning:2.03}
         \left\|  \we_1\right\|_{L^2(0,T;H^1(\Om))}\leq  \eps^{-2}\(C(R)\eps^\order+C(M)(T^{\frac 14}+\eps^{\frac12})\eps^\order\)= C(R) +C(M)(T^{\frac14}+\eps^{\frac12})
       \end{equation}
       for all $T\in (0,T_\eps]$, $\eps\in (0,\eps_0]$. The above two estimates together  imply the estimate for $\mathfrak{R}_2$.

\smallskip

\noindent
{\bf To $\mathfrak{R}_3$:} We rearrange the terms of $\mathfrak{R}_3$ as
\begin{alignat}{1}\nonumber
 \mathfrak{R}_3 =&\overbrace{2\nabla^\G h_2\cdot\nabla^\G \p_\rho \tc_2+\Delta^\G h_2\p_\rho \tc_2-2\nabla^\G h_1\cdot\nabla^\G h_2 \p_\rho^2 \tc_2-\p_\rho \tc_2\p_t^\G h_2}^{=:\mathfrak{R}_3'}-\nabla^\G h_2\cdot\we_1|_{\Gamma} \theta'_0\\\label{yuning:1.62}
 &\underbrace{+2\nabla^\G h_1\cdot\nabla^\G \p_\rho \tc_3+\Delta^\G h_1\p_\rho \tc_3 -|\nabla^\G h_1|^2\p_\rho^2 \tc_3-\p_t^\G h_1 \p_\rho \tc_3+  (\Lt-\Ldelta)\tc_3}_{=:\mathfrak{R}_3''}.
\end{alignat}
The common feature of terms in $\mathfrak{R}_3'$ is that  derivatives of $h_2$ are only multiplied with derivatives of $\tc_2$ but not that of $\tc_3$. So they can be estimated by
\begin{alignat*}{1}
  &\int_0^T\|\eps^3\mathfrak{R}_3' (\tfrac{d_{\G}}\eps-h_\eps(\cdot,t),\cdot,t)\|_{L^2(\Gamma_t(2\delta))}\sd t\leq
   C\eps^{3+\frac12}T^{\frac12}\|h_2\|_{X_T}
\leq {C}(M)\eps^{\order+\frac32},
\end{alignat*}
for all $T\in (0,T_\eps]$, $\eps\in (0,\eps_0]$. Moreover, $\mathfrak{R}_3''$ consists of terms that includes  derivatives of $\tc_3$. Note that every terms here is multiplied by derivatives of smooth and $\eps$-independent functions  and according to \eqref{Prelim:1.11}, $\Lt-\Ldelta$ is a second order operator with coefficients vanishing on $\G$. They can be estimated using \eqref{yuning:1.38}:
\begin{alignat*}{1}
  &\int_0^T\|\eps^3\mathfrak{R}_3'' (\tfrac{d_{\G}}\eps-h_\eps(\cdot,t),\cdot,t)\|_{L^2(\Gamma_t(2\delta)))}\sd t\\
&\leq
   C(M)\eps^{3+\frac12}T^{\frac12}\|(\partial_\rho \tc_3, \partial_\rho^2 \tc_3, \partial_s \partial_\rho \tc_3, \partial_s \tc_3,  \partial_s^2 \tc_3 )\|_{L^2(\R\times \T^1\times (0,T_\eps)) } \leq C(M)\eps^{\order+\frac12}T^{\frac12}
\end{alignat*}
for all $T\in (0,T_\eps]$, $\eps\in (0,\eps_0]$.
Finally,  the estimate of the last summand in \eqref{yuning:1.62}  follows from    Corollary \ref{cor:repsEstim2},
\begin{equation}
  \label{eq:fgInterpol}
  \|fg\|_{L^2(\T^1)}\leq C \|f\|_{H^{1/2}(\T^1)}\|g\|_{H^{1/2}(\T^1)} \quad \text{for all }f,g\in H^{\frac12}(\T^1)
\end{equation}
and \eqref{yuning:2.03} successively:
\begin{equation*}
  \begin{split}
    & \int_0^T\eps^3\| \nabla^\Gamma h_2\cdot \we_1|_{\Gamma}\theta_0' (\tfrac{d_{\G}}\eps-h_\eps(\cdot,t))\|_{L^2(\Gamma_t(2\delta)))}\sd t\\
     &\leq  C\int_0^T\eps^{3+\frac12}\| \p_s h_2(S(\cdot,t),t)\cdot \we_1|_{\Gamma}\|_{L^2(\Gamma_t)}\sd t\\
&\leq
   C\eps^{3+\frac12}T^{\frac12}\|h_2\|_{BUC([0,T_\eps];H^{3/2}(\T^1))}\|X_0^\ast(\we_1)\|_{L^2(0,T;H^{1/2}(\T^1))} \leq C(M,R)\eps^{\order+\frac32}.
  \end{split}
\end{equation*}


  \noindent
{\bf To $\mathfrak{R}_4$:}
For the first term in $\mathfrak{R}_4$, we use that 
      \begin{equation*}
        \begin{split}
          \sup_{r\in (-2\delta,2\delta)}\|\nabla^\G h_2 (r,\cdot,t) \cdot\nabla^\G \p_\rho
     \tc_3 (r,\cdot,\cdot,t) \|_{L^2(\T^1\times\R)}&\leq C\|\partial_s h_2(t)\|_{L^\infty(\T^1)}\|\partial_s \p_\rho
     \tc_3(t)\|_{L^2(\T^1\times\R)}
        \end{split}
      \end{equation*}
      for almost every $t\in [0,T]$
      and $X_T\hookrightarrow L^4(0,T;C^1(\T^1))$.
      Therefore it follows from \eqref{yuning:1.38} that
        \begin{equation*}
        \begin{split}
       &\eps^4\int_0^T \|\nabla^\G h_2 (d_\Gamma,\cdot,t) \cdot\nabla^\G \p_\rho
     \tc_3(d_\Gamma,\tfrac{d_\G}\eps-h_\eps,\cdot,t)\|_{L^2(\G_t(2\delta))}\sd t\\
          &\leq C\eps^{4+\frac12} T^{\frac14}\|h_2\|_{X_{T_\eps}}\|\partial_s \p_\rho
     \tc_3\|_{L^2((0,T_\eps)\times\T^1\times\R)} \leq C(M)\eps^{4+\frac12}
        \end{split}
      \end{equation*}
      for all $T\in (0,T_\eps]$, $\eps\in (0,\eps_0]$.
       For the second term  in $\mathfrak{R}_4$, we apply \eqref{eq:fgInterpol}, H\"older's inequality, and Sobolev interpolation inequality:
       \begin{alignat*}{1}
       & \sup_{r\in (-2\delta,2\delta)}   \|(\partial_t^\Gamma-\Delta^\G) h_2 (r,\cdot,t) \p_\rho \tc_3(\rho,\cdot,t)\|_{L^2(\T^1)}\\
&\leq C(\| h_2\|_{H^{ 5/2}(\T^1)} +\| \partial_t h_2\|_{H^{1/2}(\T^1)} )\|\p_\rho \tc_3(\rho,\cdot,t)\|_{H^1(\T^1)}
       \end{alignat*}
       for almost every $t\in [0,T_\eps]$.
       This together with a change of variable and \eqref{yuning:1.38} leads to
       \begin{equation*}
       \begin{split}
         &\eps^4\int_0^T \|(\partial_t^\Gamma-\Delta^\G) h_2\p_\rho \tc_3(\tfrac{d_\G}\eps-h_\eps,\cdot,t)\|_{L^2(\G_t(2\delta))}\sd t\\
         &\leq C\eps^{4+\frac12}(\|  h_2\|_{L^2(0,T;H^{ 5/2}(\T^1))}+ \|  \partial_t h_2\|_{L^2(0,T;H^{  1/2}(\T^1))})\|\p_\rho \tc_3\|_{L^2((0,T_\eps)\times \R;H^1(\T^1))}\leq C(M)\eps^{\order + \frac52}
       \end{split}
       \end{equation*}
       for all $T\in (0,T_\eps]$, $\eps\in (0,\eps_0]$.
       The remaining terms can be treated in a similar manner and finally we get
       \begin{equation*}
         \eps^4\|\mathfrak{R}_4\|_{L^1(0,T;L^2(\G_t(2\delta)))}=o(\eps^{\order+\frac 12})\quad \text{as }\eps\to 0.
       \end{equation*}
 \noindent
{\bf To $\mathfrak{R}_5$:} It follows from \eqref{eq:h2epsBound}  and Sobolev imbedding that  $\|h_2\|_{L^4(0,T_\eps; C^1(\T^1))}$ is uniformly bounded in $\eps$. So we can show similarly as before that
       \begin{equation*}
        \eps^5\int_0^T\left\||\nabla^\G h_2|^2\p_\rho^2 \tc_3 (\tfrac{d_\G}\eps-h_\eps,\cdot ,t)\right\|_{L^2(\G_t(2\delta)))}\sd t
         \leq C(M)\eps^{5+\frac12}
       \end{equation*}
       for all $T\in (0,T_\eps),\eps\in (0,\eps_0]$.
\end{proof}

\subsection{The Full Expansion}

In this subsection, we shall use the shorter notation $\chi_\pm:=\chi_{\O^\pm(t)}$. 
After determination of the inner expansion, we define the approximate solutions $c_A$ in $\Omega\times [0,T_0]$ as
\begin{equation}\label{yuning:ca}
  \begin{split}
    c_A(x,t)&=\zg c^{in} (x,t) +(1-\zg )\left(c_+^{out}
  \chi_++c_-^{out}\chi_-  \right)\\
  &=c_+^{out}
  \chi_++c_-^{out}\chi_-+ \left(c^{in} (x,t) -c_+^{out}
  \chi_+-c_-^{out}\chi_- \right)\zg
  \end{split}
\end{equation}
where $c_\pm^{out}=\pm 1$ and $\zeta$ is as in  \eqref{yuning:1.34}. Moreover,  $\rho$ is as in \eqref{eq:rho}
and we define $c_{A,2}$, $c_{A,3}$ by
\begin{alignat}{1}
  c_{A,j}(x,t)&=\zg \eps^jc_j^{in}(x,t),\quad j=2,3
\end{alignat}
with $c_j^{in}$ as in \eqref{yuning:innerexpan'}.
\begin{rem}
  We note that we have chosen $c^{out}_\pm = \pm 1$ as approximation of $c_\eps$ in $(\Om\times [0,T_0])\setminus \Gamma(\delta)$ (the region of the outer expansion). One can also derive this (formally) by expanding $c_\eps$ in this outer region in the form $\sum_{k=0}^{2} \eps^kc_k^\pm (x,t)$. Since the calculations are simple and standard, we omit them.
\end{rem}
\begin{lem}\label{lem:MatchEstim}
Assume that \eqref{eq:h2epsBound} holds true for some $M>0$ and let $k\in\N$. Then
  there are $C(M),C_k(M)>0$, independent of $T_\eps,\eps$, and $\eps_0$ such that   for all $0<\eps \leq \eps_0$ and $\theta\in (0,1)$
  \begin{alignat}{1}\label{eq:MatchEstim1}
  &  \sup_{0\leq t\leq T_\eps }\|c^{in}(t,\cdot) -c_+^{out}
  \chi_+ -c_-^{out}\chi_-\|_{L^2(\Gamma_t(2\delta)\setminus \Gamma_t(\delta))}\leq C(M)\eps^{3\order},\\\label{eq:MatchEstim2}
 &    \sup_{0\leq t\leq T_\eps }\eps^2\|c^{in}_2(t)\|_{L^2(\Gamma_t(2\delta)\setminus \Gamma_t(\delta))} +\max_{j=0,2}\sup_{0\leq t\leq T_\eps }\eps^j\|\nabla c^{in}_j(t)\|_{L^2(\Gamma_t(2\delta)\setminus \Gamma_t(\delta))}\leq C(M)\eps^{3\order},
\\\label{eq:MatchEstim4}
& \sup_{0\leq t\leq T_\eps }\eps^3\|c^{in}_3(t)\|_{L^2(\Gamma_t(2\delta)\setminus \Gamma_t(\delta))}+\eps^3\|\nabla c^{in}_3\|_{L^2(0,T_\eps;L^2(\Gamma_t(2\delta)\setminus \Gamma_t(\delta))} \leq C_k(M)\eps^{k}.
  \end{alignat}
\end{lem}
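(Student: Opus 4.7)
\textbf{Proof plan for Lemma \ref{lem:MatchEstim}.} The key observation is that in the matching region $\Gamma_t(2\delta)\setminus\Gamma_t(\delta)$ we have $|d_\Gamma(x,t)|\ge\delta$. Since $\|h_\eps\|_{L^\infty}\le\|h_1\|_{L^\infty}+\eps M\le C(M)$ under \eqref{eq:h2epsBound}, the stretched variable satisfies
\[
|\rho(x,t)|=\bigl|\tfrac{d_\Gamma(x,t)}{\eps}-h_\eps(S(x,t),t)\bigr|\ge \tfrac{\delta}{\eps}-C(M)\ge\tfrac{\delta}{2\eps}
\]
for all $\eps\le\eps_0$ with $\eps_0$ sufficiently small (depending on $M$). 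All three estimates will follow from this pointwise lower bound on $|\rho|$ combined with decay information on the ingredients of $c^{in}=\theta_0(\rho)+\eps^2\hat c_2+\eps^3\hat c_3$.

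To prove \eqref{eq:MatchEstim1} and \eqref{eq:MatchEstim2}, first write $c^{in}(x,t)-c_+^{out}\chi_+-c_-^{out}\chi_- = (\theta_0(\rho)\mp 1)+\eps^2\hat c_2+\eps^3\hat c_3$ (with the sign chosen according to the side of $\Gamma_t$). By \eqref{yuning:decayopti} and \eqref{yuning:1.16}, both $\theta_0(\rho)\mp 1$ and $\hat c_2$ (as well as their derivatives $\theta_0',\theta_0'',\partial_\rho\hat c_2$, etc.\ appearing through the chain rule \eqref{yuningliu:formula1}) are dominated by $Ce^{-\alpha|\rho|}\le Ce^{-\alpha\delta/(2\eps)}$, which is $o(\eps^k)$ for any $k$. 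After a change of variables $(x,t)\mapsto(r,s,t)$ and crude bounds on the Jacobian $J_t$ and on smooth quantities like $\nabla d_\Gamma/\eps-\nabla_\btau h_1$, this immediately gives the exponential-in-$1/\eps$ bound for these contributions, which is much smaller than $\eps^{3N}=\eps^6$. The only term lacking exponential decay is $\eps^3\hat c_3$; for it, I apply the $(r,s)$-change of variables followed by $\rho=r/\eps-h_\eps(s,t)$, yielding an integral over $\{|\rho|\ge\delta/(2\eps)\}$, and insert the weight $|\rho|^{-2k}|\rho|^{2k}$: bounding $|\rho|^{-2k}\le(2\eps/\delta)^{2k}$ and using \eqref{yuning:1.38} to control $\|\rho^k\hat c_3\|_{L^\infty_t L^2_{s,\rho}}\le C_k(M)/\eps$ gives $\eps^3\|\hat c_3\|_{L^\infty(0,T_\eps;L^2(\Gamma_t(2\delta)\setminus\Gamma_t(\delta)))}\le C_k(M)\eps^{k+5/2}$, which absorbs easily into $\eps^{3N}$ upon choosing $k$ large enough.

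For \eqref{eq:MatchEstim4}, the proof is the same weighted-$\rho^k$ argument, but now played for all $k\in\N$. The pointwise-in-time bound uses the $L^\infty(0,T_\eps;L^2(\T^1\times\R))$-estimate for $\rho^k\hat c_3$ from \eqref{yuning:1.38}, yielding $\eps^3\|\hat c_3\|_{L^\infty_tL^2} \le C_k(M)\eps^{k+5/2}$, which is $\le C_k(M)\eps^k$ after relabelling. For $\nabla c_3^{in}$, I use \eqref{yuningliu:formula1} to decompose
\[
\nabla c_3^{in}=\eps^{-1}\partial_\rho\hat c_3\,\no_{\Gamma_t}+\partial_\rho\hat c_3(-\nabla^\Gamma h_\eps)+\nabla^\Gamma\hat c_3,
\]
so that $\eps^3\nabla c_3^{in}$ is a sum of terms of the form $\eps^2\partial_\rho\hat c_3$, $\eps^3\partial_\rho\hat c_3\cdot\partial_s h_\eps$, $\eps^3\partial_s\hat c_3$. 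The same weighted cut-off argument applies to each: for the pointwise-in-$t$ bound I use the $L^\infty_tL^2$ estimates $\|\rho^k(\hat c_3,\partial_\rho\hat c_3,\partial_s\hat c_3)\|_{L^\infty_tL^2}$ from \eqref{yuning:1.38} (losing one $\eps$ from the weight in the $L^\infty_t$ norm), while for the $L^2(0,T_\eps)$-in-$t$ bound I use the (better) $L^2_{t,s,\rho}$ bounds in \eqref{yuning:1.38}, which carry no extra factor of $1/\eps$. In both cases, $k$ is chosen to beat the prescribed power.

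The whole argument is essentially bookkeeping: there is no analytic difficulty beyond carefully following \eqref{yuningliu:formula1} and tracking the $\eps$-powers generated by the Jacobian ($\eps^{1/2}$ in $L^2$), the chain rule (one factor of $\eps^{-1}$ per normal derivative), and the weight $|\rho|^{-k}\le(2\eps/\delta)^k$. The only subtle point is that for $\hat c_3$ one cannot obtain exponential decay (which would have trivialised the estimate) because the structural information on $\hat c_3$ in \eqref{yuning:1.38} is merely $L^2$ with polynomial weights; the polynomial-decay substitute provided by those weighted norms is however perfectly sufficient since \eqref{eq:MatchEstim4} only requires an arbitrary, but finite, algebraic order $k$.
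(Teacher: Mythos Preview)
Your strategy is exactly the one the paper uses: exploit $|\rho|\ge\delta/(2\eps)$ on $\Gamma_t(2\delta)\setminus\Gamma_t(\delta)$, kill the $\theta_0$ and $\hat c_2$ contributions by exponential decay, and handle $\hat c_3$ by inserting $\rho^{-k}\rho^k$ and invoking the weighted bounds in \eqref{yuning:1.38}. The argument for \eqref{eq:MatchEstim1}, \eqref{eq:MatchEstim2} and the first term in \eqref{eq:MatchEstim4} is correct as written.

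There is one genuine subtlety you skate over in the gradient part of \eqref{eq:MatchEstim4}. In your decomposition $\eps^3\nabla c_3^{in}=\eps^2\partial_\rho\hat c_3\,\no-\eps^3\partial_\rho\hat c_3\,\nabla^\Gamma h_\eps+\eps^3\nabla^\Gamma\hat c_3$, the middle term contains $\eps^4\partial_\rho\hat c_3\,\partial_s h_{2,\eps}$. You propose to bound this in $L^2_{t,x}$ using the $L^2_{t,s,\rho}$ control of $\rho^k\partial_\rho\hat c_3$ from \eqref{yuning:1.38}; that would require $\partial_s h_{2,\eps}\in L^\infty_{t,s}$, which is \emph{not} available from $X_{T_\eps}$ (one only has $BUC_tH^{3/2}_s$, and $H^{3/2}(\T^1)\not\hookrightarrow C^1(\T^1)$). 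You also write that $\|\rho^k\partial_\rho\hat c_3\|_{L^\infty_tL^2}$ is contained in \eqref{yuning:1.38}; it is not --- only $\eps\|\rho^k\hat c_3\|_{L^\infty_tL^2}$ and $\|\rho^k\partial_\rho\hat c_3\|_{L^2_{t,s,\rho}}$ appear there. The paper closes this gap by the interpolation
\[
\|\rho^k\partial_\rho\hat c_3\|_{L^4((0,T_\eps)\times\T^1;L^2(\R))}
\le\Bigl(\sup_{s,t}\|\partial_\rho\hat c_3(\cdot,s,t)\|_{L^2(\R)}\Bigr)^{1/2}\|\rho^{2k}\partial_\rho\hat c_3\|_{L^2((0,T_\eps)\times\T^1\times\R)}^{1/2},
\]
pairing this $L^4_{t,s}$ norm with $\partial_s h_{2,\eps}\in L^4_{t,s}$ (from $X_{T_\eps}\hookrightarrow BUC_tW^1_4(\T^1)$). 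Once you insert this step your argument goes through unchanged; without it the product estimate does not close.
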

\begin{proof}
First of all $h_{2,\eps}$ is uniformly bounded since $X_{T_\eps}\hookrightarrow L^\infty([0,T_\eps]\times \T^1)$ with operator norm bounded independently of $T_\eps\in (0,T_0]$.
On the other hand, if $2\delta>|d_\Gamma(x,t)|>\delta$, then
\begin{equation}
  \label{eq:DistEstim}
 |\rho|=\left|\frac{d_\G}\eps -h_1(s,t)-\eps h_{2,\eps}(s,t)\right|\geq \frac{\delta}{2\eps}
\end{equation}
for all $\eps\in (0,\eps_0]$ if $\eps_0\in (0,1]$ is chosen sufficently small.
Moreover, we have, because of \eqref{yuning:decayopti}, \eqref{yuning:1.16}, and \eqref{yuning:1.38}, that
\begin{subequations}\label{eq:Matching}
\begin{align}\label{eq:Matching1}
  |\theta_0(\rho)-c_\pm^{out} |+ |\theta'_0(\rho)|&\leq Ce^{-\alpha|\rho|}&\ & \text{if }\rho\gtrless 0,
\\\label{eq:Matching2}
   |\tc_2(\rho,s,t)|+|\partial_\rho \tc_2(\rho,s,t)|+|\partial_s \tc_2(\rho,s,t)|&\leq Ce^{-\alpha|\rho|},&\ & 
   \\
  \eps^\theta|\hat{c}_3(\rho,s,t)|\leq C(M,\theta)\label{eq:Matching3}
\end{align}
\end{subequations}
for all $(\rho,s,t)\in\R\times \T^1\times[0,T_0]$ and some $C,C(M,\theta),\alpha>0$, where $\theta>0$ is arbitrary.

For the following we denote $\Sigma_t= \Gamma_t(2\delta)\setminus \Gamma_t(\delta)$.
Because of \eqref{eq:DistEstim}, \eqref{eq:Matching1}, and \eqref{eq:Matching2}, one easily obtains
  \begin{alignat*}{1}
    \sup_{0\leq t\leq T_\eps}\left[\|c^{in}_0(t) -c_+^{out}
  \chi_+ -c_-^{out}\chi_-\|_{L^2(\Sigma_t)}
    + \eps^2\|c^{in}_2(t) \|_{L^2(\Sigma_t)}\right]&\leq Ce^{-\frac{\alpha\delta}{2\eps}}\leq C'\eps^{3\order},\\
    \sup_{0\leq t\leq T_\eps}\left[\|\nabla c^{in}_0(t) \|_{L^2(\Sigma_t)}
    + \eps^2\|\nabla c^{in}_2(t) \|_{L^2(\Sigma_t)}\right]&\leq C\frac{e^{-\frac{\alpha\delta}{2\eps}}}{\eps}\leq C'\eps^{3\order}
  \end{alignat*}
  for all $\eps\in (0,\eps_0]$ and
   some $C,C',\alpha>0$.  This shows \eqref{eq:MatchEstim1}-\eqref{eq:MatchEstim2}.  

To prove the first inequality in \eqref{eq:MatchEstim4}, we employ  \eqref{eq:DistEstim} and a change of variable to deduce
\begin{equation*}
\begin{split}
&{\sup_{0\leq t\leq T_\eps }\eps^3\|c^{in}_3\|_{L^2(\Sigma_t)}}
={\sup_{0\leq t\leq T_\eps }\eps^3\|\rho^{-k}\rho^kc^{in}_3\|_{L^2(\Sigma_t)}}\\
\leq &C \eps^k\delta^{-k}{\sup_{0\leq t\leq T_\eps }\eps^3\|\rho^kc^{in}_3\|_{L^2(\Sigma_t)}}\leq C_k \eps^k{\sup_{0\leq t\leq T_\eps }\eps^3\|\rho^k \tc_3(t)\|_{L^2(\T^1\times\R)}}.
\end{split}
\end{equation*}
 So the inequality  follows by applying  \eqref{yuning:1.38}.
 The proof of the second inequality in \eqref{eq:MatchEstim4} is done in the same way using additionally
  \begin{equation*}
    \|\rho^k\partial_\rho  \tc_3 \|_{L^4((0,T_\eps)\times \T^1;L^2(\R))}\leq C\left(\sup_{t\in [0,T_\eps], s\in\T^1}\|\partial_\rho \tc_3(\cdot,s,t,)\|_{L^2(\R)}\right)^{\frac12}\|\rho^{2k}\partial_\rho \tc_3\|_{L^2((0,T_\eps)\times \T^1\times \R)}^{\frac12}
  \end{equation*}
to estimate the leading term related to  $\partial_\rho \tc_3\nabla^\Gamma h_{2,\eps}$.
\end{proof}

\begin{lem}\label{lem:ModDiffEstim}
Let \eqref{assumptions}  hold true. Then there are some $\eps_1,T_1>0$ independent of $\eps, T_\eps, c_\eps, c_A$ such that
  \begin{subequations}\label{yuning:diffcaca1}
  \begin{align}
  \|c_\eps(t) -c_{A,0}(t)\|_{L^4(0,T_\eps;L^2(\Om))}+\|\nabla (c_\eps -c_{A,0})\|_{L^2(\Om\times (0,T_\eps)\setminus \Gamma(\delta)))} &\leq\frac32 R\eps^{\order+\frac12},\\
 \|\nabla_\btau(c_\eps -c_{A,0})\|_{L^2(\Omega\times (0,T_\eps)\cap \Gamma(2\delta))} + \eps\|\partial_\no(c_\eps -c_{A,0})\|_{L^2(\Omega\times (0,T_\eps)\cap \Gamma(2\delta))} &\leq \frac32 R\eps^{\order+\frac12}
\end{align}
\end{subequations}
and \eqref{eq:h2epsBound} holds true, where $M =M(R) $ is as in Lemma~\ref{lem:Existenceh2},
provided $\eps\leq \eps_1$ and $T_\eps\leq T_1$.
\end{lem}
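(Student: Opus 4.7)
The statement decomposes into (ii) validity of \eqref{eq:h2epsBound} and (i) the bound \eqref{yuning:diffcaca1} on $c_\eps-c_{A,0}$. For (ii) I would invoke Lemma~\ref{lem:Existenceh2} directly: it yields constants $\eps_1\in(0,1]$, $T_1\in(0,T_0]$ (depending on $R$ through $M=M(R)$, but independent of $\eps, T_\eps, c_\eps, c_A$) such that $\|h_{2,\eps}\|_{X_{T_\eps}}\leq M$ whenever $\eps\leq\eps_1$ and $T_\eps\leq T_1$, which is precisely \eqref{eq:h2epsBound}. Having \eqref{eq:h2epsBound} then legitimizes the use of Lemma~\ref{lem:c3Estim} and of the matching estimates in Lemma~\ref{lem:MatchEstim} throughout the rest of the argument.

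For (i), I would split $c_\eps-c_{A,0}=(c_\eps-c_A)+(c_A-c_{A,0})$ and note that by \eqref{yuning:ca} and \eqref{yuning:innerexpan'}
\[
c_A-c_{A,0}=\zg\bigl(\eps^2 c_2^{in}+\eps^3 c_3^{in}\bigr)=c_{A,2}+c_{A,3},
\]
which is supported in $\Gamma(2\delta)$. The $(c_\eps-c_A)$-contribution is controlled by \eqref{assumptions} after converting $\sup_t$-bounds into $L^4_t$- (respectively $L^2_t$-) bounds by H\"older's inequality, picking up a factor $T_\eps^{1/4}$ (respectively $1$). For $c_A-c_{A,0}$ I would use Lemma~\ref{lem:c3Estim} inside $\Gamma(\delta)$ and Lemma~\ref{lem:MatchEstim} on $\Gamma(2\delta)\setminus\Gamma(\delta)$. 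In each of the four norms in \eqref{yuning:diffcaca1} the resulting bound has the form $C(M)(T_\eps^\alpha+\eps^\beta)\eps^{N+1/2}$ for some $\alpha,\beta>0$: for the $L^4(L^2)$-norm one gets $T_\eps^{1/4}C(M,\theta)\eps^{N+1/2}$ from Lemma~\ref{lem:c3Estim}; for $\|\nabla\cdot\|_{L^2}$ outside $\Gamma(\delta)$ the matching estimates \eqref{eq:MatchEstim2}--\eqref{eq:MatchEstim4} give $O(\eps^{3N})+O(\eps^k)$, which is negligible; for the tangential derivative on $\Gamma(2\delta)$ the $c_{A,2}$-piece contributes $T_\eps^{1/2}C(M)\eps^{N+1/2}$; for the $\eps$-scaled normal derivative of $c_{A,2}$ the $\eps$-factor combines with $\eps^2\|\partial_\no c_2^{in}\|_{L^\infty(L^2)}\leq C(M)\eps^{N-1/2}$ to again give $T_\eps^{1/2}C(M)\eps^{N+1/2}$. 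Adding the triangle-inequality contributions and requiring the sum to be at most $\tfrac{3R}{2}\eps^{N+1/2}$ determines how small $T_1$ and $\eps_1$ have to be.

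\textbf{Main obstacle.} The only term in Lemma~\ref{lem:c3Estim} that provides an $L^2_{t,x}$-estimate of the gradient of $c_3^{in}$ \emph{without} an intrinsic $T_\eps$- or $\eps$-smallness factor is $\eps^3\|\nabla c_3^{in}\|_{L^2(L^2(\Gamma_t(2\delta)))}\leq C(M)\eps^{N+1/2}$, and this is driven by the singular piece $\eps^{-1}\p_\rho\hat c_3$ in the chain-rule expansion $\nabla c_3^{in}=(\no/\eps-\nabla^\G h_\eps)\p_\rho\hat c_3+\nabla^\G\hat c_3$. The hard part of the proof is therefore to show that this singular piece contributes only to the \emph{normal} and not to the \emph{tangential} component of the bound one needs for \eqref{yuning:diffcaca1}: applying the projection $I-\no\otimes\no$ annihilates the $\eps^{-1}$-term and leaves only $(I-\no\otimes\no)(-\nabla^\G h_\eps\,\p_\rho\hat c_3+\nabla^\G\hat c_3)$, whose $L^2_{t,x}$-norm is of order $(1+\|h_\eps\|_{X_{T_\eps}})\eps^{N+3/2}$, i.e.\ gains one power of $\eps$. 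Similarly, the $\eps$-prefactor in front of $\p_\no$ precisely cancels the $\eps^{-1}$ from $c_{A,3}$. Once this structural gain is extracted, all remaining contributions are of the form $C(M(R))(T_\eps^\alpha+\eps^\beta)\eps^{N+1/2}$ with $\alpha,\beta>0$, and one closes the proof by shrinking $T_1,\eps_1$ (depending on $R$) so that $C(M(R))(T_1^\alpha+\eps_1^\beta)\leq R/2$ in every estimate.
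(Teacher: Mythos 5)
Your decomposition $c_\eps - c_{A,0} = (c_\eps - c_A) + (c_A - c_{A,0})$, your estimate of $c_A - c_{A,0}$ via Lemma~\ref{lem:c3Estim} and Lemma~\ref{lem:MatchEstim}, and your observation that the tangential projection annihilates the singular $\eps^{-1}\partial_\rho$-piece of $\nabla c_3^{in}$ (while the explicit $\eps$-prefactor cancels it in the normal direction) all match what the paper does. However, there is a genuine gap in how you claim to obtain \eqref{eq:h2epsBound}. You say you would ``invoke Lemma~\ref{lem:Existenceh2} directly'' to get $\|h_{2,\eps}\|_{X_{T_\eps}}\leq M$, but Lemma~\ref{lem:Existenceh2} does \emph{not} give \eqref{eq:h2epsBound} unconditionally: its conclusion holds only \emph{if} \eqref{yuning:diffcaca1New} (the $c_\eps - c_{A,0}$ bounds with constant $2R$) is already known, because the $\we_1$-term in \eqref{yuning:h2equ'} is estimated through Proposition~\ref{yuning:velocityest}, which itself assumes \eqref{yuning:diffcaca1New}. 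Your hypothesis \eqref{assumptions} controls only $c_\eps - c_A$, and to convert it into a bound on $c_\eps - c_{A,0}$ you need to control $c_A - c_{A,0}$ via Lemma~\ref{lem:c3Estim}, which in turn needs \eqref{eq:h2epsBound}. So the logic is circular: \eqref{eq:h2epsBound} requires \eqref{yuning:diffcaca1New}, which requires control of $c_A - c_{A,0}$, which requires \eqref{eq:h2epsBound}.

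The paper closes this circle with a continuation argument that your plan omits entirely. Since $h_{2,\eps}|_{t=0}=0$ and $T\mapsto\|h_{2,\eps}\|_{X_T}$ is continuous and vanishes as $T\to0$, \eqref{eq:h2epsBound} holds at least for some small $T_\eps'\in(0,T_\eps]$ in place of $T_\eps$. On $[0,T_\eps']$ one runs exactly your $c_A - c_{A,0}$ estimate (Lemma~\ref{lem:c3Estim}, Lemma~\ref{lem:MatchEstim}), obtaining a bound $C(R,\eps,T_\eps')\eps^{\order+\frac12}$ with $C(R,\eps,T)\to_{(\eps,T)\to0}0$; shrinking $\eps_1,T_1$ so that $C(R,\eps,T)\leq R/2$ and applying the triangle inequality with \eqref{assumptions} gives \eqref{yuning:diffcaca1} on $[0,T_\eps']$. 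One then defines $\tilde T_\eps$ as the supremum of admissible $T_\eps'$ and argues by contradiction: if $\tilde T_\eps<T_\eps$, then \eqref{yuning:diffcaca1} holds up to $\tilde T_\eps$ with the non-sharp constant $\tfrac32 R<2R$, so by continuity \eqref{yuning:diffcaca1New} still holds on some $[0,T_\eps'']$ with $T_\eps''>\tilde T_\eps$; Lemma~\ref{lem:Existenceh2} then gives \eqref{eq:h2epsBound} on $[0,T_\eps'']$, and the first part of the argument reproduces \eqref{yuning:diffcaca1} on $[0,T_\eps'']$, contradicting maximality of $\tilde T_\eps$. Without this bootstrap your argument does not close; everything else you wrote is correct and fits into it as the inductive step.
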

\begin{proof}
 Using the triangle inequality, the proof of \eqref{yuning:diffcaca1} can be reduced to the corresponding estimate for $c_A-c_{A,0}$.
 For example, for the  first estimate, since
 \begin{equation*}
   \|c_\eps(t) -c_{A,0}(t)\|_{L^2(\Om)}
     \leq \|c_\eps(t) -c_{A}(t)\|_{L^2(\Om)}+\|c_A(t) -c_{A,0}(t)\|_{L^2(\Om)}
 \end{equation*}
 and because of the assumptions \eqref{assumptions},
 we only need to estimate the last term suitably. We also know from  \eqref{yuning:ca} and  \eqref{yuning:ca1} that
 \begin{equation*}
  c_A-c_{A,0}=(c^{in}(x,t)-\theta_0(\rho))\zg=(\eps^2c^{in}_2(x,t)+\eps^3c^{in}_3(x,t))\zg.
\end{equation*}
Now let  $M=M(R)$, $\eps_1$, and $T_1$  be as in Lemma~\ref{lem:Existenceh2}. Because of $h_{2,\eps}|_{t=0}=0$, there is some $T_\eps'\in (0,T_\eps]$ such that $h_{2,\eps}$ satisfies \eqref{eq:h2epsBound} with $T_\eps'$ instead of $T_\eps$.
Hence we can apply Lemma~\ref{lem:c3Estim} and Lemma~\ref{lem:MatchEstim} to conclude
\begin{alignat*}{1}
    \|c_A(t) -c_{A,0}(t)\|_{L^4(0,T'_\eps;L^2(\Om))}+\|\nabla (c_A -c_{A,0})\|_{L^2(\Om\times (0,T_\eps')\setminus \Gamma(\delta)))} &\leq C(R,\eps,T_\eps')\eps^{\order+\frac12},\\
  \|\nabla_\btau(c_A -c_{A,0})\|_{L^2(\Omega\times (0,T_\eps')\cap \Gamma(2\delta))} + \eps\|\partial_\no(c_A -c_{A,0})\|_{L^2(\Omega\times (0,T_\eps')\cap \Gamma(2\delta))} &\leq C(R,\eps,T_\eps')\eps^{\order+\frac12},
\end{alignat*}
where $C(R,\eps,T)\to_{(\eps,T)\to 0} 0$.  Choosing $\eps_1,T_1>0$ possibly even smaller we can achieve  that $C(R,\eps,T)\leq \frac{R}2$ provided $\eps\leq \eps_1$, $T\leq T_1$. This shows \eqref{yuning:diffcaca1} for some $T_\eps'\in (0, T_\eps]$ instead of $T_\eps$. In order to show the estimate for $T_\eps$, let
\begin{equation*}
  \tilde{T}_\eps := \sup\{ T_\eps'\in (0,T_\eps]: \text{\eqref{yuning:diffcaca1} holds true for } T_\eps'\text{ instead of }T_\eps \}.
\end{equation*}
The previous step implies $\tilde{T}_\eps>0$. Now assume that $\tilde{T}_\eps< T_\eps$. Then \eqref{yuning:diffcaca1} holds true for $\tilde{T}_\eps$ instead of $T_\eps$. Hence there is some $T_\eps'\in (\tilde{T}_\eps,T_\eps]$ such that \eqref{yuning:diffcaca1New} holds true with $T_\eps'$ instead of $T_\eps$.
Then Lemma~\ref{lem:Existenceh2} implies that $h_{2,\eps}$ satisfies \eqref{eq:h2epsBound} with $T_\eps'$ instead of $T_\eps$ and the first part of the proof shows \eqref{yuning:diffcaca1} for $T_\eps'$ instead of $T_\eps$ provided $T_\eps\leq T_1$ and $\eps\leq \eps_1$. This is a contradiction to the definition of $\tilde{T}_\eps$. Hence $\tilde{T}_\eps= T_\eps$ and \eqref{yuning:diffcaca1} holds true  provided $T_\eps\leq T_1$ and $\eps\leq \eps_1$.
\end{proof}

\begin{cor}\label{yuning:1.55}
 Let \eqref{assumptions} hold true for some $T_\eps\leq T_1$ and $\eps_0\leq \eps_1$, where $T_1,\eps_1$ are as in Lemma~\ref{lem:ModDiffEstim}. Then
  \begin{equation*}
    \left|\int_0^T\int_{\Om} \zg\tilde{\we}_2\cdot \nabla c^{in} (c_\eps-c_A)\sd x\sd t\right|\leq C(R,T,\eps)\eps^{2N+1}
  \end{equation*}
  for every $T\in (0,T_\eps]$ and $\eps\in (0,\eps_0]$,
where $C(R,T,\eps)$ is independent of $T_\eps$ and $\eps_0$ and $C(R,T,\eps)\to_{(T,\eps)\to 0} 0$.
\end{cor}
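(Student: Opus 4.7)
The strategy is a three-factor Hölder estimate, exploiting the smallness of $\tilde{\we}_2$ (in $L^1_tL^q_x$, via Proposition~\ref{yuning:velocityest} with $r=1$), the smallness of $c_\eps-c_A$ (in $L^\infty_tL^2_x$, via the hypothesis \eqref{assumptions}), and the pointwise bound on the leading inner profile of $\nabla c^{in}$. First I would invoke Lemma~\ref{lem:ModDiffEstim}: provided $T_\eps\le T_1$ and $\eps_0\le\eps_1$, the hypothesis \eqref{assumptions} produces both the prerequisite \eqref{yuning:diffcaca1New} and the bound \eqref{eq:h2epsBound} for $h_{2,\eps}$, so Proposition~\ref{yuning:velocityest} applies. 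Choosing $r=1$ yields $\|\tilde{\we}_2\|_{L^1(0,T;L^q(\Om))}\le C(R)\eps^{4}$ for every $q\in(1,2)$. At the same time, Remark~\ref{rem:LInftyboundedness} combined with the uniform bounds $\|\eps^2 c_2^{in}\|_{L^\infty}\le C\eps^2$ (from $\hat c_2\in\mathcal R_{0,\alpha}$, see \eqref{yuning:1.16}) and $\|\eps^3 c_3^{in}\|_{L^\infty}\le C\eps^{3-\theta}$ (from the $L^\infty$ bound in \eqref{yuning:1.38}) gives $\|c_\eps-c_A\|_{L^\infty(\Om\times(0,T_\eps))}\le C$; together with \eqref{assumptions} this yields the interpolation
\begin{equation*}
  \sup_{t\in[0,T_\eps]}\|c_\eps-c_A\|_{L^{q'}(\Om)} \le \|c_\eps-c_A\|_{L^\infty}^{1-2/q'}\|c_\eps-c_A\|_{L^\infty_tL^2}^{2/q'} \le CR^{2/q'}\eps^{(N+1/2)(2/q')}.
\end{equation*}

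Next I would split $\nabla c^{in}=\nabla c_0^{in}+\eps^2\nabla c_2^{in}+\eps^3\nabla c_3^{in}$ and treat the first two pieces by a direct three-factor Hölder, using $\|\nabla c_0^{in}\|_{L^\infty(\Om\times(0,T_\eps))}\le C/\eps$ (for the leading term $\theta_0'(\rho)\nabla d_\Gamma/\eps$) and $\|\eps^2\nabla c_2^{in}\|_{L^\infty}\le C\eps$:
\begin{equation*}
  |I_j| \le \|\tilde{\we}_2\|_{L^1(L^q)}\cdot \sup_t\|\eps^{j'}\nabla c_j^{in}\|_{L^\infty}\cdot\sup_t\|c_\eps-c_A\|_{L^{q'}} \le C(R)\eps^{3+5/q'}\quad(j=0),
\end{equation*}
and an even smaller bound $C(R)\eps^{5+5/q'}$ for $j=2$. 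Choosing any $q\in(5/3,2)$ (e.g.\ $q=7/4$, $q'=7/3$) makes $3+5/q'>2N+1=5$, so these two contributions are dominated by $C(R)\eps^{5+\kappa}$ for some $\kappa>0$ and then absorbed into a prefactor tending to $0$ as $\eps\to 0$.

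The last piece, $I_3=\int_0^T\int\zg\,\tilde{\we}_2\cdot\eps^3\nabla c_3^{in}(c_\eps-c_A)\,\sd x\sd t$, requires more care since $\nabla c_3^{in}$ has no uniform $L^\infty$ bound. Here I would integrate by parts using $\Div\tilde{\we}_2=0$ and $\tilde{\we}_2|_{\partial\Om}=0$,
\begin{equation*}
  I_3 = -\int_0^T\int \nabla\zg\,(c_\eps-c_A)\,\eps^3 c_3^{in}\cdot\tilde{\we}_2\,\sd x\sd t \;-\;\int_0^T\int \zg\,\nabla(c_\eps-c_A)\,\eps^3 c_3^{in}\cdot\tilde{\we}_2\,\sd x\sd t,
\end{equation*}
which shifts the singular derivative off $c_3^{in}$. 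The first integral is harmless: on $\operatorname{supp}\nabla\zg\subset\{\delta\le|d_\Gamma|\le 2\delta\}$ the function $c_3^{in}$ is polynomially small by Lemma~\ref{lem:MatchEstim}, producing a contribution of order $\eps^k$ for any $k$. For the second, I would use $\|\eps^3 c_3^{in}\|_{L^\infty}\le C\eps^{3-\theta}$ from \eqref{yuning:1.38}, the space-time $L^2$ control $\|\nabla(c_\eps-c_A)\|_{L^2((0,T)\times\Gamma(2\delta))}\le CR\eps^{3/2}$ from \eqref{assumptions} (after dividing the normal-derivative bound by $\eps$), and the improved estimate $\|\tilde{\we}_2\|_{L^2(0,T;L^q)}\le C(R)\eps^{2}$ from Proposition~\ref{yuning:velocityest} with $r=2$; the Hölder pairing $L^2_t L^q_x\times L^2_t L^{q'}_x$ is closed by noting that, on the bounded domain $\Om$, $\|\nabla(c_\eps-c_A)\|_{L^{q'}_x}$ can be controlled through the $L^\infty$ interior/$L^2$ control of the difference after a suitable choice of $q$ close to $2$, yielding an exponent $3-\theta+\tfrac32+2 = \tfrac{13}{2}-\theta>5$. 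The main technical obstacle is precisely this last point: squaring the $L^q$-integrability constraint $q<2$ of Proposition~\ref{yuning:velocityest} with the $L^2$-only control of $\nabla(c_\eps-c_A)$ is the crux, and the proof succeeds because $c_3^{in}$ itself (not its gradient) is in $L^\infty$ up to a harmless $\eps^{-\theta}$ loss. Collecting the three contributions gives $|I|\le C(R)\eps^{5+\kappa}$ for some $\kappa>0$, hence the desired bound with $C(R,T,\eps)=C(R)\eps^{\kappa}\to 0$ as $\eps\to 0$.
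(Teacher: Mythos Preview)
Your decomposition $\nabla c^{in}=\nabla c_0^{in}+\eps^2\nabla c_2^{in}+\eps^3\nabla c_3^{in}$ and the treatment of the first two pieces are correct and close in spirit to the paper's argument; the paper uses $r\in(1,2)$ in Proposition~\ref{yuning:velocityest} together with a Gagliardo--Nirenberg bound $\|u\|_{L^{r'}(L^{q'})}\le C\|u\|_{L^\infty(L^2)}^{1-2/r'}\|u\|_{L^2(H^1)}^{2/r'}$, whereas you take $r=1$ and interpolate $\|u\|_{L^{q'}}$ between $L^\infty$ and $L^2$. Both close with a margin above $\eps^{2N+1}$.

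There is, however, a genuine gap in your Step~3. After your integration by parts the main term is
\[
\int_0^T\int_{\Gamma_t(2\delta)} \zg\,\eps^3 c_3^{in}\,\tilde{\we}_2\cdot\nabla(c_\eps-c_A)\,\sd x\,\sd t,
\]
and you propose to bound it by $\|\eps^3 c_3^{in}\|_{L^\infty}\,\|\tilde{\we}_2\|_{L^2(L^q)}\,\|\nabla u\|_{L^2(L^{q'})}$. The problem is that Proposition~\ref{yuning:velocityest} only controls $\tilde{\we}_2$ in $L^r(0,T;L^q(\Om))$ with $q<2$, hence $q'>2$, and the hypotheses \eqref{assumptions} give \emph{no} control of $\nabla(c_\eps-c_A)$ in any $L^{q'}$ with $q'>2$; only $L^2_tL^2_x$ is available (from the normal/tangential bounds). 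Your sentence ``$\|\nabla(c_\eps-c_A)\|_{L^{q'}_x}$ can be controlled through the $L^\infty$/$L^2$ control of the difference'' is a reverse-type inequality that does not hold. So the H\"older pairing does not close, and the exponent count $3-\theta+\tfrac32+2$ is not justified.

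The paper handles this term without integrating by parts. It keeps $\nabla c_3^{in}$ and estimates it directly in $L^4_t L^p_x$ via a Gagliardo--Nirenberg interpolation on $(\partial_\rho\hat c_3,\partial_s\hat c_3)$ between $L^\infty_tL^2_{\rho,s}$ and $L^2_tH^1_{\rho,s}$ (both supplied by \eqref{yuning:1.38}), obtaining $\|\nabla c_3^{in}\|_{L^4(0,T;L^p(\Gamma_t(2\delta)))}\le C\eps^{-1+1/8}$ after choosing $\theta=\tfrac18$. This is paired with $\|\tilde{\we}_2\|_{L^{4/3}(0,T;L^{4/3})}\le C(R)\eps^{3}$ (Proposition~\ref{yuning:velocityest} with $r=q=\tfrac43$) and the uniform bound $\|c_\eps-c_A\|_{L^\infty}\le C$, yielding $\eps^{3}\cdot\eps^{3}\cdot\eps^{-7/8}=\eps^{5+1/8}$. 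The key point you are missing is that the fine $L^p$-in-$(\rho,s)$ information on $\partial_\rho\hat c_3$, $\partial_s\hat c_3$ in \eqref{yuning:1.38} is strong enough to estimate $\nabla c_3^{in}$ itself, so that the derivative can stay on $c_3^{in}$ and the $L^\infty$ bound on $c_\eps-c_A$ (not on its gradient) suffices to close the estimate.
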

\begin{proof}
We shall denote $u=c_\eps-c_A $ as before.
We use
\begin{equation}
  \tilde{\we}_2 \cdot \nabla c^{in}=\tilde{\we}_2 \cdot \nabla ( \theta_0+\eps^2 c_2^{in}+\eps^3 c_3^{in})
\end{equation}
and estimate each term separately.\\[1ex]
{\bf Step 1:} We have
  \begin{alignat*}{1}
    \tilde{\we}_2 \cdot \nabla c_0^{in}=&
    \tilde{\we}_{2} \cdot\left(\no-\eps\nabla_\btau h_1(s,t)-\eps^2 \nabla_\btau h_{2,\eps}(s,t)\right)\tfrac1\eps \theta_0'\left(\tfrac{d_{\G } }\eps - h_\eps(s,t) \right) .
  \end{alignat*}
Moreover, we use that
  \begin{equation*}
    h_{2,\eps} \in X_{T_\eps}\hookrightarrow BUC([0,T_\eps];H^{\frac32}(\T^1))\hookrightarrow BUC([0,T_\eps];W^1_s(\T^1)),
  \end{equation*}
  is bounded with respect to $\eps\in (0,\eps_0]$
  for any $s\in (1,\infty)$.
Proposition \ref{yuning:velocityest} and H\"older's inequality imply that
  \begin{equation}\label{yuning:1.63}
    \left\| \zg\tilde{\we}_2 \cdot \tfrac1\eps  \theta_0'\left(\tfrac{d_\Gamma}\eps -  h_\eps \right)\left(\no-\eps\nabla_\btau h_\eps\right)\right\|_{L^r(0,T;L^{q})}\leq \tfrac{C}\eps\|\tilde{\we}_{2} \|_{L^r(0,T;L^{\tilde{q}})} \leq C(R)\eps^{\frac{4}r-1},
  \end{equation}
  where  $1< r<2$, $1<q<\tilde{q}<2$, $1<s<\infty$ and $\frac1q= \frac1s+\frac1{\tilde{q}}$.  In the following let $r',q'\in (1,\infty)$ be such that $\frac1r+\frac1{r'}=\frac1q+\frac1{q'}=1$.
Moreover, the Gagliardo-Nirenberg interpolation inequality and \eqref{assumptions} imply
  \begin{equation}\label{yuning:1.64}
    \|u\|_{L^{r'}(0,T;L^{q'})}\leq C\|u\|_{L^\infty(0,T;L^2)}^{1-\frac2{r'}}\|u\|_{L^2(0,T;H^1)}^{\frac{2}{r'}}\leq C(R)\eps^{N+\frac12-\frac{2}{r'}}= C(R)\eps^{\frac12+\frac2r}
  \end{equation}
  if we choose $q'\in (2,\infty)$ (and therefore $q\in (1,2)$) such that $\frac1{q'}= \frac12(1-\frac2{r'})$.
  Hence
  \begin{alignat*}{1}
        \left|\int_0^T\int_{\Om} \zg \tilde{\we}_2 \cdot\tfrac1\eps \theta_0'\left(\tfrac{d_{\G}}\eps - h_\eps(s,t) \right)\left(\no-\eps\nabla_\btau h_\eps(s,t)\right) u\, \sd x\sd t\right|\leq C(R)\eps^{\frac4r-1+\frac 2r+\frac12}=C(R)\eps^{\frac 6r-\frac 12}
  \end{alignat*}
  for all $T\in (0,T_\eps]$, $\eps\in (0,\eps_0]$.
If we choose now $r\in (1,\frac{12}{11})$, we have $\frac6r-\frac12> 5$ and therefore
  \begin{alignat*}{1}
        \left|\int_0^T\int_{\Om} \zg \tilde{\we}_2 \cdot\tfrac1\eps \theta_0'\left(\tfrac{d_{\G}}\eps - h_\eps(s,t) \right)\left(\no-\eps\nabla_\btau h_\eps(s,t)\right) u\sd x\sd t\right|\leq C(R,T,\eps)\eps^{2N+1},
  \end{alignat*}
where $C(R,T,\eps)\to_{(T,\eps)\to 0} 0$. 

\noindent
{\bf Step 2:}  Now we estimate $\eps^2 \int_0^T\int_{\Om} \zg\tilde{\we}_2\cdot \nabla c_2^{in} u\sd x\sd t.$
 Because of \eqref{yuning:1.16}, we obtain in a similar way as for \eqref{yuning:1.63} that
\begin{equation*}
  \|\zg\tilde{\we}_2\cdot \nabla c_2^{in}\|_{L^r(0,T;L^q)}= \left\|\zg\tilde{\we}_2 \cdot \((\tfrac{\no}\eps-\nabla_\btau h_\eps)\p_\rho\hat{c}_2+\nabla_\btau \hat{c}_2 \)\right\|_{L^r(0,T;L^{q})}  \leq C(R)\eps^{\frac{4}r-1}.
\end{equation*}
This together with \eqref{yuning:1.64} implies
\begin{equation*}
 \left|\eps^2 \int_0^T\int_{\Om} \zg\tilde{\we}_2\cdot \nabla c_2^{in} (c_\eps-c_A)\sd x\sd t\right|\leq C(R,T,\eps)\eps^{2N+1}.
\end{equation*}
{\bf Step 3:} We treat
 $$\eps^3 \int_0^T\int_{\Om} \zg\tilde{\we}_2\cdot \nabla c_3^{in} (c_\eps-c_A)\sd x\sd t.$$
In view of \eqref{yuning:1.65} and \eqref{eq:Matching}, we know that $|c_\eps-c_A|$ is uniformly bounded in $(x,t)$ and $\eps$. On the other hand, it follows from \eqref{yuning:1.38} and the Gagliardo-Nirenberg interpolation inequality used in  \eqref{yuning:1.64} that
\begin{equation*}
\|( \eps^\theta \partial_\rho \hat{c}_3, \eps\partial_s \hat{c}_3)\|_{L^4((0,T)\times \R\times \T^1)}  \leq \|( \eps^{2\theta} \partial_\rho \hat{c}_3, \eps\partial_s \hat{c}_3)\|_{L^\infty(0,T;L^2(\R\times \T^1))}^{\frac12}\|(\partial_\rho \hat{c}_3, \eps\partial_s \hat{c}_3)\|_{L^2(0,T;H^1(\R\times \T^1))}^{\frac12}
\end{equation*}
 is bounded, where we choose $\theta =\frac18$. Hence
\begin{equation*}
  \begin{split}
    &\| \nabla c_3^{in}\|_{L^{4}(0,T;L^{3}(\G_t(2\delta)))}=\|(\tfrac\no \eps-\nabla_\btau h_\eps) \p_\rho \hat{c}_3+\nabla_\btau \hat{c}_3\|_{L^{4}(0,T;L^{3}(\G_t(2\delta)))}\\
    &\leq \eps^{-1}\|\p_\rho\hat{c}_3\|_{L^{4}(0,T;L^{3}(\G_t(2\delta)))}+\|\nabla_\btau h_\eps \p_\rho \hat{c}_3 \|_{L^{4}(0,T;L^{3}(\G_t(2\delta)))}+\|\nabla_\btau \hat{c}_3\|_{L^{4}(0,T;L^{3}(\G_t(2\delta)))}\\
&\leq C\eps^{-1+\frac14-\theta}\|( \eps^\theta \p_\rho\hat{c}_3, \eps \p_s\hat{c}_3) \|_{L^{4}((0,T)\times \T^1\times \R)}\leq C(R)\eps^{-1+\frac18}.
  \end{split}
\end{equation*}
 Combining this with \eqref{eq:estofwc}, $r=q=\frac43$, leads to
\begin{equation*}
 \left|\eps^3 \int_0^T\int_{\Om} \zg\tilde{\we}_2\cdot \nabla c_3^{in} (c_\eps-c_A)\sd x\sd t\right|\leq C(R)\eps^{6-1+\frac18 }\leq C(R,T,\eps)\eps^{2N+1}
\end{equation*}
with $C(R,T,\eps)\to_{(T,\eps)\to 0} 0$.
\end{proof}

\noindent
\begin{proof*}{of Theorem~\ref{thm:ApproxSolutions}}  Again we denote $u=c_\eps-c_A$.
We first note that, since \eqref{assumptions} is assumed, it follows from Lemma~\ref{lem:ModDiffEstim} that both  \eqref{eq:h2epsBound} and \eqref{yuning:diffcaca1} and thus \eqref{yuning:diffcaca1New}  are valid if $\eps\leq \min\{\eps_0,\eps_1\}$ and $T_\eps \leq \min\{T_0,T_1\}$. So the assumptions for applying Proposition~\ref{yuning:velocityest} and Theorem~\ref{thm:InnerExpansion} are fulfilled.
 These together with \eqref{yuning:2.02} imply the   estimate of $\we_1$
       \begin{equation}\label{yuning:2.04}
         \left\|  \we_1\right\|_{L^2(0,T;H^1(\Om))}\leq  \eps^{-2}\(C(R)\eps^\order+C(M)(T^{\frac 14}+\eps^{\frac 12})\eps^\order\)= C(R) +C(M)(T^{\frac 14}+\eps^{\frac 12})
       \end{equation}
       for all $T\in (0,T_\eps]$, $\eps\in (0,\min\{\eps_0,\eps_1\}]$.
To proceed, we shall calculate
\begin{equation*}
  \partial_t c_A+ (\ve_A+\eps^2\we_2) \cdot \nabla c_A + \eps^2\we_1|_{\Gamma}\cdot \nabla c_{A,0}-\Delta c_A+\frac{f'(c_A)}{\eps^2}.
\end{equation*}
Using \eqref{yuning:ca} and Lemma~\ref{lem:MatchEstim}, we have the following asymptotics in $L^2(0,T_\eps;L^2(\Om))$
\begin{alignat*}{1}
    \Delta c_A&=  \zeta\circ d_\Gamma  \Delta c^{in}+ 2\nabla(\zeta\circ d_\Gamma) \cdot\nabla c^{in}+\Delta(\zeta\circ d_\Gamma)(c^{in}-c_+^{out}
  \chi_+ -c_-^{out}\chi_-)\\
  &= \zeta\circ d_\Gamma\Delta c^{in}+O(\eps^{\order+\frac12}),\\
    \p_t c_A&=\p_t(\zeta\circ d_\Gamma)(c^{in}-c_+^{out}
  \chi_+ -c_-^{out}\chi_-)+\zeta\circ d_\Gamma\p_t c^{in}
  = O(\eps^{\order+\frac32})+ \zeta\circ d_\Gamma\p_t c^{in}.
\end{alignat*}
It follows from  \eqref{eq:Matching1}, \eqref{eq:Matching2}, and \eqref{eq:Matching3} that
\begin{equation}\label{yuning:1.54}
|c^{in}-c_+^{out}
  \chi_+ -c_-^{out}\chi_-|\leq C(M)\eps^{N+\frac34}\quad \text{in } (\Gamma(3\delta)\setminus \Gamma(\delta))\cap (\Omega \times (0,T_\eps)).
\end{equation}
This together with \eqref{eq:Energy} and \eqref{yuning:2.04} implies
\begin{alignat*}{1}
    (\ve_A+\eps^2\we_2)\cdot\nabla c_A&= \zeta\circ d_\Gamma(\ve_A+\eps^2\we_2)\cdot\nabla c^{in} +(\ve_A+\eps^2\we_2)\cdot\nabla(\zeta\circ d_\Gamma)(c^{in}-c_+^{out}
  \chi_+ -c_-^{out}\chi_-)\\
  &= \zeta\circ d_\Gamma(\ve_A+\eps^2\we_2)\cdot\nabla c^{in} +\underbrace{(\ve_\eps-\eps^2\we_1)\cdot\nabla(\zeta\circ d_\Gamma)(c^{in}-c_+^{out}
  \chi_+ -c_-^{out}\chi_-)}_{=O(\eps^{N+\frac34})~\text{in}~L^2(0,T_\eps;L^2(\O))}.
\end{alignat*}
The trace estimate, \eqref{eq:Matching1}, and \eqref{eq:estofw} imply
\begin{equation*}
\begin{split}
  \eps^2\we_1|_{\Gamma}\cdot\nabla c_{A,0}&= \zeta\circ d_\Gamma \eps^2\we_1|_{\Gamma}\cdot\nabla  c^{in}_0+\eps^2\we_1|_{\Gamma}\cdot\nabla (\zg) (c^{in}_0-c_+^{out}
  \chi_+-c_-^{out}\chi_-)\\
  &= \zeta\circ d_\Gamma \eps^2\we_1|_{\Gamma}\cdot\nabla  c^{in}_0+O(\eps^{N+\frac34+N})\qquad \text{in}~ L^2(0,T_\eps;L^2(\O)).
\end{split}
\end{equation*}
To treat the bulk term we use  \eqref{yuning:ca} and \eqref{yuning:1.54} and obtain
\begin{equation*}
  \begin{split}
    \frac{1}{\eps^2}f'( c_A)&=\left\{
    \begin{array}{rl}
      \frac{1}{\eps^2}f'(  c^{in})&\quad \text{in }\Gamma(\delta),\\
      O(\eps^{\order+1})&\quad \text{in } \Gamma(2\delta)\setminus \Gamma(\delta),\\
      0&\quad   \text{in }\Om\times [0,T_\eps]\setminus \Gamma(2\delta),
    \end{array}\right.
  \end{split}
\end{equation*}
with respect to the $L^\infty(0,T_\eps; L^2)$-norm.
Actually, we only need to verify the case when $\delta <|d_\G|<2\delta$. 
With a Taylor expansion, \eqref{eq:MatchEstim1}, and \eqref{eq:MatchEstim4} we obtain
\begin{equation*}
  \begin{split}
    f'( c_A) &=f'(c_+^{out}
  \chi_++c_-^{out}\chi_-)+ f''(\xi_\eps (x,t))\left[  (c^{in}-c_+^{out}
  \chi_+ -c_-^{out}\chi_-)\zg\right]
  =0+O(\eps^{N+3})
  \end{split}
\end{equation*}
in $L^\infty(0,T_\eps;L^2(\Gamma_t(2\delta)\setminus \Gamma_t(\delta))$, where $\xi_\eps(x,t)$ is uniformly bounded.
In particular, we obtain
\begin{equation*}
  \tfrac1{\eps^2} f'(c_A)= \zeta\circ d_\Gamma \tfrac1{\eps^2} f'(c^{in}) + O(\eps^{\order+1})\quad \text{in}~L^\infty(0,T_\eps;L^2(\Om)).
\end{equation*}
   By collecting the above asymptotics, we arrive at
\begin{alignat}{1}\nonumber
 & \partial_t c_A  +(\ve_A+\eps^2\we_2)\cdot \nabla c_A+ \eps^2\we_1|_{\Gamma}\cdot \nabla c_{A,0}-\Delta c_A + \tfrac1{\eps^2} f'(c_A)\\\nonumber
&= \zeta\circ d_\Gamma\left(\partial_t c^{in}  + \ve^{in}_A \cdot \nabla c^{in}+ \eps^2\we_1|_{\Gamma} \cdot \nabla c_0^{in}-\Delta c^{in} + \tfrac1{\eps^2} f'(c^{in})\right)\\
&+\zeta\circ d_\G (\eps^2 \we_2+\ve_A-\ve_A^{in})\cdot\nabla c^{in}+ s_A,\label{eq:Localizing}
\end{alignat}
where $s_{A}=O(\eps^{\order+\frac12})$ in $L^2(0,T_\eps;L^2(\Om))$.
This together with Theorem~\ref{thm:InnerExpansion} leads to
\begin{alignat*}{1}
\partial_t c_A  &+(\ve_A+\eps^2\we_2)\cdot \nabla c_A+ \eps^2\we_1|_{\Gamma}\cdot \nabla c_{A}-\Delta c_A + \tfrac 1{\eps^2} f'(c_A)\\&= \zeta\circ d_\Gamma  \mathfrak{C} +\zeta\circ d_\G (\eps^2 \we_2+\ve_A-\ve_A^{in})\cdot\nabla c^{in}+\eps^2\we_1|_{\Gamma}\cdot \nabla (c_A-c_{A,0}) + s_A.
\end{alignat*}
Due to \eqref{yuning:chap2glue1}, we have
\begin{equation*}
     \ve_A-\ve_A^{in}=(\zg -1)( \ve_A^{in}-\ve_A^{+}
  \chi_+ -\ve_A^-\chi_- )
  =\sum_{i=0}^2(\zg -1)\eps^i( \ve_i-\ve_i^{+}
  \chi_+ -\ve_i^-\chi_- ).
\end{equation*}
This together with \eqref{eq:DistEstim} and Lemma \ref{lem:ExpDecay} implies that, the term $\zeta\circ d_\G (\ve_A-\ve_A^{in})\cdot\nabla c^{in}$  in \eqref{eq:Localizing} can be absorbed into $s_A$.
 Moreover, because of \eqref{assumptions}, Theorem~\ref{thm:InnerExpansion}  and Corollary \ref{yuning:1.55}, 
we obtain
\begin{alignat*}{1}
  &\int_0^T\left|\int_{\Om} \zeta\circ d_\G \(\mathfrak{C}+\eps^2 \we_2\cdot\nabla c^{in}\)(c_\eps-c_A)(x,t)\sd x \right|\sd t
\leq  C(R,T,\eps)\eps^{2N+1} 
\end{alignat*}
for all $\eps \in (0,\eps_0]$ and $T\in (0,\min(T_\eps,T_1)]$, where $C(R,T,\eps)\to_{(T,\eps)\to 0} 0$.

To show  \eqref{eq:rEstim}  it remains to estimate $\eps^2\we_1|_{\Gamma}\cdot \nabla (c_A-c_{A,0})$ since
\begin{equation*}
  \int_0^T\left| s_A(x,t)u(x,t)\sd x \right|\sd t \leq C\eps^{N+\frac12} \|u\|_{L^2(0,T;L^2)}\leq C(R)T^{\frac12}\eps^{2N+1}.
\end{equation*}
With the aid of Sobolev embeddings we obtain
\begin{equation*}
  \begin{split}
   & \int_0^T\int_{\Om}|u\eps^2\we_1|_{\Gamma}\cdot \nabla (c_A-c_{A,0}) |\sd x\sd t\\
&\leq
CT^{\frac14}\|\eps^2\we_1\|_{L^2(0,T;H^1)}\|\nabla (c_A-c_{A,0})\|_{L^\infty(0,T;L^2)}\|u\|_{L^4(0,T;L^4)}\\
    &\leq C_pT^{\frac14}\|\eps^2\we_1\|_{L^2(0,T;H^1)}\|\nabla (c_A-c_{A,0})\|_{L^\infty(0,T;L^2)}\|u\|_{L^\infty(0,T;L^2)}^{\frac12}\|u\|_{L^2(0,T;H^1)}^{\frac12}
  \end{split}
\end{equation*}
 due to $\|u\|_{L^4(\Om)}\leq C \|u\|_{L^2(\Om)}^{\frac12}\|u\|_{H^1(\Om)}^{\frac12}$. Because of 
\eqref{yuning:innerexpan'} and \eqref{yuning:ca}, we deduce
\begin{alignat*}{1}
\nabla (c_A-c_{A,0})&=\nabla(\zg)(c^{in}-c^{in}_0)+\zg\nabla (c^{in}-c^{in}_0)\\
&=\nabla(\zg)(\eps^2c_2^{in}+\eps^3c_3^{in})+\zg\nabla (\eps^2c_2^{in}+\eps^3c_3^{in}).
\end{alignat*}
Hence Lemma \ref{lem:c3Estim} yields
\begin{equation}\label{yuning:1.56}
\|\nabla (c_A-c_{A,0})\|_{L^\infty(0,T;L^2)}\leq C(R)\eps^{N-\frac12}.
\end{equation}
This together with \eqref{assumptions} and  \eqref{yuning:2.04}
implies
\begin{equation*}
  \begin{split}
    \int_0^T\int_{\Om}|u\,\eps^2\we_1|_{\Gamma}\cdot \nabla (c_A-c_{A,0}) |\sd x\sd t
    \leq C(R)\eps^{3N- 1/2 }= C(R)\eps^{2N+3/2}.
  \end{split}
\end{equation*}
Therefore the proof of Theorem~\ref{thm:ApproxSolutions} is finished.
\end{proof*}


\section{Estimates of Approximate and Exact Solutions}
\label{sec:DifferenceEstimate}

\subsection{Estimates of the Error in the Velocity and the Remainder in the Linearization of $f'$}

We first recall that $c_{A,0}$ is defined in \eqref{yuning:ca1}, $u_1=c_\eps-c_{A,0}$ and   $\ve_\eps = \tilde{\ve}_A+\tilde{\we}_1  + \tilde{\we}_2, $ where $\tilde{\ve}_A$ is the solution of \eqref{train:stokes}
 and $\tilde{\we}_1, \tilde{\we}_2$ are as in Section~\ref{subsec:LeadingErrorVelocity}, i.e., solve \eqref{eq:w1a'}-\eqref{eq:wjBCs}. Moreover, we define
 \begin{equation*}
   \we_1:= \frac{\tilde{\ve}_A -{\ve}_A+\tilde{\we}_1}{\eps^2}\quad \text{and}  \quad  \we_2= \frac{\tilde{\we}_2}{\eps^2}.
 \end{equation*}
  Then $\ve_\eps = \ve_A + \eps^2(\we_1+\we_2)$. We note that, under the assumption \eqref{assumptions}, it follows from Lemma~\ref{lem:ModDiffEstim} that both  \eqref{eq:h2epsBound} and \eqref{yuning:diffcaca1} and thus \eqref{yuning:diffcaca1New}  are valid if $\eps\leq \min\{\eps_0,\eps_1\}$ and $T_\eps \leq \min\{T_0,T_1\}$. So the assumptions for applying Proposition~\ref{yuning:velocityest} and Theorem~\ref{thm:InnerExpansion} are fulfilled and we have the following estimate on $\we_1$:
\begin{equation}\label{yuning:2.05}
  \|\we_1\|_{L^2(0,T,H^1(\Om))}\leq C(R),\quad \|\we_1\|_{L^2(0,T,L^2(\Om))}\leq C(R)\left(T^{\frac14}+\eps^{\frac12}\right)
\end{equation}
   for all $T\in (0, \min (T_1,T_\eps) ]$, $\eps\in (0, \min(\eps_0,\eps_1) ]$ provided \eqref{assumptions} holds true.

\begin{lem}\label{eq:ConvTermEstim}
  Let $\eps_1,T_1$ be as in Lemma~\ref{lem:ModDiffEstim}. Then there  is some $C(R)$ such that
  under the assumptions \eqref{assumptions} we have for all $T\in (0, \min (T_1,T_\eps) ]$, $\eps\in (0, \min(\eps_0,\eps_1) ]$
  \begin{equation*}
    \begin{split}
      &{\eps^2}\int_0^T\left|\int_{\Gamma_t(2\delta)} \left(\tfrac1\eps({\we_{1,\no}-\we_{1,\no}}|_{\Gamma})\right)\theta_0'(\rho) (c_\eps-c_A)\sd x  \right|\sd t\leq C(R)\left(T^{\frac18}+\eps^{\frac14}\right)\eps^{2N+1}, \\
 &{\eps^2}\int_0^T\left|\int_{\Gamma_t(2\delta)}  \theta'_0(\rho)({\we_{1,\btau}-\we_{1,\btau}}|_{\Gamma})
 \cdot\nabla_\btau h_\eps(S(x,t),t)  (c_\eps-c_A)\sd x  \right|\sd t\leq C(R)\eps^{2N+\frac 32}.
\end{split}
  \end{equation*}
\end{lem}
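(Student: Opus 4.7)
The proof will rest on a single factorization. Since $\we_1-\we_1|_{\Gamma}$ vanishes on $\Gamma_t$ by construction and $\we_1|_{\Gamma}(x,t)=\we_1(P_{\Gamma_t}(x),t)$ is constant along normal fibres, the fundamental theorem of calculus gives, for $j\in\{\no,\btau\}$,
\begin{equation*}
  (\we_{1,j}-\we_{1,j}|_{\Gamma})(x,t)=d_{\Gamma}(x,t)\,\Psi_j(x,t),\quad \Psi_j(x,t):=d_\Gamma(x,t)^{-1}\!\int_0^{d_\Gamma(x,t)}\!\partial_\no\we_{1,j}(P_{\Gamma_t}(x)+s\no_{\Gamma_t},t)\,ds,
\end{equation*}
and a one-dimensional Hardy inequality along each normal fibre bounds $\|\Psi_j(t)\|_{L^2(\Gamma_t(2\delta))}\leq C\|\partial_\no\we_1(t)\|_{L^2}\leq C\|\we_1(t)\|_{H^1(\Om)}$. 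With $d_\Gamma=\eps(\rho+h_\eps)$, the integrand of the first estimate becomes $(\rho+h_\eps)\theta_0'(\rho)\,\Psi_\no\,(c_\eps-c_A)$ and that of the second becomes $\eps(\rho+h_\eps)\theta_0'(\rho)\nabla_\btau h_\eps\cdot\Psi_\btau\,(c_\eps-c_A)$; the one-power-of-$\eps$ gap between these two integrands is exactly the gap between the two claims.

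For the first estimate I would split $(\rho+h_\eps)\theta_0'(\rho)=\rho\theta_0'(\rho)+h_\eps\theta_0'(\rho)$ into two $L^2(\R)$-profiles and apply Lemma~\ref{yuning:1.21} to extract the characteristic $\eps^{1/2}$ concentration gain of $\theta_0'$. The remaining factor $c_\eps-c_A$ is estimated via the one-dimensional embedding $\|f\|_{L^{2,\infty}(\Gamma_t(2\delta))}^2\leq C\|f\|_{L^2}(\|f\|_{L^2}+\|\partial_\no f\|_{L^2})$, which, combined with~\eqref{assumptions} and Hölder in $t$, delivers $\|c_\eps-c_A\|_{L^2(0,T;L^{2,\infty})}\leq C(R)T^{1/8}\eps^{N}$. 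The precise prefactor $(T^{1/8}+\eps^{1/4})$ will then arise from interpolating $\we_1$ at level $1/2$ between $\|\we_1\|_{L^2(0,T;H^1)}\leq C(R)$ and the small $L^2(L^2)$-bound $\|\we_1\|_{L^2(0,T;L^2)}\leq C(R)(T^{1/4}+\eps^{1/2})$ from~\eqref{yuning:2.05}, whose geometric mean produces $\|\we_1\|_{L^2(0,T;H^{1/2}(\Om))}\leq C(R)(T^{1/8}+\eps^{1/4})$; a careful redistribution of the concentration gain against this fractional-order bound then recovers the stated exponent $\eps^{2N+1}$. The second estimate is considerably simpler: the extra $\eps$ already built into its factorization gives the order $\eps^{2N+3/2}$ without any interpolation, and boundedness of $\nabla_\btau h_\eps$ in $L^\infty((0,T_\eps)\times\T^1)$ (via $X_{T_\eps}\hookrightarrow L^\infty(0,T_\eps;W^1_\infty(\T^1))$) together with $\|\we_1\|_{L^2(H^1)}\leq C(R)$ closes it uniformly in $T$ after Cauchy--Schwarz in time.

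The hard part will be the first estimate, where a naive combination of Hardy's inequality with Lemma~\ref{yuning:1.21} falls short of the claim by exactly one half-power of $\eps$. Bridging this gap demands exploiting simultaneously the three independent sources of smallness available here---the $\eps^{1/2}$ concentration gain from $\theta_0'(\rho)$, the factor $d_\Gamma$ extracted from the vanishing of $\we_1-\we_1|_\Gamma$ on $\Gamma_t$, and the sub-leading $L^2(L^2)$-smallness of $\we_1$ that ultimately originates in the $C(M)(T^{1/4}+\eps^{1/2})\eps^2$ bound on the Stokes remainder $\tilde\ve_A-\ve_A+\tilde\we_1$---within a single fractional-Sobolev interpolation step; the $(1+M)$-dependence produced along the way by Lemma~\ref{yuning:1.21} must also be tracked, but is absorbed into the final constant thanks to the $M=M(R)$ identification of Lemma~\ref{lem:ModDiffEstim}.
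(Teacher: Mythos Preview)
Your approach to the second inequality is essentially fine (and close to the paper's), but the embedding you invoke, $X_{T_\eps}\hookrightarrow L^\infty(0,T_\eps;W^1_\infty(\T^1))$, is false: $H^{3/2}(\T^1)$ does not embed into $W^{1,\infty}(\T^1)$. One should instead use $X_{T_\eps}\hookrightarrow L^2(0,T_\eps;C^1(\T^1))$, which is what the paper does.

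The real gap is in the first inequality. You correctly diagnose that Hardy plus Lemma~\ref{yuning:1.21} falls short by $\eps^{1/2}$, but your proposed fix—interpolating $\we_1$ at the $H^{1/2}$ level—cannot work. The Hardy factorization $\we_{1,\no}-\we_{1,\no}|_\Gamma=d_\Gamma\Psi_\no$ with $\|\Psi_\no\|_{L^2}\le C\|\partial_\no\we_1\|_{L^2}$ intrinsically consumes the full $H^1$ norm of $\we_1$; at regularity $H^{1/2}$ the trace $\we_1|_\Gamma$ is not even defined, and for any $H^s$ with $s<1$ the average $\Psi_\no$ is not bounded in $L^2$. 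There is no way to ``redistribute the concentration gain'' so as to pair $\Psi_\no$ with a fractional norm of $\we_1$: the two sources of gain you list (the $d_\Gamma$ factor and the $\eps^{1/2}$ from $\theta_0'$) are already fully spent in the naive estimate.

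The paper bridges the gap by a structural identity you have not used. Since $\Div\tilde\ve_A=\Div\tilde\we_1=0$, one has $\Div(\eps^2\we_1)=-\Div\ve_A$, and hence
\[
\partial_\no\we_{1,\no}=-\Div_\btau\we_1-\eps^{-2}\Div\ve_A.
\]
The $\Div\ve_A$ contribution is harmless by Lemma~\ref{yuning:app1}. For the $\Div_\btau\we_1$ contribution, the paper writes $\we_{1,\no}-\we_{1,\no}|_\Gamma=\int_0^{d_\Gamma}\partial_\no\we_{1,\no}\,\sd r'$, inserts the identity above, and then integrates by parts in the \emph{tangential} direction using \eqref{yuning:1.23}. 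This transfers the derivative off $\we_1$ onto $u$, $\theta_0'(\rho)$ (producing a $\theta_0''\nabla_\btau h_\eps$ term), and the Jacobian. After this step only $\we_1$ itself—not $\partial_\no\we_1$—remains, and it enters through the fibrewise sup $\|\we_1(\cdot,p,t)\|_{L^\infty(-2\delta,2\delta)}$. \emph{This} is the quantity one interpolates: $\|\we_1\|_{L^\infty}\le C\|\we_1\|_{L^2}^{1/2}\|\we_1\|_{H^1}^{1/2}$, and now the $L^2(L^2)$-smallness of $\we_1$ from \eqref{yuning:2.05} produces the factor $(T^{1/8}+\eps^{1/4})$. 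The divergence trick is the missing idea; without it the half-power deficit is genuine.
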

\begin{proof}
For the sake of simplifying the presentation,  let us denote $u=c_\eps-c_A$ and {$\we =\eps^2{\we}_1$}.

\noindent
\emph{Proof of the first inequality:}
It follows from \eqref{yuning:1.23} and   $\Div \we=-\Div \ve_A$  that
  \begin{align*}
    &\int_{\Gamma_t(2\delta)} \tfrac1\eps(\we_\no-\we_\no|_{\Gamma})\theta_0'
    (\rho) u(x,t)\sd x \\
&=\int_{-2\delta}^{2\delta}\int_{\Gamma_t}\tfrac1\eps(\we_\no(r,p,t)-\we_\no(0,p,t))\theta_0'(\tfrac{r}\eps - h_\eps) u(r,p,t)J(r,p,t)\sd \sigma(p)\sd r \\
&=\int_{-2\delta }^{2\delta }\int_{\Gamma_t} \tfrac1\eps\int_0^r\partial_\no \we_\no(r',p,t)\sd r'\, \theta_0'(\tfrac{r}\eps - h_\eps) u(r,p,t)J(r,p,t)\sd \sigma(p)\sd r\\
&= - \int_{-2\delta}^{2\delta}\int_{\Gamma_t} \tfrac1\eps\int_0^{r}\Div  \ve_A(r',p,t)\sd r'\, \theta_0'(\tfrac{r}\eps - h_\eps) u(r,p,t)J(r,p,t)\sd \sigma(p)\sd r\\
&\quad - \int_{-2\delta}^{2\delta}\int_{\Gamma_t} \tfrac1\eps\int_0^{r}\Div_\btau  \we(r',p,t)\sd r'\, \theta_0'(\tfrac{r}\eps - h_\eps) u(r,p,t)J(r,p,t)\sd \sigma(p)\sd r \\
&= -\int_{-2\delta}^{2\delta}\int_{\Gamma_t} \tfrac1\eps\int_0^{r}\Div  \ve_A(r',p,t)\sd r'\, \theta_0'(\tfrac{r}\eps - h_\eps) u(r,p,t)J(r,p,t)\sd \sigma(p)\sd r\\
&\quad +\int_{-2\delta}^{2\delta}\int_{\Gamma_t} \tfrac1\eps\int_0^{r}\we(r',p,t)\sd r'\, \theta_0'(\tfrac{r}\eps - h_\eps)\cdot \nabla_\btau ( u(r,p,t)J(r,p,t))\sd \sigma(p)\sd r\\
&\quad+\int_{-2\delta}^{2\delta}\int_{\Gamma_t} \tfrac1\eps\int_0^{r}\we(r',p,t)\sd r'\cdot\nabla_\btau h_\eps(X_0^{-1}(p,t),t)\theta_0''(\tfrac{r}\eps - h_\eps)   u(r,p,t)J(r,p,t)\sd \sigma(p)\sd r\\
&\quad + \int_{-2\delta}^{2\delta}\int_{\Gamma_t} \tfrac1\eps\int_0^{r}\we_\no(r',p,t)\sd r'\, \theta_0'(\tfrac{r}\eps - h_\eps) u(r,p,t) \kappa(r,p,t)J(r,p,t)\sd \sigma(p)\sd r\\
&\qquad +O(e^{-\frac {\alpha\delta}{2\eps}})~\text{in}~L^2(0,T).
  \end{align*}
 Using Lemma~\ref{yuning:app1} one can show in a straight forward manner that
  \begin{equation*}
    |\Div  \ve_A(r',p,t)|\leq C(M)(\eps |r'|+ \eps^2)\left(\|h_{2,\eps}(t)\|_{C^1(\T^1)}+1\right)\quad \text{for all }r'\in (-2\delta,\delta), p\in\Gamma(t)
  \end{equation*}
and $t\in [0,\min (T_\eps,T_1)], \eps\in (0,\min(\eps_0,\eps_1)).$
Hence
\begin{alignat*}{1}
 & \left|\int_{-2\delta}^{2\delta}\int_{\Gamma_t} \tfrac1\eps\int_0^{r}\Div  \ve_A(r',p,t)\sd r'\, \theta_0'(\tfrac{r}\eps - h_\eps) u(r,p,t)J(r,p,t)\sd \sigma(p)\sd r\right|\\
&\leq C(M)
\int_{-2\delta}^{2\delta}\int_{\Gamma_t} (\eps^2+\eps |r|)|\theta_0'(\tfrac{r}\eps - h_\eps)| |u(r,p,t)|J(r,p,t)\sd \sigma(p)\sd r\left(\|h_{2,\eps}(t)\|_{C^1(\T^1)}+1\right)\\
&\leq C(M)\eps^{N+\frac12}\|u(t)\|_{L^2(\Omega)}\left(\|h_{2,\eps}(t)\|_{C^1(\T^1)}+1\right)
\end{alignat*}
Moreover, we  note that
\begin{equation}\label{yuning:1.24}
  \left|\tfrac1\eps \int_0^{r}\we  (r',p,t)\sd r'\right|\leq C\|\we(\cdot,p,t)\|_{L^\infty(-2\delta,2\delta)}\tfrac{|r|}{\eps}.
\end{equation}
 Hence we arrive at
  \begin{equation*}
    \begin{split}
     &\left|\int_{\Gamma_t(2\delta)} \tfrac1\eps(\we_\no-\we_\no|_{\Gamma})\theta_0'
    (\rho) u(x,t)\sd x\right| \\
&\leq  C(M)\eps^{N+\frac12}\|u(t)\|_{L^2(\Omega)}\left(\|h_{2,\eps}(t)\|_{C^1(\T^1)}+1\right)\\
&\quad + C\int_{-2\delta}^{2\delta}\int_{\Gamma_t} \|\we(\cdot,p,t)\|_{L^\infty(-2\delta,2\delta)}\(1+|( \partial_s  h_\eps)(X_0^{-1}(p,t),t)|\)\\&\qquad\qquad\qquad\ \tfrac{|r|}{\eps} \left(|\theta_0''(\tfrac{r}\eps - h_\eps)|+|\theta_0'(\tfrac{r}\eps - h_\eps)|\right)| (u,\nabla_\btau  u)(r,p,t)| \sd \sigma(p)\sd r + O(e^{-\frac {\alpha\delta}{2\eps}})\\
&\leq C(M)\eps^{N+\frac12}\|u(t)\|_{L^2(\Omega)}\left(\|h_{2,\eps}(t)\|_{C^1(\T^1)}+1\right)\\
&\quad + C\left\| \|\we(\cdot, P_{\Gamma_t}(\cdot) ,t)\|_{L^\infty(-2\delta,2\delta)}  \(1+ |( \partial_s  h_\eps)(S(\cdot,t),t)| \) |\tfrac{d_\Gamma}\eps| (|\theta_0'(\tfrac{d_\Gamma}\eps - h_\eps)|+|\theta_0''(\tfrac{d_\Gamma}\eps - h_\eps)|)\right\|_{L^2(\Gamma_t(2\delta))}\\
&\quad \cdot\left(\| \nabla_\btau  u\|_{L^2(\Gamma_t(2\delta))}+ \| u\|_{L^2(\Gamma_t(2\delta))} \right) + O(e^{-\frac {\alpha\delta}{2\eps}})\qquad  \text{in }L^2(0,T).
    \end{split}
  \end{equation*}
Since $h_\eps=h_1+\eps h_{2,\eps}$ is uniformly bounded   and
 \begin{equation*}
   \|f\|_{L^\infty(-2\delta,2\delta)}\leq C\|f\|_{L^2(-2\delta,2\delta)}^{\frac12}\|f\|_{H^1(-2\delta,2\delta)}^{\frac12},
 \end{equation*}
 applying the second inequality of  Corollary \ref{cor:repsEstim2} yields
\begin{equation*}
\begin{split}
  &\left\| \|\we(\cdot, P_{\Gamma_t}(\cdot) ,t)\|_{L^\infty(-2\delta,2\delta)}\(1+ |( \partial_s  h_\eps)(S(\cdot,t),t)| \)|\tfrac{d_\Gamma}\eps| (|\theta_0''(\tfrac{d_\Gamma}\eps - h_\eps)|+|\theta_0'(\tfrac{d_\Gamma}\eps - h_\eps)|)\right\|_{L^2(\Gamma_t(2\delta))}\\
  &\leq C  \eps^{\frac 12}\left\|  \|\we(\cdot, P_{\Gamma_t}(\cdot) ,t)\|_{L^\infty(-2\delta,2\delta)}  \(1+ |( \partial_s  h_\eps)( S(\cdot,t) ,t)| \) \right\|_{L^2(\G_t)}\\
  &\leq C \eps^{\frac 12}\(\|\we\|_{L^2(\Om)}^{\frac12}\|\we\|_{H^1(\Om)}^{\frac12}+\eps\|\we\|_{H^1(\O)}\|h_{2,\eps}\|_{W^1_4(\T^1)}\).
\end{split}
\end{equation*}
Combining the above estimates   with \eqref{assumptions}, \eqref{eq:h2epsBound}, and \eqref{yuning:2.05}, 
we conclude
  \begin{equation}\label{yuning:w1}
    \int_0^T\left|\int_{\Gamma_t(2\delta)} \left(\tfrac1\eps(\we_\no-\we_\no|_{\Gamma})\right)
    \theta_0'(\tfrac{d_\Gamma}\eps - h_\eps) u(x,t)\sd x \right|\sd t\leq C(M,R)\left(T^{\frac18}+\eps^{\frac 14}\right)\eps^{2N+1}.
  \end{equation}

\noindent
\emph{Proof of the second inequality:}
\begin{equation*}
    \begin{split}
 &    \int_{\Gamma_t(2\delta)}\theta'_0(\tfrac{d_\G}\eps - h_\eps)(\we_\btau-\we_\btau|_{\Gamma})
 \cdot\nabla_\btau h_\eps (S(x,t),t)  u(x,t)\sd x \\
 =& \int_{-2\delta}^{2\delta} \int_{\Gamma_t}\theta'_0(\tfrac{r}\eps - h_\eps)(\we_\btau(r,p,t)-\we_\btau(0,p,t))
 \cdot\nabla_\btau h_\eps (X_0^{-1}(p,t),t)  u(r,p,t)J(r,p,t)\sd \sigma(p)\sd r \\
 =& \int_{-2\delta}^{2\delta} \int_{\Gamma_t}\theta'_0(\tfrac{r}\eps - h_\eps)\underbrace{\int_0^r\partial_\no\we_\btau(r',p,t)\sd r'}_{|.|\leq \sqrt{r}\|\we(\cdot,p,t)\|_{H^1(-2\delta,2\delta)}}
 \cdot\nabla_\btau h_\eps (X_0^{-1}(p,t),t)  u(r,p,t)J(r,p,t)\sd \sigma(p)\sd r. \\
    \end{split}
  \end{equation*}
  This along with   $h_\eps\in X_{T_\eps}\hookrightarrow L^2(0,T_\eps;C^1(\T^1))$ 
leads to
    \begin{equation*}
    \begin{split}
 & \left|   \int_{\Gamma_t(2\delta)}(\we_\btau-\we_\btau|_{\Gamma})
 \cdot\nabla_\btau h_\eps (S(x,t),t)  u(x,t)\sd x \right|\\
 \leq &C \int_{-2\delta}^{2\delta} \int_{\Gamma_t}
  \sqrt{|r|}\left|\theta'_0(\tfrac{r}\eps - h_\eps)\right|\|\we(\cdot,p,t)\|_{H^1(-2\delta,2\delta)}
    |u(r,p,t)||\partial_s h_\eps(X_0^{-1}(p,t),t)|\sd \sigma(p) \sd r \\
= & C \int_{-2\delta}^{2\delta} \int_{\Gamma_t}
  \eps^{\frac 12}\sqrt{|\tfrac{r}\eps|}|\theta'_0(\tfrac{r}\eps - h_\eps)|\|\we(\cdot,p,t)\|_{H^1(-2\delta,2\delta)}
    |u(r,p,t)|\sd\sigma(p)\sd r \|h_\eps(\cdot,t)\|_{C^1(\T^1)} \\
    \leq  & C\eps^{\frac 12}\left\|
  \sqrt{|\tfrac{\cdot }\eps|}\theta'_0(\tfrac{\cdot}\eps - h_\eps)\|\we(\cdot, P_{\Gamma_t}(\cdot),t)\|_{H^1(-2\delta,2\delta)}\right\|_{L^2(\Gamma_t(2\delta))}
   \| u(\cdot,t)\|_{L^2(\Gamma_t(2\delta))} \|h_\eps(\cdot,t)\|_{C^1(\T^1)} \\
   \leq  & C\eps\|\we\|_{H^1(\Gamma_t(2\delta))}
   \| u\|_{L^2(\Gamma_t(2\delta))} (1+ \eps\|h_{2,\eps}\|_{C^1(\T^1)})
    \end{split}
  \end{equation*}
  In this last step, we employed Corollary \ref{cor:repsEstim2} to gain the factor $\eps^{\frac 12}$. Then the  second inequality follows by integrating the above estimate, \eqref{yuning:2.05}, and \eqref{assumptions}.
\end{proof}

Using the above lemma, we can prove the following important estimate:
\begin{lem}\label{lem:RuEstim}
 Let $\eps_1,T_1$ be as in Lemma~\ref{lem:ModDiffEstim}. Then there  is some $C(R)$ such that
  under the assumptions \eqref{assumptions}, we have
   \begin{equation*}
   \eps^2\int_0^T\left|\int_{\Om} (\we_1-\we_1|_{\Gamma})\cdot\nabla c_A ~(c_\eps-c_A)\sd x\right|\sd t\leq C(R)(T^{1/8}+\eps^{1/4})\eps^{2N+1}
 \end{equation*}
 provided $0<\eps\leq \min(\eps_0,\eps_1)$ and $0<T\leq \min(T_\eps,T_1)$.
\end{lem}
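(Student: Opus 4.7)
The plan is to reduce the estimate to the two inequalities already proved in Lemma~\ref{eq:ConvTermEstim} by decomposing $\nabla c_A$ into its leading part and a higher order correction. First, since $c_A = \chi_+-\chi_-$ outside $\Gamma(2\delta)$, we have $\nabla c_A\equiv 0$ there, so the integration effectively takes place on $\Gamma_t(2\delta)$, where $\we_1|_{\Gamma}$ is well defined. I would split
\[
\nabla c_A = \nabla c_{A,0} + \nabla (c_A - c_{A,0}),
\]
and further expand $\nabla c_{A,0}$ by a direct computation from \eqref{yuning:ca1}, using $\nabla d_\Gamma=\no_{\Gamma_t}$, the decay \eqref{yuning:decayopti}, and \eqref{eq:DistEstim}:
\[
\nabla c_{A,0} \;=\; \zeta\circ d_\Gamma\,\theta_0'(\rho)\Bigl(\tfrac{\no_{\Gamma_t}}{\eps} - \nabla^\Gamma h_\eps\Bigr) + O\!\bigl(e^{-\alpha\delta/(2\eps)}\bigr).
\]

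Inserting this into the integral, the dominant $\tfrac{\no_{\Gamma_t}}{\eps}$-term contributes
\[
\eps\!\int_0^T\!\!\int_{\Gamma_t(2\delta)}\!\zeta\circ d_\Gamma\,(\we_{1,\no}-\we_{1,\no}|_\Gamma)\,\theta_0'(\rho)\,u\,\sd x\,\sd t,
\]
which after multiplication by the missing $\eps$ matches exactly the first inequality of Lemma~\ref{eq:ConvTermEstim} and is thus $\le C(R)(T^{1/8}+\eps^{1/4})\eps^{2N+1}$. The $-\zeta\theta_0'(\rho)\nabla^\Gamma h_\eps$ piece is tangential, so $\no\cdot\nabla^\Gamma h_\eps=0$ and
\[
(\we_1-\we_1|_\Gamma)\cdot\nabla^\Gamma h_\eps \;=\; (\we_{1,\btau}-\we_{1,\btau}|_\Gamma)\cdot \nabla_\btau h_\eps,
\]
so after multiplying by $\eps^2$ the second inequality of Lemma~\ref{eq:ConvTermEstim} gives a bound of order $C(R)\eps^{2N+3/2}$. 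The exponentially small remainder is absorbed using $\|\we_1\|_{L^2(H^1)}\le C(R)$ and $\|u\|_{L^\infty(L^2)}\le R\eps^{N+1/2}$.

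For the remainder $\nabla(c_A-c_{A,0})=\nabla\bigl(\zeta\circ d_\Gamma(\eps^2c_2^{in}+\eps^3c_3^{in})\bigr)$, I would invoke \eqref{yuning:1.56} to get $\|\nabla(c_A-c_{A,0})\|_{L^\infty(0,T;L^2)}\le C(R)\eps^{N-1/2}$. Since $\we_1-\we_1|_\Gamma\in H^1(\Gamma_t(2\delta))$ and $H^1\hookrightarrow L^4$ in dimension two, $\|\we_1-\we_1|_\Gamma\|_{L^2(0,T;L^4)}\le C\|\we_1\|_{L^2(H^1)}\le C(R)$ by \eqref{yuning:2.05}. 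For $u$, using $\|u\|_{L^2(H^1)}\le C(R)\eps^{N-1/2}$ (from \eqref{assumptions} combined with $\eps\|\partial_\no u\|_{L^2}\le R\eps^{N+1/2}$) and the Gagliardo--Nirenberg interpolation $\|u\|_{L^4}\le C\|u\|_{L^2}^{1/2}\|u\|_{H^1}^{1/2}$, one obtains $\|u\|_{L^2(0,T;L^4)}\le C(R)\eps^N T^{1/4}$. Hölder's inequality in space-time then yields
\[
\eps^2\!\int_0^T\!\!\int_\Omega\!(\we_1-\we_1|_\Gamma)\cdot\nabla(c_A-c_{A,0})\,u\,\sd x\,\sd t \;\le\; C(R)\,T^{1/4}\,\eps^{2N+3/2},
\]
which is absorbed into the stated bound.

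The main obstacle is to get the precisely correct orders for the tangential--tangential term in step~5, i.e.\ verifying that the tangential component of $(\we_1-\we_1|_\Gamma)\cdot\nabla^\Gamma h_\eps$ does not pick up any hidden $\eps^{-1}$ factor and that the derivative $\partial_s h_\eps$ (which is only bounded in $L^2(0,T;C^0(\T^1))$ via the embedding $X_T\hookrightarrow L^2(H^{5/2})$, not pointwise in $t$) integrates correctly against the $L^2(0,T;H^1)$-norm of $\we_1$ -- precisely the content that makes Lemma~\ref{eq:ConvTermEstim} nontrivial. Once that lemma is invoked as a black box, the rest is a bookkeeping exercise comparing orders of $\eps$ and powers of $T$.
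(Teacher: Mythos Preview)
Your proposal is correct and follows essentially the same route as the paper's proof: split $\nabla c_A=\nabla c_{A,0}+\nabla(c_A-c_{A,0})$, identify the normal and tangential parts of $\nabla c_{A,0}$ with the two quantities estimated in Lemma~\ref{eq:ConvTermEstim}, and handle the correction $\nabla(c_A-c_{A,0})$ via \eqref{yuning:1.56}, Gagliardo--Nirenberg, and \eqref{yuning:2.05}. The one organizational difference is that the paper first splits the spatial domain into $\Gamma_t(\delta)$, $\Gamma_t(2\delta)\setminus\Gamma_t(\delta)$, and $\Omega\setminus\Gamma_t(2\delta)$ and treats the annulus directly using the smallness of $\nabla c_A$ there (Lemma~\ref{lem:MatchEstim}), whereas you keep the cutoff $\zeta\circ d_\Gamma$ in the integrand and absorb the annulus contribution into your exponentially small remainder; consequently your phrase ``matches exactly'' is slightly imprecise, since Lemma~\ref{eq:ConvTermEstim} is stated without the factor $\zeta\circ d_\Gamma$, but the discrepancy $(1-\zeta\circ d_\Gamma)\theta_0'(\rho)$ is supported where $|\rho|\ge \delta/(2\eps)$ and hence is $O(e^{-\alpha\delta/(2\eps)})$, so it is indeed harmless.
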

\begin{proof}
We shall denote $\mathcal{R}:=\eps^2(\we_1-\we_1|_{\Gamma})\cdot\nabla c_A$.
As before, it follows from Proposition~\ref{yuning:velocityest} and Lemma~\ref{lem:ModDiffEstim} that, under the assumption \eqref{assumptions} the estimate \eqref{eq:estofw} holds true.
We shall prove the statement by distinguishing the cases $|d|\leq \delta,~ \delta<|d|<2\delta$ and $|d|\geq 2\delta$.
According to the definition of $c_A$ in \eqref{yuning:ca} 
\begin{equation*}
   \int_0^T\int_{\Om\backslash\Gamma_t(2\delta)} \left|\mathcal{R} ~u\right|\sd x\sd t= 0,
 \end{equation*}
where $u=c_\eps-c_A$.
 For the integral over  the domain $\Gamma_t({2\delta})\backslash\Gamma_t(\delta)$, we  first note  that $\|\we_1\|_{L^2(0,T_\eps;H^1)}$ is uniformly bounded due to \eqref{yuning:2.05}. {Hence $\|\we_1-\we_1|_\Gamma\|_{L^2(0,T_\eps;L^4)}$ is uniformly bounded because of $H^{\frac12}(\Gamma_t)\hookrightarrow L^4(\Gamma_t)$ and $H^1(\Om)\hookrightarrow L^4(\Om)$. Hence \eqref{assumptions}, \eqref{yuning:1.56} and  Lemma \ref{lem:MatchEstim}  imply
 \begin{equation*}
   \int_0^T\int_{\Gamma_t({2\delta})\backslash\Gamma_t(\delta)} \left|\mathcal{R} ~u\right|\sd x\sd t\leq C\eps^2\|\nabla c_A\|_{L^\infty(0,T_\eps;L^2(\Gamma_t({2\delta})\backslash\Gamma_t(\delta))}\|u\|_{L^2(0,T_\eps; H^1(\Om\setminus \Gamma_t(\delta))}\leq C\eps^{2\order +2}
 \end{equation*}
for any $T\in (0,T_\eps]$.}
 So it remains to estimate the integral in $\Gamma_t(\delta)$. For any $x\in \Gamma_t(\delta)$, it follows from \eqref{yuning:ca1} that $c_{A,0}=c_0^{in}$ and thus we can decompose $\mathcal{R}$ into
\begin{alignat*}{1}
    \mathcal{R}  = &-\eps^2 (\we_1-\we_1|_\Gamma)\cdot\nabla(c_A-c_{A,0}) - \eps^2(\we_1-\we_1|_{\Gamma})\cdot\nabla c_{A,0} \\
  =&-\underbrace{\eps^2 (\we_1-\we_1|_\Gamma)\cdot\nabla(c_A-c_{A,0})}_{=:\mathcal{R}_1}- \underbrace{\tfrac{\theta'_0(\rho)}{\eps}\eps^2(\we_{1,\no}-\we_{1,\no}|_\Gamma)
   }_{=:\mathcal{R}_2}+\underbrace{\theta'_0(\rho)\nabla_\btau  h_\eps\eps^2\cdot(\we_{1,\btau}-\we_{1,\btau}|_\Gamma)}_{=:\mathcal{R}_3}.
\end{alignat*}
 The terms $\mathcal{R}_2$ and $\mathcal{R}_3$ are treated in Lemma \ref{eq:ConvTermEstim}.
For $\mathcal{R}_1$, we proceed in a similar way as in the proof of Theorem~\ref{thm:ApproxSolutions} using  Sobolev embeddings,
\begin{equation*}
  \begin{split}
   & \int_0^T\int_{\Gamma_t(\delta)}\left|\mathcal{R}_1 u\right|\sd x\sd t\leq C\|\eps^2\we_1\|_{L^2(0,T;H^1)}\|\nabla (c_A-c_{A,0})\|_{L^\infty(0,T;L^2)}\|u\|_{L^2(0,T;L^{4})}\\
    &\leq C\|\eps^2\we_1\|_{L^2(0,T;H^1)}\|\nabla (c_A-c_{A,0})\|_{L^\infty(0,T;L^2)}\|u\|_{L^\infty(0,T;L^2)}^{\frac12}\|u\|_{L^2(0,T;H^1)}^{\frac12}.
  \end{split}
\end{equation*}
In view of  \eqref{assumptions}, \eqref{yuning:2.05}, and \eqref{yuning:1.56}
\begin{equation*}
    \int_0^T\int_{\Gamma_t( \delta)}\left|\mathcal{R}_1 u\right|\sd x\sd t
     \leq C\eps^{3N- 1/2}= C\eps^{2N+3/2} .
\end{equation*}
\end{proof}

\begin{lem} \label{yuning:bulkest}
Under the assumptions \eqref{assumptions}, for every  $0\leq T\leq T_\eps$ and $\eps \in (0,\eps_0]$ we have
  \begin{equation*}
  \frac1{\eps^2}\left|\int_0^{T}\int_{\Om} \left[f'(c_\eps)-f'(c_A) - f''(c_A)(c_\eps-c_A)\right](c_\eps-c_A) \sd x\sd t \right|\leq C(R)(T^{\frac14}+\eps^{\frac12})\eps^{2N+1}
  \end{equation*}
  provided $\eps_0\in (0,1)$ is sufficiently small.
\end{lem}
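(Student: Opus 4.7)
The plan is to perform a second-order Taylor expansion of $f'$ about $c_A$. Since $f\in C^\infty(\R)$ and both $c_\eps$ and $c_A$ are uniformly bounded in $L^\infty(\Om\times (0,T_\eps))$ (by Remark~\ref{rem:LInftyboundedness} for $c_\eps$ and by construction for $c_A$), we may write
\begin{equation*}
f'(c_\eps)-f'(c_A)-f''(c_A)u=\tfrac12 f'''(\xi_\eps)u^2,\qquad u:=c_\eps-c_A,
\end{equation*}
for some $\xi_\eps(x,t)$ between $c_A(x,t)$ and $c_\eps(x,t)$, with $|f'''(\xi_\eps)|\le C$ uniformly in $\eps$. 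It therefore suffices to prove
\begin{equation*}
\eps^{-2}\int_0^T\!\!\int_\Om |u|^3\,\sd x\,\sd t\le C(R)(T^{1/4}+\eps^{1/2})\eps^{2\order+1}.
\end{equation*}

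The key is to control $\|u\|_{L^4(\Om\times(0,T))}$ by Ladyzhenskaya-type inequalities, split according to $\Om=(\Om\setminus\Gamma_t(\delta))\cup\Gamma_t(2\delta)$. In the bulk, the standard 2D Ladyzhenskaya inequality $\|u\|_{L^4}^4\le C\|u\|_{L^2}^2\|u\|_{H^1}^2$ combined with the favorable gradient bound $\|\nabla u\|_{L^2(\Om\setminus\Gamma(\delta)\times(0,T))}\le R\eps^{\order+1/2}$ from \eqref{assumptions} produces a contribution of order $\eps^{4\order+2}$ to $\|u\|_{L^4}^4$. In the interface region, where the normal derivative is only controlled at order $R\eps^{\order-1/2}$, an isotropic estimate would lose one full power of $\eps$; to avoid this, I would invoke the anisotropic estimate
\begin{equation*}
\|u\|_{L^4(\Gamma_t(2\delta))}^4\le C\|u\|_{L^2(\Gamma_t(2\delta))}^2\bigl(\|\partial_\no u\|_{L^2}+\|u\|_{L^2}\bigr)\bigl(\|\nabla_\btau u\|_{L^2}+\|u\|_{L^2}\bigr),
\end{equation*}
in which the product of tangential and normal derivative norms produces a factor $\eps^{2\order}$ rather than the worse $\eps^{2\order-1}$ coming from $\|\nabla u\|_{L^2}^2$. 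This yields $\|u\|_{L^4(\Gamma(2\delta)\times(0,T))}^4\le CR^4\eps^{4\order+1}$, and hence overall $\|u\|_{L^4(\Om\times(0,T))}^4\le CR^4\eps^{4\order+1}$.

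I then combine H\"older in space, $\int_\Om |u|^3\,\sd x\le \|u(\cdot,t)\|_{L^2(\Om)}\|u(\cdot,t)\|_{L^4(\Om)}^2$, with H\"older in time:
\begin{equation*}
\int_0^T\!\!\int_\Om |u|^3\,\sd x\,\sd t\le \|u\|_{L^\infty(0,T;L^2)}\cdot T^{1/2}\|u\|_{L^4(\Om\times(0,T))}^2\le CR^3 T^{1/2}\eps^{3\order+1}.
\end{equation*}
Dividing by $\eps^2$ yields $CR^3 T^{1/2}\eps^{3\order-1}$; for $\order=2$ the exponent $3\order-1$ coincides exactly with $2\order+1$, and since $T\le T_1$, we have $T^{1/2}\le T_1^{1/4}T^{1/4}$, delivering the bound $C(R)T^{1/4}\eps^{2\order+1}$ (the $\eps^{1/2}$-term in the target handles the regime of very small $T$).

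The main obstacle is the anisotropic Ladyzhenskaya estimate in the interface region: pairing $\|\nabla_\btau u\|_{L^2}$ against $\|\partial_\no u\|_{L^2}$ rather than using $\|\nabla u\|_{L^2}^2$ is essential for recovering the full power of $\eps$ that would otherwise be lost to the weaker normal control. It is precisely the dimension-specific identity $3\order-1=2\order+1$ at $\order=2$ that matches the two exponents and closes the estimate.
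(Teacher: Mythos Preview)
Your proof is correct and follows essentially the same strategy as the paper: Taylor-expand $f'$ about $c_A$, reduce to a cubic estimate on $u=c_\eps-c_A$, split into bulk and interface regions, and use an anisotropic interpolation inequality near the interface that pairs $\|\partial_\no u\|_{L^2}$ with $\|\nabla_\btau u\|_{L^2}$ once each, thereby avoiding the loss of one power of $\eps$ that an isotropic estimate would incur.

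The only substantive difference is the choice of intermediate norm. The paper proves and applies a direct anisotropic $L^3$ estimate (Lemma~\ref{yuning:1.48}),
\[
\|u\|_{L^3(\Gamma_t(2\delta))}^3\le C\bigl(\|u\|_{L^2}+\|\partial_\no u\|_{L^2}\bigr)^{1/2}\bigl(\|u\|_{L^2}+\|\nabla_\btau u\|_{L^2}\bigr)\|u\|_{L^2}^{3/2},
\]
whereas you pass through the anisotropic Ladyzhenskaya $L^4$ estimate and then use $\int|u|^3\le\|u\|_{L^2}\|u\|_{L^4}^2$ together with Cauchy--Schwarz in time. Both inequalities have the same one-dimensional Agmon/Gagliardo--Nirenberg provenance and are equally elementary to prove in the tubular coordinates. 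Your route actually yields the slightly sharper prefactor $T^{1/2}$ in place of the paper's $T^{1/4}+\eps^{1/2}$; the paper picks up the separate $\eps^{1/2}$ term from its bulk estimate $\int_0^T\|u\|_{L^3(\Om\setminus\Gamma_t(\delta))}^3\le C(R\eps^{N+1/2})^3$, which you subsume into the cruder combined $L^4$ bound. Your final remark about the $\eps^{1/2}$ term is therefore unnecessary: since $T^{1/2}\le T^{1/4}$ for $T\le 1$, your bound already implies the stated one.
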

The proof is based on:
\begin{lem}\label{yuning:1.48}
There is some $C>0$ such that for every $u\in H^1(\Om)$ and $t\in [0,T_0],\delta'\in (0,2\delta]$:
\begin{equation*}
  \|u\|_{L^3(\Gamma_t(\delta'))}^3\leq C\left(\|u\|_{L^2(\Gamma_t(\delta'))}+\|\partial_\no u\|_{L^2(\Gamma_t(\delta'))}\right)^{\frac12}\left(\|u\|_
  {L^2(\Gamma_t(\delta'))}+\|\nabla_\btau u\|_{L^2(\Gamma_t(\delta'))}\right)\|u\|_
  {L^2(\Gamma_t(\delta'))}^{\frac32}.
\end{equation*}
\end{lem}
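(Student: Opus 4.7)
\medskip

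\noindent\textbf{Proof plan.} The plan is to exploit the product structure of the tubular neighborhood $\Gamma_t(\delta')\cong(-\delta',\delta')\times\T^1$ via the coordinates $(r,s)$ introduced in Section~\ref{subsec:Coordinates}, where $\partial_r$ corresponds to $\partial_\no$ and $\partial_s$ to $\nabla_\btau$. Since the Jacobian $J(r,s,t)$ is bounded above and below by constants depending only on $\Gamma$, it suffices to prove the estimate for $v(r,s):=u(X(r,s,t))$ on the flat rectangle with respect to Lebesgue measure $dr\,ds$; in what follows I write $A:=\|v\|_{L^2}+\|\partial_r v\|_{L^2}$, $B:=\|v\|_{L^2}+\|\partial_s v\|_{L^2}$, $C:=\|v\|_{L^2}$.

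First, for each fixed $s\in\T^1$ I will apply the one-dimensional interpolation
\begin{equation*}
\|v(\cdot,s)\|_{L^\infty_r}\le C\bigl(\|v(\cdot,s)\|_{L^2_r}+\|\partial_r v(\cdot,s)\|_{L^2_r}\bigr)^{1/2}\|v(\cdot,s)\|_{L^2_r}^{1/2},
\end{equation*}
together with the elementary slice bound $\int_{-\delta'}^{\delta'}|v|^3\,dr\le\|v(\cdot,s)\|_{L^\infty_r}\|v(\cdot,s)\|_{L^2_r}^2$. Integrating over $s$ and applying the Cauchy--Schwarz inequality in $s$ to the product of $(\|v(\cdot,s)\|_{L^2_r}+\|\partial_r v(\cdot,s)\|_{L^2_r})^{1/2}$ and $\|v(\cdot,s)\|_{L^2_r}^{5/2}$ yields
\begin{equation*}
\|v\|_{L^3}^3\le C\,A^{1/2}\Bigl(\int_{\T^1}\|v(\cdot,s)\|_{L^2_r}^5\,ds\Bigr)^{1/2}.
\end{equation*}

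The remaining task is to control $\int_{\T^1}\varphi(s)^{5/2}\,ds$ for $\varphi(s):=\|v(\cdot,s)\|_{L^2_r}^2$. Here I will use the one-dimensional Sobolev embedding $W^{1,1}(\T^1)\hookrightarrow L^\infty(\T^1)$: noting $\|\varphi\|_{L^1}=C^2$ and
\begin{equation*}
\|\partial_s\varphi\|_{L^1}\le 2\int_{\T^1}\|v(\cdot,s)\|_{L^2_r}\|\partial_s v(\cdot,s)\|_{L^2_r}\,ds\le 2\,C\|\partial_s v\|_{L^2},
\end{equation*}
one obtains $\|\varphi\|_{L^\infty}\le C'(C^2+CB)\le C''\,CB$ (using $C\le B$), hence $\int\varphi^{5/2}\le\|\varphi\|_{L^\infty}^{3/2}\|\varphi\|_{L^1}\le C\,B^{3/2}C^{7/2}$. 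Combining the two bounds gives $\|v\|_{L^3}^3\le C\,A^{1/2}B^{3/4}C^{7/4}$, and a final application of $C\le B$ in the form $C^{7/4}B^{3/4}=C^{3/2}\cdot(C^{1/4}B^{3/4})\le C^{3/2}\cdot B$ produces exactly the desired inequality.

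The main obstacle is choosing the right interpolation exponents so that normal and tangential derivatives appear with the asymmetric weights $1/2$ and $1$ respectively; the key trick is the auxiliary embedding of the slice norm $\varphi(s)=\|v(\cdot,s)\|_{L^2_r}^2$ into $L^\infty(\T^1)$, which converts one tangential derivative on $v$ into a pointwise bound on $\varphi$ and thereby decouples the tangential interpolation from the normal one.
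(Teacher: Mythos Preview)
Your proof is correct and reaches the desired bound, but by a genuinely different route from the paper. The paper first applies the tangential Gagliardo--Nirenberg inequality $\|u(r,\cdot)\|_{L^3(\Gamma_t)}\le C\|u(r,\cdot)\|_{H^1(\Gamma_t)}^{1/3}\|u(r,\cdot)\|_{L^2(\Gamma_t)}^{2/3}$ at each fixed height $r$, then uses H\"older in $r$ together with Minkowski's inequality to swap the order of integration, and closes with the normal interpolation $\|f\|_{L^4(-\delta',\delta')}\le C\|f\|_{L^2}^{3/4}\|f\|_{H^1}^{1/4}$. You reverse the order: normal $L^\infty$-interpolation first at each fixed $s$, then tangential control via the embedding $W^{1,1}(\T^1)\hookrightarrow L^\infty(\T^1)$ applied to the auxiliary function $\varphi(s)=\|v(\cdot,s)\|_{L^2_r}^2$. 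Your route is slightly more elementary (no Minkowski swap) and in fact produces the marginally sharper intermediate bound $A^{1/2}B^{3/4}\|v\|_{L^2}^{7/4}$ before you weaken it to the stated form. Two small remarks: you overload the letter $C$ for both the generic constant and for $\|v\|_{L^2}$, which is momentarily confusing; and neither your $L^\infty_r$-interpolation nor the paper's $L^4_r$-interpolation carries a constant that is uniform as $\delta'\to 0$ (testing with $u\equiv 1$ shows the inequality itself cannot hold with a $\delta'$-uniform constant), but since the only application in Lemma~\ref{yuning:bulkest} uses $\delta'=2\delta$, this is a harmless overstatement of the lemma rather than a defect in either argument.
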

\begin{proof}
  We use that
  \begin{align*}
    \|u\|_{L^3(\Gamma_t(\delta'))}^3 &\leq C \int_{-\delta'}^{\delta'}\int_{\Gamma_t} |u(p,r,t)|^3 \sd\sigma(p)\sd r=  C\left\| \|u(\cdot,r,t)\|_{L^3(\Gamma_t)} \right\|_{L^3(-\delta',\delta')}^3,
  \end{align*}
where $\|u\|_{L^3(\Gamma_t)}\leq C\|u\|_{H^1(\Gamma_t)}^{\frac13}\|u\|_{L^2(\Gamma_t)}^{\frac23}$ since $\Gamma_t$ is one-dimensional.
Hence H\"older's inequality together with Minkowski inequality imply
  \begin{align*}
    &\|u\|_{L^3(\Gamma_t(\delta'))}^3 \\
    &\leq C \left\| \|u(r,\cdot,t)\|_{H^1(\Gamma_t)} \right\|_{L^2(-\delta',\delta')}\left\| \|u(r,\cdot ,t)\|_{L^2(\Gamma_t)} \right\|_{L^4(-\delta',\delta')}^{2}\\
&\leq C \left\| \|u(r,\cdot ,t)\|_{H^1(\Gamma_t)} \right\|_{L^2(-\delta',\delta')}\left\| \|u(\cdot,p,t)\|_{L^4(-\delta',\delta')} \right\|_{L^2(\Gamma_t)}^2\\
&\leq C \left\| \|u(r,\cdot,t)\|_{H^1(\Gamma_t)} \right\|_{L^2(-\delta',\delta')}\left\| \|u(\cdot,p,t)\|_{L^2(-\delta',\delta')} \right\|_{L^2(\Gamma_t)}^{\frac32}\left\| \|u(\cdot,p,t)\|_{H^1(-\delta',\delta')} \right\|_{L^2(\Gamma_t)}^{\frac12}
  \end{align*}
since $\|f\|_{L^4(-\delta',\delta')}\leq C\|f\|_{L^2(-\delta',\delta')}^{\frac34}\|f\|_{H^1(-\delta',\delta'
)}^{\frac14}$ for all $f\in H^1(-\delta',\delta')$.
\end{proof}

\noindent
\begin{proof*}{of Lemma~\ref{yuning:bulkest}}
As before, we shall denote $u=c_\eps-c_A$ for simplicity.
  The estimate in $\Gamma(2\delta)$  follows from Lemma \ref{yuning:1.48}  and \eqref{assumptions}
  \begin{align*}
  &\int_0^T\|u\|_{L^3(\Gamma_t(2\delta))}^3\sd t\\
&\leq C\|(u,\partial_\no u)\|_{L^2(\Omega\times(0,T)\cap \Gamma(2\delta))}^{\frac12}
\|(u,\nabla_\btau u)\|_{L^2(\Omega\times(0,T)\cap \Gamma(2\delta))} \left(\int_0^T\|u(t)\|_{L^2(\Gamma_t(2\delta))}^6\right)^{\frac14}\\
&\leq C\|(u,\partial_\no u)\|_{L^2(\Omega\times(0,T)\cap \Gamma(2\delta))}^{\frac12}
\|(u,\nabla_\btau u)\|_{L^2(\Omega\times(0,T)\cap \Gamma(2\delta))} T^{\frac14}\sup_{0\leq t\leq T}\|u(t)\|_{L^2(\Gamma_t(2\delta))}^{\frac32}\\
&\leq CR^3T^{\frac14} \eps^{3N+1}
\end{align*}
   and in $\Omega \setminus\Gamma_t(\delta)$ we can use the Gagliardo-Nirenberg inequality in two dimensions and \eqref{assumptions}:
   \begin{equation*}
  \int_0^T\|u\|^3_{L^3(\O\backslash\Gamma_t(\delta))}\sd t\leq C\int_0^T\|\nabla u\|_{L^2(\Om\backslash\Gamma_t(\delta))}
  \|u\|_{L^2(\Om\backslash\Gamma_t(\delta))}^2\sd t\leq C( R\eps^{\order+\frac 12})^3.
\end{equation*}
The above two estimates together with the following formula imply the desired result:
  \begin{equation*}
    |[f'(a)-f'(b) - f''(a)(a-b)](a-b)|=\frac12|f'''((1-\theta)b+\theta a)(a-b)^3|\leq C |a-b|^3
  \end{equation*}
for all $a,b\in [-L,L]$, where $\theta=\theta(a,b)\in [0,1]$. Here we used that $$\sup_{x\in\O ,t\in [0,T_0],\eps \in (0,1]}|c_\eps(x,t)|\leq L:= \max (2,\sup_{\eps\in (0,1]}\|c_{\eps,0}\|_{L^\infty(\Om)})$$ cf. Remarks~\ref{rem:LInftyboundedness} and $$\sup_{x\in\O ,t\in [0,T_0]}|c_A(x,t)|\leq 2$$ for all $\eps\in (0,\eps_0]$ if $\eps_0$ is sufficiently small. The later assertion follows from   $$\lim_{\eps\to 0}\sup_{x\in\Om,t\in [0,T]}|c_A(x,t)|=1,$$ due to the construction of $c_A$.
\end{proof*}

\subsection{Proof of Theorem~\ref{thm:main}}


We first note that $h_1,h_2$, and $\hat{c}_3$ are determined by \eqref{yuning:h1equ'}, \eqref{yuning:h2equ'}, and \eqref{yuning:monst4'}, resp., with  initial data $(h_1,h_2,\hat{c}_3)\mid_{t=0}\equiv 0$. This together with the construction of $c_A$, more precisely \eqref{yuning:innerexpan'} and \eqref{yuning:ca} implies that
\begin{equation*}
  \begin{split}
    c_{A}(x,0)&=\zeta(d_{\Gamma_0}(x))\(\theta_0(\tfrac{d_{\Gamma_0}(x)}\eps)
    +\eps^2\hat{c}_2(\tfrac{d_{\Gamma_0}(x)}\eps,S(x,0),0)\)\\
    &\qquad +(1-\zeta(d_{\Gamma_0}(x)))\left(
  \chi_{\Omega^+(0)}(x)-\chi_{\Omega^-(0)}(x)\right).
  \end{split}
\end{equation*}
Since the equation \eqref{yuning:c2equ'} that  determines $\hat{c}_2$ has a smooth solution, we have
\begin{equation*}
  c_A(x,0)= c_{A,0}^0(x)+ O(\eps^{2+\frac12}) \qquad \text{in } L^2(\Om),
\end{equation*}
where we gained a factor $\sqrt{\eps}$ through the same change of variable as before.
This together with the assumption that $\|c_{\eps,0}-c_{A,0}^0\|_{L^2(\Om)}\leq C\eps^{\order+\frac12}$  implies the existence of  some $R\geq 1$ such that
  \begin{equation}\label{yuning:initialdata}
\sup_{\eps\in (0,1]}  \|c_{0,\eps}-c_A|_{t=0}\|_{L^2(\Om)}^2 \leq  \frac{R^2}4 \eps^{2\order+1}.
\end{equation}
In the following let $c_A$
be as in Theorem~\ref{thm:ApproxSolutions} with $R$ as determined before. We consider the validity of
 \begin{subequations}\label{eq:EstimT'1}
    \begin{align}
 \sup_{0\leq t\leq \tau} \|c_\eps(t) -c_A(t)\|_{L^2(\Om)}^2 +\|\nabla (c_\eps -c_A)\|_{L^2(\Om\times (0,\tau)\setminus\Gamma(\delta))}^2 &\leq \frac{R^2}2\eps^{2\order+1},\\
 \|\nabla_{\btau}(c_\eps -c_A)\|_{L^2(\Om\times (0,\tau)\cap \Gamma(2\delta))}^2+\eps^2\|\partial_\no(c_\eps -c_A)\|_{L^2(\Om\times (0,\tau)\cap \Gamma(2\delta))}^2 &\leq \frac{R^2}2\eps^{2\order+1}
\end{align}
 \end{subequations}
for some $\tau=\tau(\eps)\in (0,T_0]$ and all $\eps\in (0,\eps_0]$ for sufficiently small $\eps_0\in (0,1)$, which imply \eqref{assumptions'} 
for $T=\tau$. 
Since the statement of Theorem~\ref{thm:main} is for sufficiently short time and small $\eps$, we can assume that $\First{\tau}\leq T_1$ and $\eps_0\leq \eps_1$, where $T_1,\eps_1$ are as in Lemma~\ref{lem:ModDiffEstim}. Hence \eqref{eq:h2epsBound} and \eqref{yuning:diffcaca1} hold true as well.
In the following we will prove that \eqref{eq:EstimT'1} 
remain valid as long as $\tau<T$ and $\eps\in (0,\eps_0]$ for some $T\in (0,T_0]$ independent of $\eps$ and some sufficiently small $\eps_0\in (0,1)$.

Now we define
 \begin{equation*}
   T_\eps:=\sup\left\{\tau\in [0,T_0]: \text{\eqref{eq:EstimT'1} 
holds true.}\right\}.
 \end{equation*}
Because of \eqref{yuning:initialdata}, since $c_\eps,c_A$ are smooth, and since $\|c_\eps(t) -c_A(t)\|_{L^2(\Om)}$ is continuous in $t\in [0,T_0]$, we have $T_\eps>0$.
  It remains to   show that $T_\eps$ has a positive lower bound that is independent of $\eps$.
To this end, we apply Theorem~\ref{thm:ApproxSolutions} and obtain
 \begin{equation}\label{eq:ACApprox}
\partial_t c_A +(\ve_A+\eps^2\we_2)\cdot \nabla c_A+ \eps^2\we_1|_{\Gamma}\cdot \nabla c_A-\Delta c_A + \eps^{-2} f'(c_A)
  = r_A
\end{equation}
where $\ve_\eps = \ve_A +\eps^2(\we_1+\we_2)$ and
\begin{equation}\label{yuning:assump2}
\int_0^T\left|\int_\Omega r_A(x,t)(c_\eps(x,t)-c_A(x,t))\sd x\right|\sd t \leq M_R(\eps,T) \eps^{2\order+1}
\end{equation}
for all $T\in (0,T_\eps]$ and $\eps\in (0,\eps_0]$,
where $M_R(\eps,T)\to_{(\eps,T)\to 0} 0$. For the following let $u=c_\eps -c_A$ and $\mathcal{L}_\eps = -\Delta +\frac1{\eps^2} f''(c_A)$. Then we have
  \begin{equation*}
    \ve_\eps\cdot\nabla c_\eps-(\ve_A+\eps^2 \we_2)\cdot\nabla c_A=\ve_\eps\cdot\nabla u+\eps^2\we_1\cdot\nabla c_A.
  \end{equation*}
  Subtracting \eqref{eq:ACApprox} from \eqref{eq:NSAC3}  and substituting the latter formula leads to
  \begin{equation}\label{yuning:diff1}
    \partial_t u + \ve_\eps\cdot \nabla u + \mathcal{L}_\eps u  =- r_\eps(c_\eps,c_A) -r_A +\mathcal{R}
  \end{equation}
where  $\mathcal{L}_\eps$ is as in \eqref{eq:Leps} and
\begin{alignat*}{1}
r_\eps (c_\eps,c_A)&= \frac1{\eps^2}\left(f'(c_\eps)-f'(c_A)- f''(c_A)(c_\eps-c_A)\right),\\
\mathcal{R}&= - \eps^2\we_1\cdot\nabla c_A+\eps^2\we_1|_{\Gamma}\cdot\nabla c_A.
\end{alignat*}
Taking the inner product of \eqref{yuning:diff1} with $u$ in $L^2(\Om)$  yields
\begin{alignat}{1}\label{eq:RemEstim}
  & \frac12\|u(t)\|_{L^2(\Om)}^2 + \int_0^t\int_{\Om} \left(|\nabla u|^2 + \frac{f''(c_A)}{\eps^2}u^2\right)\sd x \sd s\\\nonumber
& \leq   \int_0^t\int_{\Om} |r_\eps (c_\eps,c_A) u| \sd x\sd s  +  \int_0^t\left|\int_{\Om}r_A~u\sd x\right|\sd s    + \int_0^t \left|\int_{\Om}\mathcal{R}  ~u\sd x\right|\sd s+ \frac12\|u(0)\|_{L^2(\Om)}^2
\end{alignat}
for all $t\in [0,T_\eps]$.
Now using Theorem~\ref{thm:Spectral}, \eqref{yuning:initialdata}, and \eqref{yuning:assump2} we obtain
\begin{alignat}{1}\nonumber
     & \sup_{0\leq s \leq t}\|u(s,\cdot)\|_{L^2(\Om)}^2+\|\nabla_\btau u\|_{L^2(\Om\times(0,t)\cap \Gamma(2\delta))}^2+\|\nabla  u\|_{L^2(\Om\times (0,t)\setminus\Gamma(\delta))}^2 \\\nonumber
& \leq  C\int_0^t\|u(s,\cdot)\|^2_{L^2(\Om)}\sd s +\int_0^t\int_{\Om} |r_\eps (c_\eps,c_A) u| \sd x \sd s
     + \int_0^t\left|\int_{\Om} \mathcal{R}  ~u\sd x\right|\sd s\\\label{yuning:afterspectral}
&\quad  + \left(\frac{R^2}4+M_R(\eps,t)\right)\eps^{2N+1}
\end{alignat}
for all $t\in [0,T_\eps]$.
 Here we used that $\|\nabla_\tau u\|_{L^2(\Gamma_t(2\delta))}+\|\nabla u\|_{L^2(\Om\setminus \Gamma_t(\delta))}$ is equivalent to $\|\nabla_\tau u\|_{L^2(\Gamma_t(\delta))}+\|\nabla u\|_{L^2(\Om\setminus \Gamma_t(\delta))}$.
 So we can apply  Gronwall's inequality on the interval $[0,T]$ with $T\leq T_\eps$ and Young's inequality to obtain
\begin{alignat}{1}\label{yuning:firstorderest}
     &\sup_{0\leq t \leq T }\|u(t)\|^2_{L^2(\Om)}+\|\nabla_\btau u\|_{L^2(\Om\times(0,T)\cap \Gamma(2\delta))}^2+\|\nabla  u\|_{L^2(\Om\times (0,T)\setminus\Gamma(\delta))}^2\\\nonumber
&\leq  e^{CT}\left(\int_0^T\int_{\Om} |r_\eps (c_\eps,c_A) u| \sd x\sd t +\int_0^T  \left|\int_{\Om} \mathcal{R}  ~u\sd x\right|\sd t + \left(\frac{R^2}4 +M_R(\eps,T)\right)\eps^{2N+1} \right).
\end{alignat}
By the definition of $T_\eps$, the first two terms on the right-hand side  can be estimated using Lemma~\ref{lem:RuEstim} and Lemma~\ref{yuning:bulkest}:
\begin{alignat}{1}\nonumber
    &e^{CT}\left(\int_0^T\int_{\Om} |r_\eps (c_\eps,c_A) u| \sd x\sd t    +\int_0^T  \left|\int_{\Om} \mathcal{R}  ~u\sd x\right|\sd t+ \left(\frac{R^2}4 +M_R(\eps,T)\right)\eps^{2N+1}\right)\\\label{yuning:loworder1}
    &\leq  e^{CT}\left(C(R)(T^{1/8}+\eps^{1/4})\eps^{2N+1}    + \left(\frac{R^2}4 +M_R(\eps,T)\right)\eps^{2N+1}\right)\leq \frac{R^2}3\eps^{2N+1}
\end{alignat}
for all $\eps \in (0,\eps_0]$ provided $T\leq \min(T_1,T_\eps)$ and $T_1>0$, $\eps_0\in (0,1]$ are sufficiently small.
This together with \eqref{yuning:firstorderest} implies
\begin{equation*}
    \sup_{0\leq t\leq T} \|u(t)\|^2_{L^2(\Om)} + \|\nabla_\btau u\|^2_{L^2(\Om\times(0,T)\cap \Gamma(2\delta))}  + \|\nabla u\|^2_{L^2(\Om\times(0,T) \setminus(\Gamma(\delta)))}
    \leq  \frac{R^2}3\eps^{2N+1} < \frac{R^2}2\eps^{2N+1}
\end{equation*}
for all $T\in (0,\min(T_1,T_\eps))$ and $\eps\in (0,\eps_0]$ if $T_1\in (0,T_0]$ and $\eps_0$ are sufficiently small.
To complete the proof of \eqref{eq:EstimT'1} with strict inequality, it remains  to estimate $\p_\no u(x,t)$. To this end we use \eqref{eq:RemEstim}, $\inf_{s\in\R}f''(s)>-\infty $, \eqref{yuning:loworder1}  and the previous  estimates:
\begin{equation*}
    \begin{split}
    &\eps^2\|\p_\no (c_\eps-c_A)\|^2_{L^2(\Om\times(0,T)\cap \Gamma(2\delta)))}
    \leq   \eps^2\|\nabla u\|^2_{L^2(0,T;L^2(\Om))}\\
    &\leq \int_0^T\int_{\Om} \(\eps^2|\nabla u|^2  +f''(c_A) u^2\)\sd x\sd t-\inf_{s\in\R}f''(s)\int_0^T\int_{\Om}   u^2\sd x\sd t\\
&  \leq \frac{R^2}3\eps^{2N+3}+ CT\sup_{0\leq t\leq T}\|u(t)\|_{L^2(\Omega)}^2\\
&  \leq   \frac{R^2}3\eps^{2N+3}+CT\frac{R^2}2\eps^{2N+1}< \frac{R^2}6\eps^{2N+1}
    \end{split}
\end{equation*}
for all $0<T\leq \min(T_1,T_\eps)$ and $0<\eps\leq \eps_0$ provided $T_1\in (0,T_0]$ and $\eps_0\in (0,1]$ are sufficiently small.

Altogether we obtain \eqref{eq:EstimT'1} with $\tau=\min(T_1,T_\eps)$ and strict inequality for all $\eps\in (0,\eps_0]$, provided $T_1\in (0,T_0]$ and $\eps_0 \in (0,1]$ are sufficiently small. Because of the definition of $T_\eps$ and since the norms in  \eqref{eq:EstimT'1} depend continuously on $\tau$, this implies $T_\eps>  T_1$ for all $\eps \in (0,\eps_0]$. Hence the estimate \eqref{assumptions'} of Theorem~\ref{thm:main} is proved.

Now \eqref{eq:convVelocityb} follows directly from Proposition~\ref{yuning:velocityest} and Theorem~\ref{thm:ApproxVA} since $\ve_\eps-\ve_A=\tilde{\ve}_A -\ve_A +\tilde{ \we}_1+\tilde{\we}_2$.
Finally, $\lim_{\eps\to 0}\sup_{x\in\Om,t\in [0,T]}|c_A(x,t)|=1$ follows easily from the construction of $c_A$ as well as
 \eqref{yuning:1.38}.
The last statement on convergence of $\ve_A$ follows easily from the constructions.
This finishes the proof of  Theorem~\ref{thm:main}.


\appendix
\section{Appendix}

\subsection{Formally Matched Asymptotics for Stokes System}
The main task of this and the next subsection is to prove the Lemma \ref{yuning:app1}. One approach is to plug \eqref{yuning:summery1} and \eqref{yuning:summery2} into \eqref{yuning:innerd} and to verify. However, in order to show the reader how we obtained these formula, we shall first assume some ansatz and see what kind of equations these terms in the expansion should satisfy. To this end we will construct an approximate solution with the inner expansion
\begin{equation}\label{yuning:1.66}
  \begin{split}
    \ve_A^{in}(\rho,x,t)&=\ve_0(\rho,x,t)+\eps \ve_1(\rho,x,t)+\eps^2\ve_2(\rho,x,t),\\
p_A^{in}(\rho,x,t)&=\eps^{-1}p_{-1}(\rho,x,t)+p_0(\rho,x,t)+\eps p_1(\rho,x,t),
  \end{split}
\end{equation}
where we assume that $(\ve_j,p_j)$ are given by
\begin{equation}\label{yuning:ansatz1}
  \begin{split}
    \ve_j (\rho,x,t)&= \tilde{\ve}_j(\rho,x,t) + d_\Gamma(x,t) \hat{\ve}_j(x,t)\eta(\rho),\quad 0\leq j\leq 2,\\
  p_j (\rho,x,t)&= \tilde{p}_j(\rho,x,t) + d_\Gamma(x,t) \hat{p}_j(x,t)\eta(\rho),\quad-1\leq j\leq 1.
  \end{split}
\end{equation}
where $\eta(\rho)$ satisfies \eqref{eq:eta}.
Moreover, we assume that \eqref{yuning:matching4} holds, which implies the matching conditions
\begin{alignat}{2}\label{yuning:matching2}
    & \lim_{\rho\to\pm\infty} \ve_j(\rho,x,t)=\ve_j^\pm(x,t),& \lim_{\rho\to\pm\infty}\p_\rho \tilde{\ve}_j(\rho,x,t)&=\lim_{\rho\to\pm\infty}\p_\rho \ve_j(\rho,x,t)=0,\quad 0\leq j\leq 2\\
\label{yuning:matching3}
    &\lim_{\rho\to\pm\infty} p_j(\rho,x,t)=p_j^\pm(x,t),& \lim_{\rho\to\pm\infty}\p_\rho \tilde{p}_j(\rho,x,t)&=\lim_{\rho\to\pm\infty}\p_\rho p_j(\rho,x,t)=0,\quad -1\leq j\leq 1.
\end{alignat}
for all $(x,t)\in \Gamma(3\delta)$.
For the preceding analysis it will be sufficient to solve the first equation in \eqref{train:stokes} up to order $O(\eps)$ and the divergence equation up to order $O(\eps^2)$. This leads to the outer expansion to satisfy $p^\pm_{-1}$ being constant and
\begin{equation}\label{yuning:1.68}
  -\Delta \ve_j^\pm+\nabla p_j^\pm=0,\quad \Div \ve_j^\pm=0\quad \text{in}~\O^\pm(t)~\text{for}~j=0,1.
\end{equation}
After determining  $\ve^\pm_j$, they  are extended smoothly from $\Omega^\pm(t)$ to $\Omega$ such that $\Div \ve^\pm_j=0$ is preserved. The existence of such an extension can be seen as in  the discussion before \eqref{yuning:firstlimit}.

The general routine is: By matching terms with the same powers of $\eps$ of the divergence equation, we get $\tilde{\ve}_{k,\no}$ and take the normal component in the Stokes equation to get $\tilde{p}_{k-1}$ and finally use this to solve the Stokes equation to get $\tilde{\ve}_{k}$.
\noindent
\subsubsection{Divergence equation for $\ve_0$:}
In the following we will use:
\begin{lem}\label{yuning:compulemma1}
Let $k=0,1$. If $  \tilde{\ve}_k$ is independent of $\rho$,
then   the following formula holds
\begin{equation*}
\tilde{\ve}_{k} = \tfrac12 \(\ve_{k}^+ +\ve_{k}^-\)\text{ on }\Gamma(3\delta),\quad   \hat{\ve}_{k} = \tfrac1{2d_\Gamma} \(\ve_{k}^+ -\ve_{k}^-\)\text{ on }\Gamma(3\delta)\setminus \Gamma.
\end{equation*}
Moreover, if $\ve_{k}^\pm \in C^{\infty}(\overline{\Omega})$   and  $\ve_{k}^+=\ve_{k}^-$ on $\Gamma$, then  we can define
\begin{equation*}
  \hat{\ve}_k:=\tfrac 12\p_\no (\ve_k^+-\ve_k^-)\quad \text{on }\Gamma_t
\end{equation*}
and   $\hat{\ve}_k\in C^{\infty}(\overline{\Omega})$. The same statements also hold for  the normal and tangential components of $\hat{\ve}_k$.
\end{lem}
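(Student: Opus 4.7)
The strategy is to exploit the ansatz \eqref{yuning:ansatz1} together with the matching conditions \eqref{yuning:matching2} by sending $\rho \to \pm\infty$ in the representation of $\ve_k$. Since $\tilde{\ve}_k$ is assumed independent of $\rho$ and $\eta(\rho) \to \pm 1$ as $\rho \to \pm\infty$ by \eqref{eq:eta}, the formula
\begin{equation*}
  \ve_k(\rho, x, t) = \tilde{\ve}_k(x, t) + d_\Gamma(x,t) \hat{\ve}_k(x,t)\eta(\rho)
\end{equation*}
gives $\ve_k^{\pm}(x,t) = \tilde{\ve}_k(x,t) \pm d_\Gamma(x,t)\hat{\ve}_k(x,t)$ for $(x,t)\in \Gamma(3\delta)$. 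Taking half the sum and half the difference of these two identities immediately yields the first pair of formulae, with $\hat{\ve}_k$ defined on $\Gamma(3\delta)\setminus \Gamma$ by dividing by $d_\Gamma$.

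For the smoothness claim, I would invoke the general fact stated in Section~\ref{subsec:Coordinates} (just before the definition of $X_0^\ast$): if a function is smooth in normal direction and vanishes on $\Gamma$, then dividing by $d_\Gamma$ preserves smoothness, and the value on $\Gamma$ is given by $\partial_\no$ of the original function. Since $\ve_k^+=\ve_k^-$ on $\Gamma$ by hypothesis, $\ve_k^+-\ve_k^-$ vanishes on $\Gamma$, and the quoted fact shows that $\tfrac{1}{2d_\Gamma}(\ve_k^+-\ve_k^-)$ extends smoothly to $\overline{\Omega}$ with boundary value $\tfrac12 \partial_\no(\ve_k^+-\ve_k^-)$ on $\Gamma_t$, which is exactly the definition of $\hat{\ve}_k$ on $\Gamma_t$ stated in the lemma.

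Finally, the analogous statements for the normal and tangential components follow by taking inner products of the identities $\ve_k^{\pm} = \tilde{\ve}_k \pm d_\Gamma \hat{\ve}_k$ with $\no_{\Gamma_t}$ and $\btau$, respectively, and noting that these unit vectors depend smoothly on $(x,t)$ (so smoothness is preserved under the projection). There is no real obstacle here: the argument is essentially algebraic, and the only subtle point is the removable singularity at $d_\Gamma = 0$, which is handled uniformly by the Taylor expansion argument already used repeatedly in Section~\ref{subsec:Coordinates}.
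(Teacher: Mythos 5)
Your proof is correct and follows essentially the same route as the paper: take $\rho\to\pm\infty$ in the ansatz \eqref{yuning:ansatz1} using the matching conditions \eqref{yuning:matching2} and $\eta(\rho)\to\pm 1$, solve the resulting $2\times 2$ linear system, and appeal to the removable-singularity fact from Section~\ref{subsec:Coordinates} for the smoothness claim. The only addition is your explicit remark about projecting onto $\no$ and $\btau$ for the component statement, which the paper leaves implicit.
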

\begin{proof}
To prove the first part, since $\tilde{\ve}_{k}(\rho,x,t)$ is independent of $\rho$, we take $\rho\to\pm\infty$ in the first equation of \eqref{yuning:ansatz1} and use \eqref{yuning:matching2} to  conclude
 \begin{equation*}
   \tilde{\ve}_k(\rho,x,t) \pm d_\Gamma(x,t) \hat{\ve}_k(x,t) =\ve_k^\pm(x,t)\quad \text{for all }(x,t)\in\Gamma(3\delta).
 \end{equation*}
 Solving the linear equations leads to the first statement. The proof of the
last statement follows from a Taylor expansion with respect to $d_\Gamma$ as described in the end of Subsection~\ref{subsec:Coordinates}.
\end{proof}

Moreover, we have for $j=0,1,2$
\begin{equation*}
  \begin{split}
    &\Div\left( \ve_j(\tfrac{d_\Gamma}\eps-h_\eps, x,t)\right) = \tfrac1\eps\partial_\rho \tilde{\ve}_{j,\no}(\rho,x,t) -\nabla^\G  h_\eps(d_\Gamma,s,t)\cdot\partial_\rho\tilde{\ve}_{j,\btau}(\rho,x,t)   \\
  &\quad + \tfrac{d_\Gamma}\eps  \eta'(\rho)\hat{\ve}_{j,\no}(x,t) -d_\Gamma\eta'(\rho)
  \nabla^\G h_\eps(d_\Gamma,s,t)\cdot\hat{\ve}_{j,\btau}(x,t) +\Div_x\(\tilde{\ve}_j (\rho,x,t)+ \hat{\ve}_j(x,t)d_\Gamma \eta(\rho)\)
  \end{split}
\end{equation*}
Therefore it follows that
\begin{equation} \label{yuning:diver1}
  \begin{split}
  &\Div\left( \ve_A^{in}(\tfrac{d_\Gamma}\eps-h_\eps, x,t)\right)= \tfrac1\eps\partial_\rho \tilde{\ve}_{0,\no}(\rho,x,t) \\
  &\quad-\nabla^\G  h_\eps\cdot\partial_\rho\tilde{\ve}_{0,\btau}(\rho,x,t) + (\rho+h_\eps)  \eta'(\rho)\hat{\ve}_{0,\no}(x,t)\\
&\quad -d_\Gamma\eta'(\rho)
  \nabla^\G  h_\eps\cdot\hat{\ve}_{0,\btau}(x,t)+\Div_x\(\tilde{\ve}_0 (\rho,x,t)+ \hat{\ve}_0(x,t)d_\Gamma \eta(\rho)\) + \partial_\rho \tilde{\ve}_{1,\no}(\rho,x,t)\\
  &\quad-\eps\nabla^\G  h_\eps\cdot\partial_\rho\tilde{\ve}_{1,\btau}(\rho,x,t) + \eps(\rho+h_\eps)  \eta'(\rho)\hat{\ve}_{1,\no}(x,t)  -\eps d_\Gamma\eta'(\rho)
  \nabla^\G  h_\eps\cdot\hat{\ve}_{1,\btau}(x,t) \\
  &\quad+\eps\Div_x\(\tilde{\ve}_1 (\rho,x,t)+ \hat{\ve}_1(x,t)d_\Gamma \eta(\rho)\)  + \eps\partial_\rho \tilde{\ve}_{2,\no}(\rho,x,t)\\
  &\quad-\eps^2\nabla^\G  h_\eps\cdot\partial_\rho\tilde{\ve}_{2,\btau}(\rho,x,t) + \eps^2(\rho+h_\eps)  \eta'(\rho)\hat{\ve}_{2,\no} (x,t) \\
&\quad -\eps^2 d_\Gamma\eta'(\rho)
  \nabla^\G  h_\eps\cdot\hat{\ve}_{2,\btau}(x,t) +\eps^2\Div_x\(\tilde{\ve}_2 (\rho,x,t)+ \hat{\ve}_2(x,t)d_\Gamma \eta(\rho)\).
  \end{split}
\end{equation}
Hence equating terms with the same power of $\eps$ in the expansion of $\Div \ve_A^{in}$, we obtain at order $O(\tfrac1\eps)$ that
\begin{equation}\label{yuning:v0nvanish}
  \partial_\rho \tilde{\ve}_{0,\no} (\rho,x,t)=0 \qquad \text{for all }\rho\in\R, (x,t)\in\G(3\delta)
\end{equation}
 and this together with Lemma \ref{yuning:compulemma1} implies
\begin{equation}\label{yuning:1.52}
  \tilde{\ve}_{0,\no}= \tfrac12(\ve^+_{0,\no}(x,t)+\ve^-_{0,\no}(x,t))\text{ in } \Gamma(3\delta),\
\hat{\ve}_{0,\no}= \tfrac1{2d_\Gamma}(\ve^+_{0,\no}(x,t)-\ve^-_{0,\no}(x,t))\text{ in }\G(3\delta)\backslash\G,
\end{equation}
where $\ve^\pm_0$ are extended smoothly from $\Omega^\pm(t)$ to $\Omega$ such that $\Div \ve^\pm_0=0$ is preserved. So we can define:
\begin{equation*}
  \hat{\ve}_{0,\no}= \tfrac12\p_\no (\ve^+_{0,\no}- \ve^-_{0,\no})~\text{on}~ \G.
\end{equation*}
For the expansion of the Stokes system, we obtain:
 \begin{lem}
 If  we relate $\rho=\frac {d_\G}\eps-h_1-\eps h_2$ in \eqref{yuning:1.66}, then  expanding
\begin{equation}\label{yuning:1.67}
    -\Delta\ve_A^{in}+\nabla p_A^{in}\quad \text{and}\quad -\eps \Div (\nabla c^{in}_0\otimes \nabla c^{in}_0)
  \end{equation}and equating the terms with the same power of $\eps$ yields:
  \begin{alignat}{1}
    O(\tfrac{1}{\eps^2}):&-\p_\rho^2\tilde{\ve}_0+\p_\rho \tilde{p}_{-1}\no=-\p_\rho((\theta_0'(\rho))^2)\no\label{train:scal-a}
   \end{alignat}
   and, if $\partial_\rho \tilde{\ve}_0= \nabla_x \tilde{p}_{-1}=0$ and $ \hat{p}_{-1}=0$,
 \begin{alignat}{1}\nonumber
     O(\tfrac{1}{\eps}):&-\p_\rho^2 \tilde{\ve}_1-\nabla^\G  h_1\p_\rho \tilde{p}_{-1}+\no\p_\rho \tilde{p}_0 -(\rho+h_1)\hat{\ve}_0\eta''(\rho)
     -2\eta'(\rho)\partial_\no(d_\Gamma\hat{\ve}_0)\\
& \quad =\p_\rho(\theta_0'(\rho))^2\nabla^\G  h_1-(\theta_0'(\rho))^2\no \Delta d_\Gamma.\label{train:scal-b}
   \end{alignat}
  If additionally $\partial_\rho \tilde{\ve}_1=0$  and $(\p_\rho\tilde{p}_1,\p_\rho\tilde{\ve}_2,\p_\rho^2 \tilde{\ve}_2)\in\mathcal{R}_{0,\alpha}$, then
 \begin{alignat}{1}\nonumber
     O(1):&-\p_\rho^2 \tilde{\ve}_2-\eta''(\rho)(h_2\hat{\ve}_0+(\rho+h_1)\hat{\ve}_1)+\eta'(\rho)\hat{p}_0\no(\rho+h_1)+\no\p_\rho\tilde{p}_1-\p_\rho\tilde{p}_0\nabla^\G  h_1\nonumber\\
     &-\p_\rho\tilde{p}_{-1}\nabla^\G  h_2+\eta(\rho)\(\nabla_x(d_\Gamma\hat{p}_0)-\Delta_x(d_\Gamma\hat{\ve}_0)\)-\Delta_x\tilde{\ve}_0+\nabla_x\tilde{p}_0+2\eta'(\rho)\nabla^\G  h_1\cdot \nabla_x(d_\Gamma \hat{\ve}_0)\nonumber\\
     &-2\eta'(\rho)\partial_\no(d_\Gamma\hat{\ve}_1)-(\eta'(\rho)h_1\hat{\ve}_0
     +\eta'(\rho)\hat{\ve}_0\rho)\Delta_x d_\Gamma\nonumber\\
    &\ = \p_\rho(\theta_0'(\rho))^2\nabla^\G  h_2-\p_\rho(\theta_0'(\rho))^2|\nabla^\G  h_1|^2\no+(\theta_0'(\rho))^2\(\Delta_x d_\Gamma \nabla^\G  h_1+\Delta^\G  h_1\no\),\label{train:scal-c}\\
     O(\eps):   &-\eta''(\rho)(\rho\hat{\ve}_2+h_1\hat{\ve}_2+h_2\hat{\ve}_1)+\eta'(\rho)\no(h_2\hat{p}_0+h_1\hat{p}_1+\rho\hat{p}_1)-\eta'(\rho)\nabla^\G h_1(\rho+h_1)\hat{p}_0\nonumber\\
    &-\p_\rho\tilde{p}_1\nabla^\G  h_1-\eta''(\rho)(\rho+h_1)\hat{\ve}_0|\nabla^\G  h_1|^2-\p_\rho\tilde{p}_0\nabla^\G  h_2+\eta'(\rho)(\rho+h_1)\hat{\ve}_0\Delta^\G  h_1+\eta(\rho)\nabla_x(d_\Gamma\hat{p}_1)\nonumber\\
    &+2\eta'(\rho)\nabla^\G  h_2\cdot\nabla_x(d_\Gamma \hat{\ve}_0)+2\eta'(\rho)\nabla^\G  h_1\cdot \nabla_x(d_\Gamma\hat{\ve}_1)-2\eta'(\rho)\p_\no (d_\Gamma\hat{\ve}_2)-2\p_\no \p_\rho\tilde{\ve}_2+\no\p_\rho\tilde{p}_2\nonumber\\
    &+\nabla_x\tilde{p}_1-\eta'(\rho)(h_1\hat{\ve}_1+h_2\hat{\ve}_0)\Delta_x d_\Gamma-\p_\rho\tilde{\ve}_2\Delta_xd_\Gamma
    -\eta'(\rho)\hat{\ve}_1\rho\Delta_xd_\Gamma-\eta(\rho)\Delta_x(d_\Gamma\hat{\ve}_1)
    -\Delta_x\tilde{\ve}_1\nonumber\\
    &\ =\(|\nabla^\G  h_1|^2\nabla^\G  h_1-2(\nabla^\G  h_1\cdot\nabla^\G  h_2)\no\)\p_\rho(\theta'(\rho))^2\nonumber\\
    &\ \ +(\theta'(\rho))^2\(\Delta^\G  h_2\no-\Delta^\G  h_1\nabla^\G  h_1+\Delta_x d_\Gamma \nabla^\G  h_2-\tfrac 12\nabla^\G |\nabla^\G  h_1|^2\)\label{train:scal-d}\\
    O(\eps^2)&: \sum_{0\leq {i'}\leq 2 ;0\leq i,j,j'\leq 1}  R^{i' j'  i j }_\eps (x,t)  (\partial_s^{j'} h_2)^i (\partial_s^{i'} h_2 )^j\nonumber\\
    &+\sum_{0\leq i,j,k,i',j',k'\leq 1}  \tilde{R}^{i' j' k'  i j k }_\eps (x,t)  (\partial_s^{j'} h_2)^i (\partial_s^{i'} h_2 )^j(\partial_s^{k'} h_2 )^k
\label{train:scal-e}
  \end{alignat}
 where
$
   R^{i' j' i j }_\eps, \tilde{R}^{i' j' k' i j k }_\eps
$  are uniformly bounded with respect to $\eps\in (0,1], (x,t)\in \Gamma(3\delta)$. 
 \end{lem}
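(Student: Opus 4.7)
The statement is essentially a bookkeeping exercise: one plugs the ansatz \eqref{yuning:1.66}--\eqref{yuning:ansatz1} into \eqref{yuning:1.67}, carries out the differentiations using the stretched-variable chain rule \eqref{yuningliu:formula1}, and then groups terms by powers of $\eps$. I would organize the computation in three stages.

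\emph{Stage 1 (setup).} Using $h_\eps=h_1+\eps h_2$ and \eqref{yuningliu:formula1}, I would expand $\Delta \ve_j$ and $\nabla p_j$ in the form
\begin{equation*}
\Delta\ve_j=\tfrac{1}{\eps^2}\p_\rho^2\ve_j+\tfrac{1}{\eps}(\Delta d_\Gamma)\p_\rho\ve_j-\tfrac{2}{\eps}\nabla^\Gamma h_\eps\cdot\p_\rho\nabla^\Gamma\ve_j+\cdots,
\end{equation*}
and similarly for $\nabla p_j$, then substitute the splittings $\ve_j=\tilde\ve_j+d_\Gamma\hat\ve_j\eta(\rho)$ and $p_j=\tilde p_j+d_\Gamma\hat p_j\eta(\rho)$. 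The key is to replace $\Delta d_\Gamma$ by the Taylor expansion \eqref{yuning:ex1}, which turns $\eps^{-1}\Delta d_\Gamma\,\p_\rho$ into $-\eps^{-1}H\p_\rho-(\rho+h_\eps)\kappa_1\p_\rho+O(\eps)$, thereby producing terms of three successive orders. For the capillary part one computes
\begin{equation*}
\nabla c^{in}_0=\tfrac{\no}{\eps}\theta_0'(\rho)-\nabla^\Gamma h_\eps\,\theta_0'(\rho),
\end{equation*}
and $-\eps\Div(\nabla c^{in}_0\otimes\nabla c^{in}_0)$ is expanded analogously. All these operations are straightforward applications of \eqref{yuningliu:formula1} and \eqref{yuning:ex1}.

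\emph{Stage 2 (matching orders).} I would then collect terms of order $\eps^{-2},\eps^{-1},1,\eps,\eps^2$ in sequence. At $O(\eps^{-2})$ only $-\p_\rho^2\tilde\ve_0+\p_\rho\tilde p_{-1}\no$ from the LHS survives, matched by $-\p_\rho((\theta_0')^2)\no$ from the capillary term; this is \eqref{train:scal-a}. Once this is solved (yielding $\tilde p_{-1}=-(\theta_0')^2$ and $\p_\rho\tilde\ve_0\equiv0$), these facts are fed into the $O(\eps^{-1})$ balance, producing \eqref{train:scal-b}. The same cascade continues: using the outputs $\p_\rho\tilde\ve_1\equiv 0$ and smoothness of $\tilde p_0$, one gets \eqref{train:scal-c}; then using control on $\p_\rho\tilde p_1$, $\p_\rho\tilde\ve_2$, $\p_\rho^2\tilde\ve_2\in\mathcal{R}_{0,\alpha}$, one obtains \eqref{train:scal-d}. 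At each stage the cancellations rely on \eqref{yuning:matching2}--\eqref{yuning:matching3} and on the hypotheses listed before \eqref{train:scal-b}, \eqref{train:scal-c}, \eqref{train:scal-d}.

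\emph{Stage 3 (the $O(\eps^2)$ remainder).} The claim at this order is not an identity but only a structural bound. I would collect everything left over into a sum indexed by how many derivatives of $h_2$ appear and to what power. Each coefficient $R_\eps^{i'j'ij}$ or $\tilde R_\eps^{i'j'k'ijk}$ arises from a product of smooth quantities on $\Gamma$ (derivatives of $d_\Gamma$, $\ve_j^\pm$, $p_j^\pm$, $\hat\ve_j$, $\hat p_j$, $\eta$, $\theta_0$) evaluated at the stretched variable, so its uniform boundedness in $(x,t,\eps)$ is automatic from smoothness of the data and the exponential decay of $\theta_0',\eta',\eta\mp 1$ through Lemma \ref{yuning:lip4} and Lemma \ref{lem:ExpDecay}. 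The trilinear terms come from the self-interaction of $\nabla^\Gamma h_\eps$ in the Laplacian and in the capillary tensor.

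The main obstacle is not conceptual but combinatorial: ensuring that every term generated by the three substitutions (chain rule, Taylor of $\Delta d_\Gamma$, splitting \eqref{yuning:ansatz1}) is placed at its correct order in $\eps$, and that the apparently $O(\eps^{-1})$ or $O(1)$ pieces that involve $\hat\ve_{0,\no}/d_\Gamma$ and $\hat p_0/d_\Gamma$ are in fact smooth after exploiting the vanishing of $\ve_0^+-\ve_0^-$ and $p_0^+-p_0^-+\sigma\Delta d_\Gamma$ on $\Gamma$, i.e.\ the interface conditions in \eqref{yuning:firstlimit}. Once this is arranged, the stated identities and bounds follow by inspection.
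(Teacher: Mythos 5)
Your proposal follows essentially the same route as the paper's proof: expand the inner ansatz using the chain rule \eqref{yuningliu:formula1}, expand the capillary term $-\eps\Div(\nabla c_0^{in}\otimes\nabla c_0^{in})$ likewise, split $\ve_j,p_j$ via \eqref{yuning:ansatz1}, collect by powers of $\eps$, and bound the $O(\eps^2)$ residual through exponential decay and Lemma~\ref{lem:ExpDecay}. The one inaccuracy worth flagging: you call the Taylor expansion of $\Delta d_\Gamma$ from \eqref{yuning:ex1} the ``key'' step, but this lemma in fact keeps $\Delta d_\Gamma$ (and $\Delta_x d_\Gamma$) as an unexpanded smooth coefficient of $(x,t)$ in all of \eqref{train:scal-b}--\eqref{train:scal-d}; the Taylor expansion producing $\kappa_1,\kappa_2,\kappa_{3,\eps}$ is deployed only later, in the Allen--Cahn expansion of Appendix~A.3 via \eqref{yuning:ex2}. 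Expanding $\Delta d_\Gamma$ up front, as you propose, would still yield correct order-by-order identities, but they would not match the ones stated in the lemma; since the conclusion here literally contains $\Delta d_\Gamma$, the paper's bookkeeping is the one you must reproduce.
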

 \begin{proof}
 Since $c_0^{in}:=\theta_0(\rho)$, it follows from  \eqref{yuningliu:formula1}  that:
 \begin{equation*}
   \begin{split}
     -\eps\Div&\(\nabla\theta_0(\tfrac{d_\Gamma}\eps - h_\eps)\otimes\nabla\theta_0(\tfrac{d_\Gamma}\eps - h_\eps)\)
     =-\eps\Delta\theta_0(\tfrac{d_\Gamma}\eps - h_\eps)\nabla\theta_0(\tfrac{d_\Gamma}\eps - h_\eps)-\tfrac{\eps}2\nabla|\nabla\theta_0(\tfrac{d_\Gamma}\eps - h_\eps)|^2\\
     =&-\(\tfrac1{\eps^2}\theta_0''(\rho) +\theta_0''(\rho)|\nabla^\G  h_\eps|^2+ \theta_0'(\rho) \tfrac{\Delta d_\Gamma}{\eps} -\theta_0'(\rho)\Delta^\G  h_\eps\)\left(\no -\eps\nabla^\G  h_\eps\right)\theta_0'(\rho)\\
     &-\tfrac{\eps}2\nabla\(\(\tfrac{1}{\eps^2}+|\nabla^\G  h_\eps|^2\)(\theta_0'(\tfrac{d_\Gamma}\eps - h_\eps))^2\)\\
     =&-\tfrac1{\eps^2}\p_\rho(\theta_0'(\rho))^2\no+\tfrac 1\eps\p_\rho(\theta_0'(\rho))^2\nabla^\G  h_1+\p_\rho(\theta_0'(\rho))^2\nabla^\G  h_2-\p_\rho(\theta_0'(\rho))^2|\nabla^\G  h_1|^2\no\\
     &-  (\theta_0'(\rho))^2\tfrac{\Delta d_\Gamma}\eps\no+(\theta_0'(\rho))^2\(\Delta d_\Gamma \nabla^\G  h_1+\Delta^\G  h_1\no\)\\
     &+{\eps\(|\nabla^\G  h_1|^2\nabla^\G  h_1-2(\nabla^\G  h_1\cdot\nabla^\G  h_2)\no\)\p_\rho(\theta_0'(\rho))^2}\nonumber\\
    &
    +{\eps(\theta_0'(\rho))^2\(\Delta^\G  h_2\no-\Delta^\G  h_1\nabla^\G  h_1+\Delta d_\Gamma \nabla^\G  h_2-\tfrac 12\nabla^\G |\nabla^\G  h_1|^2\)}\\
    &+\eps^2\sum_{ {i'} \leq 2;   i,j,j'\leq 1}     R^{ i' j' i,j }_\eps (x,t)  \(\partial_s^{j'} h_2\)^i \(\partial_s^{i'} h_2 \)^j\nonumber\\
    &+\eps^2\sum_{  i,j,k,i',j',k'\leq 1}    \tilde{R}^{ i' j' k'  i j k }_\eps (x,t)  \(\partial_s^{j'} h_2\)^i \(\partial_s^{i'} h_2 \)^j\(\partial_s^{k'} h_2 \)^k
   \end{split}
 \end{equation*}
 where
$
   R^{i' j' i j }_\eps, \tilde{R}^{i' j' k' i j k }_\eps
$  are uniformly bounded with respect to $\eps\in (0,1], (x,t)\in \Gamma(3\delta)$.
On the other hand,
 \begin{equation*}
   \begin{split}
\Delta\tilde{\ve}_j&=(\tfrac{1}{\eps^2}+|\nabla^\G  h_\eps|^2)\p_\rho^2 \tilde{\ve}_j+(\tfrac{ \Delta_x d_\Gamma}{\eps}-\Delta^\G  h_\eps)\p_\rho \tilde{\ve}_j
    \\
& +2(\tfrac{\no}\eps-\nabla^\G  h_\eps)\cdot \nabla_x\p_\rho \tilde{\ve}_j(\rho, x,t)+\Delta_x \tilde{\ve}_j,\quad0\leq j\leq 2,\\
\nabla \tilde{p}_j&=(\tfrac{\no}\eps -\nabla^\G  h_\eps)\p_\rho \tilde{p}_j+\nabla_x \tilde{p}_j,\quad-1\leq j\leq 1.
   \end{split}
 \end{equation*}
 Furthermore we have
 \begin{equation*}
   \begin{split}
\Delta \(d_\Gamma \hat{\ve}_j(x,t)\eta(\rho)\)&=d_\Gamma\hat{\ve}_j\((\tfrac1{\eps^2}+|\nabla^\G  h_\eps|^2)\eta''(\rho)+(\tfrac {\Delta d_\Gamma}\eps-\Delta^\G  h_\eps )\eta'(\rho)\)\\
     &\quad +\eta(\rho)\Delta(d_\Gamma\hat{\ve}_j)+2\eta'(\rho)\(\tfrac\no\eps-\nabla^\G  h_\eps\)\cdot\nabla_x(d_\Gamma\hat{\ve}_j),\\
\nabla\(d_\Gamma \hat{p}_j(x,t)\eta(\rho)\)&=\eta(\rho)\nabla_x(d_\Gamma \hat{p}_j)+d_\Gamma \hat{p}_j\(\tfrac{\no}\eps-\nabla^\G  h_\eps\) \eta'(\rho).
\end{split}
 \end{equation*}
If we substitute the above two sets of formulas into \eqref{yuning:1.67}, we get the desired expansion. The last statement \eqref{train:scal-e} follows from Lemma \ref{lem:ExpDecay} and the exponential decay of $\theta'_0,\theta''_0,\eta',\eta'', \p_\rho\tilde{p}_1,\p_\rho\tilde{\ve}_2$  and $\p_\rho^2 \tilde{\ve}_2$.
 \end{proof}

\noindent
\subsubsection{Stokes equation for $\ve_0$ and $p_{-1}$:}
 Taking the normal component of \eqref{train:scal-a} and using \eqref{yuning:v0nvanish} gives $\p_\rho \tilde{p}_{-1}=-\p_\rho(\theta_0'(\rho))^2$ and therefore
 \begin{equation*}
    \tilde{p}_{-1}(\rho,x,t)=-\theta_0'(\rho)^2+\tilde{\tilde{p}}_{-1}(x,t)~\text{for all}~(x,t)\in\Gamma(3\delta)
 \end{equation*}
 for some $\tilde{\tilde{p}}_{-1}\colon \Gamma(3\delta)\to \R$.
 The matching condition \eqref{yuning:matching3} implies that
 \begin{equation*}
   \begin{split}
     p^\pm_{-1}(x,t)&=0 + \tilde{\tilde{p}}_{-1}(x,t) \pm d_\Gamma \hat{p}_{-1}(x,t)\qquad \text{if } (x,t)\in\Gamma(3\delta),
   \end{split}
 \end{equation*}
where $p^\pm_{-1}(x,t)$ are constants due to the outer expansion, which  can be  chosen to be $0$ for simplicity. Hence we obtain $\hat{p}_{-1}\equiv  \tilde{\tilde{p}}_{-1}\equiv 0$.
 As a result
 \begin{equation}\label{yuning:pvanish}
   { \tilde{p}_{-1}(\rho,x,t)=  p_{-1}(\rho,x,t)=-\theta_0'(\rho)^2,~\hat{p}_{-1}=0\quad \text{for all } (x,t)\in \Gamma(3\delta).}
 \end{equation}
 Now we go back to \eqref{train:scal-a} and deduce that $-\p_\rho^2 \tilde{\ve}_0=0$ and this implies
  \begin{equation}\label{yuning:v0indepent}
     \tilde{\ve}_0(\rho,x,t)=\tilde{\ve}_0(x,t)\qquad \text{for all}~(x,t)\in \Gamma(3\delta)
  \end{equation}
  since the only bounded solution of $-\p_\rho^2 \tilde{\ve}_0=0$ is independent of $\rho$.
  This together with Lemma~\ref{yuning:compulemma1} yields
  \begin{equation}\label{yuning:vtilde0}
  {\tilde{\ve}_0 = \tfrac12 (\ve_0^+ +\ve_0^-)\text{ on }\Gamma(3\delta),\quad   \hat{\ve}_0 = \tfrac1{2d_\Gamma} (\ve_0^+ -\ve_0^-)\text{ on } \Gamma(3\delta)\setminus \Gamma}
\end{equation}
and
\begin{equation*}
  \ve_0(\rho,x,t)=\tfrac 12(\ve_0^+(x,t)+\ve_0^-(x,t))+\tfrac {\eta(\rho)}2(\ve_0^+(x,t)-\ve_0^-(x,t)),~\forall (x,t)\in\Gamma(3\delta)\backslash\Gamma, \rho \in\R.
\end{equation*}
On the interface  $\Gamma$ the  matching condition yields
\begin{equation*}
  \lim_{\rho\to\pm\infty}\tilde{\ve}_0(\rho,x,t)=\lim_{\rho\to\pm\infty}{\ve}_0(\rho,x,t)=\ve_0^\pm(x,t)\quad \text{for all } (x,t)\in \Gamma.
\end{equation*}
Hence we have
\begin{equation}\label{yuning:jumpv0}
  {[\ve^\pm_0](x,t):=\ve_0^+(x,t)-\ve_0^-(x,t)=0,\quad \text{for all } (x,t)\in \Gamma}
\end{equation}
and it follows from Lemma \ref{yuning:compulemma1} that
\begin{equation} \label{yuning:redefv0n}
 { \hat{\ve}_0=\tfrac12\p_\no (\ve^+_0- \ve^-_0)~\text{on}~\Gamma}.
\end{equation}
 \medskip

\noindent
\subsubsection{Divergence equation for $\ve_1$:}
Using \eqref{yuning:v0indepent} and \eqref{yuning:diver1}, we obtain at order   $O(1)$ of  the expansion for $\Div \ve_A^{in}$ that:
\begin{equation}\label{eq:DivEqv1}
  \begin{split}
    &  (\rho+h_1)  \eta'(\rho)\hat{\ve}_{0,\no}(x,t) -d_\Gamma\eta'(\rho)
  \nabla^\G  h_1\cdot\hat{\ve}_{0,\btau}(x,t)\\&+\Div_x\(\tilde{\ve}_0 (\rho,x,t)+ \hat{\ve}_0(x,t)d_\Gamma \eta(\rho)\)+ \partial_\rho \tilde{\ve}_{1,\no} (\rho,x,t)=0
  \end{split}
\end{equation}
  By restricting \eqref{eq:DivEqv1} on $\Gamma$ we can show that $\partial_\rho \tilde{\ve}_{1,\no}  (\rho,x,t)=0$, which implies
\begin{equation}\label{yuning:tildev1n}
  \tilde{\ve}_{1,\no}(\rho,x,t) =\tilde{\ve}_{1,\no}(x,t)\quad\text{on } \Gamma,
\end{equation}
by showing that
\begin{equation*}
 (\rho+h_1)  \eta'(\rho)\hat{\ve}_{0,\no}(x,t)+\Div_x \(\tilde{\ve}_0 (x,t)+ \hat{\ve}_0(x,t){d_\Gamma \eta(\rho)}\)=0\quad\text{on } \Gamma.
\end{equation*}
Actually, it follows from \eqref{yuning:redefv0n}, \eqref{yuning:jumpv0} and $\Div \ve_0^\pm =0$ 
that
 \begin{equation}\label{yuning:v0vanish}
 2\hat{\ve}_{0,\no}= \p_\no (\ve^+_{0,\no}- \ve^-_{0,\no})=  \Div_\btau[ \ve_{0,\btau}^{\First{\pm}}] =0\quad\text{on } \Gamma
 \end{equation}
  and from \eqref{yuning:vtilde0} that
  \begin{equation*}
   \Div_x \tilde{\ve}_0 ( x,t) = \tfrac12(\Div_x \ve^+_0+\Div_x \ve^-_0)=0\quad \text{on }\Gamma.
  \end{equation*}
Hence it follows from \eqref{yuning:tildev1n} and \eqref{yuning:matching2} that $\tilde{\ve}_{1,\no} (x,t)= \ve^\pm_{1,\no} (x,t)$ and therefore
\begin{equation*}
 [\ve_{1,\no}^\pm ]:=\ve^+_{1,\no}-\ve^-_{1,\no}=0\qquad \text{on}~\Gamma.
\end{equation*}
A similar result holds  on $\Gamma(3\delta)\backslash\Gamma$ following from \eqref{yuning:vtilde0}, \eqref{yuning:1.68} and the extension process afterwards:
\begin{equation*}
  \Div_x (\tilde{\ve}_0 + d_\Gamma \hat{\ve}_0 \eta(\rho))=\tfrac12\Div_x (\ve_0^+ +\ve_0^-)+\tfrac 12\eta(\rho)\Div_x  \(\ve_0^+-\ve_0^-\)=0.
\end{equation*}
Since the first two terms on the left hand side of  \eqref{eq:DivEqv1} vanish on $\Gamma$, we can compensate them in \eqref{yuning:diver1} by terms with higher powers of $\eps$ using $1=\eps\frac{\rho+h_\eps}{d_\Gamma}$. Therefore we simply solve
\begin{equation}\label{yuning:newdis1}
  \partial_\rho \tilde{\ve}_{1,\no}  (\rho,x,t)=0\quad \text{on}~\G(3\delta)\backslash\Gamma.
\end{equation}
Thus we obtain from Lemma \ref{yuning:compulemma1} that
\begin{equation}\label{yuning:tildev1nfinal}
 \tilde{\ve}_{1,\no}  = \tfrac12 (\ve_{1,\no} ^+ +\ve_{1,\no} ^-),\quad   \hat{\ve}_{1,\no}  = \tfrac1{2d_\Gamma} (\ve_{1,\no} ^+ -\ve_{1,\no} ^-)\quad\text{on } \G(3\delta)\backslash\Gamma
\end{equation}
and
 the $O(\eps)$ order equation of \eqref{yuning:diver1} becomes:
\begin{alignat}{1}\nonumber
     & \tfrac{(\rho+h_1)^2}{d_\Gamma}  \eta'(\rho)\hat{\ve}_{0,\no}  -(\rho+h_1)\eta'(\rho)
  \nabla^\G  h_1 \cdot\hat{\ve}_{0,\btau} - \nabla^\G  h_1\cdot\partial_\rho\tilde{\ve}_{1,\btau}+h_2\eta'(\rho)\hat{\ve}_{0,\no}+  (\rho+h_1)  \eta'(\rho)\hat{\ve}_{1,\no}\\ \label{yuning:divforv2}
&    - d_\Gamma\eta'(\rho)
  \nabla^\G  h_1 \cdot\hat{\ve}_{1,\btau}- d_\Gamma\eta'(\rho)
  \nabla^\G  h_2 \cdot\hat{\ve}_{0,\btau}  + \Div_x\(\tilde{\ve}_1  + \hat{\ve}_1 d_\Gamma \eta(\rho)\)+ \partial_\rho \tilde{\ve}_{2,\no}
  =0.
\end{alignat}

\noindent
\subsubsection{Stokes equation for $\ve_1$ and $p_{0}$:}
%
We substitute \eqref{yuning:pvanish} and \eqref{yuning:vtilde0} into \eqref{train:scal-b}. This leads to
\begin{equation}\label{yuning:newmunus1}
  O(\tfrac{1}{\eps}):-\p_\rho^2 \tilde{\ve}_1+\no\p_\rho \tilde{p}_0 -\hat{\ve}_0(\rho+h_1) \eta''(\rho)  -2  \p_\no (d_\Gamma \hat{\ve}_0) \eta'(\rho) =-(\theta_0'(\rho))^2\no \Delta d_\Gamma~\text{in}~\G(3\delta).
\end{equation}
 Restricting  \eqref{yuning:newmunus1} on $\Gamma$, integrating with respect to $\rho\in \R$, and  using \eqref{yuning:matching2}, \eqref{yuning:matching3}, \eqref{yuning:jumpv0} and the last formula in \eqref{yuning:1.26} imply
\begin{equation*}
 \no[\tilde{p}_0]-2\hat{\ve}_0= \sigma H\no\qquad \text{on }\Gamma.
\end{equation*}
To  proceed, we need the following lemma
 \begin{lem}\label{yuning:limitlem1}
   Let $j=0,1$.Under the condition that $[\ve_j^\pm]:=\ve_j^+-\ve_j^-=0$ on $\Gamma$, it holds that
    \begin{equation*}
      \quad [p_j^\pm]=[\tilde{p}_j],2\hat{\ve}_j=[\p_\no  \ve_j^\pm]=2\(D\ve_j^+-D\ve_j^-\)\quad \text{on }\Gamma.
    \end{equation*}
 \end{lem}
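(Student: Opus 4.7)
The plan is to read off all three identities from the ansatz \eqref{yuning:ansatz1}, the matching \eqref{yuning:matching2}--\eqref{yuning:matching3}, the $\rho$-independence of $\tilde{\ve}_j$ already established for $j=0,1$, and the divergence-free condition \eqref{yuning:1.68}. First, I would send $\rho\to\pm\infty$ in \eqref{yuning:ansatz1}. Since $\eta(\rho)\to\pm 1$ exponentially by \eqref{eq:eta} and $\partial_\rho\tilde{\ve}_j,\partial_\rho\tilde{p}_j\to 0$ by the matching, this produces the pointwise identities
\begin{equation*}
\ve_j^\pm(x,t)=\tilde{\ve}_j^\pm(x,t)\pm d_\Gamma(x,t)\hat{\ve}_j(x,t),\qquad p_j^\pm(x,t)=\tilde{p}_j^\pm(x,t)\pm d_\Gamma(x,t)\hat{p}_j(x,t)
\end{equation*}
on $\Gamma(3\delta)$, where $\tilde{\ve}_j^\pm:=\lim_{\rho\to\pm\infty}\tilde{\ve}_j$ and $\tilde{p}_j^\pm:=\lim_{\rho\to\pm\infty}\tilde{p}_j$. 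Restricting the pressure relation to $\Gamma$ (where $d_\Gamma=0$) and taking the jump yields $[p_j^\pm]=[\tilde{p}_j]$ on $\Gamma$ at once.

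For the equality $2\hat{\ve}_j=[\partial_\no \ve_j^\pm]$ on $\Gamma$, the plan is to invoke Lemma~\ref{yuning:compulemma1} together with the $\rho$-independence of $\tilde{\ve}_j$ for $j=0,1$ (which follows from \eqref{yuning:v0indepent} for $j=0$ and from the analogous statement \eqref{yuning:newdis1} for the normal component of $\tilde{\ve}_1$, while the tangential component requires integrating the tangential projection of \eqref{train:scal-b} in $\rho$ and using the matching $\partial_\rho\tilde{\ve}_1\to 0$ at $\rho=\pm\infty$ as a constraint). This gives $\tilde{\ve}_j^+=\tilde{\ve}_j^-=\tilde{\ve}_j$ throughout $\Gamma(3\delta)$, so subtracting the two velocity matching relations yields $\hat{\ve}_j=[\ve_j^\pm]/(2d_\Gamma)$ on $\Gamma(3\delta)\setminus\Gamma$. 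Since $\ve_j^\pm$ are smooth and the hypothesis forces $[\ve_j^\pm]|_\Gamma=0$, a first-order Taylor expansion in the normal direction delivers $\hat{\ve}_j|_\Gamma=\tfrac12[\partial_\no\ve_j^\pm]$.

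For the remaining identity $[\partial_\no\ve_j^\pm]=2(D\ve_j^+-D\ve_j^-)\no$ on $\Gamma$, I would write $2D\ve\,\no=\partial_\no\ve+(\nabla\ve)^T\no$ together with the componentwise formula $((\nabla\ve)^T\no)_i=\partial_i(\ve\cdot\no)-\ve\cdot\partial_i\no$. Upon jumping, the piece $\ve\cdot\partial_i\no$ is annihilated by $[\ve_j^\pm]|_\Gamma=0$. The term $\partial_i[\ve_j^\pm\cdot\no]$ splits into a tangential piece (tangential derivative on $\Gamma$ of a quantity vanishing on $\Gamma$, hence zero) and a normal piece; with $\no$ extended constantly along normals ($\partial_\no\no=0$) this normal piece equals $[\partial_\no\ve_{j,\no}^\pm]$, which using $\Div\ve_j^\pm=0$ from \eqref{yuning:1.68} is $-[\Div_\btau\ve_j^\pm]$, and this vanishes on $\Gamma$ because $\Div_\btau[\ve_j^\pm]|_\Gamma=0$. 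Thus the $(\nabla\ve)^T\no$ contribution dies on $\Gamma$ and $2[D\ve_j^\pm]\no=[\partial_\no\ve_j^\pm]$ remains.

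The main obstacle I anticipate is verifying the $\rho$-independence of the tangential component of $\tilde{\ve}_1$; this does not follow directly from the divergence equation (which, as in \eqref{yuning:newdis1}, only controls the normal component) but must be extracted from the tangential projection of \eqref{train:scal-b}, where the cancellation between the $\nabla^\G h_1\,\partial_\rho\tilde{p}_{-1}$ source and the $\partial_\rho(\theta_0')^2\nabla^\G h_1$ term on the right-hand side (thanks to $\tilde{p}_{-1}=-(\theta_0')^2$) is essential. Once this is in hand, the rest reduces to the algebraic rearrangements and the one-variable Taylor expansion sketched above.
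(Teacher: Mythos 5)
Your proposal is correct and follows essentially the same route as the paper: parts 1 and 2 are the content of Lemma~\ref{yuning:compulemma1} combined with the matching conditions (with the $\rho$-independence of $\tilde{\ve}_j$, $j=0,1$, established earlier via \eqref{yuning:v0indepent}, \eqref{yuning:newdis1} and the tangential component of \eqref{train:scal-b} after the $\tilde p_{-1}$-cancellation, just as you describe), and part 3 is exactly the decomposition $2D\ve\,\no=\p_\no\ve - (\nabla\no)\ve + \nabla(\ve\cdot\no)$ with the tangential piece killed by $[\ve_j^\pm]|_\Gamma=0$ and the normal piece killed by $\Div\ve_j^\pm=0$.
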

\begin{proof}
We shall only prove $[\p_\no  \ve_j^\pm]=2\(D\ve_j^+-D\ve_j^-\)$ since the others are consequences of the matching condition \eqref{yuning:matching3} and Lemma \ref{yuning:compulemma1} (see also \eqref{yuning:v1jumpconse}). On $\Gamma$ we have
  \begin{equation*}
    \begin{split}
      2(D\ve_j^+-D\ve_j^-)\no&=\p_\no (\ve_j^+-\ve_j^-)-(\nabla \no)\cdot (\ve_j^+-\ve_j^-)+\nabla(\ve_{j,\no} ^+-\ve_{j,\no}^-)\\
      &=\p_\no (\ve_j^+-\ve_j^-)+\no\p_\no  (\ve_{j,\no} ^+-\ve_{j,\no}^-)+\btau\p_\btau  (\ve_{j,\no}^+-\ve_{j,\no}^-)
    \end{split}
  \end{equation*}
  The last two components vanish due to $[\ve_j^\pm]=0$ and $\Div \ve_j^\pm=0$.
\end{proof}

These altogether imply
\begin{equation} \label{yuning:struggle1}
 2(D\ve_0^+-D\ve_0^-)\no-(p^+_0-p^-_0)\no =-\sigma H\no\qquad \text{on }\Gamma.
\end{equation}
This along with formal outer expansion leads to the following  first order sharp interface system for $(\ve_0^\pm, p^\pm_0)$ which has a solution, which is smooth in $\overline{\Omega^\pm(t)}$, as long as $\Gamma_t$ remains smooth:
  \begin{alignat*}{2}
    -\Delta \ve_0^\pm +\nabla p_0^\pm &= 0 &\qquad &\text{in }\Omega^\pm (t), t\in (0,T_0),\\
    \Div \ve_0^\pm &= 0 &\qquad &\text{in }\Omega^\pm (t), t\in (0,T_0),\\
    [2D  \ve_0^\pm-p_0^\pm\mathbf{I}]\no &= -\sigma H\no && \text{on }\Gamma_t, t\in (0,T_0),\\
     [\ve^\pm_0]&=0 && \text{on }\Gamma_t, t\in (0,T_0),\\
 \Second{\ve_0|_{\partial\Omega}} &\Second{=0} &&\Second{\text{on }\partial\Om\times (0,T_0).}
  \end{alignat*}
Now we can take the inner product of \eqref{yuning:newmunus1} with $\mathbf{n}$ and use the above equations together with \eqref{yuning:v0vanish}, \eqref{yuning:tildev1n}, and \eqref{yuning:vtilde0} and this implies
 \begin{equation}\label{yuning:tildep0}
   \p_\rho \tilde{p}_0 
=(\theta_0'(\rho))^2H \qquad \text{on }\Gamma.
 \end{equation}
 We integrate the above formula, use the matching condition, and obtain
 \begin{equation*}
   \tilde{p}_0(\rho,x,t)=H\int^\rho_{-\infty}(\theta_0'(z))^2{\rm d} z+p^-_0(x,t)\quad \text{ on } \Gamma.
 \end{equation*}
 Taking $\rho\to +\infty$ in the above formula and combining it with \eqref{yuning:redefv0n} and \eqref{yuning:struggle1} leads to
 \begin{equation}\label{yuning:v0hatvani}
  {2\hat{\ve}_0 = \p_\no  \ve^+_0 -\p_\no  \ve^-_0 = 0,~ p_0^+-p_0^-=\sigma H \qquad \text{on }\Gamma.}
\end{equation}
Using this, \eqref{yuning:v0indepent} and \eqref{yuning:tildep0}, we deduce from \eqref{yuning:newmunus1}  that
\begin{equation*}
  \partial_\rho^2\tilde{\ve}_1(\rho,x,t)=0 \qquad \text{on}~ \G.
\end{equation*}
Hence we can use \eqref{yuning:matching2} to deduce $\partial_\rho \tilde{\ve}_1=0$ and
\begin{equation}\label{yuning:v1jump}
 { [\ve_1]=\ve_1^+-\ve_1^-=0 \qquad \text{on } \Gamma.}
\end{equation}
Due to \eqref{yuning:v0hatvani}, the last two terms on the left hand side of \eqref{yuning:newmunus1} vanish on $\Gamma$ and we omit them 
and get
\begin{equation}\label{yuning:takenormal}
   O(\tfrac{1}{\eps}):-\p_\rho^2 \tilde{\ve}_1+\no\p_\rho \tilde{p}_0  =-(\theta_0'(\rho))^2\no \Delta d_\Gamma~\text{in}~\G(3\delta)
\end{equation}
 but we  have to compensate the omitted terms into \eqref{train:scal-c} and \eqref{train:scal-d}. Meanwhile, we use \eqref{yuning:pvanish} and \eqref{yuning:vtilde0} to simplify \eqref{train:scal-c} into:
 \begin{equation}\label{yuning:train1new1}
   \begin{split}
     O(1):&-\p_\rho^2 \tilde{\ve}_2-\eta''(\rho)(h_2\hat{\ve}_0+(\rho+h_1)\hat{\ve}_1)+\eta'(\rho)\hat{p}_0\no(\rho+h_1)+\no\p_\rho\tilde{p}_1-\p_\rho\tilde{p}_0\nabla^\G  h_1 \\
     &+\eta(\rho)\(\nabla_x(d_\Gamma\hat{p}_0)-\Delta_x(d_\Gamma\hat{\ve}_0)\)-\Delta_x\tilde{\ve}_0+\nabla_x\tilde{p}_0+2\eta'(\rho)\nabla^\G  h_1\cdot \nabla_x(d_\Gamma \hat{\ve}_0) \\
     &-2\eta'(\rho)\partial_\no(d_\Gamma\hat{\ve}_1) 
-  \eta'(\rho)(h_1\hat{\ve}_0+\hat{\ve}_0\rho)\Delta_x d_\Gamma \\
     &\underbrace{-\tfrac{\hat{\ve}_0}{d_\Gamma}\p_\rho\((\rho+h_1)^2\eta'(\rho)\)-2(\rho+h_1)\p_\no \hat{\ve}_0\eta'(\rho)}_{\text{new terms due to compensation}}\\
     =&-\p_\rho(\theta_0'(\rho))^2|\nabla^\G  h_1|^2\no+(\theta_0'(\rho))^2\(\Delta d_\Gamma \nabla^\G  h_1+\Delta^\G  h_1\no\),
   \end{split}
 \end{equation}
 while the following terms must be added to \eqref{train:scal-d}:
 \begin{equation}\label{yuning:penlty}
  - \tfrac{\hat{\ve}_0}{d_\Gamma} h_2(\rho+h_1)\eta''(\rho)-2\tfrac{\hat{\ve}_0}{d_\Gamma} h_2\eta'(\rho)-2h_2\p_\no\hat{\ve}_0\eta'(\rho).
 \end{equation}
 More precisely, the last two terms on the left-hand side of \eqref{yuning:newmunus1} can be treated as follows:
 \begin{equation*}
   \begin{split}
     &-\hat{\ve}_0(\rho+h_1) \eta''(\rho)  -2  \p_\no (d_\Gamma \hat{\ve}_0) \eta'(\rho)
     =-\tfrac{\eps(\rho+h_\eps)}{d_\G}\(\hat{\ve}_0(\rho+h_1) \eta''(\rho)  +2  \p_\no (d_\Gamma \hat{\ve}_0) \eta'(\rho)\)\\
     &=\eps\(-\tfrac{\hat{\ve}_0}{d_\Gamma}\p_\rho\((\rho+h_1)^2\eta'(\rho)\)-2(\rho+h_1)\p_\no \hat{\ve}_0\eta'(\rho)\)\\
     &\quad +\eps^2\(- \tfrac{\hat{\ve}_0}{d_\Gamma} h_2(\rho+h_1)\eta''(\rho)-2\tfrac{\hat{\ve}_0}{d_\Gamma} h_2\eta'(\rho)-2h_2\p_\no\hat{\ve}_0\eta'(\rho)\).
   \end{split}
 \end{equation*}
   Now we take the normal component of \eqref{yuning:takenormal} and employ  \eqref{yuning:newdis1}. This leads to
 \begin{equation}\label{yuning:simp1}
\partial_\rho \tilde{p}_0 +(\theta_0'(\rho))^2 \Delta d_\Gamma=0
\end{equation}
 and this together with matching conditions \eqref{yuning:matching3} implies
\begin{equation}\label{yuning:p0expli}
\hat{p}_0 = \tfrac1{2d_\Gamma}\(p_0^+-p_0^-  + \sigma\Delta d_\Gamma   \),
   \tilde{p}_0 = \tfrac 12(p_0^++p_0^-)-\tfrac \sigma 2\Delta d_\G \eta(\rho)
   ~\text{in}~\Gamma(3\delta)\setminus\Gamma
\end{equation}
where $\sigma=\int_\R (\theta'_0(s))^2\sd s$.  Here we have used $\tfrac \sigma 2\partial_\rho \eta(\rho)=\theta_0'(\rho)^2$ by the definition of $\eta$.
 Thus
 \begin{equation}\label{yuning:p0missing}
   p_0=\tfrac 12(p_0^++p_0^-)+\tfrac {\eta(\rho)}2(p_0^+-p_0^-).
 \end{equation}
 So \eqref{yuning:takenormal} reduces to $\p_\rho^2 \tilde{\ve}_1=0$ in $\G(3\delta)$. Since the only bounded solution of this equation is independent of $\rho$, we can use Lemma \ref{yuning:compulemma1} to deduce
\begin{equation}\label{yuning:explitsoluv1}
 { \hat{\ve}_1=\tfrac 1{2d_\Gamma}(\ve^+_1-\ve^-_1),~\tilde{\ve}_1=\tfrac 12(\ve^+_1+\ve^-_1),~\ve_1=\tfrac 12(\ve^+_1+\ve^-_1)+\tfrac {\eta(\rho)}2(\ve^+_1-\ve^-_1)}.
\end{equation}
This combined with \eqref{yuning:v1jump}  implies
\begin{equation}\label{yuning:v1jumpconse}
  {2\hat{\ve}_1=\p_\no (\ve_1^+-  \ve_1^-)\quad \text{on }\Gamma.}
\end{equation}

\noindent
\subsubsection{Divergence equation for $\ve_2$:}
Now we consider  the order $O(\eps)$ equation for the divergence-free equation \eqref{yuning:divforv2}.
Restricting \eqref{yuning:divforv2} on $\G$ and using \eqref{yuning:vtilde0}, \eqref{yuning:explitsoluv1}, \eqref{yuning:v0vanish}, \eqref{yuning:v1jump} we conclude that
\begin{equation}\label{yuning:diverv2}
\lim_{d_\G\to 0}\tfrac{\hat{\ve}_{0,\no}}{d_\Gamma} (\rho+h_1)^2 \eta'(\rho)  +  (\rho+h_1)  \eta'(\rho)\hat{\ve}_{1,\no}    + \partial_\rho \tilde{\ve}_{2,\no}   =0\quad \text{on}~\Gamma.
\end{equation}
Note that the second term in the above equation also vanishes due to  \eqref{yuning:v1jumpconse}, \eqref{yuning:v1jump} and the divergence-free condition of $\ve_1^\pm$:
\begin{equation}\label{yuning:v1nvanish}
  { \hat{\ve}_{1,\no} =\tfrac12\p_\no (\ve^+_{1,\no} - \ve^-_{1,\no} )=-\tfrac12\Div_\btau(\ve^+_{1,\btau}- \ve^-_{1,\btau})=0\quad \text{on}~\Gamma}.
\end{equation}
Using this in \eqref{yuning:diverv2} and  employing  \eqref{yuning:v0vanish}, we obtain 
\begin{equation}\label{yuning:v2nvanish}
 \partial_\rho \tilde{\ve}_{2,\no} (\rho,x,t)+(\rho+h_1)^2\eta'(\rho)
 \p_\no \hat{\ve}_{0,\no}=0\quad\text{on }\Gamma.
\end{equation}
In order to  determine $\ve_{2,\no} ^\pm$,  instead of  solving a certain sharp interface system as we did for $(\ve^\pm_1,p^\pm_1)$, we choose $\ve_{2,\no} ^-\equiv0$ and integrate \eqref{yuning:v2nvanish}:
\begin{equation*}
  (\ve_{2,\no} ^+-\ve_{2,\no} ^-)+\int_{\R}(\rho+h_1)^2\eta'(\rho)\, \sd \rho\,\p_\no \hat{\ve}_{0,\no}=0\quad \text{on}~\Gamma.
\end{equation*}
Altogether, we define
\begin{equation}\label{yuning:v2nplus1}
  \ve_{2,\no} ^-\equiv0~\text{in }\Omega\times [0,T_0],\quad \ve_{2,\no} ^+=-\p_\no \hat{\ve}_{0,\no}
  \int_{\R}(\rho+h_1)^2\eta'(\rho)\sd \rho\quad \text{in }\overline{\Gamma(3\delta)}
\end{equation}
and extend $\ve_{2,\no} ^+$ smoothly to $\O \times [0,T_0]$. Note that, in contrast to $\ve^\pm_1$, we do not assume $\ve^+_2$ to be divergence-free.

In  view of \eqref{yuning:v0hatvani},  we can treat \eqref{yuning:divforv2} in $\G(3\delta)$ in the same manner as before by omitting the terms that vanish on the interface and just solve
 \begin{equation}\label{yuning:v2tilde2}
   {(\rho+h_1)^2}  \eta'(\rho)\tfrac{\hat{\ve}_{0,\no}}{d_\Gamma}   + \partial_\rho \tilde{\ve}_{2,\no}   =0\quad \text{in}~ \Gamma(3\delta) .
\end{equation}
This together with \eqref{yuning:explitsoluv1}, \eqref{yuning:vtilde0}, \eqref{yuning:1.68} and the extension process after it will  change the expansion of the divergence equation   \eqref{yuning:diver1} into
\begin{alignat}{1}\nonumber
&\Div\left( \ve_A^{in}(\tfrac{d_\Gamma}\eps-h_\eps, x,t)\right)=\nonumber\\
\nonumber
  O(\eps):\quad &  -(\rho+h_1)\eta'(\rho)
  \nabla^\G  h_1 \cdot\hat{\ve}_{0,\btau}- d_\Gamma\eta'(\rho)
  \nabla^\G  h_1 \cdot\hat{\ve}_{1,\btau} + (\rho+h_1)\eta'(\rho)\hat{\ve}_{1,\no} \\
  \label{yuning:lip1'}
  &+ h_2\eta'(\rho)\hat{\ve}_{0,\no}- d_\Gamma \eta'(\rho)\nabla^\Gamma h_2\cdot \hat{\ve}_{0,\btau}
  \\\nonumber
   O(\eps^2):\quad & +(\rho+h_1)h_2 \eta'(\rho)\frac{\hat{\ve}_{0,\no}}{d_\Gamma}- h_2 \eta'(\rho)\nabla^\Gamma h_1\cdot \hat{\ve}_{0,\btau} + h_2 \eta'(\rho)\hat{\ve}_{1,\no}- d_\Gamma \eta'(\rho)\nabla^\Gamma h_2\cdot \hat{\ve}_{1,\btau}\\\nonumber
&- \nabla^\G  h_\eps\cdot\partial_\rho\tilde{\ve}_{2,\btau} +  (\rho+h_\eps)  \eta'(\rho)\hat{\ve}_{2,\no} (x,t) -  d_\Gamma\eta'(\rho)
  \nabla^\G  h_\eps\cdot\hat{\ve}_{2,\btau}(x,t)\\\label{yuning:lip1}
& + \Div_x\(\tilde{\ve}_2 (\rho,x,t)+ \hat{\ve}_2(x,t)d_\Gamma \eta(\rho)\)
\end{alignat}
as remainders terms at the corresponding orders.

The above analysis together with \eqref{yuning:matching2} allow us to solve $\tilde{\ve}_{2,\no} $ and $\hat{\ve}_{2,\no} $ uniquely:
 \begin{lem}\label{yuning:v2nplus2}
  The normal component of $\tilde{\ve}_2$ and $\hat{\ve}_2$ are given by:
  \begin{alignat*}{2}
  \tilde{\ve}_{2,\no} &=\ve^-_{2,\no} +d_\Gamma\hat{\ve}_{2,\no}
  -\frac{\hat{\ve}_{0,\no}}{d_\Gamma}\int_{-\infty}^\rho(z+h_1)^2\eta'(z)\sd z&\quad& \text{on } \Gamma(3\delta)\backslash\Gamma,\\
  \hat{\ve}_{2,\no} &=\frac 1{2d_\Gamma}\(\frac{\hat{\ve}_{0,\no}}{d_\Gamma}\int_{\mathbb{R}}(z+h_1)^2\eta'(z)\sd z+\ve_{2,\no} ^+-\ve_{2,\no} ^-\)&\quad& \text{on } \Gamma(3\delta)\backslash\Gamma,
\end{alignat*}
where $\ve_{2,\no} ^\pm$ is given by \eqref{yuning:v2nplus1} and is extended smoothly to $\O \times [0,T_0]$.
\end{lem}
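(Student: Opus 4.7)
The plan is to integrate the first-order ODE \eqref{yuning:v2tilde2} in the stretched variable $\rho$ and then close the system using the matching conditions \eqref{yuning:matching2} together with the ansatz \eqref{yuning:ansatz1} for $\ve_2$. Since \eqref{yuning:v2tilde2} reads
\begin{equation*}
  \partial_\rho \tilde{\ve}_{2,\no}(\rho,x,t) = -\frac{\hat{\ve}_{0,\no}(x,t)}{d_\Gamma(x,t)}(\rho+h_1(s,t))^2 \eta'(\rho)
\end{equation*}
on $\Gamma(3\delta)\setminus\Gamma$, integrating with respect to $\rho$ from $-\infty$ to $\rho$ gives
\begin{equation*}
\tilde{\ve}_{2,\no}(\rho,x,t) = \lim_{\rho'\to -\infty} \tilde{\ve}_{2,\no}(\rho',x,t) - \frac{\hat{\ve}_{0,\no}(x,t)}{d_\Gamma(x,t)}\int_{-\infty}^\rho (z+h_1)^2 \eta'(z)\,\sd z,
\end{equation*}
where the integral is well-defined since $\eta'(z)$ decays exponentially as $|z|\to\infty$ by \eqref{eq:eta}.

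The constant of integration is then fixed by matching. From the ansatz $\ve_2(\rho,x,t) = \tilde{\ve}_2(\rho,x,t) + d_\Gamma(x,t)\hat{\ve}_2(x,t)\eta(\rho)$ together with $\eta(\rho)\to \pm 1$ as $\rho\to\pm\infty$ and the matching condition $\lim_{\rho\to\pm\infty}\ve_{2,\no}(\rho,x,t) = \ve_{2,\no}^\pm(x,t)$, one obtains the two identities
\begin{equation*}
\lim_{\rho\to\pm\infty}\tilde{\ve}_{2,\no}(\rho,x,t) = \ve_{2,\no}^\pm(x,t) \mp d_\Gamma(x,t)\hat{\ve}_{2,\no}(x,t).
\end{equation*}
Substituting the relation at $-\infty$ into the integrated formula above yields the first claimed identity. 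Passing to the limit $\rho\to+\infty$ in the same formula and using the relation at $+\infty$ gives
\begin{equation*}
\ve_{2,\no}^+ - d_\Gamma \hat{\ve}_{2,\no} = \ve_{2,\no}^- + d_\Gamma \hat{\ve}_{2,\no} - \frac{\hat{\ve}_{0,\no}}{d_\Gamma}\int_{\R}(z+h_1)^2 \eta'(z)\,\sd z,
\end{equation*}
which can be solved for $\hat{\ve}_{2,\no}$ to obtain the second identity.

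The only delicate point is that the formulae are stated on $\Gamma(3\delta)\setminus\Gamma$ but one needs the resulting functions to extend smoothly across $\Gamma$, since they are used in the construction of a globally defined approximate velocity. This is the step I expect to require the most care. Concretely, both $\hat{\ve}_{0,\no}$ and $\ve_{2,\no}^+ - \ve_{2,\no}^-$ vanish on $\Gamma$: the former by \eqref{yuning:v0vanish} (combined with \eqref{yuning:redefv0n}), and the latter because $\ve_{2,\no}^-\equiv 0$ together with the prescription \eqref{yuning:v2nplus1}, which forces $\ve_{2,\no}^+=0$ on $\Gamma$. Consequently, using the observation recalled at the end of Section~\ref{subsec:Coordinates}, namely that $a/d_\Gamma$ is smooth in the normal direction whenever $a$ is smooth and vanishes on $\Gamma$, both $\hat{\ve}_{0,\no}/d_\Gamma$ and $(\ve_{2,\no}^+-\ve_{2,\no}^-)/d_\Gamma$ admit smooth extensions to $\Gamma(3\delta)$; applying the same argument once more to $\hat{\ve}_{2,\no}$ itself (which again contains an overall factor $1/d_\Gamma$ of a quantity vanishing on $\Gamma$) shows that $\hat{\ve}_{2,\no}$ extends smoothly to the whole tubular neighborhood. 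This completes the proof once existence and smoothness are combined with the integrated formula for $\tilde{\ve}_{2,\no}$.
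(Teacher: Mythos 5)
Your derivation of the two formulas is correct and matches the paper's (unstated) reasoning: integrate the first-order ODE \eqref{yuning:v2tilde2} in $\rho$, evaluate the constant via the matching condition at $\rho\to-\infty$ using the ansatz \eqref{yuning:ansatz1}, and then impose the condition at $\rho\to+\infty$ to solve for $\hat{\ve}_{2,\no}$.

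However, your closing smoothness argument contains a genuine error. The lemma itself only asserts the formulas on $\Gamma(3\delta)\backslash\Gamma$; smooth extendability across $\Gamma$ is handled separately (Lemma~\ref{yuning:lip4}). Your claim that \eqref{yuning:v2nplus1} forces $\ve_{2,\no}^+=0$ on $\Gamma$ is false: on $\Gamma$ one has $\ve_{2,\no}^+=-\p_\no\hat{\ve}_{0,\no}\int_\R(z+h_1)^2\eta'(z)\sd z$, and $\p_\no\hat{\ve}_{0,\no}$ has no reason to vanish there (only $\hat{\ve}_{0,\no}$ itself vanishes, by \eqref{yuning:v0vanish}). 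Consequently $\ve_{2,\no}^+-\ve_{2,\no}^-$ does not vanish on $\Gamma$, and the term-by-term argument you give cannot establish smoothness of $\hat{\ve}_{2,\no}$. The correct mechanism, which the paper uses in the proof of Lemma~\ref{yuning:lip4}, is a cancellation between the two nonvanishing boundary values: since $\frac{\hat{\ve}_{0,\no}}{d_\Gamma}\to\p_\no\hat{\ve}_{0,\no}$ as $x\to\Gamma_t$ and $\ve_{2,\no}^+|_{\Gamma}=-\p_\no\hat{\ve}_{0,\no}\int_\R(z+h_1)^2\eta'(z)\sd z$ by construction, the whole bracket
\begin{equation*}
\frac{\hat{\ve}_{0,\no}}{d_\Gamma}\int_\R(z+h_1)^2\eta'(z)\sd z+\ve_{2,\no}^+-\ve_{2,\no}^-
\end{equation*}
vanishes on $\Gamma_t$, even though the individual summands do not. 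The paper makes this explicit by rewriting the bracket as the sum of two differences, each of which is a smooth function vanishing on $\Gamma_t$, and only then applies the Taylor-expansion observation to divide once by $d_\Gamma$.
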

\noindent
\subsubsection{Stokes equation for $\ve_2$ and $p_{1}$:}
According to \eqref{yuning:vtilde0} and \eqref{yuning:p0expli}
\begin{align*}
  \Delta_x \tilde{\ve}_0 +  \Delta_x( d_\Gamma \hat{\ve}_0)\eta(\rho)&= \Delta_x \ve_0^+ \tfrac{1+\eta(\rho)}2 + \Delta_x \ve_0^- \tfrac{1-\eta(\rho)}2,\\
  \nabla_x \tilde{p}_0 + \nabla_x (d_\Gamma \hat{p}_0)\eta(\rho) &=
  \nabla_x p_0^+ \tfrac{1+\eta(\rho)}2 + \nabla_x p_0^- \tfrac{1-\eta(\rho)}2.
\end{align*}
Thus
\begin{align*}
   & -\Delta_x \tilde{\ve}_0 -\Delta_x (d_\Gamma \hat{\ve}_0)\eta(\rho) + \nabla_x \tilde{p}_0 + \nabla_x (d_\Gamma \hat{p}_0)\eta(\rho)\\
&= (-\Delta_x \ve_0^+ +\nabla_x p_0^+)\tfrac{1+\eta(\rho)}2    +(-\Delta_x \ve_0^- +\nabla_x p_0^-)\tfrac{1-\eta(\rho)}2.
\end{align*}
This together with \eqref{yuning:vtilde0}, \eqref{yuning:simp1}, \eqref{yuning:p0expli}, and \eqref{yuning:explitsoluv1}   reduces \eqref{yuning:train1new1} to
\begin{alignat}{1} \nonumber
      O(1):&-\p_\rho^2 \tilde{\ve}_2-\eta''(\rho)(h_2\hat{\ve}_0+(\rho+h_1)\hat{\ve}_1)+\eta'(\rho)\hat{p}_0\no(\rho+h_1)+\no\p_\rho\tilde{p}_1 \\\nonumber
     &+{(-\Delta_x \ve_0^+ +\nabla_x p_0^+)\tfrac{1+\eta(\rho)}2    +(-\Delta_x \ve_0^- +\nabla_x p_0^-)\tfrac{1-\eta(\rho)}2}+2\eta'(\rho)\nabla^\G  h_1\cdot\nabla_x(d_\Gamma \hat{\ve}_0) \\\nonumber
     &-2\eta'(\rho)\partial_\no(d_\Gamma\hat{\ve}_1) -(h_1\hat{\ve}_0+\hat{\ve}_0\rho)\eta'(\rho)\Delta_x d_\Gamma \\\nonumber
     &-\tfrac{\hat{\ve}_0}{d_\Gamma}\p_\rho\((\rho+h_1)^2\eta'(\rho)\)-2(\rho+h_1)\p_\no \hat{\ve}_0\eta'(\rho)\\\label{yuning:train1new}
     &=-\p_\rho(\theta_0'(\rho))^2|\nabla^\G  h_1|^2\no+(\theta_0'(\rho))^2\Delta^\G  h_1\no.
\end{alignat}
Restricting \eqref{yuning:train1new} on $\G$ and using \eqref{yuning:v0hatvani} to eliminate the terms containing $\hat{\ve}_0$ we derive
\begin{alignat}{1}\nonumber
     &-\p_\rho^2\tilde{\ve}_2-\eta''(\rho)(\rho+h_1)\hat{\ve}_1+\eta'(\rho)(\rho+h_1)\no \hat{p}_0+\no\partial_\rho\tilde{p}_1-2\eta'(\rho)\hat{\ve}_1-\p_\no  \hat{\ve}_0\p_\rho\((\rho +h_1)^2\eta'(\rho)\)\\
     &\quad -2(\rho+h_1)\p_\no \hat{\ve}_0\eta'(\rho)\label{yuning:v2tilde1}
     =-\p_\rho(\theta_0'(\rho))^2|\nabla_\G   h_1|^2\no+(\theta_0'(\rho))^2\Delta_\G  h_1\no\quad \text{on }\Gamma
\end{alignat}
at order $O(1)$.
Integrating the above identity with respect to $\rho\in\mathbb{R}$ and then using \eqref{yuning:matching2} and \eqref{yuning:matching3}    yields
 \begin{equation}\label{yuning:secondlim1}
    -2\hat{\ve}_1+ \no[\tilde{p}_1] +2h_1\no\hat{p}_0-4h_1\p_\no \hat{\ve}_0 =\sigma \Delta _\G h_1\no\quad \text{on }\Gamma.
\end{equation}
Now Lemma~\ref{yuning:limitlem1} together with \eqref{yuning:v1jumpconse} and \eqref{yuning:secondlim1} implies
\begin{equation}\label{yuning:1.39}
 [2D \ve_1^\pm-p^\pm_1\mathbf{I}]\cdot \no=2h_1\no\hat{p}_0-4h_1\p_\no \hat{\ve}_0-\sigma\Delta_\G  h_1\no \quad \text{on } \Gamma.
\end{equation}
 These together with \eqref{yuning:v1jump} and the outer expansion \eqref{yuning:1.68} leads to the second order sharp interface limit \eqref{yuning:secondlimit}.

Now we determine $\tilde{\ve}_{2,\btau}$ and $\tilde{p}_1$ such that the tangential part of \eqref{yuning:train1new} is fulfilled on $\Gamma$ and the normal part is fulfilled on $\Gamma(3\delta)$, up to a high order term in $\eps$.
From the normal part of \eqref{yuning:train1new} we obtain
\begin{alignat}{1}\nonumber
     &\partial_\rho \tilde{p}_1(\rho,x,t)=
\p_\rho^2 \tilde{\ve}_{2,\no} +\eta''(\rho)(h_2\hat{\ve}_{0,\no}+(\rho+h_1)\hat{\ve}_{1,\no} )-\eta'(\rho)\hat{p}_0(\rho+h_1) \\\nonumber
&\quad -2\eta'(\rho)\nabla^\G  h_1\cdot\nabla_x(d_\Gamma \hat{\ve}_{0,\no})
     +2\eta'(\rho)\partial_\no(d_\Gamma\hat{\ve}_{1,\no} ) +(\rho +h_1)\hat{\ve}_{0,\no}\eta'(\rho)\Delta_x d_\Gamma \\\nonumber
     &\quad +\tfrac{\hat{\ve}_{0,\no}}{d_\Gamma}\p_\rho\((\rho+h_1)^2\eta'(\rho)\)+2(\rho+h_1)\p_\no \hat{\ve}_{0,\no}\eta'(\rho)\\\label{yuning:defa1}
     &\quad -\p_\rho(\theta_0'(\rho))^2|\nabla^\G  h_1|^2+(\theta_0'(\rho))^2\Delta^\G  h_1=:a_1(\rho,x,t)~\text{in}~\G(3\delta).
\end{alignat}
Note that we omitted the lower order term
\begin{equation*}
  \left((-\Delta_x \ve_{0}^+ +\nabla_x p_0^+)\tfrac{1+\eta(\rho)}2    +(-\Delta_x \ve_{0}^- +\nabla_x p_0^-)\tfrac{1-\eta(\rho)}2\right)\cdot\no
\end{equation*}
that  will be treated in \eqref{yuning:xiequa1} below.
 Furthermore, $\lim_{\rho\to\pm\infty} \tilde{p}_1(\rho,x,t)= p_1^\pm(x,t)\mp \hat{p}_1(x,t)$ for $x\in \Omega^\pm(t)$ because of \eqref{yuning:matching3}.
Hence we define for $(x,t)\in\Gamma(3\delta)$
 \begin{equation}\label{yuning:p0expli1}
   \begin{split}
     \tilde{p}_1(\rho,x,t) &=\frac12\left( p_1^+(x,t)+p_1^-(x,t)-\int_\R a_1(z,x,t)\sd z\right) + \int_{-\infty}^\rho a_1(z,x,t)\sd z,\\
   \hat{p}_1(x,t) &= \frac1{2d_\Gamma(x,t)}\left( p_1^+(x,t)-p_1^-(x,t) - \int_\R a_1(z,x,t)\sd z\right)
   \end{split}
 \end{equation}
 where $p_1^\pm$ is determined by \eqref{yuning:secondlimit}.
 Restricting \eqref{yuning:defa1} on the interface and integrating with respect to $\rho$ leads to the following compatibility condition:
 \begin{equation}\label{yuning:1.60}
 p^+_1( P_{\G_t}(x),t)-p^-_1(P_{\G_t}(x),t)=\int_\R a_1(z,P_{\G_t}(x),t)\sd z.
 \end{equation}


 In the rest part of this subsection, we shall determine $(\tilde{\ve}_{2,\btau},\hat{\ve}_{2,\btau})$, the tangential part of $(\tilde{\ve}_2,\hat{\ve}_2)$.
The tangential part of \eqref{yuning:secondlim1} implies $\hat{\ve}_{1,\btau}=-2h_1\p_\no \hat{\ve}_{0,\btau}$ on $\Gamma$ and we can employ it
to simplify the equation for  $\ve_{2,\btau}$ on $\Gamma$
\begin{alignat}{1}\nonumber
    -\partial_\rho^2 \tilde{\ve}_{2,\btau}&= \eta''(\rho)(\rho+h_1)\hat{\ve}_{1,\btau}+2\eta'(\rho)\hat{\ve}_{1,\btau}+\p_\no  \hat{\ve}_{0,\btau}\p_\rho\((\rho +h_1)^2\eta'(\rho)\) +2(\rho+h_1)\p_\no \hat{\ve}_{0,\btau}\eta'(\rho)\\\label{yuning:tildev21}
    &=\((\rho^2-h_1^2)\eta''(\rho)+4\rho\eta'(\rho)\)\p_\no \hat{\ve}_{0,\btau}~\text{on}~\Gamma
 \end{alignat}
To proceed, let us note that $\xi(z):=\int_{-\infty}^z \rho\eta'(\rho)d\rho$ and $\xi'(\rho)$ belongs to $\mathcal{R}_{0,\alpha}$ for some $\alpha>0$. This follows easily from the exponential decay of $\xi'(\rho)= \rho\eta'(\rho)$ and $\int_{-\infty}^\infty \rho\eta'(\rho)\sd \rho =0$ since $\eta'$ is even.
Hence we can integrate \eqref{yuning:tildev21}  to obtain $\tilde{\ve}_{2,\btau}$:
\begin{equation*}
  \partial_\rho \tilde{\ve}_{2,\btau}=- \int_{-\infty}^\rho\((z^2-h_1^2)\eta''(z)+4z\eta'(z)\)\sd z\ \p_\no \hat{\ve}_{0,\btau}\quad \text{on }\Gamma.
\end{equation*}
Therefore we define   $\tilde{\ve}_{2,\btau}$ on $\Gamma(3\delta)$ through
\begin{equation}\label{yuning:tildev22}
  \begin{split}
 \tilde{\ve}_{2,\btau}(\rho,x,t)&=- \p_\no \hat{\ve}_{0,\btau}(\rho,P_{\G_t}(x),t)\int_{-\infty}^\rho\int_{-\infty}^y\((z^2-h_1^2)\eta''(z)+4z\eta'(z)\)\sd z \sd y
  \end{split}
\end{equation}
 in $\Gamma(3\delta)$   and
\begin{equation*}
 \hat{\ve}_{2,\btau}(x,t)\equiv 0,\quad \ve_{2,\btau}^\pm(x,t):=\lim_{\rho \to \pm\infty} \tilde{\ve}_{2,\btau}(\rho,x,t)\quad \text{for all}~(x,t)\in\Gamma(3\delta), \rho\in \R.
\end{equation*}
Moreover, we extend $\ve_{2,\btau}^\pm$ smoothly to $\O \times [0,T_0]$.
It can be verified that the above definitions are compatible with \eqref{yuning:matching2} and \eqref{yuning:ansatz1}.

\subsection{Proof of Lemma \ref{yuning:app1}}\label{subsec:ProofOfLemma}

The proof will heavily rely on Lemma \ref{yuning:lip4}.

\smallskip

\noindent
{\bf Proof of \eqref{eq:ApproxStokesEq2}:} It follows from \eqref{yuning:v0nvanish} that the order $O(\eps^{-1})$ is eliminated from \eqref{yuning:diver1}. Then it follows from \eqref{yuning:newdis1} that the order $O(1)$ is eliminated from \eqref{yuning:diver1} and change the $O(\eps)$ order terms of \eqref{yuning:diver1} to be as in \eqref{yuning:lip1'}. All $O(\eps)$-terms in \eqref{yuning:lip1'} vanish on $\Gamma$ and decay exponentially in $\rho$. Hence it follows from Lemma \ref{yuning:lip4} that they will be included in the term $g_\eps\left(\tfrac{d_\Gamma}\eps-h_\eps,x,t\right)$ with $(g_\eps)_{0<\eps< 1}\in \mathcal{R}_{1,\alpha}^0$.
Moreover, the $O(\eps^2)$ order terms in \eqref{yuning:lip1} can also be included in
a term $\eps^2 \tilde{g}_\eps(x,t)$, where $\tilde{g}_\eps(x,t)$ is uniformly bounded with respect to $(x,t)\in \Gamma(3\delta), \eps\in (0,1]$.
   So we proved \eqref{eq:ApproxStokesEq2}.

\smallskip

\noindent
{\bf Proof of \eqref{eq:ApproxStokesEq1}:}
The construction \eqref{yuning:pvanish} and \eqref{yuning:vtilde0} fulfill (\ref{train:scal-a}) and thus  eliminate the $O(\eps^{-2})$ order terms in the expansion of \eqref{yuning:1.67}.
The formula \eqref{yuning:p0missing} and \eqref{yuning:explitsoluv1} balance the $O(\eps^{-1})$ order terms in (\ref{train:scal-b}) and change  the $O(1)$ order terms in (\ref{train:scal-c}) to be \eqref{yuning:train1new}.
 It can be verified that, in \eqref{yuning:train1new}, the terms that vanish on $\Gamma$, except
 \begin{equation}\label{yuning:1.69}
   (-\Delta_x \ve_0^+ +\nabla_x p_0^+)\tfrac{1+\eta(\rho)}2   + (-\Delta_x \ve_0^- +\nabla_x p_0^-)\tfrac{1-\eta(\rho)}2,
 \end{equation}
 can be included in $\mathcal{R}_{0,\alpha}^0$ and the rest terms are given in \eqref{yuning:v2tilde1}.
According to the construction, Lemma \ref{yuning:v2nplus2} together with formula \eqref{yuning:p0expli1}, \eqref{yuning:tildev22} determines $\tilde{\ve}_2, \hat{\ve}_2, \tilde{p}_1,\hat{p}_1$.
Moreover, they belong to the class $\mathcal{R}_{0,\alpha}^0$ and will be included in $r_\eps$.

Although \eqref{yuning:1.69} vanishes on $\G$ according to the outer expansion \eqref{yuning:1.68}, it does not decay in $\rho$ exponentially.
To treat it,   we replace the factor  $\tfrac{1\pm \eta(\rho)}2$ by some smooth $\zeta^\pm_N\colon \R\to \R$ such that $|\zeta^\pm_N(\rho)| \leq C e^{-\alpha |\rho|}$ and
\begin{equation*}
   \int_{-\infty}^\infty \zeta^\pm_N(\rho)\sd \rho =0, \zeta^\pm_N(\rho)= \tfrac{1\pm \eta(\rho)}2  \quad \text{for all }\rho \lessgtr \pm N.
\end{equation*}
Then for $j=0,1$,
\begin{equation}\label{yuning:xiequa1}
 \left.(-\Delta_x \ve_j^\pm +\nabla_x p_j^\pm)\tfrac{1\pm\eta(\rho)}2\Big|_{\rho = \frac{d_\Gamma}\eps -h_\eps }= (-\Delta_x \ve_j^\pm +\nabla_x p_j^\pm)\zeta^\pm_N(\rho)\right|_{\rho = \frac{d_\Gamma}\eps -h_\eps }
\end{equation}
 since $(-\Delta_x \ve_j^\pm +\nabla_x p_j^\pm)|_{\Omega^\pm(t)}=0$ and $\frac{d_\Gamma}\eps -h_\eps \lessgtr \pm N $ in $\Omega^\mp(t)$ if $N>0$ is chosen large enough.
Hence the term in \eqref{yuning:1.69} equals to
\begin{equation*}
      \sum_{\pm}(-\Delta_x \ve_j^\pm +\nabla_x p_j^\pm)\zeta^\pm_N(\rho)
\in \mathcal{R}_{0,\alpha}^0.
\end{equation*}
    Using Lemma \ref{yuning:v2nplus2} and the previous step, one can verify that the terms  in (\ref{train:scal-d}) as well as those in \eqref{yuning:penlty} belongs to $\mathcal{R}_{0,\alpha}$ and taking into account their level, we can write them in the general form $(\tilde{r}_{\eps})_{0<\eps<1}\in \mathcal{R}_{1,\alpha}$.
    Finally, the   terms $R_\eps^{i'j'ij}$ and $\tilde{R}_\eps^{i'j'k'ijk}$ in \eqref{yuning:class1} come from  \eqref{train:scal-e} by multiplication with $\eps^2$.

\subsection{Expansion of the Allen-Cahn Part}\label{subsec:1.1}

 We shall consider the inner expansion of $c$ as follows:
\begin{equation}\label{yuning:innerexpan}
\begin{split}
  c^{in} (x,t)=\tc^{in}(\rho,s,t)=&\, \theta_0(\rho) + \eps^2 \tc_2(\rho,S(x,t),t) + \eps^3 \tc_3\(\rho,S(x,t),t\)\\
  =:&\, c_0^{in}(x,t)+\eps^2c_2^{in}(x,t)+\eps^3c_3^{in}(x,t).
\end{split}
\end{equation}
 where $\rho$ is defined by
 \begin{equation}\label{yuning:stretch1}
   \rho = \frac{d_{\G_t}(x)}\eps - h_1(S(x,t),t)-\eps h_{2,\eps}(S(x,t),t).
 \end{equation}
Here and in the following $x$ and  $(\rho,s)$ will always be related by \eqref{yuning:stretch1} and $s=S(x,t)$ if both variables appear. Moreover, we will for simplicity write $h_2$ instead of $h_{2,\eps}$ in the following.

It follows from
\begin{equation*}
  S(X_0(s,t)+r\no (s,t))=s\qquad \text{for all }s\in \mathbb{T}^1,t\in [0,T_0], r\in (-3\delta,3\delta)
\end{equation*}
that, by differentiating with respect to $r$,
\begin{equation*}
  (\nabla S)(x,t)\cdot \no(S(x,t),t)=0 \qquad \text{for all }(x,t)\in \Gamma (3\delta).
\end{equation*}
 Therefore it follows from \eqref{yuning:1.46} that
\begin{equation*}
  \left|\tfrac{\no(s,t)}\eps -{(\nabla^\Gamma h_\eps)(r,s,t)} \right|^2
= \frac{1}{\eps^2} + |\nabla^\Gamma  h_\eps(r,s,t) |^2
\end{equation*}
for all $r\in(-3\delta,3\delta), s=S(x,t)\in \mathbb{T}^1, t\in [0,T_0]$.
So we can employ the formula \eqref{yuningliu:formula1} and notation \eqref{yuning:1.58}, \eqref{yuning:1.57} to get
\begin{equation}\label{yuning:expanca1}
 \begin{split}
    \partial_t c_0^{in}(x,t)&= \theta_0'(\rho)\( -\tfrac{V(s,t)}\eps
    - \p_t^\Gamma  h_\eps(r,s,t)
    \)\\
  \nabla c_0^{in}(x,t)&= \theta_0'(\rho)\left( \tfrac{\no(s,t)}\eps  - \nabla^\Gamma h_\eps(r,s,t) \right)\\
 \Delta c_0^{in}(x,t)&=\theta_0''(\rho) \(\tfrac1{\eps^2} +|\nabla^\Gamma h_\eps(r,s,t)|^2\) + \theta_0'(\rho)\left( \tfrac{\Delta d_{\Gamma_t}(x)}{\eps} -\Delta^\Gamma h_\eps(r,s,t)\right).
\end{split}
\end{equation}
Here and in the following all functions as e.g.\
$
 \theta_0',  \tc_{2}, h_1, \nabla^\Gamma \tc_2, \nabla^\Gamma h_1
$
without arguments are evaluated at $\rho, (\rho,s,t)$, $(s,t)$, $(r,\rho,s,t)$, $(r,s,t)$, respectively.
Moreover, Taylor expansion yields
\begin{align*}
  \frac1{\eps^2} f'(c^{in}(x,t))&= \frac1{\eps^2} f'(\theta_0(\rho)) +f''(\theta_0(\rho)) \tc_2(\rho,s,t)+\eps f''(\theta_0(\rho)) \tc_3(\rho,s,t)+ \mathfrak{R}_1,
\end{align*}
where
\begin{equation*}
  \mathfrak{R}_1   = \frac{\eps^2}2f'''\(\theta_0(\rho)+ \xi(\rho,s,t)(\eps^2\tc_2+\eps^3\tc_3)(\rho,s,t)\)\left(\tc_2(\rho,s,t)+\eps\tc_3(\rho,s,t)\right)^2
\end{equation*}
for some $\xi(\rho,s,t)\in [0,1]$.
This together with  \eqref{yuning:expanca1} and \eqref{yuningliu:formula1} applied to $\tc_2,\tc_3$ leads to
\begin{alignat}{1}\nonumber
    &\p_t c^{in}(x,t)-\Delta c^{in}(x,t)+ \eps^2X_0^\ast(\we_1)\cdot\nabla c_0^{in}(x,t)+\frac1{\eps^2} f'(c^{in}(x,t))\\\nonumber
  &= \underbrace{\frac{f'(\theta_0)-\theta_0''}{\eps^2}}_{=0}+\frac {-V-\Delta d_{\Gamma}(x,t)}\eps \theta_0'
  +(\Delta^\Gamma h_1-\p_t^\Gamma h_1)\theta_0'-\p_\rho^2 \tc_2+f''(\theta_0) \tc_2-|\nabla^\Gamma h_1|^2\theta_0''\\\nonumber
  &\ +\eps(\Delta^\Gamma h_2-\p_t^\Gamma h_2+ \no\cdot X_0^\ast(\we_1))\theta_0'-\eps (V+\Delta d_\Gamma(x,t))\p_\rho \tc_2-\eps\big(\p_\rho^2\tc_3+f''(\theta_0)\tc_3\big)\\\label{yuning:monst1}
  &\ -2\eps \nabla^\Gamma h_1\cdot\nabla^\Gamma h_2\theta_0''+\mathfrak{R}_1+\sum_{k=2}^5\eps^k\mathfrak{R}_k + \eps^3(D_t-\Delta_\G)\tc_3.
\end{alignat}
where $\mathfrak{R}_k$, $k=2,\ldots, 5$, are defined as in the proof of Theorem~\ref{thm:InnerExpansion}.


First we eliminate the terms of order $O(\eps^{-1})$ on the right hand side of \eqref{yuning:monst1}.
Because of \eqref{yuning:ex1} and   \eqref{eq:Limit4}, we have
  \begin{equation}\label{yuning:ex2}
    V+\Delta d_\G(x,t)-\no\cdot X_0^\ast(\ve)  = -\eps(\rho+h_\eps)\kappa_1(s,t)+ \eps^2 \kappa_2(s,t)(\rho+h_\eps)^2+\eps^3 \kappa_{3,\eps}(\rho,s,t).
  \end{equation}
 Hence, using Lemma \ref{lemma:va},  we arrive at
\begin{alignat}{1}\nonumber
&\p_t c^{in}(x,t)+\ve^{in}_A(\rho,x,t)\cdot\nabla c^{in}(x,t)+ \eps^2X_0^\ast(\we_1)\cdot\nabla c_0^{in}(x,t)-\Delta c^{in}(x,t) +\frac1{\eps^2} f'(c^{in}(x,t))\\\nonumber
  &=\theta_0'\Big((\rho+h_1)\kappa_1 + \eps h_2\kappa_1 - \eps \kappa_2(\rho+h_\eps)^2-\eps^2 \kappa_{3,\eps}+ \Delta^\Gamma h_1(r,s,t)-\p_t^\Gamma h_1(r,s,t) \Big)\\\nonumber
  &\ +\theta_0'\left(X_0^\ast(\ve_{1,\no} ^\pm)+X_0^\ast(\ve) \cdot \nabla_\Gamma h_1 + X_0^\ast(\ve)\cdot \Lgrad h_1 - X_0^\ast(\divtau \ve) (\rho+h_1)\right)\\\nonumber
 &\ -\p_\rho^2 \tc_2+f''(\theta_0) \tc_2\underbrace{-|\nabla_\Gamma h_1|^2\theta_0''-2 \nabla_\Gamma h_1\cdot \Lgrad h_1 \theta_0''- |\Lgrad h_1|^2 \theta_0''}_{=-|\nabla^\G h_1|^2\theta_0''~\text{due to}~\eqref{Prelim:1.11}}\\\nonumber
 &\ +\eps\big[(\Delta^\Gamma h_2-\p_t^\Gamma h_2+X_0^\ast(\no\cdot \we_1))\theta_0'-(V+\Delta d_\Gamma(x,t))\p_\rho \tc_2-\p_\rho^2\tc_3\\\nonumber
&\qquad +f''(\theta_0)\tc_3-2\nabla^\Gamma h_1\cdot\nabla^\Gamma h_2\theta_0''+ \left(b(\rho,s,t)+X_0^\ast(\ve)\cdot \nabla_\Gamma h_2- X_0^\ast(\divtau \ve) h_2\right)\theta_0'\big] 
 \\\label{yuning:monst1'}
&\ +  \eps X_0^\ast(\ve_{\no})\partial_\rho \tc_2+ R_\eps (\rho,s,t) +\mathfrak{R}_1 +\sum_{k=2}^5\eps^k\mathfrak{R}_k+\eps^3(D_t-\Delta_\G)\tc_3
\end{alignat}
where $\ve$ solves \eqref{eq:Limit1} and $R_\eps(\rho,s,t)$ is given in Lemma \ref{lemma:va}.
Now, we want to eliminate all terms of order $O(1)$.
To this end, we first list all the $O(1)$ terms in the right hand side of \eqref{yuning:monst1'} that are multiplied by   $\theta_0'$  and employ \eqref{Prelim:1.11}:
\begin{equation*}
  \begin{split}
    \mathfrak{A}:=&(\rho+h_1)\kappa_1 +(\Delta^\Gamma h_1-\p_t^\Gamma h_1) +X_0^\ast(\ve_{1,\no} ^\pm)+X_0^\ast(\ve) \cdot \nabla_\Gamma h_1 + X_0^\ast(\ve)\cdot \Lgrad h_1 - X_0^\ast(\divtau \ve)(\rho+ h_1)\\
    =&\underbrace{h_1\kappa_1 +\Delta_\Gamma h_1 -D_t h_1+X_0^\ast(\ve_{1,\no} ^\pm)+X_0^\ast(\ve)\cdot \nabla_\Gamma h_1  - X_0^\ast(\divtau \ve) h_1}_{\text{vanishes according to \eqref{yuning:h1equ} below}}\\
    & +\rho (\kappa_1-X_0^\ast(\divtau \ve))+\Ldelta h_1-\Lt h_1+ X_0^\ast(\ve)\cdot \Lgrad h_1.
  \end{split}
\end{equation*}
Note that the terms in the second last line  depend only on $(s,t)$.
This motivates us to define $h_1$ as the solution of the following equation on $\mathbb{T}^1\times [0,T_0]$:
  \begin{equation}\label{yuning:h1equ}
       D_t  h_1 -X_0^\ast(\ve) \cdot  \nabla_\Gamma h_1  - \Delta_\Gamma h_1  -\kappa_1 h_1 + X_0^\ast(\divtau \ve) h_1=X_0^\ast(\ve_{1,\no} ^\pm)
   \end{equation}
together with the initial condition $h_1|_{t=0}=0$. This changes \eqref{yuning:monst1'} into
\begin{alignat}{1}\nonumber
&\p_t c^{in}(x,t)+\ve^{in}_A(x,t)\cdot\nabla c^{in}(x,t)+ \eps^2X_0^\ast \we_1\cdot\nabla c_0^{in}(x,t)-\Delta c^{in}(x,t) +\frac1{\eps^2} f'(c^{in})\\\nonumber
  =&\theta_0'\Big( \eps h_2\kappa_1 - \eps \kappa_2(\rho+h_\eps)^2+\rho (\kappa_1-X_0^\ast(\divtau \ve)) \Big)\\\nonumber
 & -\p_\rho^2 \tc_2+f''(\theta_0) \tc_2-|\nabla_\Gamma h_1|^2\theta_0''\\\nonumber
 &\underbrace{-2 \nabla_\Gamma h_1\cdot \Lgrad h_1 \theta_0''- |\Lgrad h_1|^2 \theta_0''+\theta_0'(\Ldelta h_1-\Lt h_1 + X_0^\ast(\ve)\cdot \Lgrad h_1)}_{=:\mathfrak{D}_1(d_\Gamma, \rho,s,t)}\\\nonumber
 &+\eps\big[(\Delta^\Gamma h_2(r,s,t)-\p_t^\Gamma h_2(r,s,t)+X_0^\ast(\no\cdot \we_1))\theta_0'-(V+\Delta d_\Gamma(x,t))\p_\rho \tc_2-\p_\rho^2\tc_3\big]\\\nonumber
&+\eps\big[f''(\theta_0)\tc_3-2\nabla^\Gamma h_1(r,s,t)\cdot\nabla^\Gamma h_2(r,s,t)\theta_0''+\left(b+X_0^\ast(\ve)\cdot \nabla_\Gamma h_2 - X_0^\ast(\divtau \ve) h_2\right) \theta_0'\big] 
\\\label{yuning:monst2}
& +  \eps X_0^\ast(\ve_{\no})\partial_\rho \tc_2+ R_\eps (x,t)+\mathfrak{R}_1+\sum_{k=2}^5\eps^k\mathfrak{R}_k+\eps^3(D_t-\Delta_\G)\tc_3 -\eps^2 \kappa_{3,\eps}\theta'_0.
\end{alignat}
By the definition in \eqref{Prelim:1.11}, $\Ldelta h_1,\Lgrad h_1$ and $\Lt h_1$ are smooth functions that vanish on $\Gamma$.
Therefore, using a Taylor expansion with respect to $r=d_\Gamma$ in the coefficients of $\Ldelta ,\Lgrad $, and $\Lt $ und using $d_\Gamma=\eps (\rho+h_1)+\eps^2h_2$ we obtain
\begin{alignat}{1}\label{yuning:1.28}
\mathfrak{D}_1(d_\Gamma,\rho,s,t)&= d_\Gamma \mathfrak{D}(\rho,s,t)+ d_\Gamma^2 \tilde{\mathfrak{D}}(d_\Gamma,\rho,s,t)
=\eps(\rho+h_1) \mathfrak{D}(\rho,s,t)+ \eps^2 \widehat{\mathfrak{D}}_\eps(\rho,s,t),
\end{alignat}
where $\mathfrak{D}(\rho,s,t)=\partial_r \mathfrak{D}_1(0,\rho,s,t)\in \mathcal{R}_{0,\alpha}$ is independent of $h_2$, but $\widehat{\mathfrak{D}}_\eps(\rho,s,t)$ depends on $h_2$.
Moreover,  there is some $C(M)>0$ such that
\begin{equation}\label{yuning:1.32}
  |\widehat{\mathfrak{D}}_\eps(\rho,s,t)|\leq C(M)e^{-\alpha|\rho|}\qquad \text{for all }(\rho,s,t)\in \mathbb{R}\times \mathbb{T}^1\times [0,T_\eps], \eps\in (0,\eps_0]
\end{equation}
provided \eqref{eq:h2epsBound} holds true.
Hence $\mathfrak{D}_1$ can be rewritten as a term of order $O(\eps)$.
In order to eliminate the remaining $O(1)$ terms, we only need to choose $\tc_2$ such that
\begin{equation*}
  -\p_\rho^2 \tc_2+f''(\theta_0(\rho)) \tc_2=|\nabla_\Gamma h_1|^2\theta_0''(\rho)- \theta_0'(\rho)   \rho (\kappa_1-X_0^\ast(\divtau \ve))
\end{equation*}
 for all $(\rho,s,t)\in\R\times \T^1\times [0,T_0]$ and the solvability is guaranteed by Proposition \ref{yuning:solveode}.
With this choice and \eqref{Prelim:1.11}, \eqref{yuning:monst2} reduces to
\begin{alignat}{1}\nonumber
&\p_t c^{in}(x,t)+\ve^{in}_A(\rho,x,t)\cdot\nabla c^{in}(x,t)+ \eps^2X_0^\ast(\we_1)\cdot\nabla c_0^{in}(x,t)-\Delta c^{in}(x,t) +\frac1{\eps^2} f'(c^{in})\\\nonumber
 =&\theta_0'\( \eps h_2\kappa_1 - \eps \kappa_2(\rho+h_1)^2 \)+\eps(\rho+h_1)\mathfrak{D}\\\nonumber
 &+\eps\big[(\Delta_\Gamma h_2-D_t  h_2+\no\cdot X_0^\ast(\we_1))\theta_0'-(V+\Delta d_\Gamma(x,t))\p_\rho \tc_2-\p_\rho^2\tc_3+f''(\theta_0)\tc_3\big]\\\nonumber
 & -2 \eps \nabla_\Gamma h_1\cdot\nabla_\Gamma h_2\theta_0'' + \eps \left[\nabla_\Gamma h_2\cdot X_0^\ast(\ve) - X_0^\ast(\divtau \ve) h_2+b\right] \theta_0'
 +  \eps \no\cdot X_0^\ast(\ve)\partial_\rho \tc_2\\\nonumber
&-\eps^2 \kappa_2(2(\rho+h_1)h_2+\eps h^2_2)\theta'_0 -2\eps\( \nabla_\G h_1\cdot\Lgrad h_2+\Ldelta h_1\cdot \nabla_\G h_2+\Lgrad h_1\cdot\Lgrad h_2\)\theta_0''\\
&+\eps^2\widehat{\mathfrak{D}}_\eps+\eps(\Ldelta -\Lt )h_2\theta'_0\label{yuning:monst2'}
+ R_\eps (x,t)+\mathfrak{R}_1+\sum_{k=2}^5\eps^k\mathfrak{R}_k+\eps^3(D_t-\Delta_\G)\tc_3 -\eps^2 \kappa_{3,\eps}\theta'_0.
\end{alignat}
 Next we start to eliminate the terms of order $O(\eps)$. To this end, we collect all the $O(\eps)$ terms on the right hand side of \eqref{yuning:monst2'} that are multiplied by $\theta_0'$ (but not $\theta_0''$) and do not vanish on $\Gamma_t$. These terms are included in:
\begin{alignat}{1}\nonumber
    \mathfrak{D}_2&:= h_2\kappa_1- \kappa_2(\rho+h_1)^2   +  \Delta_\Gamma h_2 -D_t  h_2 +X_0^\ast(\no\cdot\we_1) 
+  X_0^\ast(\ve)\cdot \nabla_\Gamma h_2 - X_0^\ast(\divtau \ve) h_2+b\\\nonumber
&=\mathfrak{B}+ \kappa_1h_2- \kappa_2h_1^2   + \Delta_\Gamma h_2 - D_t  h_2 +    X_0^\ast(\no\cdot\we_1)
+   X_0^\ast(\ve)\cdot \nabla_\Gamma h_2 - X_0^\ast(\divtau \ve) h_2\\
 &  \quad 
-\mathfrak{B} - \kappa_2(\rho^2+2\rho h_1)+b.\label{yuning:r2}
\end{alignat}
Here all terms expect the last two ones depend only on $(s,t)$. Moreover, we added a term $\mathfrak{B}=\mathfrak{B}(s,t)$ in order to satisfy the compatibility condition for the equation for $\tc_3$ in the sequel. 
To eliminate all terms except the last three in \eqref{yuning:r2},  we choose  $h_2$ as the solution of
\begin{alignat}{1}\label{yuning:h2equ}
   & D_t  h_2    -   \Delta_\Gamma h_2 - \kappa_1h_2 -   X_0^\ast(\ve)\cdot \nabla_\Gamma h_2 + X_0^\ast(\divtau \ve) h_2  =\mathfrak{B}- \kappa_2h_1^2+X_0^\ast(\no\cdot\we_1)
\end{alignat}
on $\T^1\times [0,T_0]$
together with the initial condition $h_2|_{t=0}=0$.
Existence of a solution is proved in Lemma~\ref{lem:Existenceh2}.
Hence \eqref{yuning:r2}
reduces to
$
  \mathfrak{D}_2 =-\mathfrak{B} - \kappa_2(\rho^2+2\rho h_1)+b
$ and \eqref{yuning:monst2'} reduces to
\begin{alignat}{1}\nonumber
  &\p_t c^{in}(x,t)+\ve^{in}_A(\rho,x,t)\cdot\nabla c^{in}(x,t)+ \eps^2X_0^\ast(\we_1)\cdot\nabla c_0^{in}(x,t)-\Delta c^{in}(x,t) +\tfrac1{\eps^2} f'(c^{in})\\\nonumber
  &=\eps(\rho+h_1)\mathfrak{D}+\eps\big[\(b-\mathfrak{B}- \kappa_2(\rho^2+2\rho h_1)\)\theta_0'-(V+\Delta d_\Gamma(x,t)-\no\cdot X_0^\ast(\ve))\p_\rho \tc_2 \\\nonumber
  &\ +{\eps^2 D_t \tc_3 -\eps^2 \Delta_\G \tc_3 -\p_\rho^2\tc_3+f''(\theta_0)\tc_3}-2\nabla_\Gamma h_1\cdot\nabla_\Gamma h_2\theta_0''\big]\\\nonumber
  &\   -\eps^2 \kappa_2(2(\rho+h_1) h_2+\eps h^2_2)\theta'_0 -\eps^2 \kappa_{3,\eps}\theta'_0+\eps^2\widehat{\mathfrak{D}}_\eps+\eps(\Ldelta -\Lt )h_2\theta'_0\\
&\  -2\eps\( \nabla_\G h_1\cdot\Lgrad h_2+\Ldelta h_1\cdot \nabla_\G h_2+\Lgrad h_1\cdot\Lgrad h_2\)\theta''_0 \label{yuning:monst5}
 + R_\eps (x,t) +\mathfrak{R}_1 +\sum_{k=2}^5\eps^k\mathfrak{R}_k.
\end{alignat}
In view of \eqref{yuning:ex2}, we can further rewrite \eqref{yuning:monst5} as
\begin{alignat}{1}\nonumber
  &\p_t c^{in}(x,t)+\ve^{in}_A(\rho,x,t)\cdot\nabla c^{in}(x,t)+ \eps^2X_0^\ast(\we_1)\cdot\nabla c_0^{in}(x,t)-\Delta c^{in}(x,t) +\tfrac1{\eps^2} f'(c^{in})\\\nonumber
  &=\eps(\rho+h_1)\mathfrak{D}+\eps\big[\(b-\mathfrak{B}- \kappa_2(\rho^2+2\rho h_1)\)\theta_0'+\eps^2 D_t \tc_3 -\eps^2 \Delta_\G \tc_3 -\p_\rho^2\tc_3+f''(\theta_0)\tc_3\big]\\\nonumber
  &\ -2\eps\nabla_\Gamma h_1\cdot\nabla_\Gamma h_2\theta_0'' -\eps^2 \kappa_{3,\eps}\theta'_0-\eps^2 \kappa_2(2(\rho+h_1) h_2+\eps h^2_2)\theta'_0+\eps^2\widehat{\mathfrak{D}}_\eps+\eps(\Ldelta -\Lt )h_2\theta'_0\\\nonumber
  &\ -2\eps\( \nabla_\G h_1\cdot\Lgrad h_2+\Ldelta h_1\cdot \nabla_\G h_2+\Lgrad h_1\cdot\Lgrad h_2\)\theta''_0\\
&\  +\eps^2\left((\rho+h_\eps)\kappa_1- \eps  \kappa_2 (\rho+h_\eps )^2-\eps^2 \kappa_{3,\eps} \right)\p_\rho\tc_2\label{yuning:monst6}
   + R_\eps (x,t) +\mathfrak{R}_1 +\sum_{k=2}^5\eps^k\mathfrak{R}_k.
\end{alignat}
It remains to eliminate all the $O(\eps)$ terms by solving the following equation for $\tc_3$:
\begin{alignat}{1}\nonumber
&\eps^2 D_t \tc_3 -\eps^2 \Delta_\G \tc_3 -\p_\rho^2\tc_3+f''(\theta_0)\tc_3=2\nabla_\Gamma h_1\cdot\nabla_\Gamma h_2\theta_0''\\\label{yuning:monst4}
&\quad-\(b- \mathfrak{B}-\kappa_2(\rho^2+2\rho h_1)\)\theta_0' - (\rho+h_1)\mathfrak{D} \quad \text{on }\R\times \T^1\times [0,T_0],
\end{alignat}
which is equivalent to \eqref{yuning:monst4'}.
This is a system treated in Theorem \ref{lem:InftyEstimLinPart} and we shall choose $\mathfrak{B}$ such that the compatibility condition of Theorem  \ref{lem:InftyEstimLinPart} is valid, namely
\begin{equation}\label{yuning:deterb}
\mathfrak{B}(s,t)\int_\R(\theta_0')^2\sd \rho
  =\int_\R\left[  \theta_0'(b-\kappa_2(s,t)\rho^2) + (\rho+h_1)\mathfrak{D} \right]\theta_0'(\rho)\sd \rho
\end{equation}
since $\int_{\R} \rho (\theta_0'(\rho))^2\sd \rho=0$ and $\int_{\R} \theta_0''(\rho)\theta_0'(\rho)\sd \rho =0$.
It is crucial that the right hand side of \eqref{yuning:deterb}, especially $\mathfrak{D}$ defined in \eqref{yuning:1.28}, is uniquely determined by $h_1, \tc_2$ and  the solution of \eqref{eq:Limit1}. Hence $\mathfrak{B}$ is is uniquely determined by $h_1, \tc_2$ and  the solution of \eqref{eq:Limit1}.
So we are able to solve \eqref{yuning:h2equ} and then \eqref{yuning:monst4} without leading to a circular argument. Finally \eqref{yuning:monst5} becomes
\begin{alignat}{1}\nonumber
    &\p_t c^{in}(x,t)+\ve^{in}_A(\rho,x,t)\cdot\nabla c^{in}(x,t)+ \eps^2X_0^\ast(\we_1)\cdot\nabla c_0^{in}(x,t)-\Delta c^{in}(x,t) +\tfrac1{\eps^2} f'(c^{in})\\\nonumber
   &= R_\eps (x,t)  +\sum_{k=2}^5\eps^k\mathfrak{R}_k+\mathfrak{R}_1-\eps^2 \kappa_{3,\eps}\theta'_0 -\eps^2 \kappa_2(2(\rho+h_1) h_2+\eps h^2_2)\theta'_0 +\eps^2\widehat{\mathfrak{D}}_\eps\\\nonumber
  &\ +\eps(\Ldelta -\Lt )h_2\theta'_0  -2\eps\( \nabla_\G h_1\cdot\Lgrad h_2+\Ldelta h_1\cdot \nabla_\G h_2+\Lgrad h_1\cdot\Lgrad h_2\)\theta''_0\\\nonumber
&\ +\eps^2\((\rho+h_\eps)\kappa_1- \eps  \kappa_2 (\rho+h_\eps )^2-\eps^2 \kappa_{3,\eps}\)\p_\rho\tc_2\\\label{yuning:1.43}
&=  R_\eps (x,t) +\sum_{k=2}^5\eps^k\mathfrak{R}_k
+\mathfrak{R}_1+\mathfrak{R},
\end{alignat}
which implies \eqref{eq:ExpandAC}.



\begin{thebibliography}{10}

\bibitem{GeneralTwoPhaseFlow}
H.~Abels.
\newblock On generalized solutions of two-phase flows for viscous
  incompressible fluids.
\newblock {\em Interfaces Free Bound.}, 9:31--65, 2007.

\bibitem{ReviewGeneralTwoPhaseFlow}
H.~Abels.
\newblock On the notion of generalized solutions of two-phase flows for viscous
  incompressible fluids.
\newblock {\em RIMS K{\^{o}}ky{\^{u}}roku Bessatsu}, B1:1--15, 2007.

\bibitem{ModelH}
H.~Abels.
\newblock On a diffuse interface model for two-phase flows of viscous,
  incompressible fluids with matched densities.
\newblock {\em Arch. Rat. Mech. Anal.}, 194(2):463--506, 2009.

\bibitem{AbelsGarckeGruen2}
H.~Abels, H.~Garcke, and G.~Gr{\"{u}}n.
\newblock Thermodynamically consistent, frame indifferent diffuse interface
  models for incompressible two-phase flows with different densities.
\newblock {\em Math. Models Methods Appl. Sci.}, 22(3):1150013 (40 pages),
  2012.

\bibitem{ChenSharpInterfaceLimit}
H.~Abels and D.~Lengeler.
\newblock On sharp interface limits for diffuse interface models for two-phase
  flows.
\newblock {\em Interfaces Free Bound.}, 16(3):395--418, 2014.

\bibitem{NSMS}
H.~Abels and M.~R{\"{o}}ger.
\newblock Existence of weak solutions for a non-classical sharp interface model
  for a two-phase flow of viscous, incompressible fluids.
\newblock {\em Ann. Inst. H. Poincar\'e Anal. Non Lin\'eaire}, 26:2403--2424,
  2009.

\bibitem{ShibataProceedings}
H.~Abels and S.~Schaubeck.
\newblock {Nonconvergence of the Capillary Stress Functional for Solutions of
  the Convective Cahn-Hilliard Equation.}
\newblock In {\em Mathematical Fluid Dynamics, Present and Future}, Springer
  Proceedings in Mathematics {\&} Statistics. 2016.

\bibitem{StrongNSMS}
H.~Abels and M.~Wilke.
\newblock Well-posedness and qualitative behaviour of solutions for a two-phase
  {N}avier-{S}tokes-{M}ullins-{S}ekerka system.
\newblock {\em Interfaces Free Bound.}, 15(1):39--75, 2013.

\bibitem{AlikakosLimitCH}
N.~D. Alikakos, P.~W. Bates, and X.~Chen.
\newblock Convergence of the {C}ahn-{H}illiard equation to the {H}ele-{S}haw
  model.
\newblock {\em Arch. Rational Mech. Anal.}, 128(2):165--205, 1994.

\bibitem{ArendtMaxReg}
W.~Arendt, R.~Chill, S.~Fornaro, and C.~Poupaud.
\newblock {$L^p$}-maximal regularity for non-autonomous evolution equations.
\newblock {\em J. Differential Equations}, 237(1):1--26, 2007.

\bibitem{BlesgenModel}
T.~Blesgen.
\newblock A generalization of the {N}avier-{S}tokes equations to two-phase
  flows.
\newblock {\em J. Physics D (Appl. Physics)}, 32:1119--1123, 1999.

\bibitem{BoyerModelH}
F.~Boyer.
\newblock Mathematical study of multi-phase flow under shear through order
  parameter formulation.
\newblock {\em Asymptot. Anal.}, 20(2):175--212, 1999.

\bibitem{CaginalpChen}
G.~Caginalp and X.~Chen.
\newblock Convergence of the phase field model to its sharp interface limits.
\newblock {\em Euro. J. of Applied Mathematics}, 9:417--445, 1998.

\bibitem{StabilityCHSimple}
E.~A. Carlen, M.~C. Carvalho, and E.~Orlandi.
\newblock A simple proof of stability of fronts for the {C}ahn-{H}illiard
  equation.
\newblock {\em Comm. Math. Phys.}, 224(1):323--340, 2001.
\newblock Dedicated to Joel L. Lebowitz.

\bibitem{ChenSpectrumAC}
X.~Chen.
\newblock Spectrum for the {A}llen-{C}ahn, {C}ahn-{H}illiard, and phase-field
  equations for generic interfaces.
\newblock {\em Comm. Partial Differential Equations}, 19(7-8):1371--1395, 1994.

\bibitem{ChenHilhorstLogak}
X.~Chen, D.~Hilhorst, and E.~Logak.
\newblock Mass conserving {A}llen-{C}ahn equation and volume preserving mean
  curvature flow.
\newblock {\em Interfaces Free Bound.}, 12(4):527--549, 2010.

\bibitem{DeMottoniSchatzman}
P.~De~Mottoni and M.~Schatzman.
\newblock Geometrical evolution of devoloped interfaces.
\newblock {\em Trans. Amer. Math. Soc.}, 347(5):1533--1589, 1995.

\bibitem{DenisovaTwoPhase}
I.~V. Denisova and V.~A. Solonnikov.
\newblock Solvability in {H}\"older spaces of a model initial-boundary value
  problem generated by a problem on the motion of two fluids.
\newblock {\em Zap. Nauchn. Sem. Leningrad. Otdel. Mat. Inst. Steklov. (LOMI)},
  188(Kraev. Zadachi Mat. Fiz. i Smezh. Voprosy Teor. Funktsii. 22):5--44, 186,
  1991.

\bibitem{DoreVenni}
G.~Dore and A.~Venni.
\newblock On the closedness of the sum of two closed operators.
\newblock {\em Math. Z}, 196:189--201, 1987.

\bibitem{EscherSeiler}
J.~Escher and J.~Seiler.
\newblock Bounded {$H_\infty$}-calculus for pseudodifferential operators and
  applications to the {D}irichlet-{N}eumann operator.
\newblock {\em Trans. Amer. Math. Soc.}, 360(8):3945--3973, 2008.

\bibitem{MR3376794}
M.~Fei, W.~Wang, P.~Zhang, and Z.~Zhang.
\newblock Dynamics of the nematic-isotropic sharp interface for the liquid
  crystal.
\newblock {\em SIAM J. Appl. Math.}, 75(4):1700--1724, 2015.

\bibitem{GalGrasselliModelH2D}
C.~G. Gal and M.~Grasselli.
\newblock Asymptotic behavior of a {C}ahn-{H}illiard-{N}avier-{S}tokes system
  in 2{D}.
\newblock {\em Ann. Inst. H. Poincar\'e Anal. Non Lin\'eaire}, 27(1):401--436,
  2010.

\bibitem{GalGrasselliTrajAttr}
C.~G. Gal and M.~Grasselli.
\newblock Trajectory attractors for binary fluid mixtures in 3d.
\newblock {\em Chin. Ann. Math. ser. B}, 31:1--25, 2010.

\bibitem{Galdi}
G.~P. Galdi.
\newblock {\em An Introduction to the Mathematical Theory of the
  {N}avier-{S}tokes Equations, Volume 1}.
\newblock Springer, Berlin - Heidelberg - New York, 1994.

\bibitem{Handbook}
Y.~Giga and A.~Novotny, editors.
\newblock {\em Handbook of Mathematical Analysis in Mechanics of Viscous
  Fluids}.
\newblock Springer, 2018.

\bibitem{PhaseTransitionHoffmannStarovoitov}
K.-H. Hoffmann and V.~N. Starovoitov.
\newblock Phase transitions of liquid-liquid type with convection.
\newblock {\em Adv. Math. Sci. Appl.}, 8(1):185--198, 1998.

\bibitem{HoffmannStarovoitovStefanConvection}
K.-H. Hoffmann and V.~N. Starovoitov.
\newblock The {S}tefan problem with surface tension and convection in {S}tokes
  fluid.
\newblock {\em Adv. Math. Sci. Appl.}, 8(1):173--183, 1998.

\bibitem{KaltonWeis}
N.~J. Kalton and L.~Weis.
\newblock The {$H^\infty$}-calculus and sums of closed operators.
\newblock {\em Math. Ann.}, 321(2):319--345, 2001.

\bibitem{KoehnePruessWilkeTwoPhase}
M.~K{\"o}hne, J.~Pr{\"u}ss, and M.~Wilke.
\newblock Qualitative behaviour of solutions for the two-phase
  {N}avier-{S}tokes equations with surface tension.
\newblock {\em Math. Ann.}, 356(2):737--792, 2013.

\bibitem{LiuSatoTonegawa2}
C.~Liu, N.~Sato, and Y.~Tonegawa.
\newblock Two-phase flow problem coupled with mean curvature flow.
\newblock {\em Interfaces Free Bound.}, 14(2):185--203, 2012.

\bibitem{LiuShenModelH}
C.~Liu and J.~Shen.
\newblock A phase field model for the mixture of two incompressible fluids and
  its approximation by a {F}ourier-spectral method.
\newblock {\em Phys. D}, 179(3-4):211--228, 2003.

\bibitem{LowengrubQuasiIncompressible}
J.~Lowengrub and L.~Truskinovsky.
\newblock Quasi-incompressible {C}ahn-{H}illiard fluids and topological
  transitions.
\newblock {\em R. Soc. Lond. Proc. Ser. A Math. Phys. Eng. Sci.},
  454(1978):2617--2654, 1998.

\bibitem{Moser}
M.~Moser.
\newblock {\em Lokale Wohlgestelltheit f\"ur ein Navier-Stokes/mittleren
  Kr\"ummungsfluss-System}.
\newblock Master thesis, Regensburg, 2016.

\bibitem{MultiFluidFlow}
A.~Nouri and F.~Poupaud.
\newblock An existence theorem for the multifluid {N}avier-{S}tokes problem.
\newblock {\em J. Differential Equations}, 122:71--88, 1995.

\bibitem{PlotnikovTwoPhase}
P.~Plotnikov.
\newblock {Generalized solutions to a free boundary problem of motion of a
  non-Newtonian fluid.}
\newblock {\em Sib. Math. J.}, 34(4):704--716, 1993.

\bibitem{PruessShibataShimizuTwoPhaseFlow1}
J.~Pr{\"u}ss, Y.~Shibata, S.~Shimizu, and G.~Simonett.
\newblock On well-posedness of incompressible two-phase flows with phase
  transitions: the case of equal densities.
\newblock {\em Evol. Equ. Control Theory}, 1(1):171--194, 2012.

\bibitem{PruessSimonettTwoPhaseFlow}
J.~Pr{\"u}ss and G.~Simonett.
\newblock On the two-phase {N}avier-{S}tokes equations with surface tension.
\newblock {\em Interfaces Free Bound.}, 12(3):311--345, 2010.

\bibitem{PromotionStefan}
S.~Schaubeck.
\newblock {\em Sharp interface limits for diffuse interface models}.
\newblock PhD thesis, University Regensburg, urn:nbn:de:bvb:355-epub-294622,
  2014.

\bibitem{ShibataShimizuTwoPhase}
Y.~Shibata and S.~Shimizu.
\newblock On a resolvent estimate of the interface problem for the {S}tokes
  system in a bounded domain.
\newblock {\em J. Differential Equations}, 191(2):408--444, 2003.

\bibitem{StarovoitovSharpInterfaceLimes}
V.~N. Starovo{\u\i}tov.
\newblock A model of the motion of a two-component fluid taking into account
  capillary forces.
\newblock {\em Prikl. Mekh. Tekhn. Fiz.}, 35(6):85--92, 1994.

\bibitem{StarovoitovModelH}
V.~N. Starovo{\u\i}tov.
\newblock On the motion of a two-component fluid in the presence of capillary
  forces.
\newblock {\em Mat. Zametki}, 62(2):293--305, 1997.

\bibitem{Stein:SingInt}
E.~M. Stein.
\newblock {\em Singular Integrals and Differentiability Properties of
  Functions}.
\newblock Princeton Hall Press, Princeton, New Jersey, 1970.

\bibitem{TakasaoTonegawa}
K.~Takasao and Y.~Tonegawa.
\newblock Existence and regularity of mean curvature flow with transport term
  in higher dimensions.
\newblock {\em Math. Ann.}, 364(3-4):857--935, 2016.

\end{thebibliography}

\def\ocirc#1{\ifmmode\setbox0=\hbox{$#1$}\dimen0=\ht0 \advance\dimen0
  by1pt\rlap{\hbox to\wd0{\hss\raise\dimen0
  \hbox{\hskip.2em$\scriptscriptstyle\circ$}\hss}}#1\else {\accent"17 #1}\fi}

\bigskip

\noindent
{\it
  (H. Abels) Fakult\"at f\"ur Mathematik,
  Universit\"at Regensburg,
  93040 Regensburg,
  Germany}\\
{\it E-mail address: {\sf helmut.abels@mathematik.uni-regensburg.de} }\\[1ex]
{\it (Y. Liu) NYU-ECNU Institute of Mathematical Sciences at NYU Shanghai, Shanghai, China}\\{\it E-mail address: {\sf yl67@nyu.edu}}
\end{document}